\documentclass[reqno,11pt]{amsart}
\usepackage{amsmath,amssymb,mathrsfs,amsthm,amsfonts,mathtools}
\usepackage[inline]{enumitem} 
\usepackage[usenames,dvipsnames]{xcolor}
\usepackage{comment}
\usepackage{caption}
\usepackage{array,longtable,booktabs,caption}
\DeclareMathAlphabet{\mathpzc}{OT1}{pzc}{m}{it}
\usepackage{stmaryrd}
\usepackage[paper=letterpaper,margin=1in, top=1in, bottom=1 in]{geometry}
\usepackage{graphicx}
\usepackage{tabularx}
\usepackage{mathrsfs}
\usepackage{listings}
\newcommand{\ind}{\mathbf{1}}
\usepackage{bbm}
\usepackage{tikz}
\usetikzlibrary{arrows}
\usepackage{hyperref} 

\hypersetup{%
	colorlinks=true, linkcolor=blue,
	citecolor=ForestGreen
}
\newtheorem{thm}{Theorem}[section]
\newtheorem{cor}[thm]{Corollary}
\newtheorem{prop}[thm]{Proposition}
\newtheorem{lem}[thm]{Lemma}
\newtheorem{lemma}[thm]{Lemma}
\newtheorem{conj}[thm]{Conjecture}
\theoremstyle{definition}
\newtheorem{defn}[thm]{Definition}

\newtheorem{remark}[thm]{Remark}
\newtheorem{ass}[thm]{Assumption}

\numberwithin{equation}{section}

\newcommand{\Z}{\mathbb{Z}}

\newcommand{\g}{\mathbf g}
\newcommand{\z}{\boldsymbol{{\boldsymbol{\zeta}}}}
\newcommand{\h}{\boldsymbol{{\boldsymbol{\eta}}}}

\newcommand{\vect}[1]{\mathbf{#1}}
\DeclareMathOperator{\gap}{gap}
\newcommand{\pN}[1]{\mathbf{p}_{N,#1}}
\newcommand{\PN}[1]{\mathbf{P}_{N,#1}}
\newcommand{\EN}[1]{\mathbf{E}_{N,#1}}

\renewcommand{\tilde}{\widetilde}

\DeclareMathOperator{\supp}{supp}

\DeclarePairedDelimiter{\norm}{\lVert}{\rVert}
\DeclarePairedDelimiter{\brac}{\langle}{\rangle}
\DeclarePairedDelimiter{\floor}{\lfloor}{\rfloor}

\newcommand{\seq}[1]{\mathbf{#1}}

\newcommand{\x}{\mathbf x}
\newcommand{\dr}{\mathrm d}
\newcommand{\y}{\mathbf y}

\newcommand{\bfa}{\mathbf a}

\newcommand{\R}{\mathbb{R}}
\newcommand{\N}{\mathbb{N}}

\renewcommand{\hat}{\widehat}

\renewcommand{\bar}{\overline}

\renewcommand{\P}{\mathbb{P}}	

\newcommand{\E}{\mathbb{E}}

\newcommand{\e}{\varepsilon}
\newcommand{\sdn}[1]{{\color{red}\small[#1]\color{black}}}

\title{A universality result for the critical 2d stochastic heat flow}
\author{Hindy Drillick}
\address{Courant Institute of Mathematical Sciences, New York University, New York, NY, USA}
\email{hindy.drillick@nyu.edu}
\author{Jonathan Hou}
\address{Courant Institute of Mathematical Sciences, New York University, New York, NY, USA}
\email{jh10648@nyu.edu}
\author{Shalin Parekh}
\address{Department of Mathematics, University of Maine, Orono, ME, USA}
\email{shalin.parekh@maine.edu}

\begin{document}

\maketitle

\begin{abstract}
 For a large class of discrete- and continuous-time models satisfying a linear flow property, we prove convergence to the critical $2d$ stochastic heat flow under diffusive scaling of space and time. Convergence is in the sense of finite-dimensional distributions in the space of measure-valued stochastic flows. The class of discrete models we consider includes random walks in space-time random environments with finite-range jumps and directed polymer models with finite-range spatial correlations. The class of continuum models we consider includes several distinct types of linear-multiplicative stochastic PDEs driven by Gaussian noise with smooth and compactly supported covariance kernel.
\end{abstract}


\tableofcontents

\section{Introduction and main results}

\subsection{Background and motivation}\label{sse:bg}
The \emph{critical $2d$ stochastic heat flow} (SHF) is a measure-valued random field that was recently constructed by \cite{CSZ_2d} and characterized axiomatically by \cite{Tsa24}. 
These works respectively established the SHF as a scaling limit of the directed polymer and the mollified stochastic heat equation.
Beyond these approximations, the SHF is thought to be a 
scaling limit of a number of other models, but little has been said of what these are, let alone what characterizes them.
We study a class of models satisfying six conditions that are sufficient criteria for convergence to the SHF. 
In doing so, we seek to clarify the near-critical behavior of many models with natural structural properties. We begin by discussing the notion of criticality.

A central goal of probability theory and statistical mechanics is to understand how disordered systems behave near their points of \emph{phase transition}. Loosely, these critical values witness stark changes to macroscopic observables as a parameter governing the model at the microscopic scale is varied continuously through them.
Concretely, we may consider the example of the nearest-neighbor directed polymer in a random environment, where the parameter is the inverse temperature $\beta \geq 0$ and the observable of interest is the point-to-plane partition function
\begin{equation*}
    Z_N^\beta := \mathbb E \Big[e^{\beta\sum_{r = 0}^{N-1} \omega_{r,R_r} - N\lambda(\beta)} \Big| \omega \Big].
\end{equation*}
Here $\mathbb E$ is with respect to the law of the simple random walk $(R_n)_{n \geq 0}$ on $\Z^d$ and $\omega=\{\omega_{n,x}\}_{(n,x) \in \Z \times \Z^d}$ is an i.i.d.\ random environment with cumulant generating function $\lambda(\beta) := \log\E\big[e^{\beta\omega}\big] < \infty$ for $\beta \geq 0$.
This model and its critical point were first studied in the physics literature \cite{HH85}.
The precise formulation of this transition is with respect to the mean-one martingale $Z_N^\beta$: one says that $\beta$ falls under the \emph{strong disorder} regime if $\lim_{N \to \infty} Z_N^\beta = 0$ and under the \emph{weak disorder} regime otherwise.
It was shown in  \cite{MR968950,MR1006293} that weak disorder holds in $d \geq 3$ for small positive $\beta$. Later on, it was shown that for all $d \geq 1$ there exists a critical $\beta_c \in [0,\infty)$ such that all $\beta < \beta_c$ witness weak disorder and $\beta > \beta_c$ strong disorder \cite{MR2271480}. The behavior of the partition function and the associated path measure in the strong disorder regime is itself a subject of great interest \cite{MR2249669,MR2917766,MR3785400,MR4089496,MR5110428}. See \cite{MR3444835} for an exposition of these disorder regimes.

It is also known that $\beta_c = 0$ when $d \in \{1,2\}$, meaning that for every fixed positive $\beta$ there is strong disorder \cite{MR2271480}.
Thus, the phase transition is trivial, however, behavior about the critical value is no less rich. The work of 
\cite{akq} showed that at $d = 1$ one recovers an \emph{intermediate disorder} regime by the scaling $\beta_N = \vartheta N^{-1/4}$, where $\vartheta>0$ is arbitrary and $N$ is the path length as above,
at which one observes features of both weak and strong disorder.

In this work, we will be interested in the case $d=2$. One naturally asks whether this intermediate disorder behavior extends to the two-dimensional directed polymer. After a series of works that studied this question at the level of moments \cite{MR1629198,
MR3719953, MR4017119, MR4032870, GQT}, the paper \cite{CSZ_2d} constructed the SHF as a suitably scaled limit of the polymer partition function,
provided $\beta=\beta_N$ is sent to zero at a precise logarithmically attenuated rate (see \eqref{crit1} below).

A significant feature of the SHF is that it is conjectured to describe the universal limiting behavior of a large class of models. Another model that is known to converge to the SHF is the mollified multiplicative-noise stochastic heat equation \cite{Tsa24}. We now discuss this model in more detail. 
Consider the stochastic PDE 
\begin{equation*}\label{eq:SHE}\tag{SHE}
    \partial_t u (t,x)= \tfrac{1}{2}\Delta u(t,x) + \beta \cdot u(t,x)\xi^\epsilon (t,x),\qquad t\ge 0 ,\; x\in \mathbb R^d,
\end{equation*}
where $\beta > 0$, and $\xi^\epsilon$ is a Gaussian noise on $[0,\infty) \times \R^d$ with covariance $\mathbb E[ \xi^\epsilon(t,x) \xi^\epsilon(s,y)] = \epsilon^{-d} J(\epsilon^{-1} (x-y)) \delta(t-s). $ Here $J\in C_c^\infty(\mathbb R^d)$ and $\int J=1.$
In statistical mechanical terms, this model can be understood as the continuum analogue of the partition function of the directed polymer model from above by the Feynman-Kac formula \cite{BC95}. Its logarithm, or \emph{free energy}, solves the Kardar-Parisi-Zhang (KPZ) equation of random growth \cite{KPZ}. The development of solution theory for the SHE in one dimension drove progress in understanding the models that characterize the $(1 + 1)$-dimensional KPZ university class \cite{FS10,Qua11,Cor12,Quastel_2015,CW17,CS20}. 

In addition to this statistical mechanics interpretation, we can also study the SHE from the point of view of singular stochastic PDEs, a field that has seen dramatic progress over the last 15 years \cite{Hai13, Hai14, GJ14,GIP15, GP17,duch}. From this perspective, the interesting question is whether there is a sensible limiting object for the SHE as $\epsilon\downarrow 0$, in other words as the spatially smooth noise $\xi^\epsilon$ approaches a space-time white noise $\xi$. Note that in every dimension, the SHE driven by space-time white noise is classically ill-posed due to
the lack of regularity of $\xi$, which is at best a tempered distribution. Thus, the existence of the $\epsilon \downarrow 0$ limit must use deeper probabilistic structure of $\xi$ and the PDE.

The answer to this question is strongly dimension dependent. See that replacing $(t,x) \mapsto (\epsilon^2 t,\epsilon x)$ in \eqref{eq:SHE} effectively changes $\beta \mapsto \beta \epsilon^{1-\frac{d}{2}}$. In the language of regularity structures, the SHE is therefore \textit{scaling-subcritical} in $d=1$,  \textit{scaling-critical} in $d=2$, and \textit{scaling-supercritical }in $d\ge 3$. Subcriticality in the context of stochastic PDEs is unrelated to phase transitions, but instead refers to the fact that the small-scale structure of the object is well-approximated by a stochastic PDE that is linear in the driving noise. 
This means that in $d=1$ there exist general theories such as regularity structures \cite{Hai14, MR3779690} or paracontrolled products \cite{GIP15, Rosati} that give meaning to equations such as the SHE as $\epsilon \downarrow 0$, while in $d\ge 3$ one does not expect any solution at all. However, for the critical dimension $d=2$ the answer is much more subtle.

For the SHE in its critical dimension $d=2$, \cite{MR1629198} first showed that the variance of the SHE converges to a nontrivial limit as $\epsilon \downarrow 0$, provided $\beta_\epsilon \to 0$ is logarithmically tuned in a precise fashion (see \eqref{crit2} below). Later, \cite{GQT} showed using resolvent techniques from the physics literature \cite{Rajeev, Dimock} that, under the same scaling, all moments converge to a non-trivial limit. Finally, \cite{Tsa24} showed that the moments can actually be used to characterize the limit points uniquely as those of the SHF, thus verifying independently of \cite{CSZ_2d} the existence of a nontrivial limit of \eqref{eq:SHE} as $\epsilon\downarrow 0$. 

This then leads to the question of whether all scaling-critical stochastic PDEs have nontrivial (meaning non-Gaussian and nonzero) scaling limits. The answer is no: \cite{MR5042137} showed that after tuning ferromagnetic Ising and $\Phi^4_d$ toward criticality at the marginal dimension $d = 4$, the limit can only be a Gaussian field.
Nonetheless, other examples exist where one does expect a nontrivial limit at the critical dimension. Perhaps the most prominent example, whose construction is still an open problem, is the Yang-Mills theory with non-abelian gauge groups in its critical dimension $d=4$ (see \cite{Thierry, Chandra, cao} for recent progress in the subcritical dimensions $d=2,3$). 
Thus, the SHF provides an exciting example of a scaling-critical theory with nontrivial correlations, for which one can actually say something meaningful and prove convergence to the continuum limit.

It is natural to ask if the SHF can arise from other types of prelimiting objects. The recent work \cite{sur26} proved that the continuous-time lattice polymer driven by spatially correlated Brownian motions also converges to the SHF. In this work, we enlarge the domain of attraction of the SHF to a broad class of models called \emph{linear stochastic flows}. 
This class of flows includes spatially correlated polymer models, random walks in random environments, stochastic PDEs with derivative noise, and diffusions in random media.

\begin{figure}[h]
    \centering
    \vspace{-3 em}
    \includegraphics[scale=0.4]{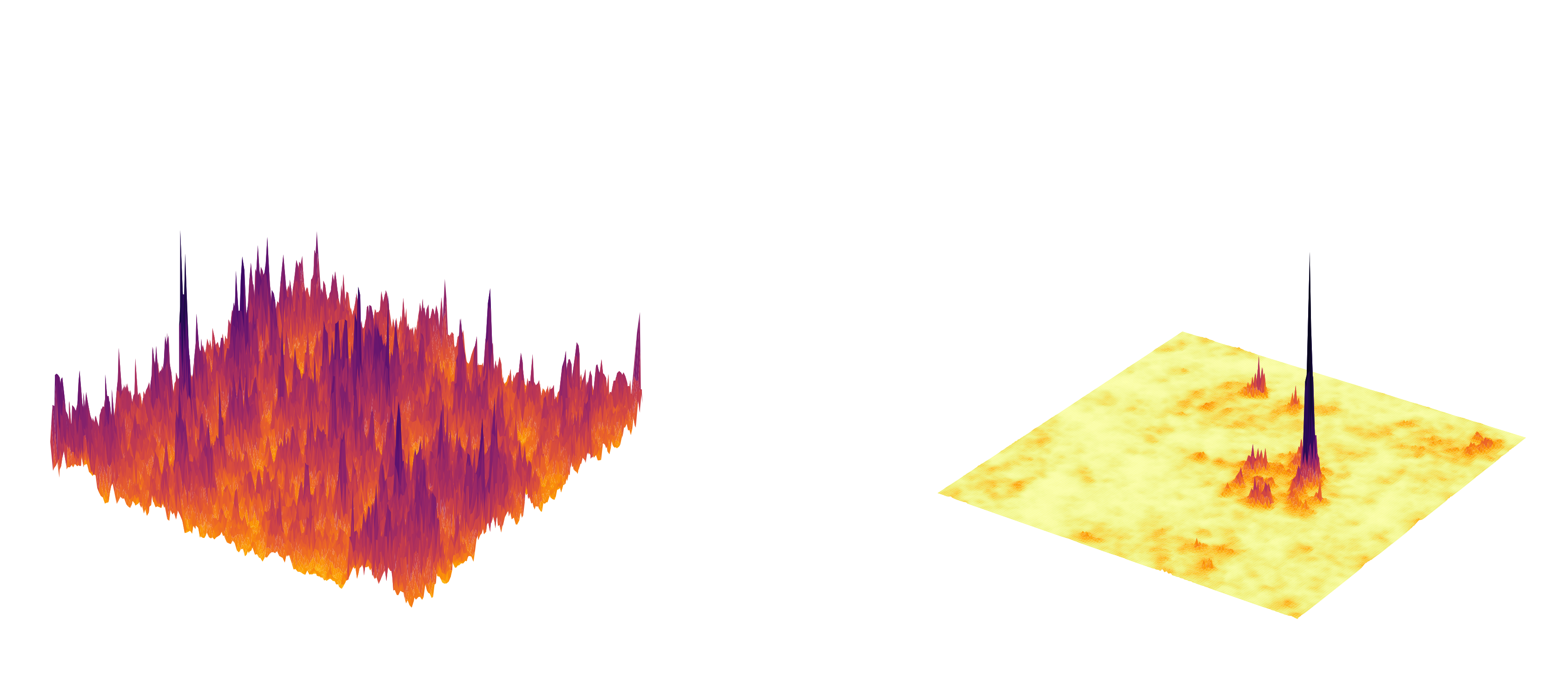}
    \caption{
        Simulations of the plane-to-point field \(H_N(x) = \sum_y \psi(y) H^N_{0,N}(y,x)\) defined from the tilted transition kernel for a random walk in a random environment with flat initial condition \(\psi \equiv 1\) (see Section \ref{5.4}).
        Here $N = 1024$, the viewport is a $512 \times 512$ box, and the noise is spatially averaged with correlations of range $4$.
        The left and right panels respectively display fine tuning as in \eqref{crit_tuning} with $\vartheta = -4$ and $\vartheta=4$.
    }
    \label{fig:rwre}
\end{figure}

\subsection{The critical $2d$ stochastic heat flow} In this section we give a self-contained definition of the SHF, which is a random field $Z^\vartheta = (Z^\vartheta_{s,t}(dx,dy))_{0 \leq s \leq t \le 1}$ with each $Z^\vartheta_{s,t}$ taking values in the space $\mathcal M_{\mathrm{loc}}(\R^2 \times \R^2)$ of positive locally finite Borel measures on $\R^2 \times \R^2$.
Here the parameter $\vartheta \in \R$ is a \textit{fine-tuning} constant chosen as desired. 
We begin with two preliminary definitions.

\begin{defn}[Basic notation]\label{j_n}
    Let $h\in \mathbb N$, let $t>0$, and let $\x=(x_1,\dots,x_h) \in (\mathbb R^2)^h$.
    We denote by $\mathbf g^{\otimes h}(t,\x):= (2\pi t)^{-h} e^{-|\x|^2 / 2t}$ the standard heat kernel on $(\mathbb R^2)^h.$

    For $\vartheta \in \mathbb R$ and $t\in [0,1]$, we also define the special function
    $G_\vartheta(t):= \int_0^\infty \frac{e^{(\vartheta-\gamma)s} st^{s-1}}{\Gamma(s+1)} ds,$ where $\gamma \approx 0.57$ is the Euler-Mascheroni constant, and $\Gamma(z) = \int_0^\infty e^{-u} u^{z-1} du$ is the Gamma function.

    We write $I \vdash S$ if $S$ is a set and $I = \{I_1,\ldots,I_k\}$ is a set partition of $S$.
    For all $i \in S$ we write $[i]_I$ for the set in $I$ containing $i$.
    A \emph{pair partition} $J \vdash \{1,\dots,h\}$ is a partition consisting of one pair and $h - 2$ singletons. 
    For all pair partitions $J \vdash \{1,\ldots,h\}$, we define the \emph{collision space} $(\R^2)^h_J := \{\mathbf x \in (\R^2)^h : x_i = x_j \textrm{ if } [i]_J = [j]_J\}$.
    See that $(\R^2)^h_J \cong (\R^2)^{h - 1}$.
\end{defn}

\begin{defn}[Moment coefficients for the SHF]\label{mom_co}
    For $\Phi, \Psi \in C_c((\mathbb R^2)^h)$, $t>0$, $\vartheta\in \mathbb R$, and $m\in \mathbb Z_{\ge 0},$ we define
    \begin{align*}
    \mathfrak I_{\mathrm{SHF}}^{(h)} (&\vartheta; m,t,\Phi,\Psi) := \sum_{\substack{J_1,\dots,J_m \vdash\{1,2,3,4\}\\ J_i\ne J_{i-1} \\ \mathrm{pair\;partitions\;only}}}  \iint_{\substack{u_1<v_1 <\dots<u_m<v_m<t \\ \prod_{i = 1}^{m+1} (\R^2)^h_{J_{i-1}} \times (\R^2)^h_{J_i}}} 
    \;\Phi(\mathbf c_0)\\ \times \prod_{i=1}^{m}&\bigg\{  
    \mathbf g^{\otimes h} (u_i-v_{i-1},\mathbf c_{i-1}, \mathbf b_i) \times \mathbf U^{J_i}_\infty( v_i - u_i, \mathbf b_i ,\mathbf c_i) 
    \bigg\} \mathbf g^{\otimes h} (t-v_m , \mathbf c_m,\mathbf b_{m+1}) \Psi(\mathbf b_{m+1})\;d\vec ud \vec v d\vec{\mathbf b}d \vec{\mathbf c}.
\end{align*}
Here we take $\mathbf U^J_\infty(s,\x,\vect y)=U^\vartheta_\infty( s, \mathbf x | _{J^{(2)}},\mathbf y| _{J^{(2)}}) \cdot \mathbf g^{\otimes (h-2)}(s, \mathbf x|_{J^{(1)}}, \mathbf y|_{J^{(1)}}),$ where $\x|_{J^{(2)}}$ corresponds to the 2-vector consisting of the $J$-paired coordinates of the vector $\x$, and $\x|_{J^{(1)}}$ corresponds to the $(h-2)$-vector consisting of the $J$-singleton coordinates of the vector $\x$, and moreover $U^\vartheta_\infty(t,\vec x,\vec y) = 4\pi G_{\vartheta}(t)\mathbf  g(\tfrac{t}{2}, y_1-x_1)$ for $\vec x,\vec y\in (\mathbb R^2)^2$. Furthermore, we take $J_0, J_{m+1}$ to be the partitions consisting of all singletons, so that $(\mathbb R^2)^h_{J_0} = (\mathbb R^2)^h_{J_{m+1}} = (\mathbb R^2)^4$ is the full space with no constraints. Finally, it should also be understood that $v_0=0$ and $u_{m+1}=t$ above. 

When $m=0$, the above integral is understood as $\mathfrak I_{\mathrm{SHF}}^{(h)} (0,t,\Phi,\Psi) = \langle \Phi, \mathbf g^{\otimes h}(t) * \Psi\rangle_{L^2((\mathbb R^2)^h)}$ where the convolution is in space only.
\end{defn}

The reason we defined these coefficients $\mathfrak I^{(h)}$ is because the infinite series $\sum_{m=0}^\infty \mathfrak I_{\mathrm{SHF}}^{(h)} (\vartheta; m,t,\Phi,\Psi)$ will describe the $h$\textsuperscript{th} moment of the SHF. When $h=1$, we see that $\mathfrak I_{\mathrm{SHF}}^{(h)} (\vartheta; m,t,\Phi,\Psi)=0$ for all $m\ge 1$, thus the $m=0$ term is the only nonzero term. Meanwhile if $h=2$ then $\mathfrak I_{\mathrm{SHF}}^{(h)} (\vartheta; m,t,\Phi,\Psi)=0$ for all $m\ge 2$, since pair partitions for a two-element set cannot be non-consecutive. Thus we see that $h=3$ is the lowest order where one sees a nontrivial contribution for all $m$. 

Now we can define the SHF. The following definition is due to Tsai \cite[Definition 1.2]{Tsa24}, based on previous work of \cite{GQT} on the moment expansions.

\begin{defn}[Critical $2d$ stochastic heat flow] \label{def:axioms}
    Let $\mathcal M_{\mathrm{loc}} ((\mathbb R^2)^2)$ denote the space of locally finite measures on $(\mathbb R^2)^2$. Equip $\mathcal M_{\mathrm{loc}} ((\mathbb R^2)^2)$ with the \textit{vague topology}, that is, the weakest topology with respect to which the maps $\mu \mapsto \int f\;d\mu$ are continuous for all $f\in C_c((\mathbb R^2)^2)$. This is a Polish space. Finally let $C([0,1]^2_{\le}, \mathcal M_{\mathrm{loc}} ((\mathbb R^2)^2))$ denote the space of continuous maps from $[0,1]^2_{\le}\to \mathcal M_{\mathrm{loc}} ((\mathbb R^2)^2))$ where $[0,1]^2_{\le}:= \{ (s,t) \in [0,1]^2: s\le t\}.$

    If $\vartheta \in \mathbb R$, then the \textit{critical $2d$ stochastic heat flow of parameter $\vartheta$} is a $C([0,1]^2_{\le}, \mathcal M_{\mathrm{loc}} ((\mathbb R^2)^2))$-valued random variable $Z^\vartheta$, written as $(s,t) \mapsto Z^\vartheta_{s,t} (dx,dy)$ with $x,y\in \mathbb R^2$, whose law is uniquely determined by the following three axioms. 

    \begin{enumerate}
        \item $Z^\vartheta_{s_j,t_j}$ are independent under $\mathbb P$ whenever $(s_j,t_j] \subset [0,1]$ are disjoint intervals.

        \item For all $f\in C_c^\infty((\mathbb R^2)^2)$ and all $\psi\in C_c^\infty(\mathbb R^2)$ with $\int_\mathbb R\psi=1$, we have that $$\int_{(\mathbb R^2)^4} \epsilon^{-2} \psi(\epsilon^{-1}(y_1-y_2))\cdot f(x,z) Z^\vartheta_{t,u}(dy_1,dz) Z^\vartheta_{s,t}(dx,dy_2)  \to \int_{(\mathbb R^2)^2} f(x,z) Z^\vartheta_{s,u}(dx,dz)$$ in probability as $\epsilon\to 0$, whenever $0\le s<t<u\le 1$.

        \item \label{item:moments} $\mathbb E \big[ \prod_{j=1}^h Z^\vartheta_{s,t} (\phi_j \otimes \psi_j)\big]  = \sum_{m=0}^\infty \mathfrak I_{\mathrm{SHF}}^{(h)} \big(\vartheta; m,t-s,\bigotimes_{j=1}^h \phi_j, \bigotimes_{j=1}^h \psi_j\big)$ for all $h \in \{1,2,3,4\}$ and all $\phi_j,\psi_j \in C_c^\infty(\mathbb R^2)$, where $\mathfrak I^{(h)}_{\mathrm{SHF}}$ are the coefficients in Definition \ref{mom_co}. 
    \end{enumerate} 
\end{defn}

We remark that in contrast to related works, here we will only consider the SHF on the time interval $0\le s<t\le 1$, as this will simplify certain aspects of the exposition and make certain formulas easier to prove. 

In the years since its construction, significant attention has been spent probing the qualitative structure of the SHF \cite{Caravenna,MR4945087,csz25,gt25,cd25,ct25,chen25b,chen25a,nak25,gn25,BCT,gt26,gn26,btz26, MR5082783}. For example, it is now known that the moments of the SHF grow doubly-exponentially \cite{GQT,gn25}. The SHF exhibits non-Gaussian intermittency \cite{gn26} and admits no representation as a Gaussian multiplicative chaos \cite{Caravenna}. The Besov-H\"older regularity of $Z^\vartheta_{0,1}$ is $C^{-\epsilon}$ for every $\epsilon>0$, and the measures $Z^\vartheta_{0,1}$ have full support on $\mathbb R^2 \times \R^2$ \cite{csz25}. Gaussian fluctuations hold for $\vartheta\to -\infty$ \cite{MR4945087}, yet the object is not a Gaussian-driven stochastic PDE in its own right \cite{gt25,cd25}. The SHF can also be used to construct continuum polymer measures \cite{MR5082783, ct25}. Furthermore, the SHF admits a martingale problem, though uniqueness is still open \cite{chen25a, nak25}. See also \cite{SHFnotes,SHFICM} for surveys and further properties.
These traits, while interesting in their own right, also offer a remarkable glimpse of how other systems behave: in particular, they show that models converging to the SHF exhibit nontrivial behavior at the critical point. 

\subsection{Results for discrete models}

\begin{defn}\label{1}
    A discrete directed \textit{linear stochastic flow on $\mathbb Z^2$} is a family of random variables $ H_{s,t}(x,y)$ where $s,t\in \mathbb Z$ with $s\le t$, and $x,y\in\mathbb Z^2$, satisfying the following assumptions: 
    \begin{enumerate}
        \item\label{1stat} (Stationary increments) $H$ is nonnegative, and strictly stationary in both variables: more precisely, $H_{s,t} \stackrel{d}{=} H_{s+h,t+h}$ for all $h\in \mathbb Z$ and all $s<t$, and also $(H_{s,t}(x+a,y+a))_{x,y\in \mathbb Z^2} \stackrel{d}{=} (H_{s,t}(x,y))_{x,y\in \mathbb Z^2}$ for all $a\in \mathbb Z^2$ and all $s<t.$

        \item\label{1indep} (Independent time increments) $H_{s_j,t_j}$ are independent if $(s_j,t_j]$ are disjoint intervals.

        \item\label{1linear} (Linear system) We have $\sum_{y\in \mathbb Z^2} H_{s,t} (x,y) H_{t,u}(y,z) = H_{s,u}(x,z)$ for all $x,z\in\mathbb Z^2$ and all integers $s<t<u$.

        \item\label{1norm} (Normalization) We have that $\sum_{y\in \mathbb Z^2} \mathbb E [ H_{0,1}(0,y)] =1.$
        \end{enumerate}
\end{defn}

Note that Item \eqref{1linear} can be viewed as a sort of cocycle property or a composition of operators, in the sense that $H_{s,t}H_{t,u} = H_{s,u}$ for $s<t<u$. Since $H_{t,t+1}$ is simply an independent copy of $H_{0,1}$, one can see that the distribution of $H_{0,1}$ actually determines all statistical properties of the full flow system $\{H_{s,t}\}_{s<t}$. The core example of such a directed flow comes from the directed polymer model, in which $H_{s,t}(x,y)$ would be the point-to-point partition function from $(s,x)$ to $(t,y)$, which means that the one-step evolution operator is $H_{t,t+1}(x,y) = e^{\omega_{t,x}} p(y-x)$ for some i.i.d.\ environment $\omega_{t,x}$ such that $\mathbb E[e^{\omega_{0,0}}]=1$, and $p$ is a deterministic probability measure on $\mathbb Z^2$.

Such stochastic flows have appeared in different contexts in the works of \cite{kun94b, TV98, lejan}, but a systematic study of the properties of interest in this work seems elusive. Before getting to concrete examples of such flows, we list some of these properties.

This notion of stochastic flow comes with a lot of structure, especially in terms of the moments of the stochastic flow. Define 
\begin{equation*}\eta^{(h)} (x_1,...,x_h): = \log \sum_{(y_1,\ldots,y_h)\in (\mathbb Z^2)^h} \mathbb E\bigg[ \prod_{j=1}^h H_{0,1}(x_j,y_j)\bigg]
\end{equation*}
and define a Markov chain $\mathbf Q_{[h]}^{(x_1,...,x_h)} $ on $(\mathbb Z^2)^h$ by the one-step transition probability 
\begin{equation*}\mathbf q_{[h]}((x_1,...,x_h), (y_1,...,y_h)) = e^{-\eta^{(h)}(x_1,...,x_h)} \mathbb E\bigg[ \prod_{j=1}^h H_{0,1}(x_j,y_j)\bigg].
\end{equation*}Then by iterating the definition of the Markov chain, it is easily seen that one has a  Bose-gas representation of the moments in terms of the Markov chain:  
\begin{equation}\label{eq:mrep}
    \mathbb E\bigg[ \prod_{j=1}^h H_{0,1}(x_j,y_j)\bigg] =  \mathbf Q_{[h]}^{(x_1,...,x_h)} \bigg[ e^{\sum_{r=0}^{t-1} \eta^{(h)}(\mathbf R_r)} \ind_{\{\mathbf R_t=\y\}}\bigg], 
\end{equation}
for all $t \in \mathbb N$ and all $x_i,y_i\in \mathbb Z^2$. Above $\mathbf R_t=(R^1_t,...,R^h_t)$ represents the realization of the Markov chain as a stochastic process, and the super-script $(x_1,...,x_h)$ on $\mathbf Q_{[h]}$ represents the starting position of the Markov chain, i.e., $\mathbf R_0=(x_1,...,x_h)$ under $\mathbf Q^{(x_1,...,x_h)}_{[h]}$. 

Suppose there exists $A>0$ for which $H_{0,1} (x,y)$ and $H_{0,1}(x',y')$ are independent whenever $|x-x'| \ge A$. This value $A$ is called the \textit{range of dependency} of the stochastic flow. Finite-range dependency does not imply that $\mu(x):=\mathbb E[H_{0,1}(0,x)]$ is a finitely supported measure, and many of our results will in fact allow infinite-range jumps as long as some exponential moment is finite. 

For $h=2$, by translation invariance, we note that $\eta^{(2)}(x_1,x_2)$ can be written as $\eta(x_1-x_2)$ for some function $\eta:\mathbb Z^2\to \mathbb R$. If a finite range of dependency $A>0$ is assumed, then $\eta$ has a finite support contained in $[-A,A]^2$. Moreover, $\eta^{(h)} (\mathbf x) = \sum_{1\le i<j\le h} \eta(x_i-x_j)$ for all $\mathbf x=(x_1,...,x_h)$ that do not have any ``non-pair collisions" among the coordinates, in the sense that $|x_i-x_j| + |x_\ell-x_k|\leq 2A$ for some $i<j$ and $k<\ell$ with $\{i,j\}\ne \{k,\ell\}$. The Markov chains $\mathbf Q_{[h]}$ are also well-behaved. For example, if $\mu(x):= \mathbb E[H_{0,1}(0,x)]$, then the one-step transitions of $\mathbf Q_{[2]}$ behave as random walk increments with distribution $\mu^{\otimes 2}$ away from a neighborhood of size $A$ of the diagonal of $(\mathbb Z^2)^2$. 


To state our main results, we will consider a \textit{sequence} $H^N$ of such stochastic flows, and we will be interested in \textit{macroscopic scaling limits} of these objects under a diffusive scaling of space and time. These ideas then lead to the following list of assumptions.

\begin{ass}[Assumptions for the discrete-time result]\label{a1}
    Let $ H^N_{s,t}(x,y)$ be a family of random variables indexed by $s,t\in \mathbb Z$ with $s\le t$, and $x,y\in\mathbb Z^2$. Suppose these satisfy the following: 
    \begin{enumerate}


        \item \label{a1flow} Each $H^N$ is a discrete linear stochastic flow on $\mathbb Z^2$, in the sense of Definition \ref{1}.
        
        \item (Markov representation of the moments) \label{a1markov} For each $N\in \mathbb N$ and $h \in \{1,2,3,4,5\}$ there exists a time-homogeneous Markov chain $\mathbf P_{N,h}^{(x_1, \ldots, x_h)}$ on $(\mathbb Z^2)^h$ with the property that 
        for all $t\in\mathbb Z_{\geq 0}$ and all $\y=(y_1,...,y_h)\in (\mathbb Z^2)^h,$ one has  \begin{align*}\mathbb E\bigg[ \prod_{j=1}^h H^N_{0,t}(x_j,y_j)\bigg] 
        =\mathbf E_{N,h}^{(x_1,\ldots,x_h)} \bigg[ e^{ \mathscr E^{N,h}_t+\sum_{r=0}^{t-1} {\boldsymbol{\eta}}_{N,h}( \mathbf R_r)  } \cdot \ind_{\{\mathbf R_t=\y\}} \bigg], 
        \end{align*}
        where $\mathbf R=(R^1,...,R^h)$ denotes the coordinate process on $(\mathbb Z^2)^h$. Furthermore, the following conditions hold:
        \begin{itemize} \item ${\boldsymbol{\eta}}_{N,h}: (\mathbb Z^2)^h \to \mathbb R$.
        \item $\lim_{N\to\infty} \sup_{t,\x} \|\mathscr E^{N,h}_t\|_{L^\infty(\mathbf P^{\x}_{N,h})} =0,$ thus $\mathscr E^{N,h}_t$ can be viewed as an error term. 
        \item $\boldsymbol{\eta}_{N,2}(x_1,x_2) = \z_N(x_1-x_2)$ where $\z_N:\mathbb Z^2\to [0,\infty)$ is nonnegative and has finite support contained in a ball of radius $A$ in $\mathbb Z^2,$ for some $A>0$ not depending on $N$. 
        \item  $\|\boldsymbol{\eta}_{N,h}\|_{L^\infty((\mathbb Z^2)^h )}\leq C/\log N,$ and furthermore
        \begin{equation*}\boldsymbol{\eta}_{N,h}(\x) = \sum_{1\le i<j\le h} \z_N(x_i-x_j) \hspace{0.2 in} \text{ for all } \hspace{0.2 in} \x= (x_1,...,x_h)\notin T^{(h)},\end{equation*} where $T^{(h)}\subset (\mathbb Z^2)^h$ is the set of \emph{non-pair collisions}, consisting of all vectors $\x=(x_1,...,x_h)$ for which $|x_i-x_j|\leq A$ and $|x_k-x_\ell|\leq A$ for some coordinate indices $i<j$ and $k<\ell$ with $\{i,j\}\ne \{k,\ell\}.$
        \end{itemize}
        \item (Translation invariance and consistency of Markov chains) \label{a1consistency}  Let $\mathbf p_{N,h}(\x,\y)$ denote the one-step transition density of the Markov chain $\mathbf P_{N,h}$ from the previous step. Then:
        \begin{itemize}\item  $\mathbf p_{N,1} (x,y) = \mathbb E[H^N_{0,1}(0,y-x)]$, so that $\mathbf P_{N,1}$ is an i.i.d.\ random walk. Equivalently $\mathscr E^{N,1} = 0$ and $\boldsymbol{\eta}_{N,1}=0.$
        \item The Markov chain $\mathbf P_{N,2}$ satisfies $\mathbf p_{N,2}((x,x'),(y,y')) = \mathbf p_{N,2} ((x+a,x'+a), (y+a,y'+a))$ for all $x,x',y,y',a\in\mathbb Z^2.$ Thus, the difference $R^1-R^2$ of the two coordinates under $\mathbf P^{\x}_{N,2}$ is Markov in its own filtration, whose law we write as $\mathbf P^x_{N,\mathrm{dif}}.$
        \item For $h \in \{1,2,3,4,5\}$, the Markov chains $\mathbf P_{N,h}$ are symmetric in their coordinates, and moreover 
        \begin{align*}\mathbf p_{N,h} ( (x_1,\dots,x_m,x_1'&,\dots,x_{h-m}') , \bullet) \\&= \mathbf p_{N,m} ((x_1,\dots,x_m),\bullet) \otimes \mathbf p_{N,h-m} ((x_1',\dots, x_{h-m}'),\bullet) 
        \end{align*} if $m\le h$ and $\min\{ |x_i-x_j'| : i\le m$ and $ j\le h-m\}>A$, where $A$ is as in Item \eqref{a1markov}.
        \footnote{We do not impose any constraints on the collision regions where $\min\{ |x_i-x_j'| : i\le m$ and $ j\le h-m\}\le A$.}
        \end{itemize}
        \item (Logarithmic asymptotics of the return probability) \label{a1logasymptotic} The transition kernels of $\mathbf P^x_{N,\mathrm{dif}}$ and $\mathbf P^x_{N,h}$ at time $r\in \mathbb Z_{\ge 0}$ will be written as $\mathbf p_{N,\mathrm{dif}}(r;x,y)$ and $\mathbf p_{N,h}(r;\x,\y),$ respectively. Thus $\mathbf p_{N,\mathrm{dif}}(r;x,y) := \sum_{a\in\mathbb Z^2} \mathbf p_{N,2} (r;(x,0), (a,a-y))$ for $x,y\in \mathbb Z^2$. We assume the following:
        \begin{itemize}
        \item There exists a measure $\nu_\infty$ of full support on $\mathbb Z^2$ such that for all fixed values of $x,y\in\mathbb Z^2$ one has the following asymptotic as $N\to \infty$:\footnote{Note that for independent, aperiodic random walks, $\nu_\infty(y) = \frac{1}{4 \pi}$.} \begin{equation*}\sum_{r=0}^N \mathbf p_{N,\mathrm{dif}}(r;x,y) = \nu_\infty(y) \log(N) +o(\log N). 
        \end{equation*}
        \item Furthermore, define a sequence of vectors in $\mathbb R^2$ by 
        \begin{equation*}d_N:= N\sum_{y\in \mathbb Z^2} y\mathbb E[H_{0,1}^N(0,y)].
        \end{equation*}If the sequence $(x^1_N,x^2_N) \in (\mathbb Z^2)^2$ is \textit{well-separated} in the sense that $N^{-1/2} (x^1_N,x^2_N)\to (x_1,x_2)$ with $x_1\ne x_2$,
        then for all $t>0$, $y\in \mathbb Z^2$, and smooth bounded $\phi:\mathbb R^2\to \mathbb R$, we impose that 
        \begin{align*}\lim_{N\to \infty} \sum_{r=0}^{Nt} &\sum_{a\in \mathbb Z^2} \phi(N^{-1/2} (a-N^{-1}d_Nr)) \mathbf p_{N,2} (r,x_N^1,x_N^2,a,a-y) \\&= 4 \pi \cdot  \nu_\infty(y) \int_0^t \mathbf g(2s,x_1-x_2) \int_{\mathbb R^2} \mathbf g(2s,z) \phi(\tfrac12 (z+x_1+x_2)) dz \,ds, 
        \end{align*}
        where $\g(s,x)=(2\pi s)^{-1} e^{-|x|^2/2s}$ is the standard heat kernel on $\mathbb R^2$.
        \end{itemize}
         

        \item (Invariance principle) \label{a1invarianceprinciple} Whenever $N^{-1/2} (x_1^N,\ldots,x_h^N)\to (a_1,\ldots,a_h)$ with $x_i^N\in\mathbb Z^2$, the process $N^{-1/2}(R_{Nt}^1-d_Nt,\ldots,R^h_{Nt}-d_Nt)$ under $\mathbf P_{N,h}^{(x_1^N,\ldots,x_h^N)}$ satisfies an invariance principle to Brownian motion in $(\mathbb R^2)^h$ started from $(a_1,\ldots,a_k)$ as $N\to\infty$, in the topology of $C([0,T], (\mathbb R^2)^h)$.
        
        \item (Heat kernel upper bound) \label{a1heatkernel} The transition densities of the Markov chains $\mathbf P_{N,h}$ satisfy the long-time heat kernel estimate $\mathbf p_{N,h} (r; \x, \y) \leq Cr^{-h} (1 + r^{-1/2}|\y - \x - N^{-1}r (d_N,\ldots,d_N)|)^{-S}$, where $r \geq 1$, $h \in \{1,2,3,4,5\}$, and $S>0$ can be taken as large as desired. Here $C$ is uniform over $N,h,r,\x,\y$, but may depend on $S$. 
    \end{enumerate}
\end{ass}

\begin{remark}  Items \eqref{a1flow}--\eqref{a1consistency} are typically simple to verify directly in models of interest. Items \eqref{a1logasymptotic}--\eqref{a1heatkernel} may appear difficult to verify directly, but simpler alternative criteria will be given in Theorem \ref{check} just below.

    A natural question is why we have assumed a Markov representation of the moments in Assumption \ref{a1}, given that such a representation always exists by \eqref{eq:mrep}. The reason is that under the construction in \eqref{eq:mrep}, it is \textit{not} automatically true that $\z_N\ge 0$, which is a nontrivial restriction in Item \eqref{a1markov}. More importantly, the choice of Markov chain from \eqref{eq:mrep} may seem like the simplest or most canonical choice to use, however, it is not the unique choice of Markov chain satisfying the representation in Item \eqref{a1markov}. In some cases of interest, it will be necessary to tilt the Markov chain in \eqref{eq:mrep} to a different one in order to verify the assumptions. 
    Such a tilt can then yield a nonzero error term $\mathscr E^{N,h}_t$ as assumed above. See Subsection \ref{5.3} for such an example in the continuum.

    One may also ask if Assumption \ref{a1} \eqref{a1logasymptotic} is equivalent to a local limit theorem for $\mathbf P_{N,2}$ as $N\to \infty$. The answer is no: it is weaker than a local limit theorem since there is a time-average. We are not actually able to prove a local limit theorem or even a heat kernel lower-bound matching the upper-bound for most models of interest, but are nevertheless able to extract the precise limit of the time average using a Krylov-Bogoliubov trick as done in \cite[proofs of Theorems 4.16--4.17]{DP25}. 
\end{remark}

\begin{defn}\label{a2}
    With $H^N$ and $d_N$ as in Assumption \ref{a1}, define the \textit{macroscopic density field} $$\mathfrak H^N_{s,t}(\phi,\psi):= N^{-1}\sum_{(x,y)\in( \mathbb Z^2)^2} \phi(N^{-1/2} (x-d_Ns)) H^N_{Ns,Nt}(x,y) \psi(N^{-1/2} (y-d_Nt)).$$
    Here $\phi,\psi \in C_c(\mathbb R^2)$ and $s,t \in N^{-1}\mathbb Z$ with $s\le t.$
\end{defn}



\begin{defn}[Critical tuning coefficients]\label{def:lambda}
    For any finitely supported function $f : \Z^2\to \R$, we define a coefficient $\Lambda_N(f)$ as follows: if $J:= \mathrm{supp}(f)$ and $\mathscr S_N(x,y):= \sum_{s=1}^N \mathbf p_{N,\mathrm{dif}}(s;x,y)$, then we take $\Lambda_N(f)$ to be the principal eigenvalue of the linear map from $\mathbb R^J\to\mathbb R^J$ given by $T_N^f h(x) = \frac1{\log N}\cdot  \sum_{z\in J}\mathscr S_N(x,z) f(z)h(z).$
\end{defn}
Then we have the following theorem.

\begin{thm}[Main result---discrete case]\label{mr}
    Suppose that we have a family $\{H^N\}_{N\ge 1}$ of discrete stochastic flows on $\mathbb Z^2$ satisfying Assumption \ref{a1}. Henceforth fix a \textit{fine-tuning constant} $\vartheta\in \mathbb R$, which is arbitrary. We impose that $\z_N$ from Item \eqref{a1markov} of Assumption \ref{a1} satisfies the \textbf{critical scaling} asymptotic 
    \begin{equation}\label{crit_tuning}
        \Lambda_N\big(e^{\z_N} - 1\big) \log N = 1+\frac{\vartheta+o(1)}{\log N}
    \end{equation}
    as $N \to \infty$, where $\Lambda_N(f)$ is as in Definition \ref{def:lambda}.
    Then, the macroscopic field $\mathfrak H^N_{s,t}(\phi,\psi)$ from Definition \ref{a2} converges to the critical 2$d$ stochastic heat flow, in the sense of finite-dimensional distributions.
    More precisely, fix $m\in \mathbb N$ and $\phi_j, \psi_j\in C_c((\mathbb R^2)^2)$, for $1\le j \le m$. Consider any sequences $0\le s_j^N<t_j^N\le 1$ with $s_j^N,t_j^N\in N^{-1}\mathbb Z$, and assume $s_j^N\to s_j$ and $t^N_j \to t_j$, as $N\to\infty$ for $1\le j \le m$. Then one has the joint convergence in distribution
    \begin{equation*}
        \big(\mathfrak H^N_{s_j^N,t_j^N} (\phi_j, \psi_j)\big)_{j=1}^m \to \big( Z_{s_j,t_j}^\vartheta (\phi_j,\psi_j)\big)_{j=1}^m
    \end{equation*}
    as $N\to \infty$, where $Z_{s,t}^\vartheta$ is the SHF with parameter $\vartheta$ as in Definition \ref{def:axioms}.
\end{thm}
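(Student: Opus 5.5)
The plan is to use Tsai's axiomatic characterization (Definition \ref{def:axioms}) together with a tightness/subsequence argument. First, tightness: by Assumption \ref{a1}\eqref{a1markov} with $h=2$ and the critical tuning \eqref{crit_tuning}, second moments of $\mathfrak H^N_{s,t}(\phi,\psi)$ stay bounded uniformly in $N$. Hence the family of random measures on $(\R^2)^2$ defined by $(\phi,\psi)\mapsto \mathfrak H^N_{s,t}(\phi,\psi)$ (extended to $f\in C_c((\R^2)^2)$ in the obvious way) is tight in the vague topology for each fixed pair $(s,t)$, and jointly for any finite collection of pairs. I will pass to a subsequential limit along which the whole collection converges jointly for all rational times, and identify the limit as the SHF. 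Since the SHF law is unique, the full sequence converges in finite-dimensional distributions; the convergence along general $s_j^N\to s_j$, $t_j^N\to t_j$ will follow from an $L^2$ continuity estimate in time, again obtained from second moments via Assumption \ref{a1}\eqref{a1heatkernel}.

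Next I verify the three axioms for any limit point $Z$. Axiom (1), independence over disjoint intervals, is inherited from Definition \ref{1}\eqref{1indep}. For Axiom (3) I first show that $\mathbb E\prod_{j=1}^h\mathfrak H^N_{s,t}(\phi_j,\psi_j)\to\sum_m \mathfrak I^{(h)}_{\mathrm{SHF}}$ for $h\le 4$. Starting from the Markov representation, I write $e^{\sum\boldsymbol\eta_{N,h}}=\prod(1+(e^{\boldsymbol\eta}-1))$ and expand in a chaos series over collision times. Away from the non-pair set $T^{(h)}$, the potential splits into pairwise $\z_N$ terms. Because of the consistency in Assumption \ref{a1}\eqref{a1consistency}, during a stretch where only one pair is close the chain factorizes into a two-body chain times free walks. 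Summing the consecutive collisions of a fixed pair then gives a renewal series whose kernel, by Assumption \ref{a1}\eqref{a1logasymptotic} and Definition \ref{def:lambda}, is asymptotically $\Lambda_N(e^{\z_N}-1)\log N$ times a Dickman-type renewal density. Under \eqref{crit_tuning}, a standard renewal computation (as in \cite{CSZ_2d,DP25}) produces $4\pi G_\vartheta(t)\,\g(t/2,\cdot)$, that is, $U^\vartheta_\infty$. The Krylov–Bogoliubov averaging replaces the stationary measure on the collision support by $\nu_\infty$. The free stretches converge to heat kernels by the invariance principle, Assumption \ref{a1}\eqref{a1invarianceprinciple}, with domination supplied by Assumption \ref{a1}\eqref{a1heatkernel}. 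I also need to show that time spent in $T^{(h)}$ contributes $o(1)$, using $\|\boldsymbol\eta_{N,h}\|\le C/\log N$ together with heat kernel bounds: triple collisions in 2d are non-logarithmic, so they carry an extra $1/\log N$ factor. Because the number of $m$-fold switches can be controlled summably, dominated convergence over $m$ is available. Moments up to order 5 are assumed, and the uniform bound on fifth moments gives the uniform integrability needed to pass the fourth moments to the limit, which yields Axiom (3).

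Axiom (2) comes from the linearity \ref{1}\eqref{1linear}. I will show that $\mathfrak H^N_{s,u}(f)$ minus the mollified composition of $\mathfrak H^N_{t,u}$ and $\mathfrak H^N_{s,t}$ has small $L^2$ norm, uniformly in $N$, as $\epsilon\to0$, and then pass to the limit. This reduces to a second-moment computation with an inserted mollifier at the intermediate time $t$. Using the $h=2$ and $h=4$ moment formulas, the error is expressed through limiting $\mathfrak I$ coefficients, which vanish as $\epsilon\to 0$ exactly as in Tsai's verification for the mollified SHE.

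I expect the main obstacle to be the uniform-in-$N$ control of the chaos expansion for $h=3,4$ (and $5$). Two things are needed there: showing non-pair collision regions are negligible, and obtaining summable bounds over $m$ given only the time-averaged asymptotic \eqref{a1logasymptotic} rather than a local limit theorem. For this I would first try a resolvent/Laplace-transform approach in time, as in \cite{GQT}: bound the operators in weighted $\ell^p$ norms, using the heat kernel bound for the free parts and the $1/\log N$ smallness of the renewal kernel's deviation.
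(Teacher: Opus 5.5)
Your plan matches the paper's proof. It has the same overall architecture: tightness of the nonnegative fields, then identification of limit points through Tsai's characterization. The moment axiom comes from the chaos/renewal expansion, built from principal-eigenvalue tilting, Dickman convergence, negligibility of non-pair collisions, exponentially damped operator bounds for summability in $m$, and fifth moments for uniform integrability. The semigroup axiom comes from a second-moment computation with an inserted mollifier.

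The differences are minor. The paper gets tightness from first moments alone. It needs only the $h=2$ moments for the semigroup axiom, because the two time increments are independent. It handles the moving times $s_j^N,t_j^N$ by proving moment convergence along such sequences and putting the limit times into the countable index set, rather than through a uniform-in-$N$ time-continuity estimate. That estimate, in any case, would need the renewal/Dickman bounds and could not come from the heat kernel bound alone.
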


One can think of \eqref{crit_tuning} as a critical tuning of the model parameters, generalizing \eqref{crit1}. The core idea for why \eqref{crit_tuning} is the correct scaling to observe the SHF is actually quite brief, and can be found in Sections \ref{s:2.1}--\ref{s:2.2}. 

One might ask whether it is easy to actually check the conditions of Assumption \ref{a1}, in particular Items \eqref{a1logasymptotic}--\eqref{a1heatkernel}. This was a problem studied in \cite[Section 4]{DP25}. It turns out \eqref{a1logasymptotic}--\eqref{a1heatkernel} automatically hold under reasonable conditions on the stochastic flow.

\begin{thm}[Simple criteria to verify Assumption \ref{a1}]\label{check}
    Suppose that $H^N$ is any sequence of directed linear stochastic flows on $\mathbb Z^2$ for which Items \eqref{a1flow}, \eqref{a1markov}, and \eqref{a1consistency} of Assumption \ref{a1} hold.  
    Assume:
    \begin{enumerate} 
    \item $\sup_N \mathbb E\big[ \big( \sum_{x\in \mathbb Z^2} e^{\epsilon |x|}H^N_{0,1}(0,x)\big)^5\big]<\infty$ for some $\epsilon>0$. 
    \item For all $h=1,2,3,4,5$, the limit of the time-one transition kernel
    \begin{equation}
        \mathbf p_{\infty,h} (\mathbf x,\mathbf y) := \lim_{N\to \infty} \mathbb E\bigg[ \prod_{j=1}^h H^N_{0,1} (x_j,y_j)\bigg]\label{limit_N}
    \end{equation}
    exists for each $\x,\y\in(\mathbb Z^2)^h$. Assume furthermore that the convergence is uniform in total variation norm: $\lim_{N\to\infty} \sup_{\x} \big\|\mathbb E\big[ \prod_{j=1}^h H^N_{0,1} (x_j,\bullet)\big] - \mathbf p_{\infty,h} (\x,\bullet) \big\|_{TV} =0.$
    \item \label{irred} The one-step transition kernel $\mathbf p_{\infty,h}$ defines an irreducible Markov chain on $(\mathbb Z^2)^h$, for all $h \in \{1,2,3,4,5\}$. 
    \item The covariance matrix of the measure $\mathbf p_{\infty,1} (0,\bullet)$ is the $2\times 2$ identity matrix.
    \end{enumerate}
    Then Items \eqref{a1logasymptotic}--\eqref{a1heatkernel} of Assumption \ref{a1} automatically hold, where $\nu_\infty$ is the invariant measure for the gap chain of $\mathbf p_{\infty,2}$, defined as the Markov chain on $\mathbb Z^2$ with one-step transition probability $\mathbf p_{\infty,\mathrm{dif}}(x,y) = \sum_{a\in \mathbb Z^2} \mathbf p_{\infty,2} ((x,0),(a,a-y)).$
    
    Furthermore, the result still holds under the weaker condition that $\lim_{N\to \infty} \|\boldsymbol{\eta}_{N,h}\|_{L^\infty} = 0$ (rather than $ \|\boldsymbol{\eta}_{N,h}\|_{L^\infty} \leq C/\log N$, as in Item \eqref{a1markov} of Assumption \ref{a1}).
\end{thm}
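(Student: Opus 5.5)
The plan is to reduce everything to the results of \cite[Section 4]{DP25}, which establish heat kernel bounds, invariance principles and time-averaged return asymptotics for Markov chains that are uniform-in-$N$ perturbations of a fixed irreducible chain with exponential moments. The first step is to identify the relevant chains. The representation in Item \eqref{a1markov} says that $\mathbf p_{N,h}$ differs from the canonical kernel $\mathbb E[\prod_j H^N_{0,1}(x_j,\bullet)]$ only through the potential $\boldsymbol\eta_{N,h}$ and the error $\mathscr E^{N,h}$. Both tend to zero uniformly (this is exactly why the weaker hypothesis $\|\boldsymbol\eta_{N,h}\|_{L^\infty}\to0$ suffices). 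From this I would deduce that $\sup_{\x}\|\mathbf p_{N,h}(\x,\bullet)-\mathbf p_{\infty,h}(\x,\bullet)\|_{TV}\to0$. Along the way, H\"older's inequality and assumption (1) give uniform exponential moments of the one-step jumps of every $\mathbf P_{N,h}$.

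Next I would use Item \eqref{a1consistency}. Outside an $A$-neighborhood of the collision set, $\mathbf p_{N,h}$ factorizes into products of lower-order kernels, and ultimately into $\mu_N^{\otimes h}$ with $\mu_N=\mathbb E[H^N_{0,1}(0,\bullet)]$. So each $\mathbf P_{N,h}$ is a product of i.i.d.\ walks perturbed only on a finite-width neighborhood of the diagonals. The drift $d_N/N$ is the mean of $\mu_N$, and by assumption (4) and total-variation convergence the covariance tends to the identity.

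With these facts, the three target items come as follows.

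\begin{itemize}
\item \textbf{Heat kernel bound (Item \eqref{a1heatkernel}).} Irreducibility of $\mathbf p_{\infty,h}$ and closeness in total variation give a uniform Doeblin-type lower bound on a finite set. Combined with exponential tails, this lets one run the perturbed-random-walk Nash/Gaussian-tail argument of \cite{DP25}, with constants uniform in $N$.
\item \textbf{Invariance principle (Item \eqref{a1invarianceprinciple}).} Tightness follows from the moment bounds. For identification, the heat kernel bound shows that the time spent in the collision region up to time $Nt$ is $O(\log N)=o(N)$ for $h\le5$ in $d=2$. Hence a martingale CLT for the centered process gives Brownian motion with identity covariance.
\item \textbf{Log asymptotics (Item \eqref{a1logasymptotic}).} This is the main obstacle. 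I would use the Krylov--Bogoliubov trick: the empirical measures $(\log N)^{-1}\sum_{r\le N}\mathbf p_{N,\mathrm{dif}}(r;x,\bullet)$ are tight on finite sets by the heat kernel bound, and any subsequential limit is invariant for $\mathbf p_{\infty,\mathrm{dif}}$. This uses TV convergence, together with the fact that one application of the kernel changes the sum by $O(1)$. By irreducibility the invariant measure is unique up to scaling. The scale is pinned by comparison with the free gap walk far from the origin, where the return sums of $\mu\ast\bar\mu$ give the $\frac1{4\pi}$ normalization. The subtlety is establishing both uniqueness and the normalization for a nonpositive-recurrent chain on $\mathbb Z^2$. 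I would handle this with a ratio-limit argument: compare the Green function at two sites via a harmonic decomposition at the first visit.
\end{itemize}

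The second, spatially weighted statement in Item \eqref{a1logasymptotic} I would obtain by splitting at the last macroscopic time. For the well-separated start, the first approach to distance $O(1)$ happens at a time distributed per the invariance principle, with density $\mathbf g(2s,x_1-x_2)$. After that, the local-time accumulation $\nu_\infty(y)\log N$ over a window of length $o(N)$ gives the factor $4\pi\nu_\infty(y)\,ds$, and the center of mass diffuses as $\mathbf g(2s,\cdot)$ evaluated against $\phi$.
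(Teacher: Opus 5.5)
Your overall strategy is the paper's. Theorem~\ref{check} is proved by checking the structural conditions of Assumption~\ref{B} and then invoking \cite[Section 4]{DP25} through Theorem~\ref{A}. Those conditions are TV convergence, uniform exponential moments via H\"older, exponentially small TV distance to $\mu_N^{\otimes h}$ off the diagonals, translation invariance, irreducibility, and projectivity of the limits. Your transfer of TV convergence from $\mathbb E[\prod_j H^N_{0,1}(x_j,\bullet)]$ to $\mathbf p_{N,h}$, using $\|\boldsymbol\eta_{N,h}\|_\infty+\|\mathscr E^{N,h}\|_\infty\to0$, is more careful than the paper's treatment.

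The gap is at the hand-off: you never verify the hypothesis those results require. The one nontrivial check in the paper is the pairwise one-step repulsion \eqref{delta_irr},
\[
\liminf_{N\to\infty}\inf_{\x}\sum_{\y}|y_i-y_j-(x_i-x_j)|\,\mathbf p_{N,h}(\x,\y)>0,
\]
which makes $\mathbf P_{N,h}$ a $\delta$-repulsive SRI chain.
\begin{itemize}
\item For $h=2$ it follows from irreducibility by compactness. Far from the diagonal, the expected gap displacement is within $Ce^{-\alpha|x|/2}$ of $c_N=\iint|u-v|\,\mu_N(du)\mu_N(dv)\to c_\infty>0$. So a failing sequence of gaps stays bounded, and its limit would be an absorbing state of $\mathbf p_{\infty,\mathrm{dif}}$.
\item For $h\ge3$ a pair gap is not Markov, so irreducibility of $\mathbf p_{\infty,h}$ does not give the one-step bound. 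The paper uses that the limits are projective: each pair marginal of $\mathbf p_{\infty,h}$ is $\mathbf p_{\infty,2}$.
\end{itemize}
Projectivity is not a hypothesis of Theorem~\ref{check} and must be proved. The paper does this via H\"older and $\|\sum_y H^N_{0,1}(0,y)-1\|_{L^h}\to0$, which follows from $\boldsymbol\eta_{N,k}\to0$ and $\mathscr E^{N,k}\to0$. Your multi-step Doeblin escape bound is a different non-stickiness property. It might support a heat-kernel argument of your own, but saying it ``lets one run the argument of \cite{DP25}'' does not verify that argument's input.

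Of the pieces you re-derive, the martingale-CLT proof of the invariance principle is sound once the heat kernel bound is available. Two others need repair.
\begin{itemize}
\item \emph{Uniqueness in the Krylov--Bogoliubov step.} Irreducibility alone does not make the limit unique up to scaling, since transient chains can carry non-proportional invariant measures. You need recurrence of $\mathbf p_{\infty,\mathrm{dif}}$. This is true, because the chain agrees with a recurrent planar walk off a finite set, but it has to be argued.
\item \emph{The spatially weighted limit.} The first approach of a well-separated pair to distance $O(1)$ is \emph{not} governed by the invariance principle. Planar Brownian motion does not hit points, and this event has probability $\Theta(1/\log N)$. Taken literally, your density $\mathbf g(2s,x_1-x_2)$ times local time $\nu_\infty(y)\log N$ diverges. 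The correct sub-density is $\frac{4\pi}{\log N}\mathbf g(2s,x_1-x_2)$. To obtain it, decompose at the first entrance into the collision region. Before that time the chain coincides with the free gap walk of increment law $\mu_N*\mu_N(-\bullet)$, so you can compare with that walk's Green function $\frac1{4\pi}\log N$. This comparison needs a local limit theorem uniform in $N$.
\end{itemize}
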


Note that the conditions of Theorem \ref{check} are local, in the sense that they only require one to know the properties of the stochastic flow $H^N$ on the time interval $[0,1]$. In contrast, the global conditions in Assumption \ref{a1} probe macroscopic behavior of the Markov chains (for example, by invariance principles). Thus, one can think of Assumption \ref{a1} as the set of technical conditions that make the proof work, and Theorem \ref{check} as the practical way to check those conditions.

It follows from Assumption \ref{a1} \eqref{a1markov} that $\mathrm{Var} (\sum_y H_{0,1}(0,y)) \leq C/\log N$, and therefore
\begin{equation*}
    \bigg|1- \sum_{y_1,y_2} \mathbb E\big[ H^N_{0,1}(x_1,y_1)H^N_{0,1}(x_2,y_2)\big]\bigg| = \bigg|\mathrm{Cov}\Big(\sum_y H^N_{0,1}(x_1,y) , \sum_y H^N_{0,1}(x_2,y) \Big) \bigg|\leq \frac{C}{\log N}.
\end{equation*}
Thus it is automatically true that the limit on the right hand side of \eqref{limit_N} defines a Markov chain for $k=2$, i.e.\ that $\mathbf p_{\infty,2}(\x,\bullet)$ is a probability measure for all $\x$. One might then ask if Item \eqref{irred} of Theorem \ref{check} holds automatically, but this is not the case. Irreducibility of the limiting chain can fail even in relatively simple models, see \cite{DDP24} for a discussion of the Bernoulli-weighted RWRE.

\subsection{Results for continuum models}

Next, we state a continuous version of our results.

\begin{defn}\label{2}
     We allow $\mathbb I$ to be $\mathbb Z$ or $\mathbb R$ as desired, and all integrals on $\mathbb I$ should henceforth be understood with respect to counting or Lebesgue measure. A continuum directed \textit{linear stochastic flow on $\mathbb I^2$} is a family of random variables $ H_{s,t}(x,y)$ where $s,t\in \mathbb R$ with $s < t$, and $x,y\in\mathbb I^2$, satisfying the following assumptions: 
    \begin{enumerate}
        \item (Stationary increments) $H$ is nonnegative and jointly measurable in all variables, and strictly stationary in both variables: more precisely, $H_{s,t} \stackrel{d}{=} H_{s+h,t+h}$ for all $h\in \mathbb R$, and also $(H_{s,t}(x+a,y+a))_{x,y\in \mathbb I^2} \stackrel{d}{=} (H_{s,t}(x,y))_{x,y\in \mathbb I^2}$ for all $a\in \mathbb I^2$ and all $s,t.$

        \item (Independent time increments) $H_{s_j,t_j}$ are independent if $(s_j,t_j]$ are disjoint intervals.


        \item (Linear system/Chapman-Kolmogorov) We have $\int_{\mathbb I^2} H_{s,t} (x,y) H_{t,u}(y,z)dy = H_{s,u}(x,z)$ for all $x,z\in\mathbb Z^2$ and all real numbers $s<t<u$.

        \item (Normalization) We have that $\int_{\mathbb I^2} \mathbb E [ H_{0,t}(0,y)] dy =1$ for all $0<t<1.$
        \end{enumerate}
\end{defn}

At this point, one might ask if one has an automatic representation of the moments of such $H$ in terms of a diffusion process or similar, analogously to \eqref{eq:mrep} in the discrete setup. Rather than going into such technicalities, we simply assume such a representation holds. 

\begin{ass}[Assumptions for main result---continuous case]\label{a3}
    Assume we have a family $H^N$ of random processes satisfying the following:
    \begin{enumerate}


        \item\label{a3flow} Each $H^N$ is a continuum stochastic flow on $\mathbb I^2$, in the sense of Definition \ref{2}.
        
        \item (Markov representation of the moments)\label{a3markov} For each $N\in \mathbb N$ and $h \in \{1,2,3,4,5\}$ there exists a time-homogeneous Feller Markov process $\mathbf P_{N,h}^{\x}$ on $(\mathbb I^2)^h$, with continuous paths if $\mathbb I=\mathbb R,$ and the property that 
        if $t>0$ and $\psi\in C_c^\infty(\mathbb R^2)$ then
        \begin{equation*}
            \quad\quad\quad\quad \int_{(\mathbb I^2)^h} \mathbb E\bigg[ \prod_{j=1}^h H^N_{0,t}(x_j,y_j)\bigg] \prod_{j=1}^h \psi(y_j) d\y
            =\mathbf E_{N,h}^{(x_1,\ldots,x_h)} \bigg[ e^{\mathscr E^{N,h}_t+ \int_0^t {\boldsymbol{\eta}} _{N,h}(\mathbf R_r)dr } \cdot \prod_{j=1}^h \psi(R^j_t) \bigg], 
        \end{equation*}
         where $\mathbf R=(R^1,...,R^h)$ denotes the coordinate process on $(\mathbb I^2)^h$. Furthermore, the following conditions hold:
         \begin{itemize}
         \item ${\boldsymbol{\eta}}_{N,h}: (\mathbb I^2)^h \to \mathbb R$. 
        \item $\lim_{N\to\infty} \sup_{t,\x} \|\mathscr E^{N,h}_t\|_{L^\infty(\mathbf P^{\x}_{N,h})} =0.$ 
        \item $\boldsymbol{\eta}_{N,2}(x_1,x_2) = \z_N(x_1-x_2)$ where $\z_N:\mathbb I^2\to [0,\infty)$ is nonnegative and has uniformly compact support, meaning that the support of $\z_N$ is contained in a ball of radius $A$ in $\mathbb I^2,$ for some $A>0$ not depending on $N$. If $\mathbb I= \mathbb R$, we impose the stronger constraint that $\z_N(x) = \beta_N^2 \z(x)$ for some fixed function $\z \in C_c(\mathbb R^2)$, and some sequence $\beta_N>0$.
        \item We impose that $\|\boldsymbol{\eta}_{N,h}\|_{L^\infty((\mathbb I^2)^h)}\leq C/\log N,$ and furthermore that \begin{equation*}\boldsymbol{\eta}_{N,h}(\x) = \sum_{1\le i<j\le h} \z_N(x_i-x_j) \hspace{0.2 in} \text{ for all } \hspace{0.2 in} \x= (x_1,...,x_h)\notin T^{(h)},\end{equation*} where $T^{(h)}\subset (\mathbb Z^2)^h$ is the set of \emph{non-pair collisions}, consisting of all vectors $\x=(x_1,...,x_h)$ for which $|x_i-x_j|\leq A$ and $|x_k-x_\ell|\leq A$ for some coordinate indices $i<j$ and $k<\ell$ with $\{i,j\}\ne \{k,\ell\}.$
        \end{itemize}
        \item (Translation invariance and consistency of Markov chains) \label{a3consistency} Assume that the Markov chains $\mathbf P_{N,h}$ satisfy the following.
        \begin{itemize}\item $\mathbf p_{N,1}(t,x,y) = \mathbb E[H^N_{0,t} (0,y-x)]$, so that $\mathbf P_{N,1}$ is a L\'evy process. Equivalently $\mathscr E^{N,1} = 0$ and $\boldsymbol{\eta}_{N,1}=0.$
        \item The Markov chains $\mathbf P^{\x}_{N,h}$ admit transition densities $\mathbf p_{N,h}(t;\x,\y)$ that are continuous in all three variables on $(0,\infty) \times (\mathbb I^2)^h\times (\mathbb I^2)^h$.
        \item $\mathbf p_{N,2}(t;(x,x'),(y,y')) = \mathbf p_{N,2} (t;(x+a,x'+a), (y+a,y'+a))$ for all $x,x',y,y',a\in\mathbb I^2$ and $t>0.$ Thus, the difference $R^1-R^2$ under $\mathbf P^{\x}_{N,2}$ is Markov in its own filtration, with density $\mathbf p_{N,\mathrm{dif}}(t;x,y) := \int_{\mathbb I^2} \mathbf p_{N,2} (t;(x,0), (a,a-y))da$.
        \item The Markov chain $\mathbf P_{N,h}$ is symmetric in its coordinates. Let $\mathbf L_{N,h}$ denote the Markov generator of $\mathbf P_{N,h}$. The domain of $\mathbf L_{N,h}$ should contain $C_c^\infty((\mathbb I^2)^h)$, and for all $f\in C_c^\infty((\mathbb I^2)^h)$ we impose that 
        \begin{align*}\hspace{0.7 in}\mathbf L_{N,h} f & (x_1,\dots,x_m,x_1',\dots,x_{h-m}') \\&= \Big(\mathbf L_{N,m} \otimes \mathrm{Id} + \mathrm{ Id} \otimes \mathbf L_{N,h-m} \Big) f(x_1,\dots,x_m,x_1',\dots,x_{h-m}'),
        \end{align*}
        whenever $\min\{ |x_i-x_j'| : i\le m$ and $ j\le h-m\}>A$, where $A$ is as in Item \eqref{a3markov} and thus independent of $f,N,x_i,x_i'$.
        \end{itemize}
        \item\label{a3logasymptotic} (Logarithmic asymptotics of the return probability) \label{a3log} 
        Assume the following two limits:
        \begin{itemize}
        \item There exists a positive function $\nu_\infty$ on $\mathbb I^2$ such that for $x,y\in\mathbb I^2$, \begin{align*}\int_1^N \mathbf p_{N,\mathrm{dif}}(t;x,y)dt &= \nu_\infty(y) \log(N) +o(\log N), 
        \end{align*}
        as $N\to\infty,$ where the error term is uniform over $x,y$ lying in any compact set. 
        \item Furthermore, define a sequence of vectors in $\mathbb R^2$ by 
        \begin{equation*}d_N:= N\int_{ \mathbb I^2} y\mathbb E[H_{0,1}^N(0,y)]dy.
        \end{equation*}If $\x_N$ is any sequence in $(\mathbb I^2)^2$ for which $N^{-1/2} \x_N\to \x=(x_1,x_2)$ with $x_1\ne x_2$ then for all $y\in \mathbb I^2$,  $t>0$, and smooth bounded $\phi:\mathbb R^2\to \mathbb R$, we impose that 
        \begin{align*}\lim_{N\to \infty} \int_1^{Nt} &\int_{\mathbb I^2}\phi(N^{-1/2} (a-N^{-1}d_Nr)) \mathbf p_{N,2} (r,x_N^1,x_N^2,a,a+y)da\; dr \\&= 4 \pi \cdot  \nu_\infty(y) \int_0^t \mathbf g(2s,x_1-x_2) \bigg(\int_{\mathbb R^2} \mathbf g(2s,z) \phi(\tfrac12 (z+x_1+x_2)) dz \bigg)ds, \end{align*}
        where the limit is uniform over $y$ in any compact set, and $\mathbf g(s,a)=(2\pi s)^{-1} e^{-|a|^2/2s}$ is the standard heat kernel on $\mathbb R^2$. \end{itemize}
        

        \item (Invariance principle)\label{a3inv} Whenever $N^{-1/2} (x_1^N,\ldots,x_h^N)\to (a_1,\ldots,a_h)$, the process $N^{-1/2}(R_{Nt}^1-d_Nt,\ldots,R^h_{Nt}-d_Nt)$ under $\mathbf P_{N,h}^{(x_1^N,\ldots,x_h^N)}$ satisfies an invariance principle to Brownian motion in $(\mathbb R^2)^h$ started from $(a_1,\ldots,a_h)$ as $N\to\infty$, in the topology of $C([0,T], (\mathbb R^2)^h)$.\footnote{Note that if $\mathbb I=\mathbb R$, we do not need the vectors $d_N$ as in the discrete case, because we can automatically subtract $d_N$ from each coordinate of the Markov process, and the resulting process will still be Markov. In other words, if $\mathbb I=\mathbb R$, then by shearing if necessary, we may assume without loss of generality that $\int_{\mathbb R^2} y \mathbb E[H^N_{0,1}(0,y)]dy=0.$}

        \item (Heat kernel upper-
        bounds)\label{a3heatkernel} The transition densities of the Markov chains $\mathbf P_{N,h}$ satisfy the long-time heat kernel estimate $ \mathbf p_{N,h} (t; \x, \y) \leq Ct^{-h} (1 + t^{-1/2}|\y-\x- N^{-1}t(d_N,\ldots,d_N)|)^{-S}$ for $t \ge 1$, $h \in \{1,2,3,4,5\}$, and any $S>0$. Here $C$ is uniform over $N,h,t,\x,\y$, but may depend on $S$.

        If $\mathbb I=\mathbb R,$ we impose a separate short-time estimate $\sup_N \mathbf p_{N,h} (t; \x, \y) \leq Ct^{-h} e^{-\alpha |\x-\y|/\sqrt t},$ uniform over $0<t<1$ and all $\x,\y.$ 
    \end{enumerate}
\end{ass}


\begin{defn}\label{a4}
    With $H^N$ and $d_N$ as in Assumption \ref{a3}, define the \textit{macroscopic density field} $$\mathfrak H^N_{s,t}(\phi,\psi):= N^{-1}\int_{(\mathbb I^2)^2} \phi(N^{-1/2}( x-d_Ns)) H^N_{Ns,Nt}(x,y) \psi(N^{-1/2}( y-d_Nt))dxdy.$$
    Here $\phi,\psi \in C_c(\mathbb R^2)$ and $s,t \in \mathbb R$ with $s\le t.$
\end{defn}

\begin{defn} \label{def:lambdacont}
    For any compactly supported continuous function $f:\mathbb I^2\to \mathbb R$, we define the coefficient $\Lambda_N(f)$ as follows: if $J:= \mathrm{supp}(f)$ and $\mathscr S_N(x,y):= \int_0^N \mathbf p_{N,\mathrm{dif}}(s;x,y)ds$ then we define $\Lambda_N(f)$ to be the principal eigenvalue of the integral operator from $L^2(J) \to L^2(J)$ given by $T_N^f h(x) = \frac1{\log N}\cdot  \int_J\mathscr S_N(x,z) f(z)h(z)dz.$ 
\end{defn}

\begin{thm}[Main result---continuous case]\label{mr2}
    Henceforth fix a \textit{fine-tuning constant} $\vartheta\in \mathbb R$, which is arbitrary. With $\{H^N\}_{N \geq 1}$ as in Assumption \ref{a3}, we impose that $\z_N$ from Item \eqref{a3markov} satisfies the following \textbf{critical scaling} asymptotic\footnote{In the continuous case, note that we simply have $\z_N$ as opposed to $e^{\z_N}-1,$ which is a consequence of the continuum time index as opposed to the discrete one, and will be explained.} as $N\to\infty$: 
    \begin{equation}\label{crit_tuning2}\Lambda_N\big(\z_N\big) \log N = 1+\frac{\vartheta+o(1)}{\log N},
    \end{equation}
    where $\Lambda_N$ is as in Definition \ref{def:lambdacont}.
    Then the macroscopic field $\mathfrak H^N_{s,t}(\phi,\psi)$ from Definition \ref{a4} converges to the critical $2d$ stochastic heat flow in the sense of finite-dimensional distributions. More precisely, the 
    convergence is in the following sense. Fix $m\in \mathbb N$ and $\phi_j, \psi_j\in C_c((\mathbb R^2)^2)$, for $1\le j \le m$. Consider any sequences $0\le s_j^N<t_j^N\le 1$ with $s_j^N,t_j^N\in \mathbb R_+$, and assume $s_j^N\to s_j$ and $t^N_j \to t_j$, as $N\to\infty$ for $1\le j \le m$. Then one has the joint convergence in distribution
    \begin{equation*}
        \big(\mathfrak H^N_{s_j^N,t_j^N} (\phi_j, \psi_j)\big)_{j=1}^m \to \big( Z_{s_j,t_j}^\vartheta (\phi_j,\psi_j)\big)_{j=1}^m
    \end{equation*}
    as $N\to \infty$, where $Z_{s,t}^\vartheta$ is the SHF with parameter $\vartheta$.    
\end{thm}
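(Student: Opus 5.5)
The plan is to reduce Theorem \ref{mr2} to the axiomatic characterization of Definition \ref{def:axioms}, following the strategy of \cite{Tsa24}, by the same route as the discrete Theorem \ref{mr}. There are three main steps.

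\textbf{Step 1: tightness and independence.} I first show that the family $(\mathfrak H^N_{s,t}(\phi,\psi))$ is tight. Tightness follows from uniform bounds on second moments. These come from the Markov representation in Assumption \ref{a3}\eqref{a3markov} with $h=2$, combined with the heat kernel upper bounds in Item \eqref{a3heatkernel} and the fact that $\z_N=\beta_N^2\z$ is of order $1/\log N$.

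Axiom (1), independence over disjoint time intervals, is inherited directly from the independent time increments in Definition \ref{2}, since $H^N_{Ns,Nt}$ is measurable with respect to the increments on $(Ns,Nt]$.

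\textbf{Step 2: convergence of moments (the main obstacle).} I must show that for $h\le 4$ the $h$-th mixed moment of $\mathfrak H^N$ converges to $\sum_m \mathfrak I^{(h)}_{\mathrm{SHF}}$. I also need uniform boundedness of the fifth moment, which gives uniform integrability of the fourth.

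The expansion proceeds as follows.

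1. Write the moment as $\mathbf E_{N,h}[e^{\int_0^{Nt}\boldsymbol\eta_{N,h}(\mathbf R_r)dr}\prod\psi]$. Drop $\mathscr E^{N,h}$, which is uniformly small.
2. Expand the exponential as a chaos series over successive pair-collision episodes $J_1,\dots,J_m$.
3. On the complement of the non-pair collision set $T^{(h)}$, the potential splits as $\sum_{i<j}\z_N(x_i-x_j)$. By the consistency in Item \eqref{a3consistency}, between episodes the generator factorizes, so uncollided coordinates move independently.
4. Within one episode with pair $J_i$, the difference process evolves under $\mathbf P_{N,\mathrm{dif}}$ with killing/creation rate $\z_N$. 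Summing the resulting geometric series, the two-body kernel is governed by the operator $T^{\z_N}_N$ of Definition \ref{def:lambdacont}.
5. Project onto the principal eigenfunction. The critical condition \eqref{crit_tuning2} yields renewal weights $(\Lambda_N\log N)^k$ that, after inserting the logarithmic asymptotics in Item \eqref{a3logasymptotic}, converge to the Dickman-type kernel $4\pi G_\vartheta(v-u)\,\mathbf g(\tfrac{v-u}{2},\cdot)$. This follows the renewal argument of \cite{CSZ_2d} and \cite{GQT}.
6. The second bullet of Item \eqref{a3logasymptotic} identifies the spatial profile of the pair at the start and end of each episode. The invariance principle in Item \eqref{a3inv} identifies the free heat kernels $\mathbf g^{\otimes h}$ between episodes.

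The hardest part is controlling, uniformly in $N$, the remainders:

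1. Contributions from non-pair collisions $T^{(h)}$, where no structure is assumed.
2. Contributions from non-principal eigen-directions.
3. Short-time regions near the endpoints.

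I plan to handle these with a Laplace-transform resolvent bound in the spirit of \cite{GQT}, using the short- and long-time heat kernel estimates. The key estimate is that three-body near-coincidences cost an extra factor $1/\log N$ relative to the pair series, and that the series is summable with doubly-exponential but $N$-uniform constants. This is what gives the uniform fifth-moment bound.

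\textbf{Step 3: the flow property and conclusion.} For axiom (2), I use the Chapman--Kolmogorov identity in Definition \ref{2} at the prelimit level. I then show that the $L^2$ difference between the mollified product and $\mathfrak H^N_{s,u}(f)$ vanishes as $\epsilon\to0$ uniformly in $N$, computing this difference via second moments from Step 2.

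Any subsequential limit then satisfies the three axioms, so its law is that of the SHF by uniqueness in \cite{Tsa24}. This gives the claimed convergence of finite-dimensional distributions.
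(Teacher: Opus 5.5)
Your architecture is the paper's: tightness, independence, moment convergence through a pair-collision renewal expansion with a Dickman limit, damped operator bounds for non-pair and remainder terms, the flow property via second moments, and Tsai's uniqueness theorem. As written, however, it is the proof of the discrete Theorem \ref{mr}, and it misses the obstruction specific to $\mathbb I=\mathbb R$.

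\textbf{The main gap is in your step 5.} The renewal increment between consecutive collisions with gaps $x\to y$ has density proportional to $\mathbf p_{N,2}(r;(x,0),(a,a-y))$ on $0<r<N$, normalized by $\mathscr S_N(x,y)=\int_0^N\mathbf p_{N,\mathrm{dif}}(r;x,y)\,dr$. In continuous space the short-time part $\int_0^1\mathbf p_{N,\mathrm{dif}}(r;x,y)\,dr$ grows like $\log(1/|x-y|)$, and $\mathscr S_N(x,x)=\infty$. So $\mathscr S_N(x,y)/\log N\to\nu_\infty(y)$ is not uniform over the support of $\z$.

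Consequently the Dickman invariance principle of Theorem \ref{convergence_dickman} is false in the continuum, although in the discrete case it holds uniformly over gap sequences in a finite set. A step with $|x_{j-1}-x_j|$ tiny contributes strictly less than its $1/\log N$ share of the L\'evy exponent, and nothing when $x_{j-1}=x_j$. If a non-negligible fraction of the $s\log N$ steps are of this type, the limiting clock is not the Dickman subordinator.

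You cannot escape this by cutting the time integrals at $r=1$, which is all that Item \eqref{a3logasymptotic} controls. That cut changes $\Lambda_N(\z_N)\log N$ by $O(1/\log N)$ and so shifts $\vartheta$ by $O(1)$.

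The missing idea is quantitative control of the random gap sequence. The paper proves Dickman convergence only for gap sequences with no bulk consecutive pair closer than $(\log N)^{-100}$ (Theorem \ref{dick2}, condition \eqref{100}). It then shows this condition holds with probability $1-O((\log N)^{-49})$ under the tilted measure $\tilde P_{N,k}$. That measure is a one-dimensional Gibbs measure with singular nearest-neighbour weight $e^{\mathfrak h_N}$, which if anything favours close consecutive gaps. The proof couples the resulting time-inhomogeneous chain to a stationary homogeneous chain with an $N$-independent spectral gap (Theorem \ref{dick3}, Corollary \ref{dick4}).

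Relatedly, your ``project onto the principal eigenfunction'' step needs compact-operator spectral theory for a log-singular kernel, and it is not uniform in $f$. This is exactly where the hypothesis $\z_N=\beta_N^2\z$ with $\z$ fixed enters (Lemma \ref{2.1'}, Corollary \ref{rie2}), and your proposal never uses it.

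\textbf{A second point to fix concerns the episode decomposition.} Suppose your episodes $J_1,\dots,J_m$ are defined by the collision pattern of the path. In continuous time, a diffusion touching the boundary of $\{|x_i-x_j|\le A\}$ crosses it infinitely often, so the number of pattern changes on $[0,t]$ is infinite and the decomposition is ill-defined. The paper handles this as follows:
\begin{enumerate}
\item it adds a hysteresis, registering a new pattern only after the path has moved distance $1$ from the previous collision set;
\item it defines the constrained kernels as regular conditional probabilities;
\item it observes that after the contraction identities only kernels with Lebesgue densities survive.
\end{enumerate}
If instead you index episodes by chaos points, your item 4 is incorrect as stated. Between two chaos points of the same pair the other particles may come close, so the gap does not simply evolve under $\mathbf P_{N,\mathrm{dif}}$. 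You then need the comparison of $\mathbf p_{N,4}$ with $\mathbf p_{N,2}\otimes\mathbf p_{N,1}^{\otimes 2}$ carried out in Propositions \ref{prop:flem4} and \ref{u-u}.
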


One might ask why we did not allow $\mathbb R^2$ as the spatial domain in the discrete-time setup of Theorem \ref{mr}. The answer is just to keep things simple. In reality, replacing $\mathbb Z^2$ by $\mathbb R^2$ in Assumption \ref{a1} is perfectly allowable as long as we impose similar continuity conditions on the sample paths of $H^N$ and the additive functionals $\z_N$. In fact, any locally compact additive subgroup of $\mathbb R^2$ would work just as well, and this was the point of view taken in \cite{DP25}.

Next, we will state a result analogous to Theorem \ref{check} about how to actually check the conditions of Assumption \ref{a3}. 

\begin{thm}[Simple criteria to verify Assumption \ref{a3}]\label{check2}
    Suppose that $H^N$ is any sequence of continuum directed linear stochastic flows on $\mathbb I^2$ for which Items \eqref{a3flow}, \eqref{a3markov}, and \eqref{a3consistency} of Assumption \ref{a3} hold true.  
    Assume furthermore that
    \begin{enumerate}
        \item\label{check2exptail} Assume that $\sup_N \mathbb E\big[ \big(\int_{\mathbb I^2} e^{\epsilon |x|} H^N_{0,1}(0,x)dx\big)^5\big]<\infty$ for some $\epsilon>0$. If $\mathbb I=\mathbb R$, assume the slightly stronger estimate over short times: $\mathbb E [ \prod_{j=1}^5 H_{0,t}(x_j,y_j) ] \leq Ct^{-5} e^{-\frac{\alpha |\x-\y|}{\sqrt t}} $ uniformly over $\x,\y \in (\mathbb Z^2)^5$ and $0<t<1$. 
        
        \item\label{check2hptlim} For all $h=1,2,3,4,5$, the limit of the time-one transition kernel \begin{equation*}\mathbf p_{\infty,h} (\mathbf x,\mathbf y) := \lim_{N\to \infty} \mathbb E\bigg[ \prod_{j=1}^h H^N_{0,1} (x_j,y_j)\bigg]
        \end{equation*} exists for each $\x,\y\in(\mathbb I^2)^h$. Assume furthermore that the convergence is uniform in total variation norm: $\lim_{N\to\infty} \sup_{\x} \big\|\mathbb E\big[ \prod_{j=1}^h H^N_{0,1} (x_j,\bullet)\big] - \mathbf p_{\infty,h} (\x,\bullet) \big\|_{TV} =0.$
        
        \item\label{check2irred} The one-step transition density $\mathbf p_{\infty,h}$ defines a topologically irreducible Markov chain on $(\mathbb I^2)^h$, for all $h \in \{1,2,3,4,5\}$.  
    
        \item\label{check2cov} The covariance matrix of the measure $\mathbf p_{\infty,1} (0,\bullet)$ is the $2\times 2$ identity matrix.
    \end{enumerate}

    Then, Items \eqref{a3log}--\eqref{a3heatkernel} of Assumption \ref{a3} automatically hold true, where $\nu_\infty$ is the invariant measure for the gap chain of $\mathbf p_{\infty,2}$, defined as the Markov chain on $\mathbb I^2$ with time-one transition probability $\mathbf p_{\infty,\mathrm{dif}}(x,y) = \int_{ \mathbb I^2} \mathbf p_{\infty,2} ((x,0),(a,a-y))da.$

    The result still holds true under the weaker hypothesis that $\lim_{N\to \infty} \|\boldsymbol{\eta}_{N,h}\|_{L^\infty} = 0$ (as opposed to $ \|\boldsymbol{\eta}_{N,h}\|_{L^\infty} \leq \frac{C}{\log N}$ in Item \eqref{a3markov} of Assumption \ref{a3}).
\end{thm}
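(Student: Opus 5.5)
The plan is to reduce Theorem \ref{check2} to the discrete-time criterion, Theorem \ref{check}, using the time-one skeleton, and then to treat the extra continuous-time features separately: the integrals in $t$, and the short-time bounds when $\mathbb I=\mathbb R$.

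\emph{Step 1: the skeleton chains.} For each $N,h$, consider the skeleton Markov chain on $(\mathbb I^2)^h$ whose one-step kernel is the time-one kernel $\mathbf p_{N,h}(1;\x,\cdot)$. The discrete sampling $H^N_{n,n+1}$ of the flow is a linear stochastic flow on $\mathbb I^2$ in the sense of Definition \ref{1}, with $\mathbb Z^2$ replaced by $\mathbb I^2$; the paper remarks that this replacement is harmless, as in \cite{DP25}. By the Markov representation in Item \eqref{a3markov} and the bound $\|\boldsymbol\eta_{N,h}\|_{L^\infty}\to 0$, the moment kernels $\mathbb E[\prod_j H^N_{0,1}(x_j,\cdot)]$ and $\mathbf p_{N,h}(1;\x,\cdot)$ differ in total variation by $o(1)$, uniformly in $\x$. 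So the hypotheses \eqref{check2exptail}--\eqref{check2cov} transfer to the skeleton chains with the same limit $\mathbf p_{\infty,h}$. Topological irreducibility replaces irreducibility, and the exponential moment yields the required tails.

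We then run the argument of Theorem \ref{check}, i.e.\ \cite[Section 4]{DP25}, for these chains. It gives the following, all for integer times:
\begin{itemize}
\item heat kernel upper bounds for $\mathbf p_{N,h}(n;\x,\y)$;
\item the invariance principle;
\item the Krylov--Bogoliubov identification of time averages $\sum_{n\le N}\mathbf p_{N,\mathrm{dif}}(n;x,y)\approx\nu_\infty(y)\log N$, where $\nu_\infty$ is invariant for $\mathbf p_{\infty,\mathrm{dif}}$.
\end{itemize}
The uniformity on compacts and the density statements (rather than statements for measures) use the continuity of the transition densities from Item \eqref{a3consistency}.

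\emph{Step 2: from integer to real times.} For $t\in[n,n+1]$, Chapman--Kolmogorov gives
\[
\mathbf p_{N,h}(t;\x,\y)=\int \mathbf p_{N,h}(t-n;\x,\z)\,\mathbf p_{N,h}(n;\z,\y)\,d\z .
\]
The invariance principle extends directly, because fluctuations within a unit time interval are tight by the exponential moment bound.

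For the heat kernel bound, the argument depends on $\mathbb I$. If $\mathbb I=\mathbb Z$, the factor $\mathbf p_{N,h}(t-n;\x,\z)$ has total mass $1$ and exponential tails, so the integer-time bound transfers. If $\mathbb I=\mathbb R$, the short-time hypothesis in \eqref{check2exptail} together with the Markov representation gives $\sup_N\mathbf p_{N,h}(t;\x,\y)\le Ct^{-h}e^{-\alpha|\x-\y|/\sqrt t}$ for $t<1$. Splitting the convolution at $t/2$ then gives the long-time bound at non-integer times.

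For the logarithmic asymptotic we write
\[
\int_1^N \mathbf p_{N,\mathrm{dif}}(t;x,y)\,dt=\int_0^1\sum_{n} \mathbf p_{N,\mathrm{dif}}(n+u;x,y)\,du .
\]
Each $u$-slice is a skeleton average started from $\mathbf p_{N,\mathrm{dif}}(u;x,\cdot)$. Since the discrete asymptotic is uniform over starting points in compacts and the tails are controlled, each slice is $\nu_\infty(y)\log N+o(\log N)$. Here one must check that $\nu_\infty$ is the same for every $u$; it is, because the invariant measure of $\mathbf p_{\infty,\mathrm{dif}}$ is also invariant for the continuous-time limiting semigroup, by uniqueness under irreducibility. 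The second, well-separated limit follows similarly. The sum over $r\le Nt$ has time-scale contributions of order one, which are captured by the invariance principle together with the decoupling of the chain away from the diagonal (Item \eqref{a3consistency}). Local time near the diagonal is weighted by $\nu_\infty$, exactly as in the discrete proof.

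\emph{Main obstacle.} I expect the hardest step to be the uniform-on-compacts identification of the time average, where one needs equicontinuity of $y\mapsto \mathbf p_{N,\mathrm{dif}}(t;x,y)$ in order to pass from measure statements to pointwise density statements. I would first try to obtain this from the short-time Gaussian bound via Chapman--Kolmogorov smoothing, i.e.\ by writing each density at time $t\ge1$ as a convolution with a time-$1/2$ kernel that is uniformly continuous. If that fails, I would impose it through the continuity in Item \eqref{a3consistency} combined with the total variation convergence hypothesis \eqref{check2hptlim}.
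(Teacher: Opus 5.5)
Your overall route is compatible with the paper's. The paper packages the time-one kernels into Assumption \ref{B} and applies the SRI-chain results of \cite[Section 4]{DP25} through Theorem \ref{A}, and your Chapman--Kolmogorov/strong-Feller fix for the ``main obstacle'' is essentially what it does. However, Step 1 has a genuine gap.

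When you ``run the argument of Theorem \ref{check}'' on the time-one chains, one input of that argument is not supplied by hypotheses (1)--(4). This is the approximate independence away from collisions, Item \eqref{Birred} of Assumption \ref{B}: the time-one kernel $\mathbf p_{N,h}(1;\x,\bullet)$ must be within total variation $Ce^{-\alpha\min_{i<j}|x_i-x_j|}$ of the law of $\x$ plus $h$ independent $\mu_N$-distributed increments, where $\mu_N=\mathbf p_{N,1}(1;0,\bullet)$. In the discrete proof this is trivial, because Item \eqref{a1consistency} of Assumption \ref{a1} gives an \emph{exact} factorization of the one-step kernel once the particles are more than $A$ apart. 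The time-one kernel of a continuous-time chain has no such property. Item \eqref{a3consistency} of Assumption \ref{a3} only splits the \emph{generator} on the region where cross-distances exceed $A$, and during $[0,1]$ particles can enter the interaction range. So the skeleton neither factorizes nor satisfies the hypotheses of Theorem \ref{check}. Without a quantitative substitute, you cannot invoke the invariance principle, the heat kernel bound, or the Krylov--Bogoliubov identification of $\nu_\infty$ from \cite{DP25}.

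The missing idea is the coupling the paper uses. Apply the generator splitting repeatedly: $\mathbf P^{\x}_{N,h}$ then coincides in law with $h$ independent copies of the L\'evy process $\mathbf P_{N,1}$, started from $x_1,\dots,x_h$, up to the first time two coordinates come within distance $A$. By hypothesis \eqref{check2exptail}, the L\'evy process has exponential moments at time $1$, so its running maximum displacement on $[0,1]$ has exponential tails uniformly in $N$. A union bound over pairs then shows that this first time is at most $1$ with probability at most $C\sum_{i<j}e^{-\alpha|x_i-x_j|}$, which gives the required total-variation estimate.

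A smaller remark concerns your slicing argument. You justify that every $u$-slice produces the same $\nu_\infty$ by appealing to a limiting continuous-time semigroup, which the hypotheses do not provide, since only time-one limits are assumed. This step is unnecessary. The kernel $\mathbf p_{N,\mathrm{dif}}(n+u;x,y)=\int\mathbf p_{N,\mathrm{dif}}(u;x,z)\,\mathbf p_{N,\mathrm{dif}}(n;z,y)\,dz$ is an $n$-step skeleton kernel ending at $y$. Hence the time-one $\nu_\infty$ appears automatically once the discrete asymptotic holds uniformly over starting points in compacts with controlled tails.
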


\subsection{Examples of directed stochastic flows}
The following models fall under Definition \ref{1}, and therefore into the domain of attraction of the SHF. Full verifications of convergence are given later in Section \ref{sec:examples}: we begin now with brief high-level discussion.

\subsubsection{Directed polymer in a random environment}\label{ss2:dpre}

The quintessential example of a linear stochastic flow is the directed polymer in a random environment. Fix $\beta > 0$. We define the point-to-point partition function $H^\beta_{s,t}(x,y)$ of the directed polymer $(s,x) \to (t,y)$ by 
\begin{equation}\label{eq:DPRE}\tag{DPRE}
    H^\beta_{t,t+1}(x,y) = e^{\beta \omega_{t,x} - \log \mathbb E[e^{\beta \omega_{0,0}}]} p(y - x).
\end{equation}
Here $\{\omega_{t,x}\}_{(t,x) \in \mathbb Z \times \mathbb Z^d}$ is an identical random environment that may have finite range spatial correlations and $p$ is a deterministic probability measure on $\mathbb Z^2$. In this case, note that the Markov chain $\mathbf Q_{[h]}$ from \eqref{eq:mrep} reduces to $h$ independent random walks in $\mathbb Z^2$, each with increment law $p(\bullet)$.

 It is clear that Assumption \ref{a1} is satisfied if $p$ has exponential moments, is aperiodic, and centered with covariance matrix $\mathrm{Id}_{2\times 2}$. The fundamental result of \cite{CSZ_2d} shows that if the $\omega_{t,x}$ are i.i.d.\ and of unit variance, then the convergence of Theorem \ref{mr} holds, and in this case \eqref{crit_tuning} reduces to 
\begin{equation}\label{crit1}\frac{\beta_N^2}{4\pi} = \frac{1}{\log N} +  \frac{\pi^{1/2} \kappa_3 }{\log^{3/2} N} +\frac{\vartheta - c_{\mathrm{RW}}
-(\frac32 \pi -\frac12 \kappa_3^2 +\frac7{12} \kappa_4 )
}{\log^2 N},\quad\quad \vartheta \in \mathbb R,
\end{equation}
        where $\kappa_k$ is the $k$th cumulant of $\omega_{0,0}$ and $c_{\mathrm{RW}}$ is a constant that depends on the increment law $p(\cdot) $, for example $c_{\mathrm{RW}} = \gamma -\log 16 $ for the symmetric nearest-neighbor walk.


It turns out that if one tunes $\beta_N$ other than on the knife's edge of \eqref{crit1}, one either obtains a Gaussian or a trivial limit \cite{MR4945087, BCT}. Thus the emergence of anomalous behavior is even more sensitive to fine tuning than the one-dimensional power law $\beta_N = \vartheta N^{-1/4}$ discussed above. The sharp transition at $d = 2$, with its logarithmic attenuation, can be seen as a critical phenomenon, as explained in the next section.

In the general case of directed polymers with spatially correlated environments $\omega$, convergence to the SHF is treated in Section \ref{5.1}.

\subsubsection{Stochastic heat equation with multiplicative noise}\label{ss2:SHE}

In recent decades, a significant amount of effort has gone into studying singular stochastic PDEs \cite{Hai14, GIP15, GJ14, GP18,duch}. These are PDEs forced by random noises so irregular that they lack classical solutions. We consider the following regularization
\begin{equation}\label{eq:mSHE1}
    \partial_t u^{(N)}(t,x) = \tfrac12 \Delta u^{(N)} (t,x)+ \beta_N \cdot u^{(N)}(t,x) \cdot N^2 \xi(N^2 t, N x).
\end{equation}
of the multiplicative stochastic heat equation \eqref{eq:SHE} discussed earlier.
Here $t \ge 0$, $x \in \R^2,$ and $\xi$ is a centered Gaussian forcing term satisfying $\mathbb E[ \xi(t,x) \xi(s,y)] = J(x-y)\delta(t-s)$ for $J$ smooth and compactly supported, with $\int_{\R^2} J(x)dx = 1$. 
As explained earlier, the one-dimensional analog of this stochastic PDE is a popular model in $d=1$ due to its relation to the KPZ equation through the Cole-Hopf transform \cite{KPZ,Cor12,CS20,Quastel_2015}.

The stochastic PDE \eqref{eq:mSHE1} naturally defines a family of \emph{propagators}: write $H_{s,t}^N(x,y)$ for the solution to the above equation, started from Dirac initial condition at $(s,x)$, then evaluated at $(t,y)$, but coupled to the same realization of $\xi$ for different $s,t,x,y$. It turns out these satisfy the semigroup property
\begin{equation*}
    \int_{\mathbb R^2} H^N_{s,t}(x,y) H^N_{t,u}(y,z)dy  = H^N_{t,u}(x,z),
\end{equation*}
and so $H^N$ is a flow in the sense of Definition \ref{2}.

Take the mollification scale to 0: that is, send $N \to \infty$ in the display above. In Section \ref{sse:bg} we asked whether, if $\beta_N$ is tuned like \eqref{crit1}, the propagators of the regularized stochastic PDE above converge to the same SHF described in the previous example. This is motivated by the natural interpretation of \eqref{eq:mSHE1} as a continuum analog of \eqref{eq:DPRE} through the Feynman-Kac formula \cite{MR3444835}.
The works surveyed in Section \ref{sse:bg}
showed the answer is more subtle: this time the correct tuning is
\begin{equation}\label{crit2}
    \frac{\beta_N^2}{4\pi} = \frac{1}{\log N}+\frac{\vartheta+\big[\log 4 - 2\int_{(\mathbb R^2)^2}J(x)J(y)\log|x-y| dxdy\big]}{\log^2 N},\quad\quad \vartheta \in \mathbb R,
\end{equation}
for which one does obtain the same limiting field $Z^{\vartheta}_{s,t}.$

\eqref{eq:mSHE1} is a nice example of a singular stochastic PDE that is scaling-critical in the regularity structures sense \cite{Hai14}, yet still has a nontrivial (nonzero and non-Gaussian) limit. Scaling criticality in this context means that if a solution to the stochastic PDE were actually to exist after the covariance kernel $J$ is replaced by a Dirac mass, then that solution would be a scale-invariant object in $d = 2$. In other words, its distribution would be preserved under the mapping $\mathcal L(t,x) \mapsto \mathcal L(\epsilon^2 t, \epsilon x)$.
Thus, in some sense, the SHF provides a notion of solution of the SHE in $d=2$ driven with space-time white noise. However, it is not scale-invariant, and it cannot be represented as the solution to any Gaussian-driven stochastic PDE \cite{gt25, cd25}. 

We revisit the convergence of the SHE to the SHF in Section \ref{sss:molSHE}.

\subsubsection{Random walk in a random environment}
Random walks in random environments (RWRE) provide an example of other models which are now well understood to exhibit KPZ-type behavior in $d=1$ \cite{ldt2, ldt, bld, hass23,hass2023b, hass2024extreme, Par24, Par-,Par+, hass2025universal, hass2025superuniversalbehavioroutliersdiffusing}, but their critical behavior in higher dimensions remains less understood. The physics work \cite{ldt} predicted that in $d=2$, the critical scaling for the RWRE is at a location of order $x \sim t/\sqrt{\log t}$, at which point KPZ fluctuations should emerge. See also the recent work \cite{ark2025universalfluctuationstailprobability} for numerical simulations of the critical scaling in $d=2$. The \textit{subcritical} $d=2$ behavior for RWRE models was proved to exhibit Gaussian fluctuations in \cite{DP25}, and that work motivated the study of the critical window in the present work.

To define this model in $d=2$, we consider a family $\{K_t(x,\bullet)\}_{(t,x) \in \Z \times \Z^2}$ of random probability measures on $\mathbb Z^2$. We impose that the $K_t(x,\bullet)$ are independent for distinct $t \in \Z$, of finite-range dependence in $x\in\mathbb Z$, and strictly stationary in $x$. 

Notice that we do not necessarily assume that $K_t(x,\bullet)$ and $ K_t(x',\bullet)$ are independent for $x \ne x'$. Let $\omega = \{K_t(x,\bullet)\}_{(t,x) \in \Z \times \Z^2}$ and let $P^\omega_{s,t}$ be the transition density of the associated RWRE determined by $\omega$: in other words, $P^\omega_{s,t}(x,y)$ is the probability of the step $(s,x) \to (t,y)$, with increment probabilities defined in a Markovian way by the kernels $K_t$. 

Notice that this model also satisfies the flow property
\begin{equation*}
    \sum_{y\in \mathbb Z^2} P^\omega_{s,t}(x,y) P^\omega_{t,u}(y,z) = P^\omega_{s,u}(x,z), \quad\quad s<t<u.
\end{equation*}
Although it appears that there is no parameter $\beta$ that can be tuned in order to yield similar results as the previous two examples, it turns out that such a parameter can indeed be introduced by an exponential tilt.
The main question of interest for this model is then whether or not it is possible to obtain the SHF as a limit 
when one tunes $\beta=\beta_N$ according to some precise asymptotic similar to \eqref{crit1}--\eqref{crit2}. This question will be answered in Section \ref{5.4}.

\subsubsection{Diffusion in a random potential}

Among continuum models, the prime example of a stochastic flow is diffusion of many particles in a turbulent medium. Pure diffusion of a particle will be modeled by a standard Brownian motion $W=(W_t)_{t\ge 0}$ taking values in $\mathbb R^d$. To model the turbulence of the medium in which the particle is diffusing, we will introduce a centered $\mathbb R^d$-valued Gaussian vector field $V=(V_1,\ldots,V_d)$ which is completely independent of the Brownian motion $W$ and spatio-temporally homogeneous in the sense that $\mathbb E[ V_i(t,x) V_j(s,y)] = \delta(t-s) \cdot C_{ij}(x-y)$, say with $C$ smooth and compactly supported and $C(0) = \frac12 \mathrm{Id}_{d\times d}.$ In particular, $V$ is white in time but smooth in space. Thus, the equation for the diffusion of a single particle in the turbulent environment $V$ will be modeled by the stochastic differential equation (SDE) 
\begin{equation}\label{diffu0}
    dX_t = V(t,X_t)dt + dW_t,\quad\quad X_0=0,
\end{equation}
where $x\in \mathbb R^d$. Now, we will be primarily be interested in the behavior of a large number of independent particles $X^1,\ldots,X^N$ diffusing in the turbulent medium $V$. More precisely, these particles $X^i$ solve \eqref{diffu0} with different Brownian motions $W^i$ that are independent of one another, but coupled to the same realization of the drift field $V$. This model of diffusion in turbulent environments is heavily inspired by work of \cite{Kr, GH, LR, lejan, war, ew6, dom}.

We wish to model the time-evolution of the probability density function of the diffusion $X$ solving \eqref{diffu0}, given the randomness of the field $V$. This leads to the Fokker-Planck equation associated to the above diffusion: the conservative stochastic PDE given by 
\begin{equation}\partial_t u(t,x) = \tfrac12 \Delta u(t,x) - \mathrm{div} \big( u(t,x) V(t,x) \big),\quad\quad t\ge 0,\; x \in \R^d, \label{fth0}
\end{equation}
where the product of $u$ and $V$ should be interpreted in the Stratonovich sense. For the equivalent It\^o equation, $\frac12\Delta$ would be replaced by $\Delta$, because the correction is $\frac12\Delta u$: this may be verified using the fact that $C(0) = \frac12  \mathrm{Id}_{d\times d}$. 
\cite{kun94b} shows that \eqref{diffu0} actually makes sense and that a solution exists for almost every realization of $V$, and also that \eqref{fth0} admits a Duhamel solution that almost surely describes the evolution of the density of $   X_t$ started from 0. 

The stochastic PDE \eqref{fth0} is linear as a function of its initial data, and therefore it determines a family of propagators $u_{s,t}(x,y)$ exactly in the same way as described for the stochastic PDE \eqref{eq:mSHE1} in Section \ref{ss2:SHE}. Consequently, the flow property holds as well for this model, and the question again becomes if we can observe the SHF from this model. This question will be answered in Section \ref{5.5}. 

\subsubsection{Multiplicative stochastic PDEs with derivative noise}

The following model is motivated by work of \cite{Hai24} on renormalization in the presence of variance blowup, which was also studied in \cite{Par-} using a different moment-based approach. Those results were in $d=1$, and we now focus on $d=2$ which is more subtle.

Fix some $p\in\mathbb N$. Consider the It\^o stochastic PDE 
    \begin{equation}\label{dshe0}\tag{dSHE}\partial_t Z^\epsilon(t,y) = \tfrac12 \Delta_y Z^\epsilon(t,y) + \sqrt{\beta_\epsilon}  \epsilon^{p} \cdot Z^\epsilon(t,y) \cdot (-\Delta_y)^{p/2}\eta^\epsilon(t,y),\qquad t\ge 0, y\in \mathbb R^2,
    \end{equation}where $\eta^\epsilon:= \varphi_\epsilon *\eta$ for a standard Gaussian space-time white noise $\eta$ on $\mathbb R\times\mathbb R^2$ and where the convolution is in space only, and where $  \varphi_\epsilon (y):= \epsilon^{-2} \varphi(\epsilon^{-1}y)$ for some fixed nonnegative even continuous $\varphi \in C_c^\infty(\mathbb R^2)$ with $\int_{\mathbb R^2}\varphi=1$.

    Note that this stochastic PDE is similar to Example 2, but unlike that example, there is a fractional Laplacian around the noise term, which creates additional significant complications. The papers \cite{Hai24, Par-} considered the $d=1$ version of this equation and prove convergence to the KPZ equation with $\beta_\epsilon=\epsilon^{-1/2}$. We will see that the situation is quite different in $d=2$. 

    Set $\epsilon= \epsilon_N=N^{-1/2}$. Let $H^N_{s,t}(x,y):=\epsilon^2 \cdot Z_{\epsilon^2 s,\epsilon^2 t}^\epsilon(\epsilon x,\epsilon  y)$ where $Z^\epsilon_{s,t}$ are the propagators for \eqref{dshe0}. Then the question again becomes that if one tunes $\beta_N$ according to some precise asymptotic similar to \eqref{crit1}--\eqref{crit2}, is it possible to obtain the SHF as a limit? We will answer this question in Section \ref{5.3}.

\subsection{Proof ideas and open questions}

We prove Theorems \ref{mr} and \ref{mr2} using Tsai's axiomatic characterization of the stochastic heat flow \cite{Tsa24}, which is described in Definition \ref{def:axioms}. The bulk of the work goes into proving that the limiting field satisfies Axiom \eqref{item:moments} which states that the first four moments of the random field in question match with the first four moments of the stochastic heat flow. To study the second, third and fourth moments, we use the approach developed in \cite{MR4017119, CSZ_2d} to interpret the second moment as the \emph{renewal function} for a certain space-time renewal process. 

The main technical challenge in this paper is as follows.
In \cite{CSZ_2d}, the $h$\textsuperscript{th} moment is described in terms of the interactions between $h$ independent random walks on $\Z^2$. In their work, the Markov chain $\mathbf P_{N,h}$ in Assumption \ref{a1} \eqref{a1markov} is given by the law of $h$ independent random walks and does not actually depend on $N$.
By contrast, this work deals with the more general Markov chains $\mathbf P_{N,h}$. In Section \ref{sec:renewal} we study generalized renewal processes defined in terms of the transition probabilities of $\mathbf P_{N,2}$ and show that these renewal processes have the same asymptotic behavior as the ones studied in \cite{MR4017119, CSZ_2d}.
The approach to analyzing the fourth moment is then to express the higher moments in terms of second moment terms which encode pairwise interactions between particles. Here, additional challenges arise from the fact that we cannot just decompose the Markov chain $\mathbf P_{N,4}$ into something of the form $\mathbf P_{N,2} \otimes \mathbf P_{N,1}^{\otimes 2}$.  Instead, we need to carefully control the interactions between all particles. This is the subject of Section \ref{sec:highermoments}. To address this, we introduce a decomposition of our process in terms of certain stopping times that dictate exactly when the \emph{collision pattern} (a partition of the set $\{1,2,3,4\}$ representing which particles are interacting with one another at a given time) of the system changes.

Next, we discuss open problems. While our result applies to a broad class of \textit{linear stochastic flows} in the sense that the Chapman-Kolmogorov identity holds in the prelimit, it is not the case that every model that is conjectured to converge to the SHF satisfies such a linearity property. Of particular interest is the \textit{nonlinear multiplicative stochastic heat equation} given by $$\partial_t u(t,x) = \tfrac12 \Delta u(t,x) + \beta\cdot \sigma(u(t,x))\xi(t,x) ,\quad\quad t \ge 0, x\in \mathbb R^2,$$ where $\xi$ is a Gaussian noise that is white in time but smooth in space, and $\sigma:\mathbb R_+\to\mathbb R_+$ is smooth, nondecreasing, and globally Lipschitz with $\sigma(0)=0$ and $\sigma'(0)=1$. It is clear that this equation does not give rise to a model satisfying Chapman-Kolmogorov, and in fact it is not always true that one can obtain a four-parameter field from this model in the first place. Nonetheless, this stochastic PDE still defines a nonlinear stochastic flow $\Phi^\beta_{s,t}$ on the space of the initial data, satisfying $\Phi^\beta_{t,u}\circ \Phi^\beta_{s,t} = \Phi^\beta_{s,u}$ for $s<t<u$, and therefore it is still sensible to ask if this entire stochastic flow $(\Phi^\beta_{s,t})_{s<t}$ converges (in some suitable topology) to the SHF $(Z_{s,t}^\vartheta)_{s<t}$ under some logarithmic tuning of $\beta$ and diffusive scaling of space-time. In the \textit{subcritical regime}, strong indications in the direction of SHF-type fluctuations have been done by \cite{Tao, DG25}.


Finally we remark on the condition that ${\boldsymbol{\zeta}}_N \ge 0$ in Assumption \ref{a1} \eqref{a1markov} and likewise in Assumption  \ref{a3} \eqref{a3markov}, which may seem overly strict. There are easy examples of models for which ${\boldsymbol{\zeta}}_N$ does take negative values. For example, consider a Gaussian polymer whose driving noise has a covariance kernel that is not nonnegative everywhere (but whose integral over $\mathbb R^2$ is strictly positive). Another generalization could be removing the compact support assumption on ${\boldsymbol{\zeta}}_N$, which corresponds to models driven by noise with long-range spatial correlations. Our proof techniques cannot easily deal with such examples, as we heavily use the positivity and uniform compact support of $\z_N$. Thus we make the following conjecture.

\begin{conj}\label{mr3}
    The conclusions of Theorems \ref{mr} and \ref{mr2} remain true if we only assume that $|{\boldsymbol{\zeta}}_N(x)| \leq Ce^{-\alpha |x|}$ for some $C,\alpha>0$, instead of the stronger conditions above (${\boldsymbol{\zeta}}_N\ge 0,$ and ${\boldsymbol{\zeta}}_N$ has uniformly compact support).
\end{conj}

In this case, the operators from Definitions \ref{def:lambda} and \ref{def:lambdacont} still make sense on $L^2(\mathbb Z^2)$ or $L^2(\mathbb R^2),$ and we conjecture that the critical-tuning coefficients $\Lambda_N({\boldsymbol{\zeta}})$ still take the same form. The recent work of \cite{sur26} provides strong evidence of this conjecture, as the author studies a model for which $\z_N$ is not required to be compactly supported. In that work, even exponential decay is not necessary, and the truly optimal condition could be much weaker than that, see \cite[(1.3)]{sur26}.

\medskip
\noindent\textbf{Outline.} In Section \ref{sec:renewal}, we study the second moment of the field $\mathfrak H^N$ from Definition \ref{a2}. In Section \ref{sec:highermoments}, we study the higher moments of the field $\mathfrak H^N.$ In Section \ref{sec:mainResults}, we prove the main results (Theorems \ref{mr}--\ref{check} and Theorems \ref{mr2}--\ref{check2}). In Section \ref{sec:examples}, we introduce several models that satisfy the hypotheses (Assumptions \ref{a1} or \ref{a2}), and prove that there exists a scaling of parameters in the model with which one obtains the SHF as a continuum limit.

\medskip
\noindent\textbf{Notation.} Several distinct probability measures will be considered throughout the paper. The first is $\PN{h}^\x$, which denotes the measure on the path space $\big((\mathbb Z^2)^h\big)^{\mathbb Z_{\ge 0}}$ given by the Markov chain from Assumption \ref{a1} \eqref{a1markov}, started at state $\x\in (\mathbb Z^2)^h$. The symbol $\P$ denotes the measure on the underlying probability space containing the stochastic flows $H^N$ appearing in Assumption \ref{a1} \eqref{a1flow}, which represent the underlying environment. The unadorned measure $P$ and its tilted counterpart $\tilde P$ will be used for the randomness coming from the renewal variables $T^{x\to y}_{(N,j)}$ and $A^{x\to y}_{(N,j)}$, defined in Section \ref{s:2.1}. These three sources of randomness are distinct, and should not be confused.

Script letters such as $\mathscr Q_N$ and $\mathscr U_N$ will always denote operators on Banach spaces that encode the multiplicative functionals $e^{\boldsymbol{\eta}_{N,h}}$ and the Markov chains $\PN{h}^\x$ appearing in Assumptions \ref{a1} and \ref{a3}.
The standard heat kernel on $(\mathbb R^2)^h$ will be denoted in this paper by $\g^{\otimes h}(t,\x)= (2\pi t)^{-h} \exp(-|\x|^2/2t)$, where $t>0$ and $\x\in (\mathbb R^2)^h.$
The special function $G_\vartheta(t) := \int_0^\infty e^{(\vartheta -\gamma)s} \frac{st^{s-1}}{\Gamma(s+1)} ds$ for $t\in [0,1]$ and $\vartheta\in\mathbb R$ will also appear in many important formulas throughout the paper. 

We use Landau's asymptotic notation: by $f(n) = O(g(n))$ we mean there exists a constant $C > 0$ such that $|f(n)| \leq Cg(n)$ for all $n \in \N$, and by $f(n) = o(g(n))$ we mean $f(n)/g(n) \to 0$ as $n \to \infty$. We clarify dependencies of the implicit constant explicitly in the text.


Many of the random processes $z \mapsto X_z$ we consider, and functions thereof, are indexed by the integers or integer lattice.
When we take scaling limits, it will be convenient to write expressions such as $X_{tz}$ for $t \in \R$. These should be understood as $X_{\floor{tz}}$, where $\floor{tz}$ is the integer point nearest and no greater than $tz$ in any coordinate.

\medskip
\noindent\textbf{Acknowledgements.}
We thank Francesco Caravenna, Ivan Corwin, Yu Gu, Jeremy Quastel, Rongfeng Sun, Li-Cheng Tsai, and Nikos Zygouras for instructive discussions that helped to guide this project. We also thank Alex Dunlap, Simon Gabriel, Carl Mueller, Sudheesh Surendranath, and Jaeyun Yi for helpful conversations about the stochastic heat flow. 
This material is based upon work supported by the National Science Foundation under Grant No.\ DMS-2424139, while HD was in residence at the Simons Laufer Mathematical Sciences Institute in Berkeley, California during the Fall 2025 semester.
JH was partially supported by the National Science Foundation Graduate
Research Fellowship under Grant No.\ DGE-2234660.
HD and SP acknowledge the support of the Banff International Research Station (BIRS) for providing us with the opportunity for initial discussions about the project during the workshop ``Emerging Synergies between Stochastic Analysis and Statistical Mechanics'' in October 2025. HD, JH, and SP also acknowledge the support of the Simons Center for Geometry and Physics (SCGP) at Stony Brook University for hosting the authors for three weeks during the workshop  ``Algebraic Methods in Probability'' in July 2026.

\section{The renewal structure and second moment} \label{sec:renewal}

Here we introduce a renewal structure that will be used throughout the remainder of the paper. We will assume that Assumption \ref{a1} is satisfied throughout the discussion, so that we are in the discrete-time case. 

Throughout all of the proofs in Sections \ref{sec:renewal}--\ref{sec:mainResults}, we are going to assume that $d_N\equiv 0$ in Assumption \ref{a1} \eqref{a1invarianceprinciple}, because it adds unnecessary notation without illuminating any of the central ideas. It should be implicitly understood and that in the case of general $d_N\ne 0$, one actually needs to subtract $d_Nt$ from all coordinates of the diffusively rescaled Markov process $\mathbf P_{N,h}$ in the all of the appropriate places (and nothing else is actually impacted by the presence of $d_N$). 
\subsection{The renewal process}\label{s:2.1}

Fix $N \in \mathbb N$, and recall $\mathbf p_{N,\mathrm{dif}}$ from Assumption \ref{a1} Item \eqref{a1logasymptotic}. For distinct $j \in \N$ and $x,y \in \Z^2$, we will introduce independent random variables $T^{x\to y}_{(N,j)}$ defined by
\begin{equation}\label{s_n}
    P(T_{(N,j)}^{x \to y} = r) = \frac{\pN{\mathrm{dif}}(r;x,y) \ind_{\{1 \le r \le N\}}}{\mathscr S_N(x,y)}, \quad\quad
    \mathscr S_N(x,y) := \sum_{s=1}^N \pN{\mathrm{dif}}(s;x,y).
\end{equation}
Let $f \ge 0$ be a finitely supported function on $\mathbb Z^2$. Then we may define the quantity
\begin{align}\label{eq:timerenewal}
   \notag\mathcal E_{x_0}^f(N):&= \sum_{k=1}^\infty \sum_{0< r_1< \cdots <r_k \leq N} \EN{\mathrm{dif}}^{x_0} [ f(Z_{r_1}) \cdots f(Z_{r_k})] \\&=\notag  \sum_{k=1}^\infty \sum_{0< r_1< \cdots <r_k \leq N} \sum_{x_1,\ldots,x_k\in\mathbb Z^2}\prod_{j=1}^k f(x_j)\pN{\mathrm{dif}} (r_j-r_{j-1} , x_{j-1},x_j) \\&= \sum_{k=1}^\infty  \sum_{x_1,\ldots,x_k\in\mathbb Z^2}\prod_{j=1}^k f(x_j) \mathscr S_N(x_{j-1},x_j) \cdot P\big( T_{(N,1)}^{x_0\to x_1} + \cdots + T_{(N,k)}^{x_{k-1}\to x_k} \le N\big),
\end{align}
where $(Z_r)_r$ is a realization of the Markov process $\PN{\mathrm{dif}}^{x_0}$ from Assumption \ref{a1} \eqref{a1logasymptotic} and $\EN{\mathrm{dif}}^{x_0}$ is the associated expectation operator.

One should think of $\mathcal E_{x_0}^f(N)$ as a type of renewal function, as in \cite{MR4017119,CSZ_2d}. In Section \ref{sss:spacetimerenewal}, we will define a more general renewal function which depends on both the space and time variables, and therefore supersedes this one. Nevertheless, it will be instructive to analyze this simpler case first.

By Assumption \ref{a1} \eqref{a1logasymptotic}, we know that $\mathscr S_N(x_{j-1},x_j)$ behaves like $\nu_\infty(x_j)\log N + o(\log N)$ uniformly over all $x_j$ in a finite set, but it turns out that we cannot make this replacement if we want to calculate the precise asymptotic of \eqref{eq:timerenewal} as $N\to \infty$: in other words, we need the lower-order behavior of the Green's function, which is related to the sequence $\Lambda_N$ from Theorem \ref{a1}. However, the following trick allows us to avoid making any assumptions on this lower-order behavior. Write \eqref{eq:timerenewal} as 
\begin{align}\label{eq0}
    \notag \mathcal E^f_{x_0}(N)
    &= \sum_{k=1}^\infty (\log N)^k \sum_{x_1,\ldots,x_k\in\mathbb Z^2}  \prod_{j=1}^k f(x_j) \nu_\infty(x_j) \frac{\mathscr S_N(x_{j-1},x_j)}{\nu_\infty(x_j)\log N} P( T_{(N,1)}^{x_0\to x_1} + \cdots+ T_{(N,k)}^{x_{k-1}\to x_k} \le N) \\
    &= \sum_{k=1}^\infty  (\log N)^k \big( \nu_\infty(f) \big)^k E \Bigg[ \prod_{j=1}^k \frac{\mathscr S_N(X_{j-1},X_j)}{\nu_\infty(X_j)\log N} \ind_{\big\{ T_{(N,1)}^{x_0\to X_1} + \cdots + T_{(N,k)}^{X_{k-1} \to X_k} \le N\big\}}\Bigg],
\end{align}
where under the measure $E$, the random variables $X_j$ are sampled i.i.d.\ (and independently of the renewal variables $T^{x\to y}_{(N,j)}$) from the distribution
\begin{equation}\label{mu_f}
    \mu(x) = \frac{f(x) \nu_\infty(x)}{\nu_\infty(f)}
\end{equation}
on $\mathbb Z^2$, which has a finite support since $f$ does.
Note that the expectation $E$ is now also taken with respect to this additional marginal, and this should be understood of the measure $P$ going forward.
Next, we split the expectation as 
\begin{align}
    \notag E \Bigg[ \prod_{j=1}^k &\frac{\mathscr S_N(X_{j-1},X_j)}{\nu_\infty(X_j)\log N} \ind_{\big\{ T_{(N,1)}^{x_0\to X_1} + \dots + T_{(N,k)}^{X_{k-1} \to X_k} \le N\big\}}\Bigg] \\
    &= E \Bigg[ \prod_{j=1}^k \frac{\mathscr S_N(X_{j-1},X_j)}{\nu_\infty(X_j)\log N} \Bigg] \cdot  \tilde P_{N,k}\Big( T_{(N,1)}^{x_0\to X_1} + \dots + T_{(N,k)}^{X_{k-1} \to X_k} \le N \Big), \label{eq1}
\end{align}
where the tilted measure $\tilde P_{N,k}$ is defined by the Radon-Nikodym derivative proportional to that product.\footnote{ Under the tilted measures $\tilde P_{N,k}$, it is still true that $(X_j)_{j=0}^k$ forms a time-inhomogeneous Markov chain with explicit transition kernel (see \eqref{inhom}). This is not so important in the discrete-time case of Theorem \ref{mr}, but it will be important for the continuous-time case of Theorem \ref{mr2}.} 
We then define the function 
\begin{equation}\label{h_n}
    \mathfrak h_N(x,y):=\log \Big( \tfrac{\mathscr S_N(x,y)}{\nu_\infty(y) \log N}\Big),
\end{equation}
so that if $X_0 = x_0$ (\ref{eq:timerenewal}) can be rewritten as 
\begin{equation}
    \mathcal E^f_{x_0}(N) = \sum_{k=1}^\infty \big(\log N \cdot \nu_\infty(f) \big)^k \cdot E \Big[ e^{\sum_{j=1}^k \mathfrak h_N(X_{j-1},X_j) } \Big] \cdot \tilde P_{N,k}\Big( T_{(N,1)}^{x_0\to X_1} + \cdots + T_{(N,k)}^{X_{k-1} \to X_k} \le N\Big).\label{un}
\end{equation}
An analysis of the middle term now clarifies the role of the sequence $\Lambda_N$ from \eqref{crit_tuning}.

\subsection{The critical tuning coefficients $\Lambda_N(f)$}\label{s:2.2}

\begin{lem}\label{2.1}
    Let $f:\mathbb Z^2 \to [0,\infty)$ be finitely supported, let $X_j$ be sampled i.i.d.\ from $\mu$ as in \eqref{mu_f}, and let $\mathfrak h_N$ be as in \eqref{h_n}.
    There exists a sequence of real numbers $\lambda_N(f)$ converging to 1 as $N\to\infty$, such that
    \begin{equation*}
        \lim_{N\to\infty} \sup_{k\in\mathbb N} \bigg| k\log \lambda_N(f) - \log E \Big[ e^{\sum_{j=1}^k \mathfrak h_N(X_{j-1},X_j) } \Big] \bigg| =0.
    \end{equation*}
    This sequence $\lambda_N(f)$ is equal to $\Lambda_N(f)/\nu_\infty(f)$, with $\Lambda_N(f)$ as in Definition \ref{def:lambda}. Furthermore, the limit is uniform over all $f$ such that the support of $f$ is contained in some fixed finite set $J\subset \mathbb Z^2$.
\end{lem}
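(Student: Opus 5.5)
This is a Perron--Frobenius argument for a positive matrix on the finite set $J=\mathrm{supp}(f)$. The starting point is to write the expectation as a matrix product. Since the $X_j$ are i.i.d.\ with law $\mu$ and $X_0=x_0$ is fixed,
\[
E\Big[e^{\sum_{j=1}^k \mathfrak h_N(X_{j-1},X_j)}\Big] = \big(M_N^{k-1}\,u_N\big)(x_0),\qquad M_N(x,y):=\frac{\mathscr S_N(x,y)\,\mu(y)}{\nu_\infty(y)\log N}=\frac{\mathscr S_N(x,y) f(y)}{\nu_\infty(f)\log N}.
\]
Here $M_N$ is viewed as a $J\times J$ matrix, and $u_N(x):=\sum_{y\in J}M_N(x,y)$ carries the first step, starting from $x_0$. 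By Definition \ref{def:lambda}, $M_N$ is exactly $T_N^f/\nu_\infty(f)$ acting on $\mathbb R^J$. Hence its principal eigenvalue is $\lambda_N(f)=\Lambda_N(f)/\nu_\infty(f)$, which identifies the sequence claimed in the lemma.

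The second step is to identify the limit of $M_N$. By Assumption \ref{a1} \eqref{a1logasymptotic}, $\mathscr S_N(x,y)/(\nu_\infty(y)\log N)\to 1$ for each fixed pair $x,y$. On the finite set $J\cup\{x_0\}$ this convergence is uniform, so $M_N\to M_\infty$ entrywise, where $M_\infty(x,y)=\mu(y)$. The matrix $M_\infty$ has rank one and is stochastic. Its Perron eigenvalue is $1$, with right eigenvector $\mathbf 1$ and left eigenvector $\mu$, and all other eigenvalues are $0$. In particular $\lambda_N(f)\to 1$.

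The third step is a perturbation argument. For $N$ large, $M_N$ has strictly positive entries on $J$, because $\mathscr S_N>0$ eventually and $\mu>0$ on $J$ (recall $\nu_\infty$ has full support). By continuity of isolated simple eigenvalues and their eigenprojections, we can decompose
\[
M_N=\lambda_N\,\Pi_N+R_N,\qquad \Pi_N R_N=R_N\Pi_N=0,
\]
where $\Pi_N\to \mathbf 1\otimes\mu$ and the spectral radius of $R_N$ is eventually at most $1/2$. Consequently
\[
M_N^{k-1}u_N=\lambda_N^{k-1}\Pi_N u_N+R_N^{k-1}u_N .
\]
We also have $u_N\to u_\infty=\mathbf 1$ and $\Pi_N u_N\to \mathbf 1$. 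Dividing by $\lambda_N^k$ then gives
\[
\lambda_N^{-k}(M_N^{k-1}u_N)(x_0)=\lambda_N^{-1}(\Pi_N u_N)(x_0)+\lambda_N^{-k}(R_N^{k-1}u_N)(x_0).
\]
The first term tends to $1$ uniformly in $k$. The second is bounded by $C(2/3)^{k}\|u_N\|\to 0$ uniformly in $k$. This uses the bound $\|R_N^{n}\|\le C(0.6)^n$ with $C$ uniform in $N$, which follows from $R_N\to R_\infty=M_\infty-\mathbf 1\otimes\mu=0$ in the sense that $\|R_N\|\to0$ directly. Indeed $R_\infty=0$, so this step is in fact easy: $\|R_N^{n}\|\le\|R_N\|^n$ with $\|R_N\|\to 0$. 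Taking logarithms yields
\[
\sup_k\Big|\log E\big[e^{\sum_{j=1}^k \mathfrak h_N(X_{j-1},X_j)}\big]-k\log\lambda_N\Big|\to 0 .
\]

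The main point requiring care is uniformity over $f$ with $\mathrm{supp}\, f\subset J$ fixed. Here $\mu$ ranges over probability measures on $J$, and some entries of $\mu$ may be zero or tiny, so positivity cannot be used. Instead we rely on the structure above. Write $M_N=D_N\,\mathrm{diag}(\mu)$, where $D_N(x,y)=\mathscr S_N(x,y)/(\nu_\infty(y)\log N)$ satisfies $D_N\to\mathbf 1\mathbf 1^T$ uniformly, independently of $f$. Since $\|\mathrm{diag}(\mu)\|\le 1$, we get $\|M_N-\mathbf 1\otimes\mu\|\le\|D_N-\mathbf 1\mathbf 1^T\|=:\delta_N\to0$ uniformly in $f$. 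A compactness argument over the simplex of $\mu$ on $J$ then gives the needed uniform bounds: the resolvent-based Riesz projection estimates for $\Pi_N,R_N$ depend only on $\delta_N$, because the unperturbed operator $\mathbf 1\otimes\mu$ always has spectrum $\{1,0\}$ with a uniformly bounded resolvent on the circle $|z-1|=1/2$. This yields the claimed uniformity.
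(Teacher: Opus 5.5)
Your proposal is correct and is essentially the paper's second proof: your $M_N$ is exactly the paper's $Q_N=g_NB_N$ (the Markov-chain tilt there is cosmetic), and both arguments perturb the rank-one limit $\mathbf 1\otimes\mu$ to isolate the eigenvalue $\lambda_N=\Lambda_N(f)/\nu_\infty(f)$ and show that the complementary part of $M_N^k$ is negligible uniformly in $k$. Your bound $\|M_N-\mathbf 1\otimes\mu\|\le\delta_N$, combined with the uniform resolvent estimate, makes the uniformity in $f$ more explicit than the paper does. Two small repairs are needed: the identity $M_N^{n}=\lambda_N^{n}\Pi_N+R_N^{n}$ holds only for $n\ge 1$ (or with $R_N^0:=I-\Pi_N$), so $k=1$ should be checked directly; and the interim claim that $C(2/3)^k\|u_N\|\to0$ uniformly in $k$ is false as stated, though your subsequent bound $\|R_N^n\|\le\|R_N\|^n$ with $\|R_N\|\to0$ supersedes it.
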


Note that the dependence of $\lambda_N$ on $f$ comes from the law $\mu$ of $X_j$. So far we have assumed $X_0 = x_0$, but the proof below shows that the initial value is irrelevant in the construction of $\lambda_N$ in the sense that changing the initial value of $(X_j)_j$, or in fact any finite number of values, gives the same sequence $\lambda_N$ in the construction: that is, $\lambda_N$ is not sensitive to finite perturbations of the i.i.d.\ sequence $X_1,X_2,\ldots$. Thus we may assume without loss of generality that $X_0$ is sampled from the same law $\mu$. 

\begin{proof}[Proof of Lemma \ref{2.1}] For completeness, we give two separate descriptions of these important coefficients $\lambda_N$: one as an exponentiated sum of joint cumulants and the other as the principal eigenvalue of some square matrix related to a specific Markov chain.

\medskip
    \noindent\textbf{Proof 1.} Abbreviate $\eta_j^N:=\mathfrak h_N(X_{j-1},X_j), $ thus $\eta^N$ is a stationary sequence with finite-range (in fact range-one) dependence. Note that $\delta_N:=\norm{\eta_1^N}_{L^\infty(P)} \to 0.$ This follows from the fact that $\mathfrak h_N(x,y)\to 0$ for $x,y \in \supp f$, by Assumption \ref{a1} \eqref{a1logasymptotic}.
    
    Expand the cumulant generating function
    \begin{equation*}
        \log E \Big[e^{\sum_{j=1}^k \eta_k^N} \Big] = \sum_{\ell=1}^\infty \frac{\kappa_\ell(\eta^N_1+ \cdots +\eta^N_k)}{\ell!},
    \end{equation*}
    where $\kappa_\ell$ denotes the cumulant of order $\ell$. It is not at all obvious that the right side defines a convergent series in $\ell$, and one in fact needs to exploit nontrivial cancellations. For this, we use Theorem 9.11 of \cite{KLM} to prove the stronger bound $|\kappa_\ell (\eta^N_1+ \cdots +\eta^N_k)| \leq k\cdot (2e)^\ell \cdot \ell! \delta_N^\ell$ uniformly over $N,k,\ell$ or the law of the $X_i$, as well as the estimate
    \begin{equation*}
        \sup_k\Big| \kappa_\ell (\eta^N_1+ \cdots +\eta^N_k) - k c_{\ell,N}^*\Big| \leq C^\ell \delta_N^\ell \ell!, \quad\quad c_{\ell,N}^*:= \sum_{t_1, \cdots ,t_{\ell-1}\in\mathbb Z} \kappa_{(\ell)} ( \eta^N_0 , \eta^N_{t_1} ,\ldots, \eta_{t_{\ell-1}}^N),
    \end{equation*}
    for some absolute constant $C$ not depending on $N,k,\ell$. Here $\kappa_{(\ell)}$ denotes the joint cumulant of order $\ell$, and we have applied stationarity of $\eta^N$ after using the multilinearity of joint cumulants to write $\kappa_\ell(\sum_{t=1}^k \eta^N_t) = \sum_{t_1,\ldots,t_\ell=1}^k \kappa_{(\ell)} (\eta^N_{t_1},\ldots,\eta^N_{t_\ell})$. We thus define
    \begin{equation*}
        \lambda_N(f):= \exp\bigg(\sum_{\ell=1}^\infty \frac{c_{\ell,N}^*}{\ell!}\bigg).
    \end{equation*}
    The series converges again because of Theorem 9.11 of \cite{KLM}, which shows that $|c_{\ell,N}^*|\leq A^\ell \cdot \ell!\delta_N^\ell $ for the same $A > 0$, and therefore that $\lambda_N\to 1$ (since $\delta_N\to 0$). Thus we obtain 
    \begin{align*}
        \sup_k\bigg| k\log \lambda_N(f) - \log E \Big[ e^{\sum_{j=1}^k \mathfrak h_N(X_{j-1},X_j) } \Big]\bigg| \leq \sum_{\ell=1}^\infty \frac{\sup_k \big| \kappa_\ell (\eta^N_1+ \cdots +\eta^N_k) - k c_{\ell,N}^*\big| }{\ell!}& \leq \sum_{\ell=1}^\infty C^\ell \delta_N^\ell\\& \leq C'\delta_N\to 0.
    \end{align*}
    
    \medskip
    \noindent\textbf{Proof 2.}
    The following is a longer but more elementary argument using only finite-state Markov chains, inspired by observations made in \cite{Meyn}. Define $g_N(x):= E\big[e^{\mathfrak h_N(x,X_1)}\big]$. Then we can use the Radon-Nikodym derivative $F:= e^{\sum_{j=1}^k \mathfrak h_N(X_{j-1},X_j) -\log g_N(X_{j-1})}$ to tilt the law of the process $(X_j)_j$,
    in order to rewrite the expectation of interest as 
    \begin{equation*}
        E \Big[e^{\sum_{j=1}^k \mathfrak h_N(X_{j-1},X_j) } \Big] = E_N^{MC} \Bigg[  \prod_{j=0}^{k-1} g_N(X_{j}) \Bigg] .
    \end{equation*}
    Since the $(X_j)_j$ were originally i.i.d.\ distributed according to $\mu$, we see that under this new measure $P_N^{MC}$ they form a Markov chain with transition density
    \begin{equation*}
        b_N(x,y):=\frac{e^{\mathfrak h_N(x,y)}f(y)\nu_\infty(y)}{\sum_{z\in J}  e^{\mathfrak h_N(x,z)}f(z)\nu_\infty(z)} = \frac{\mathscr S_N(x,y) f(y)}{\sum_{z\in J} \mathscr S_N(x,z)f(z)},
    \end{equation*}
 where $J:=\mathrm{supp}(f).$ 
 
 Let $B_Na(x):=\sum_y b_N(x,y)a(y)$ be the Markov operator associated to $b_N$. Further define $Q_N$ as the linear map on $\mathbb R^J$ given by $Q_N a(x) :=  g_N(x) B_Na(x)$. Then we can rewrite 
 \begin{align*}
     E_N^{MC} \Bigg[  \prod_{j=0}^{k-1} g_N(X_{j}) \Bigg] = (Q_N^{k} \ind_J) (X_0).
 \end{align*}
 Note that $\nu_\infty(f)\cdot Q_N$ is the same as the operator $T_N^f$ defined after \eqref{crit_tuning}.
 
 Using the assumption that $\mathscr S_N(x,y) = \nu_\infty(y)\log(N) + o(\log N)$, we have for all $x\in 
 J$ that $\lim_{N \to \infty} b_N(x,y) = \frac{\nu_\infty(y) f(y)}{\sum_{z \in J}\nu_\infty(z) f(z) }$ and $\lim_{N\to \infty} g_N(x)=1.$ This means that the operators $Q_N$ converge in operator norm to the rank-one linear map $Q_\infty a (x) = \frac{\nu_\infty(af)}{\nu_\infty(f)}$. Thus $Q_\infty$ has a single positive eigenvalue $1,$ associated with the eigenfunction $\ind_J$, whereas the other $|J|-1$ eigenvalues of $Q_\infty$ are all zero. Since the finite list of eigenvalues is continuous as a function of the square matrix, This means that for $N$ sufficiently large, $Q_N$ has a real eigenvalue $\lambda_N$ converging to $1$ as $N\to\infty$, while the other $|J|-1$ (possibly complex) eigenvalues converge to zero. Indeed, the largest eigenvalue must be real, since complex ones would come in conjugate pairs, and we know there cannot be two separate eigenvalues both converging to 1 (or else $Q_N$ would not converge to $Q_\infty$ in operator norm).
    
    In summary we can write $\ind_J= \psi_N + w_N$, where $\psi_N$ is the principal eigenfunction of $Q_N$ associated to $\lambda_N$ and $w_N$ lies in some $(|J|-1)$-dimensional subspace associated to the other eigenvalues. We can ensure that $\psi_N$ converges uniformly to $\ind_J$, since the latter is the principal eigenfunction for $Q_\infty$. Thus $w_N\to 0$ in $\mathbb R^J$ and in turn $|Q_N^k w_N| \leq c_Ne^{-\alpha_N k}$, with $c_N\downarrow 0$ and $\alpha_N\uparrow \infty$ as $N\to \infty$, and for fixed $N$ this bound is uniform over all $k\in \mathbb N$. All of this shows that $|Q_N^k \ind_J(x) - \lambda_N^k \psi_N(x)| \leq c_N e^{-\alpha_N k}$, that is,
    \begin{equation*}
        \bigg| E \Big[e^{\sum_{j=1}^k \mathfrak h_N(X_{j-1},X_j) } \Big] - \lambda_N^k \psi_N(X_0)\bigg| \leq c_Ne^{-k \alpha_N}
    \end{equation*}
    uniformly over all $N,k$. 
    Thus $\lambda_N^{-k} E \Big[e^{\sum_{j=1}^k \mathfrak h_N(X_{j-1},X_j) }  \Big]$ will converge to $\psi_N(X_0)$ as $k\to\infty$, for any fixed value of $N$ large enough so that $\alpha_N> |\log \lambda_N|$.
    Now since $|\log a - \log b| \leq 2|a-b|$ for $a,b\ge \frac12$, we get
    \begin{equation*}
        \bigg|\log\Big (\lambda_N^{-k} E \Big[e^{\sum_{j=1}^k \mathfrak h_N(X_{j-1},X_j) }  \Big] \Big)- \log \psi_N(X_0) \bigg| \le 2  c_N\lambda_N^k e^{-k\alpha_N} \leq 2 c_Ne^{-\frac12 k\alpha_N}
    \end{equation*}
    uniformly over $k\in \mathbb N$, for $N$ sufficiently large (where in the last bound we are using that $\alpha_N\to\infty,$ so that $\alpha_N>2 \log \lambda_N$ eventually). Consequently we obtain \begin{equation*}
        \sup_k \bigg|k\log \lambda_N - \log E \Big[e^{\sum_{j=1}^k \mathfrak h_N(X_{j-1},X_j) } \Big] \bigg| \leq |\log \psi_N(X_0)| + 2c_N.
    \end{equation*}
    Sending $c_N\downarrow 0$ and $\psi_N\to 1$ proves the claim.\footnote{
        This proof implicitly assumes $x_0\in J$. If this is not the case, one may slightly modify the definition of $J$ to $J\cup\{x_0\}$: clearly this will not change $\Lambda_N(f)$, since the Markov kernel $b_N$ is still supported on the original $J$.
    }
\end{proof}


\begin{remark} The above lemma will be most important in the case where $f=e^{\boldsymbol{\zeta}}-1$, the particular function appearing in Item \eqref{a1markov} of Assumptions \ref{a1} and \ref{a3}. For many interesting models, $\lambda_N(e^{\boldsymbol{\zeta}}-1)$ above will typically be of the form $1+\frac{q}{\log N} + o\big(\frac1{\log N})$ for some interesting constant $q \in \mathbb R$, see for example \eqref{crit1} and \eqref{crit2}. 
Lemma \ref{2.1} shows that this sequence $\lambda_N$ exists under very general assumptions, though of course it need not behave like $1+O\big(\frac{1}{\log N}\big)$ in general. That is because Item \eqref{a1logasymptotic} of Assumption \ref{a1} imposes no precise lower-order asymptotic beyond the logarithmic growth.
\end{remark}

Combining \eqref{un} and Lemma \ref{2.1}, and recalling from Lemma \ref{2.1} that $\nu_\infty(f) \lambda_N(f) = \Lambda_N(f)$, we obtain the following representation of $\mathcal E^f_{x_0}$.

\begin{thm}\label{2.3}
    We have 
    \begin{align*}
        \mathcal E_{x_0}^f(N)
        &=\sum_{k=1}^\infty \sum_{0< r_1< \cdots <r_k \le N} \EN{\mathrm{dif}}^{x_0} [ f(X_{r_1}) \cdots f(X_{r_k})] \\
        &= \sum_{k=1}^\infty \big(\log N \cdot \Lambda_N(f) \big)^k e^{\e_N^f(k)} \tilde P_{N,k}\Big( T_{(N,1)}^{x_0\to X_1} + \cdots + T_{(N,k)}^{X_{k-1} \to X_k} \le N\Big),
    \end{align*}
    where the error terms $\e_N^f(k)$ satisfies $\sup_{f:\mathrm{supp}(f) \subset J}\sup_{k\ge 1} |\e_N^f(k)| \to 0$ for any finite $J\subset \mathbb Z^2$.
    
    Letting $f = e^{\z_N} - 1$, and recalling from \eqref{crit_tuning} that $\Lambda_N(e^{\z_N} - 1) \log N = 1+\frac{\vartheta+o(1)}{\log N}$, we get
    \begin{equation*}
        \mathcal E_{x_0}^{f}(N)=\sum_{s\in (\log N)^{-1}\mathbb N}e^{\vartheta s + o(1)} \cdot \tilde P_{N,s\log N}\big( T_{(N,1)}^{x_0\to X_1} + \cdots + T_{(N,s\log N)}^{X_{s\log N-1} \to X_{s\log N}} \le N\big) ,
    \end{equation*}
    where the $\e_N^f(k)$ errors have been absorbed into the $o(1)$ term above.
\end{thm}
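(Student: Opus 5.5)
The plan is to start from the representation \eqref{un} and simply substitute Lemma \ref{2.1}. The first displayed equality in the theorem is exactly the definition \eqref{eq:timerenewal}, so only the second needs proof. By \eqref{un},
\begin{equation*}
\mathcal E^f_{x_0}(N)=\sum_{k=1}^\infty \big(\log N\cdot \nu_\infty(f)\big)^k\, E\Big[e^{\sum_{j=1}^k \mathfrak h_N(X_{j-1},X_j)}\Big]\, \tilde P_{N,k}\big(T^{x_0\to X_1}_{(N,1)}+\cdots+T^{X_{k-1}\to X_k}_{(N,k)}\le N\big).
\end{equation*}
I will then define
\begin{equation*}
\e^f_N(k):=\log E\Big[e^{\sum_{j=1}^k \mathfrak h_N(X_{j-1},X_j)}\Big]-k\log\lambda_N(f).
\end{equation*}
With this choice, $E[e^{\sum_j \mathfrak h_N}]=\lambda_N(f)^k e^{\e_N^f(k)}$. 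Using $\nu_\infty(f)\lambda_N(f)=\Lambda_N(f)$ from Lemma \ref{2.1}, the $k$-th term becomes $(\log N\,\Lambda_N(f))^k e^{\e^f_N(k)}\tilde P_{N,k}(\cdots)$. The uniform bound $\sup_{\mathrm{supp} f\subset J}\sup_k|\e_N^f(k)|\to0$ is precisely the conclusion of Lemma \ref{2.1}, including its uniformity over $f$ supported in a fixed finite $J$. The starting point $X_0=x_0$ is harmless: if $x_0\notin J$, I enlarge $J$ to $J\cup\{x_0\}$, as in the footnote to that proof.

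One point needs care. $f=e^{\z_N}-1$ depends on $N$, but its support lies in the fixed ball of radius $A$, which is a fixed finite $J$. The uniformity in $f$ from Lemma \ref{2.1} is exactly what is needed here.

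For the second display, I would set $f=e^{\z_N}-1\ge0$, which is finitely supported by Assumption \ref{a1} \eqref{a1markov}, and insert \eqref{crit_tuning}. This gives
\begin{equation*}
k\log\big(\Lambda_N(f)\log N\big)=k\log\Big(1+\tfrac{\vartheta+o(1)}{\log N}\Big)=\frac{k}{\log N}\big(\vartheta+o(1)\big)+O\Big(\frac{k}{\log^2N}\Big).
\end{equation*}
Writing $s=k/\log N\in(\log N)^{-1}\mathbb N$, the $k$-th prefactor equals $e^{\vartheta s+s\,o(1)+\e_N^f(k)}$.

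The main obstacle is the meaning of the $o(1)$. The correction $s\,o(1)$ is uniformly small only for $s$ in compact sets, not for all $k$. I would therefore read the $o(1)$ as a quantity bounded by $\e_N s+\sup_k|\e^f_N(k)|$ with $\e_N\to0$. This is how the statement should be used downstream: large $s$ are handled separately because the tilted probability $\tilde P_{N,s\log N}(\sum T\le N)$ decays quickly in $s$. Inside this proof I only need to record the identity with this explicitly quantified error, absorbing $\e_N^f(k)$ into it as the statement says.
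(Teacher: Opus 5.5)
Your proposal is correct and follows the paper's argument: the paper's proof is exactly the substitution of Lemma \ref{2.1} into \eqref{un}, using $\nu_\infty(f)\lambda_N(f)=\Lambda_N(f)$ and the lemma's uniformity over $f$ supported in a fixed finite $J$, followed by inserting \eqref{crit_tuning}. You are also right that the $o(1)$ in the second display really has the form $s\,o(1)+\sup_k|\e^f_N(k)|$, so it is uniform only for $s$ in compacts; this sharpens the paper's loose notation and matches how the identity is used later, where the super-exponential decay of Corollary \ref{cor:dcBound} absorbs the extra $e^{s\,o(1)}$ growth.
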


The final expression is suggestive of a Riemann sum with mesh $(\log N)^{-1}$, and this will be a crucial observation. More precisely, it resembles $\log N \cdot \int_0^\infty e^{\vartheta s} P(Y_s\le 1) ds$, where $(Y_s)_{s\ge 0}$ is the Dickman subordinator of Definition \ref{dickman} below. Theorem \ref{2.3} will be further strengthened by Proposition \ref{prop:RiemannSum} below, which will add a spatial component to the identity given above.

\subsection{Space-time version of the renewal process}\label{sss:spacetimerenewal}
Next, we generalize the previous section by studying a space-time version of the above result. Fix $N\in \mathbb N$. For distinct $j \in \N$ and $x,y \in \Z^2$, we again define independent random variables $(T^{x \to y}_{(N, j)}, A_{(N, j)}^{x \to y})\in \{1,\ldots,N\}\times \mathbb Z^2$ with distribution
\begin{equation}\label{(t,a)}
    P(T^{x \to y}_{(N, j)} = r, A_{(N, j)}^{x \to y}=a) =\frac{\mathbf p_{N,2} \big(r ; (x,0) , (a,a-y)\big)\ind_{\{1\le r\le N\}}}{\mathscr S_N(x,y)}.
\end{equation}
See that by summing over $a\in \mathbb Z^2$ we recover the same marginal law for $T^{x\to y}_{(N,j)}$ defined in \eqref{s_n}.
Here $\pN{2}$ is the transition density for the Markov chain $\PN{2}$ from Assumption \ref{a1} \eqref{a1markov}. $x$ and $y$ represent the \emph{gaps} between the two particles in this system.
For any deterministic sequence $\x = (x_j)_j\subset \mathbb Z^2$ of such gaps, we then define
\begin{equation*}
    (\tau_{(N, k)}^{\x}, S_{(N,k)}^{\mathbf x}):= \sum_{j=1}^k (T^{x_{j-1} \to x_j}_{(N,j)} , A^{x_{j-1} \to x_j}_{(N,j)}).
\end{equation*}

Throughout this section we will assume that $t\in \mathbb N$ with $t\le N$. Then we can define the following renewal function, 
which captures the energy of two particles which travel from positions $x,x'$ to $y, y'$ in time $t > 0$ while interacting through the kernel $f:\mathbb Z^2 \to \mathbb [0,\infty)$ when they are nearby.
\begin{defn}\label{def:renewal}
    Let $f:\mathbb Z^2\to\mathbb R$ be finitely supported. For $t\in \mathbb Z_{\ge 0} , x,x',y,y'\in\mathbb Z^2$, define the \textbf{renewal function}
    \begin{equation}\label{eq:renewal}
        U_N(f; t,x,x',y,y') = \sum_{k=0}^\infty \sum_{0 < r_1< \cdots<r_k < t} \mathbf E^{(x,x')}_{N,2} \Bigg[\prod_{j=1}^k f(R^1_{r_j}-R^2_{r_j}) \ind_{\{ R^1_{t}= y, R^2_{t}=y'\}}\Bigg],
    \end{equation}
    with the convention that $U_N(f;0,x,x',y,y') = \ind_{\{ y=x, y'=x'\}}$ and that the $k=0$ term of the above series is $E^{(x,x')}_{N,2}[ \ind_{\{ R^1_{t}= y, R^2_{t}=y'\}}]$. Here $(R^1_r,R^2_r)_r$ is a realization of the Markov chain $\mathbf P_{N,2}$.
\end{defn}
So, in the notation of Section \ref{s:2.1}, we have the identity $\mathcal E_{x_0}^f(N)=\sum_{y,y'}U_N(f;N;0,x_0,y,y') - 1$. Setting $f=e^{\boldsymbol{\zeta}_N}-1$, we obtain the following.
\begin{prop} \label{prop:RiemannSum}
    Let $h:\mathbb Z^2 \to \mathbb R$ be bounded. With $U_N$ as in Definition \ref{def:renewal}
    , we have
    \begin{align*}
        \sum_{y' \in \mathbb Z^2} U_N(e^{\boldsymbol{\zeta}_N}-1;& t; x, x', y, y')(e^{\z_N(y-y')} -1)h(y')\\
        &= \sum_{s\in (\log N)^{-1}\mathbb N} e^{\vartheta s + o(1)} \cdot \widetilde E_{N,s\log N} \bigg[ h(y- X_{s \log N}) \ind_{\big\{ \tau^{\mathbf X}_{(N, s\log N)} = t, S^{\mathbf X}_{(N, s \log N)} = y-x\big\}}\bigg] 
    \end{align*}
    with the $X_j$ sampled from the tilted measure $\tilde P_{N,k}$ as in \eqref{eq1}.
\end{prop}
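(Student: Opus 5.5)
We follow the same route as in the derivation of Theorem~\ref{2.3}, now keeping track of the spatial displacement.

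\textbf{Step 1: expand the renewal function along collision times.} Take a term in \eqref{eq:renewal} with $k$ collision times $0<r_1<\cdots<r_k<t$. Since $f=e^{\z_N}-1$ is supported in the ball of radius $A$, the gap $R^1-R^2$ at time $r_j$ lies in $J:=\supp(f)$. We apply the Markov property of $\mathbf P_{N,2}$ at each $r_j$, together with the translation invariance from Assumption~\ref{a1}\eqref{a1consistency}. This writes each increment over $(r_{j-1},r_j]$ as
\[
\mathbf p_{N,2}\big(r_j-r_{j-1};(x_{j-1},0),(a_j,a_j-x_j)\big)
\]
after shifting, where $x_j$ is the gap at time $r_j$ and $a_j$ is the displacement of $R^1$. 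The extra factor $(e^{\z_N(y-y')}-1)h(y')$ means we evaluate at a final collision: setting $r_{k+1}=t$ and $x_{k+1}=y-y'$, the sum over $y'$ becomes a sum over the final gap $x_{k+1}$ weighted by $f(x_{k+1})$. We therefore get a sum over $k'=k+1\ge1$ of
\[
\prod_{j=1}^{k'} f(x_j)\,\mathbf p_{N,2}\big(r_j-r_{j-1};(x_{j-1},0),(a_j,a_j-x_j)\big),
\]
subject to $\sum_j (r_j-r_{j-1})=t$ and $\sum_j a_j=y-x$. Here $x_0=x-x'$; the statement writes $x$ loosely for this initial gap and position, and we will fix that bookkeeping. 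The weight is $h(y-x_{k'})$. Each increment has $1\le r_j-r_{j-1}\le t\le N$, so the indicator $\ind_{\{1\le r\le N\}}$ in \eqref{(t,a)} is automatic.

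\textbf{Step 2: pass to the renewal variables and the tilted measure.} Multiply and divide each factor by $\mathscr S_N(x_{j-1},x_j)$. By \eqref{(t,a)}, the $k'$-fold sum equals
\[
\sum_{x_1,\dots,x_{k'}}\prod_j f(x_j)\mathscr S_N(x_{j-1},x_j)\;E\Big[h(y-x_{k'})\ind_{\{\tau=t,\,S=y-x\}}\Big].
\]
Exactly as in \eqref{eq0}--\eqref{un}, we write $f(x_j)=\nu_\infty(f)\mu(x_j)/\nu_\infty(x_j)$ with $\mu$ as in \eqref{mu_f}. This turns the sum over $x_j$ into an expectation over i.i.d.\ $X_j\sim\mu$, with weight
\[
(\log N\,\nu_\infty(f))^{k'}\,e^{\sum_j\mathfrak h_N(X_{j-1},X_j)}.
\]
Normalizing by $E[e^{\sum\mathfrak h_N}]$ produces precisely $\tilde P_{N,k'}$, and the integrand becomes $\widetilde E_{N,k'}[h(y-X_{k'})\ind\{\cdots\}]$. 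One point needs checking: the conditional law of the renewal pairs given $\mathbf X$ is unchanged by the tilt, since the Radon--Nikodym derivative depends only on $\mathbf X$. This holds by construction.

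\textbf{Step 3: apply Lemma~\ref{2.1} and reindex.} Lemma~\ref{2.1} gives
\[
E\big[e^{\sum\mathfrak h_N}\big]=\lambda_N^{k'}e^{\e_N(k')},\qquad \sup_{k'}|\e_N(k')|\to0,
\]
uniformly for $f$ supported in the fixed ball. With $\nu_\infty(f)\lambda_N=\Lambda_N(f)$ and \eqref{crit_tuning}, the prefactor becomes
\[
\Big(1+\tfrac{\vartheta+o(1)}{\log N}\Big)^{k'}e^{\e_N(k')}=e^{\vartheta s+o(1)},\qquad s=k'/\log N.
\]

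\textbf{Main obstacle.} The $o(1)$ must be uniform in $k'$. For $k'\lesssim\log N$ this is clear. For large $k'$, the term $k'\cdot o(1)/\log N$ is not small, so "$o(1)$" must be read as $s\cdot o(1)$, or the large-$s$ tail must be discarded separately. I would handle this by noting that the renewal constraint $\tau=t\le N$ forces $\tilde P_{N,k'}$ to decay superexponentially in $s$, as for the Dickman subordinator. This decay is the content used after Theorem~\ref{2.3}. Concretely, I would interpret the statement with an error $e^{o(1)(1+s)}$, which is harmless after summation against that decay.
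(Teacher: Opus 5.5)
Your proposal is correct and follows essentially the paper's own proof. You expand $U_N$ along collision times, with the terminal factor $f(y-y')$ turning the final gap into one more renewal step, rewrite each block through $\mathscr S_N$ and the renewal variables \eqref{(t,a)}, tilt by $\mu$ as in \eqref{eq0}--\eqref{eq1}, and conclude with Lemma~\ref{2.1} and \eqref{crit_tuning}. Your caveat is also right: the error is really $e^{s\cdot o(1)+o(1)}$, not $e^{o(1)}$ uniformly in $s$. The paper leaves this implicit and only ever uses the identity together with the superexponential tail bound of Corollary~\ref{cor:dcBound} (e.g.\ in Lemma~\ref{lem:Fdef}), which is exactly how you propose to handle it.
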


\begin{proof}
With $r_{k+1} = t,\, r_0 = 0,\, x_0 = x- x',\, x_{k+1} = y - y'$, and $a_0 = x', a_{k+1} = y$, we expand $U_N$ as
\begin{align}
    U_N(&f;t,x,x',y,y')\notag \\
    &= 
        \sum_{k=0}^\infty  \sum_{0 <  r_1 < \cdots < r_k <  t} \sum_{\substack{x_1,\ldots,x_k\in\mathbb Z^2\\ a_1,\ldots,a_k\in \mathbb Z^2}}\prod_{j=1}^{k} f(x_j) \prod_{j=1}^{k+1} \mathbf p_{N,2}(r_j-r_{j-1} ; (a_{j-1}+x_{j-1}, a_{j-1}) , (a_j,a_j-x_j) ). \label{trigger}
\end{align}
Note that in \eqref{eq:renewal} we multiply by factors of $f(R^1_{r_j} - R^2_{r_j})$ only for times $r_j < t$. Sometimes we will need to tack on an extra factor of $f(R^1_{t} - R^2_{t})$ at the terminal time $t$. We will also often want to sum out the terminal gap $y'-y$ at time $t$.
This leads us to study sums of the following form.
If $0<t<N$ and $h: \mathbb Z^2 \to \mathbb R$ then for any fixed $y\in \mathbb Z^2$ we have 
\begin{align}\label{h_identity}
    \notag \sum_{y' \in \Z^2} &U_N(f; t,x,x',y,y') f(y-y') h(y') \\
    &= \notag \sum_{x_{k+1} \in \Z^2} U_N(f; t,x,x',y,y - x_{k+1}) f(x_{k+1}) h(y - x_{k+1}) \\&= \notag\sum_{k=0}^\infty  \sum_{\substack{x_1,\ldots,x_{k+1}\in\Z^2}}\prod_{j=1}^{k+1} f(x_j)\mathscr S_N(x_{j-1},x_j) P\big( \tau_{(N,k+1)}^{\x} = t, S_{(N,k+1)}^{\mathbf x} = y-x\big) h( y - x_{k+1}) \\
    &=  \sum_{k=1}^\infty  \sum_{\substack{x_1,\ldots,x_{k}\in \Z^2}}\prod_{j=1}^{k} f(x_j)\mathscr S_N(x_{j-1},x_j) P\big( \tau_{(N,k)}^{\x} = t, S_{(N,k)}^{\mathbf x} = y-x\big) h(y - x_{k}).
\end{align}
The second equality follows because $t < N$ by assumption, and also because for fixed $x_{j-1}$ and $x_j$ the quantity $\pN{2} (r_j - r_{j - 1},(a_{j-1}+x_{j-1}, a_{j-1}) , (a_j,a_j-x_j))$ is a function only of $(r_j-r_{j-1},a_j-a_{j-1})$, as the difference of the two coordinates in $\PN{2}$ is Markov by Assumption \ref{a1} \eqref{a1logasymptotic}. 

We can now use a similar trick as before to rewrite \eqref{h_identity} as a sum of tilted probability measures. Sample i.i.d.\ random variables $X_1,\ldots, X_k$ from the distribution on $\mathbb Z^2$ given by
\begin{equation*}
    \mu(x) = \frac{f(x) \nu_\infty(x)}{\sum_y f(y) \nu_\infty(y)}.
\end{equation*}
Let $X_0 = x-x'$. 
Then, using exactly the same manipulation as in \eqref{eq0}--\eqref{eq1}, we rewrite \eqref{h_identity} as

\begin{align}\label{eq:RiemannSum}
    \notag\sum_{k=1}^\infty (\log N)^k \bigg( \int &f\,d\nu_\infty \bigg)^k E \Bigg[ \prod_{j=1}^k \frac{\mathscr S_N(X_{j-1},X_j)}{\nu_\infty(X_j)\log N} h(y- X_k)\ind_{\big\{ \tau_{(N,k)}^{\mathbf X} = t, S_{(N,k)}^{\mathbf X} = y-x\big\}}\Bigg] \\
    &= \sum_{k=1}^\infty \big( \log N \cdot  \Lambda_N(f) \big)^{k} e^{\varepsilon_N(k)} \cdot \tilde E_{N,k}\bigg[ h(y- X_k) \ind_{\big\{ \tau^{\mathbf X}_{(N,k)} = t, S^{\mathbf X}_{(N,k)} = y-x\big\}}\bigg] .
\end{align}
Here $\e_N(k)$ is as in Theorem \ref{2.3}. 
Specializing to the case $f = e^{\boldsymbol{\zeta}_N}-1$ and recalling from \eqref{crit_tuning} that $\Lambda_N(e^{\boldsymbol{\zeta}_N}-1)\log N = 1+\frac{\vartheta+o(1)}{\log N}$, we obtain the result.
\end{proof}




\subsection{Convergence to Dickman subordinator}

In this section we study $N\to \infty$ limits of $U_N$. First, we will record some consequences of Assumption \ref{a1} used for computations in the body. Recall that $\mathbf g(s,x) = (2\pi s)^{-1} e^{-|x|^2/2s}$ denotes the standard heat kernel on $\R^2$.

\begin{lem}\label{hku}
    The estimate
    \begin{equation*}
        \pN{\mathrm{dif}}(r,x,y) \leq Cr^{-1} (1+r^{-1/2} |x-y|)^{-S}
    \end{equation*}
    holds for $S$ as large as desired (the constant $C$ may depend on $S$).
\end{lem}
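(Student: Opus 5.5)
The plan is to derive the bound directly from the heat kernel upper bound in Assumption \ref{a1} \eqref{a1heatkernel} with $h=2$ (recall that we take $d_N\equiv 0$), by summing out the common translation coordinate. By definition,
\begin{equation*}
\pN{\mathrm{dif}}(r;x,y)=\sum_{a\in\mathbb Z^2}\pN{2}\big(r;(x,0),(a,a-y)\big),
\end{equation*}
and Item \eqref{a1heatkernel} gives, for $r\ge 1$,
\begin{equation*}
\pN{2}\big(r;(x,0),(a,a-y)\big)\le Cr^{-2}\big(1+r^{-1/2}|(a-x,a-y)|\big)^{-S'}
\end{equation*}
for any $S'>0$, with $C$ depending only on $S'$.

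To exploit this, we use that the Euclidean norm of the vector $(a-x,a-y)\in(\mathbb R^2)^2$ dominates both $|a-x|$ and $\tfrac12|x-y|$: indeed $|x-y|\le |a-x|+|a-y|\le 2|(a-x,a-y)|$. Hence
\begin{equation*}
\big(1+r^{-1/2}|(a-x,a-y)|\big)^{-S'}\le \big(1+\tfrac12 r^{-1/2}|x-y|\big)^{-S}\big(1+r^{-1/2}|a-x|\big)^{-(S'-S)}.
\end{equation*}
We choose $S'=S+3$. The factor $(1+\tfrac12 r^{-1/2}|x-y|)^{-S}$ is bounded by $2^S(1+r^{-1/2}|x-y|)^{-S}$, so it only changes the constant.

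It then remains to sum over $a$. The sum $\sum_{a\in\mathbb Z^2}(1+r^{-1/2}|a-x|)^{-3}$ is bounded by $C' r$ uniformly in $x$ and in $r\ge 1$. This follows by comparing the sum with the integral $\int_{\mathbb R^2}(1+r^{-1/2}|z|)^{-3}dz=r\int_{\mathbb R^2}(1+|w|)^{-3}dw$, which is finite because $3>2$. The lattice-to-integral comparison is harmless here, since the summand changes by at most a bounded factor across unit cells when $r\ge 1$.

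Combining these estimates gives $\pN{\mathrm{dif}}(r;x,y)\le C r^{-2}\cdot r\cdot (1+r^{-1/2}|x-y|)^{-S}$, which is the claim, with a constant depending only on $S$. There is no real obstacle in this argument. The only point needing care is splitting the decay exponent so that part of it is spent on the summability in $a$ while the remainder is retained for the gap $|x-y|$, and this works precisely because $S$ in Item \eqref{a1heatkernel} may be taken arbitrarily large.
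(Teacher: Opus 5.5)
Your proposal is correct and follows essentially the same approach as the paper: the paper's one-line proof obtains the bound by summing the $h=2$ heat kernel estimate of Assumption~\ref{a1}~\eqref{a1heatkernel} over the common translation coordinate $a$. Your write-up supplies the details that proof leaves implicit, namely spending three extra powers of decay to make the sum over $a$ cost a factor $r$ and keeping the rest for the gap $|x-y|$.
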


\begin{proof}
    This is immediate from union-bounding over the $h=2$ case of Assumption \ref{a1} \eqref{a1heatkernel}.
\end{proof}

\begin{defn}\label{well_sep}
    We say that a sequence $(\mathbf x_N) \subset (\Z^{2})^h$ is \textit{well-separated} if $N^{-1/2}\mathbf x_N \to \mathbf x$ and $\mathbf x = (x_1,\ldots,x_h)$ satisfies $x_i \neq x_j$ for all $i < j$.
\end{defn}

\begin{lem}\label{flem}
    Let $(x^1_N,x^2_N)$ be any sequence in $(\mathbb Z^2)^2$ for which $N^{-1/2} (x^1_N,x^2_N)\to (x_1,x_2)$. Let $s_N,t_N\in N^{-1}\mathbb Z_{\ge 0}$ such that $s_N\to s$ and $t_N\to t$ with $0<s<t$. Then
    \begin{align*}\lim_{N\to\infty} \sum_{r=Ns_N}^{Nt_N}\;  &\sum_{a\in \mathbb Z^2} \mathbf p_{N,2} (r;x^1_N,x^2_N,a,a+y) \cdot h\bigg(\frac{r}{N},\frac{a}{\sqrt N}\bigg) \\& = 4 \pi \cdot  \nu_\infty(y) \int_s^t \mathbf g(2u,x_1-x_2) \int_{\mathbb R^2} \mathbf g(2u,z) h\big(u,\tfrac12 (z+x_1+x_2)\big) dz \,du
    \end{align*}
    for all $y \in \Z^2$ and smooth bounded $h: [0,\infty) \times \R^2 \to \R\).
    The statement extends to $s = 0$ provided at least one of the following two conditions holds:
    \begin{enumerate}
        \item\label{flem1} $\mathbf x_N$ is well-separated.
        \item\label{flem2} $h$ is smooth up to the boundary and $h(0,0) = 0$, and $|x^1_N-x^2_N|$ remains bounded as $N\to \infty$.
        \end{enumerate}
\end{lem}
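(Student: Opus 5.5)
The plan is to deduce Lemma \ref{flem} from the second bullet of Assumption \ref{a1} \eqref{a1logasymptotic}, which is exactly this statement for $s=0$, a function $h=\phi$ independent of time, and well-separated starting points. Three extensions are needed: allowing time dependence in $h$; passing to a general lower limit $s>0$ and arbitrary starting points; and the boundary case $s=0$ under \eqref{flem2}.

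\emph{Step 1: general $s>0$ and time-dependent $h$.} We use the Markov property at time $r_0:=\lfloor N s/2\rfloor$ (any fixed fraction of $Ns$ would do). By the invariance principle, Assumption \ref{a1} \eqref{a1invarianceprinciple} with $h=2$, the rescaled position $N^{-1/2}\mathbf R_{r_0}$ converges in law to a Gaussian with density $\mathbf g^{\otimes 2}(s/2,\cdot-(x_1,x_2))$. Its two coordinates are almost surely distinct, so the new starting point is well separated with probability tending to one.

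We then condition on $\mathbf R_{r_0}=\mathbf z$ and write the remaining sum over $r\in[Ns_N,Nt_N]$ as a difference of two sums of the form $\sum_{r=0}^{N\tau}$. Each such sum started from $\mathbf z$ with $N^{-1/2}\mathbf z\to(z_1,z_2)$ is handled by the assumption. Since the assumption only allows time-independent $\phi$, we first handle $h(u,a)=\mathbf 1_{[0,\tau]}(u)\phi(a)$ by taking differences in $\tau$. For general smooth $h$ we approximate uniformly by step functions in $u$, say $h\approx\sum_i \mathbf 1_{(u_{i-1},u_i]}(u)h(u_i,a)$. The approximation error is controlled by $\|h(u,\cdot)-h(u_i,\cdot)\|_\infty$ times $\sum_{r\le Nt}\sum_a \mathbf p_{N,2}(r;\cdot,(a,a+y))$, which is $O(1)$ uniformly over well-separated starting points.

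To justify this uniform bound and to integrate over the law of $\mathbf R_{r_0}$, we use Lemma \ref{hku} together with the heat kernel bound of Assumption \ref{a1} \eqref{a1heatkernel}. These give $\sum_a\mathbf p_{N,2}(r;\mathbf z,(a,a+y))=\mathbf p_{N,\mathrm{dif}}(r;z_1-z_2,-y)\lesssim r^{-1}(1+r^{-1/2}|z_1-z_2|)^{-S}$. Summed over $r\ge Ns/2$, this is $O(1)$ uniformly in $\mathbf z$, which licenses dominated convergence over the law of $\mathbf R_{r_0}$, including the small-probability event that $\mathbf R_{r_0}$ is not well separated.

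The limit is then the Gaussian average over $(z_1,z_2)$ of the right-hand side of the assumption, with time shifted by $s/2$. The identity we need is the Chapman--Kolmogorov/Gaussian convolution identity for the pair $\bigl(z_1-z_2,\tfrac12(z_1+z_2)\bigr)$: the difference $z_1-z_2$ carries variance $2$ per unit time and the midpoint carries variance $1/2$, which gives exactly $\mathbf g(2u,x_1-x_2)\,\mathbf g(2u,z)$ in the stated formula.

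\emph{Step 2: $s=0$.} Under \eqref{flem1} we split at $r=N\delta$. The piece $[N\delta,Nt]$ is handled by Step 1. The piece $[0,N\delta]$ is bounded by Lemma \ref{hku}: since $|x^1_N-x^2_N|\ge c\sqrt N$, we get $\sum_{r\le N\delta} r^{-1}(1+c\sqrt{N/r})^{-S}\lesssim \delta^{S/2}$, which vanishes as $\delta\to0$. The corresponding piece of the limit integral also vanishes as $\delta\to0$.

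Under \eqref{flem2} the same split applies, but the short-time piece is now of size about $\log(N\delta)$, because $\sum_{r\le N\delta} r^{-1}\approx\log N$. The decay must therefore come from $h$. We bound $|h(r/N,a/\sqrt N)|\le C(r/N+|a|/\sqrt N)$. We control $E|R^2_r|$ using the heat kernel bound for $\mathbf p_{N,2}$: since the starting gap is bounded, $E|a|\lesssim\sqrt r$. The sum is then at most $\sum_{r\le N\delta} r^{-1}\sqrt{r/N}\lesssim\sqrt\delta$. For the middle range, Step 1 needs a random, not well-separated start at $r_0=N\delta/2$; this is fine because $N^{-1/2}\mathbf R_{r_0}$ converges to a Gaussian that is almost surely well separated. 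The limit integral is finite near $u=0$ precisely because $h(0,0)=0$ compensates the $\mathbf g(2u,0)\sim u^{-1}$ singularity; this is the same computation as above.

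The main obstacle is the uniformity in Step 1. The assumption gives convergence only for each deterministic well-separated sequence, while we need it averaged over the random, merely asymptotically well-separated $\mathbf R_{r_0}$. We resolve this by applying the assumption along arbitrary convergent subsequences (a Skorokhod coupling argument), with domination supplied by the heat kernel bounds.
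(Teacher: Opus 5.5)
Your argument is essentially the paper's proof: the Markov property at an intermediate time, combined with the invariance principle and the second limit of Assumption~\ref{a1}\eqref{a1logasymptotic}, handles $s>0$; a piecewise-constant-in-time approximation handles the time dependence of $h$; and heat-kernel bounds on $[0,N\delta]$ handle $s=0$. Restarting at $Ns/2$ instead of $Ns$ is a harmless variation that lets you dominate by a bounded quantity instead of using the paper's $\log_-$ uniform-integrability bound \eqref{eq:2ptgreens}.

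Two small points, shared with or worth borrowing from the paper, remain. First, in case \eqref{flem2} your estimate $E|a|\lesssim\sqrt r$ uses that $x^1_N$ itself stays bounded, not merely $|x^1_N-x^2_N|$; this holds in the application, Theorem~\ref{convergence_dickman}, and the paper's Case 2b makes the same tacit assumption. Second, for a merely smooth bounded $h$, the step-function approximation in $u$ should come after the paper's reduction to compactly supported $h$ plus truncation, since uniform continuity in $u$ uniformly over $a$ is not automatic.
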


\begin{remark}
    In the case $x_1=x_2$, the right hand side can be equivalently expressed as
    $$4\pi \cdot \nu_\infty(y) \int_s^t   \int_{\mathbb R^2} \mathbf g(u,z)^2 h(u,z+x_1)dzdu=\nu_\infty(y) \int_s^t u^{-1} \int_{\mathbb R^2} \mathbf g(\tfrac{u}2,z)h(u,z+x_1) dz\,du,$$
    which can be proved by using the identity $\g(\tfrac{s}{2},x-\tfrac12(a_1+a_2)) \g(2s, a_1-a_2) = \g(s, x- a_1)\g(s,x-a_2)$, for instance. 
    In particular, by taking $h(s,x)=\ell(s)$ we get 
    \begin{equation*}
        \lim_{N\to\infty} \sum_{r=Ns_N}^{Nt_N} \pN{\mathrm{dif}} (r;x,y) \ell(N^{-1}r)  = \nu_\infty(y) \int_s^t u^{-1} \ell(u) du,
    \end{equation*}
    valid for all $s>0$, and also for $s=0$ if $\ell(0)=0.$
\end{remark}

\begin{proof}[Proof of Lemma \ref{flem}]
    We break down the proof into two cases, the first case will be $s>0$, and the second case will be $s=0$.

    \smallskip
    \noindent\textbf{Case 1a.} By an abuse of notation, we will suppress the subscripts on $s_N,t_N$ and just write $\sum_{Ns}^{Nt}$ rather than $\sum_{Ns_N}^{Nt_N}$. This convention will be used henceforth. 
    
    First, we address the case where the limit $s > 0$, and $h$ is of the form $h(s,u) = \phi(u).$  Write $(\mathbf Z_s)_s$ for the Markov process associated to the transition density $\mathbf p_{N,2}$ started from $(x^1_N,x^2_N).$ By the Markov property we can write
    \begin{equation*}
        \sum_{r=Ns}^{Nt} \sum_{a\in \mathbb Z^2} \mathbf p_{N,2} (r;(x,x'),(a,a+y)) \phi\bigg(\frac{a}{\sqrt N}\bigg) = \mathbf E^{(x,x')}_{N,2} \Bigg[\sum_{r=0}^{N(t-s)} \sum_{a\in \mathbb Z^2} \mathbf p_{N,2} ( r; \mathbf Z_{Ns}, (a,a+y)) \phi\bigg(\frac{a}{\sqrt N}\bigg)\Bigg].
    \end{equation*}
    Using the invariance principle in Item \eqref{a1invarianceprinciple}, together with the second limit of Item \eqref{a1logasymptotic}, we see that as $N\to\infty$ we have convergence in distribution 
    \begin{align}\label{bo}
        \notag \sum_{r=0}^{N(t-s)} \sum_{a\in \mathbb Z^2}& \mathbf p_{N,2} ( r; \mathbf Z_{Ns}, (a,a+y)) \cdot \phi(N^{-1/2} a) \\
        &\stackrel{d}{\longrightarrow} 4\pi \cdot \nu_\infty(y) \int_0^{t-s} \mathbf g(2u,B^1_s-B^2_s) \int_{\mathbb R^2} \mathbf g(2u,z) \phi\big(\tfrac12 (z+B^1_s+B^2_s)\big)dz\,du,
    \end{align}
    where $(B^1,B^2)$ is standard Brownian motion in $(\mathbb R^2)^2$. 
    
    Henceforth write $\mathbf Z := (Z^1,Z^2)$ and $Z^{\mathrm{dif}} := Z^1-Z^2$.
    Note the trivial Green's function bound $\sum_{a\in \mathbb Z^2} \mathbf p_{N,2} (r,x,x',a,a+y) \phi(N^{-1/2} y) \leq \|\phi\|_{L^\infty} \cdot \pN{\mathrm{dif}}(r,x-x',y)$. By Lemma \ref{hku}, we also have the upper bound $\mathbf p_{N,\mathrm{dif}} ( r; x, y) \leq C r^{-1} (1+ r^{-1/2} |x-y|)^{-S},$ which gives a logarithmic singularity in a macroscopic neighborhood of the spatial origin:
    \begin{equation}\label{eq:2ptgreens}
        \sum_{r=0}^{N(t-s)} \mathbf p_{N,\mathrm{dif}} ( r; Z^{\mathrm{dif}}_{Ns}, y) \leq C (1+\log_- ( N^{-1/2} (1+|Z^{\mathrm{dif}}_{Ns}|) ),
    \end{equation}
    where $\log_-x = |\log x|\ind_{\{x<1\}}$. As another consequence of Lemma \ref{hku} we find that $$\sup_N \mathbf E^x_{N,\mathrm{dif}} [\log_- ^2( N^{-1/2} (1+|Z^{\mathrm{dif}}_{Ns}|) ) ]<\infty.$$ Thus uniform integrability holds in \eqref{bo} and the convergence of the expectations is true as well. It follows that 
    \begin{align*}
        \lim_{N\to \infty}&\sum_{r=Ns}^{Nt} \sum_{a\in \mathbb Z^2} \mathbf p_{N,2} (s;x^1_N,x^2_N,a,a+y) \phi(N^{-1/2}a) \\
        &= 4\pi \nu_\infty(y) \int_0^{t-s} \E \bigg[g(2u,x_1-x_2+B^1_s-B^2_s) \int_{\mathbb R^2} \mathbf g(2s,z) \phi(\tfrac12 (z+x_1-x_2+B^1_s+B^2_s)) dz \bigg]du \\
        &= 4 \pi  \nu_\infty(y) \int_s^t \mathbf g(2u,x_1-x_2) \bigg(\int_{\mathbb R^2} \mathbf g(2u,z) \phi\big(\tfrac12 (z+x_1+x_2)\big) dz \bigg)ds,
    \end{align*}
    where in the second line the $B^i$ are independent Brownian motions in $\mathbb R^2$, starting from the origin. The second equality follows from the semigroup properties of the heat kernel.

    \smallskip
    \noindent\textbf{Case 1b.} Next, we address the slightly more general case where $s > 0$ and $h$ is of the form $h(s,u) = \ell(s)\phi(u).$ Case 1a proves the result if $h$ is of the form $\ell(s)\phi(a)$ with $\ell$ piecewise-constant and $\phi$ compactly supported. For general smooth functions $\ell$, the claim then follows by using a simple rectangular approximation of $\ell$ by piecewise-constant functions. As the mesh goes to zero, the error is easily controlled using the smoothness of $\ell$ together with the heat kernel upper bound $\sup_{x,y}\mathbf p_{N,\mathrm{dif}} (s;x,y) \leq Cs^{-1}$ from Lemma \ref{hku}. This proves the claim for $h$ of the form $\ell(s)\phi(a)$ with $\phi$ compactly supported, and the result for all compactly supported two-variable functions $h(s,a)$ follows from a density argument: take linear combinations and apply Stone-Weierstrass. To extend from compactly supported $h$ to all smooth and bounded $h$, one uses the heat kernel bounds in Item \eqref{a1heatkernel} and truncates to approximate $h$ by a compactly supported sequence $h_N\to h$.

    \medskip
    Next, we address the case where $s=0$.

    \smallskip
    \noindent\textbf{Case 2a.} If $x_1\ne x_2$, then directly from Item \eqref{a1logasymptotic} and the above discussion we obtain the result for all $h$ of the form $\ell(s)\phi(a)$, with $\ell$ piecewise-constant and $\phi$ compactly supported. From this class of functions, the argument of the previous paragraph can be repeated to prove the claim.

    \smallskip
    \noindent\textbf{Case 2b.} If $h(0,0)=0$, with $h$ smooth up to the boundary, we have $|h(s,u)| \leq C(|s|+|u|)$. Recall the estimate $\sup_N \mathbf p_{N,2} (s;\x,\y) \leq Cs^{-2} ( 1+s^{-1/2} |\x-\y|)^{-S}$ from Item \eqref{a1heatkernel}. Since $y$ from the lemma statement is a fixed element of $\mathbb Z^2$, and since and $|x^1_N-x^2_N|$ remains bounded as $N\to \infty$, we can sharpen this to $\sup_N \mathbf p_{N,2} (s;x^1_N,x^2_N,a,a+y) \leq Cs^{-2} (1+s^{-1/2} |a|)^{-S}$. Thus
    \begin{align*}
        \sum_{s=0}^{N\epsilon}\sum_{a\in \mathbb Z^2} \mathbf p_{N,2} (s;x,x',a,a+y) \bigg|h\bigg(\frac{s}{N},\frac{a}{\sqrt N}\bigg)\bigg|
        &\leq \sum_{s=0}^{N\epsilon} \sum_{a\in \mathbb Z^2}  Cs^{-2} ( 1+s^{-1/2} |a|)^{-S} \bigg( \bigg|\frac{s}{N}\bigg| + \bigg| \frac{a}{\sqrt{N}}\bigg| \bigg)\\&\leq C \sum_{s=1}^{N\epsilon} \frac1{N} +\frac{1}{\sqrt{Ns} }\leq C\sqrt{\epsilon}.
    \end{align*}
    This allows us to apply the result on $[\epsilon,t] \times \mathbb R^2$ for $\epsilon>0$ and then send $\epsilon\downarrow 0$.  
\end{proof}


It is clear that to analyze the expression in Proposition \ref{prop:RiemannSum} as an approximate Riemann sum, we will require a joint invariance principle for the pair of processes $\big(\tau^{\mathbf X}_{(N,s\log N)} , S^{\mathbf X}_{(N,s\log N)}\big)$ under the measures $\tilde P_{N,s\log N}$ as $N\to \infty$. 
To do this, we will make use of the Dickman subordinator. 

\begin{defn}[\cite{MR4017119}]\label{dickman} The \textit{Dickman subordinator} is the L\'{e}vy process with moment generating function $\mathbb E[ e^{\lambda Y_t}]= \exp\big(t\int_0^1 \frac{e^{\lambda u}-1}{u} du\big).$ The density of $Y_s$ exists and will be denoted $f_s.$
\end{defn}

\begin{thm}[Joint invariance principle for the renewal process]\label{convergence_dickman}
Consider any deterministic collection of sequences $\mathbf x_N = (x^N_1,x^N_2,\dots)$ where $x^N_j\in F$ for some finite set $F\subset \mathbb Z^2$ not depending on $N$. Then the pair of processes $\big(N^{-1}\tau^{\mathbf x_N}_{(N,s\log N)} , N^{-1/2} S^{\mathbf x_N}_{(N,s\log N)}\big)_{s \ge 0}$ from \eqref{(t,a)} converges in law to $\big(
Y_{s},\frac1{\sqrt{2}} W_{
Y_{s}}\big)_{s \ge 0}$ as $N\to \infty$, where $Y_s$ is the Dickman subordinator and $W$ is an independent standard Wiener process in $\mathbb R^2$, both started from $0$. Convergence is in the sense of finite-dimensional distributions. 

The same result holds if $\x_N=(x^N_j)_{j=1}^\infty$ are randomly sampled with all $x^N_j$ in the finite set $J$, independently of the renewal variables in \eqref{(t,a)}.
In particular, this is the case under the measures $\tilde P_{N,k_N}$ from \eqref{eq1}, say with $k_N \ge \log N$.
\end{thm}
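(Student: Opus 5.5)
We will prove convergence of finite-dimensional distributions by computing the joint Laplace transform of increments. Since the pairs $(T^{x_{j-1}\to x_j}_{(N,j)},A^{x_{j-1}\to x_j}_{(N,j)})$ are independent across $j$, it suffices to handle one-dimensional marginals of increments over disjoint blocks of indices. Finite-dimensional convergence then follows by independence of blocks and the independent-increment structure of the limit $(Y_s,\tfrac1{\sqrt2}W_{Y_s})$. For a block of $k=\lfloor s\log N\rfloor$ terms, we will show that for $\lambda\ge0$ and $\theta\in\mathbb R^2$,
\begin{equation*}
E\Big[e^{-\lambda N^{-1}\tau + i\theta\cdot N^{-1/2}S}\Big]=\prod_{j}\Big(1-\frac{\Phi_N(x_{j-1},x_j;\lambda,\theta)}{\log N}\Big)\longrightarrow \exp\Big(-s\int_0^1\frac{1-e^{-\lambda u-|\theta|^2u/4}}{u}\,du\Big),
\end{equation*}
which is exactly the transform of $(Y_s,\tfrac1{\sqrt2}W_{Y_s})$. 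Indeed, conditionally on $Y$, $\tfrac1{\sqrt2}W_{Y_s}$ is Gaussian with variance $Y_s/2$ per coordinate.

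The key one-step estimate is that, uniformly over $x,y$ in the finite set $F$,
\begin{equation*}
\Phi_N(x,y;\lambda,\theta):=\log N\cdot E\big[1-e^{-\lambda T/N+i\theta\cdot A/\sqrt N}\big]\to\int_0^1\frac{1-e^{-\lambda u-|\theta|^2u/4}}{u}\,du .
\end{equation*}
Writing this expectation via \eqref{(t,a)}, we get
\begin{equation*}
\frac{\log N}{\mathscr S_N(x,y)}\sum_{r=1}^N\sum_a \mathbf p_{N,2}(r;(x,0),(a,a-y))\big(1-e^{-\lambda r/N+i\theta\cdot a/\sqrt N}\big).
\end{equation*}
The prefactor tends to $1/\nu_\infty(y)$ by Assumption~\ref{a1}\eqref{a1logasymptotic}. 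For the sum we apply Lemma~\ref{flem} under condition \eqref{flem2} with $h(u,z)=1-e^{-\lambda u+i\theta\cdot z}$, which vanishes at $(0,0)$ and is smooth and bounded (split into real and imaginary parts). Here $x^1_N-x^2_N=x$ is bounded and both limiting points are $0$. By the Remark after Lemma~\ref{flem}, the limit is
\begin{equation*}
\nu_\infty(-y)\int_0^1u^{-1}\int \mathbf g(u/2,z)\big(1-e^{-\lambda u+i\theta\cdot z}\big)\,dz\,du=\nu_\infty\int_0^1\frac{1-e^{-\lambda u-|\theta|^2u/4}}{u}\,du .
\end{equation*}
The sign convention $a-y$ versus $a+y$ only relabels the fixed gap, and the $\nu_\infty$ factors cancel. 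Pointwise convergence for each pair in the finite set $F\times F$ gives uniformity for free. Then, since $|\Phi_N|$ is bounded, $\sum_j\log(1-\Phi_N/\log N)=-\sum_j\Phi_N/\log N+O(k/\log^2N)$. With $k\approx s\log N$ terms each having $\Phi_N\to\Phi$ uniformly, the exponent tends to $-s\Phi$, which proves the deterministic case.

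For random $\x_N$ independent of the renewal variables, we condition on $\x_N$: the bound is uniform over all sequences valued in $J$, so we integrate. The statement about $\tilde P_{N,k_N}$ needs more care, because there the tilt $\prod_j \mathscr S_N(X_{j-1},X_j)/(\nu_\infty(X_j)\log N)$ depends only on the $X$'s. Under $\tilde P_{N,k}$ the $X$'s therefore become a Markov chain independent of the renewal variables given $X$, with the conditional law of the $(T,A)$'s unchanged. So the first part applies conditionally on $X$. The main obstacle is Lemma~\ref{flem}'s applicability at $s=0$ with a complex-valued $h$ depending on $(u,z)$ jointly; we expect the real and imaginary parts each satisfy \eqref{flem2}, so this should go through.
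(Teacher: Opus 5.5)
Your proposal is correct and follows the paper's proof essentially step for step. You factorize the transform over the independent increments, reduce to the one-step limit via $\log N/\mathscr S_N(x,y)\to 1/\nu_\infty(y)$ and Lemma~\ref{flem} under condition~\eqref{flem2} with an $h$ vanishing at $(0,0)$, use finiteness of $F$ for uniformity, and disintegrate for random gap sequences. Two small differences are harmless: you use a Laplace transform in the time coordinate where the paper uses a Fourier transform (fine since $T\ge 0$, and no exponential moments of $A$ are needed), and you spell out the block/independent-increment reduction to finite-dimensional distributions, which the paper leaves implicit. The step you flag as the main obstacle is not one: the real and imaginary parts of $h$ each satisfy \eqref{flem2}, and the paper applies the lemma in exactly that way.
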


\begin{proof}
We prove the theorem first for any fixed deterministic $\mathbf x$ restricted to a finite set, and then specializing to the case of varying $\x_N$.

For the deterministic and fixed-$\x$ case, let $(x_j)_j$ be an arbitrary sequence taking values in a finite set $J\subset \mathbb Z^2$.
It is enough to establish convergence of the Fourier transforms: for $\mu\in \mathbb R$ and $\lambda \in \mathbb R^2$, we show
\begin{equation}\label{f1}
    \mathbb E \bigg[ e^{ \frac{i\mu}{N} \sum_{j=1}^{t\log N} T^{x_{j-1} \to x_j}_{(N,j)} + \frac{i\lambda}{\sqrt N} \bullet \sum_{j=1}^{t\log N}A^{x_{j-1} \to x_j}_{(N,j)} } \bigg]
    \to \exp\bigg(t \int_0^1 \int_{\mathbb R^2} (e^{i\mu u + i\lambda \bullet a}-1) \mathbf g(\tfrac{u}2 ,a)da \frac{du}{u}\bigg).
\end{equation}
The fact that the right-hand side is indeed the correct Fourier transform of the time-$t$ distribution of the L\'evy process $(Y_s,\frac1{\sqrt 2} W_{Y_s})$ follows from \cite[Equation (2.3)]{MR4017119}. To prove the above limit, we may use independence of the summands, and thus write the left side as a product 
\begin{align}
    \notag \prod_{j=1}^{t\log N}& \mathbb E \bigg[ e^{ \frac{i\mu}{N} T^{x_{j-1} \to x_j}_{(N,j)} + \frac{i\lambda}{\sqrt N} \bullet A^{x_{j-1} \to x_j}_{(N,j)} } \bigg]
    = \prod_{j=1}^{t\log N} \bigg(\sum_{r=1}^N \sum_{a\in \mathbb Z^2} e^{\frac{i\mu}{N} r + \frac{i\lambda}{\sqrt N} \bullet a} \frac{\pN{2}(r,(x_{j-1},0),(a,a-x_j))}{\mathscr S_N(x_{j-1},x_j)} \bigg) \\
   \notag  &= \prod_{j=1}^{t\log N}\bigg(1 + \frac1{\mathscr S_N(x_{j-1},x_j)}\sum_{r=1}^N \sum_{a\in \mathbb Z^2} \big(e^{\frac{i\mu}{N}r + \frac{i\lambda}{\sqrt N} \bullet a} -1\big) {\pN{2}(r,(x_{j-1},0),(a,a-x_j))} \bigg) \\
    &= \exp\Bigg[\sum_{j=1}^{t\log N} \mathrm{Log} \bigg( 1 + \frac1{\mathscr S_N(x_{j-1},x_j)}\sum_{r=1}^N \sum_{a\in \mathbb Z^2} \big(e^{\frac{i\mu}{N}r +\frac{i\lambda}{\sqrt N} \bullet a} -1\big) \pN{2}(r,(x_{j-1},0),(a,a-x_j))\bigg)\Bigg], \label{uni0}
\end{align}
where $\mathrm{Log}$ denotes the complex logarithm with the standard branch cut along the negative real axis. 
Note that $\mathrm{Log}(1+u) = u+O(u^2)$ for $u\in \mathbb C$ with $u$ close to 0, and that the $x_j$ can only take finitely many values. Thus, in order to complete the proof of \eqref{f1}, we need only prove that for all $x_{j-1},x_j$ in the finite set $J$, the limit
\begin{equation}
    \log N\cdot \sum_{r=1}^N \sum_{a\in \mathbb Z^2} \big(e^{\frac{i\mu}{N}r + \frac{i\lambda}{\sqrt N} \bullet a} -1\big) \frac{\pN{2}(r,(x_{j-1},0),(a,a-x_j)) }{\mathscr S_N(x_{j-1},x_j) } \to \int_0^1 \int_{\mathbb R^2} (e^{i\mu u + i\lambda \bullet a}-1) \mathbf g(\tfrac{u}2,a)da \frac{du}{u}\label{uni1}
\end{equation}
holds as $N\to \infty$. Note that the right side is independent of $x_{j-1},x_j$. By Assumption \ref{a1} \eqref{a1logasymptotic}, we have $\frac{\log N}{\mathscr S_N(x_{j-1},x_j)} \to \frac1{\nu_\infty (x_j)}$, and so it remains to show convergence
\begin{align}\label{f2}
    \notag\sum_{r=1}^N \sum_{a\in \mathbb Z^2} (e^{\frac{i\mu}{N} r + \frac{i\lambda}{\sqrt N} \bullet a} -1)\mathbf p_{N,2}(&r,(x_{j-1},0),(a,a-x_j)) \\
    &\to \nu_\infty(x_j)\int_0^1 \int_{\mathbb R^2} (e^{i\mu u + i\lambda \bullet a}-1) \mathbf g(\tfrac{u}2,a)da \frac{du}{u}
\end{align}
of the numerator of \eqref{uni1} as $N\to \infty$.
But since the function $h(u,a) = e^{i\mu u + i\lambda \bullet a}-1$ is smooth and vanishes at $(0,0)$, this is immediate from Lemma \ref{flem}. 
This proves the claim for deterministic $\mathbf x=(x_j)_j$, with $x_j\in J$.

For the theorem statement, we actually need to consider a deterministic sequence $\mathbf x^N$ that itself varies with $N$, and prove \eqref{f1} again in this generality. But this is immediate from the fact that $J$ is assumed to be a finite set, so that the limit \eqref{f2} is actually uniform over all possible values $x_j$, regardless of whether they depend on $N$. 

For the randomized case, we simply disintegrate along the law of $\mathbf x_N$ and use the deterministic result, using the independence of $\mathbf X$ and the renewal variables to disintegrate.
\end{proof}

\begin{remark}
    One might ask why we used the Fourier transform in the proof above, as opposed to the Laplace transform as in \cite{MR4017119}. The reason is that the heat kernels $\pN{h}$ are not assumed to have exponential moments, rather only super-polynomial decay in Assumption \ref{a1}. 
    Moreover, we assume no local limit theorem and no $r^{-1}(1 + o(1))$ asymptotics for $\pN{2}$, working instead under the weaker conditions of Assumption \ref{a1} \eqref{a1logasymptotic}, \eqref{a1invarianceprinciple}, and \eqref{a1heatkernel}.

    We further remark that the theorem above is false in the continuous-time continuous-space case of Assumption \ref{a3}, and the statement of the theorem needs to be modified so that rather than arbitrary sequences $\x_N$, we restrict the macroscopic time interval to $[0,T]$ and consider only those sequences $\x_N=(x_1^N,x_2^N,\ldots, x^N_{\lfloor T\log N \rfloor})$ such that
\begin{equation*}\{\sqrt{T\log N} \le j \le T\log N -\sqrt{T\log N}: |x_j^N-x_{j-1}^N| \le (\log N)^{-100} \} =\emptyset.
\end{equation*}
The exponents are not sharp, but they suffice for the proof of the continuous version in Theorem \ref{mr2} given below, see \eqref{100} and the subsequent argument.
\end{remark}

\begin{cor}\label{cor:dcBound}
    For all $x,M > 0$ there exist $C>0$ and $N_0\in \mathbb N$ such that
    \begin{equation*}
        \sup_{\mathbf x}  P\big( N^{-1} \tau_{(N,s\log N)}^{\mathbf x} \leq x \big) \leq C e^{-M  s}
    \end{equation*}
    uniformly over $s \ge 0$ and $N\in \{N_0,N_0+1,\ldots\}$. Here the supremum is over deterministic infinite-length sequences $\x=(x_1,x_2,\dots)$ with all $x_j$ staying fixed within a finite set $F\subset \mathbb Z^2$ (the constant $C$ may depend on $F$).
\end{cor}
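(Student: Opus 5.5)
The plan is a Chernoff bound: control the Laplace transform of $N^{-1}\tau^{\x}_{(N,s\log N)}$ at a large negative argument. For $\theta>0$, Markov's inequality gives
\begin{equation*}
P\big(N^{-1}\tau^{\x}_{(N,s\log N)}\le x\big)\le e^{\theta x}\, E\big[e^{-\theta N^{-1}\tau^{\x}_{(N,s\log N)}}\big]=e^{\theta x}\prod_{j=1}^{s\log N} E\big[e^{-\theta N^{-1}T^{x_{j-1}\to x_j}_{(N,j)}}\big],
\end{equation*}
where the product uses the independence of the renewal variables. The goal is a bound on each factor that is uniform in $j$ and in the choice of $x_{j-1},x_j\in F$.

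For a single factor, I would write
\begin{equation*}
E\big[e^{-\theta T/N}\big]=1-\frac{1}{\mathscr S_N(x_{j-1},x_j)}\sum_{r=1}^N\big(1-e^{-\theta r/N}\big)\pN{\mathrm{dif}}(r;x_{j-1},x_j).
\end{equation*}
By the Remark after Lemma \ref{flem}, applied with $\ell(u)=1-e^{-\theta u}$ (which vanishes at $0$, so the case $s=0$ is allowed), the sum converges to $\nu_\infty(x_j)\int_0^1 u^{-1}(1-e^{-\theta u})\,du$. Assumption \ref{a1} \eqref{a1logasymptotic} gives $\mathscr S_N(x_{j-1},x_j)=\nu_\infty(x_j)\log N(1+o(1))$. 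Both limits are uniform over the finitely many pairs in $F\times F$. Hence, for $N\ge N_0(\theta,F)$,
\begin{equation*}
E\big[e^{-\theta T/N}\big]\le 1-\frac{c_\theta}{2\log N}\le \exp\Big(-\frac{c_\theta}{2\log N}\Big),\qquad c_\theta:=\int_0^1\frac{1-e^{-\theta u}}{u}\,du.
\end{equation*}
Here $c_\theta\sim\log\theta$ as $\theta\to\infty$, which matches the Dickman Laplace exponent of Definition \ref{dickman}.

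Multiplying the $s\log N$ factors gives $P(\cdots)\le e^{\theta x}e^{-s c_\theta/2}$. Given $M$, choose $\theta$ large enough that $c_\theta\ge 2M$; this is possible since $c_\theta\to\infty$. Then set $C=e^{\theta x}$ and take $N_0$ from the previous step for this $\theta$. If $s\log N$ is not an integer, the floor convention changes only one factor, and that factor is at most $1$, so the bound still holds.

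The main obstacle is uniformity. It rests on two points: $F$ is finite, and the limit in the Remark after Lemma \ref{flem} holds for fixed endpoints with the starting gap bounded (case \eqref{flem2} of Lemma \ref{flem}). The conclusion must also hold uniformly over all $s\ge0$, not just for $s$ in compacts. Because each factor is bounded individually and the bound is multiplicative, the dependence on $s$ is exactly exponential, so no further argument is needed there.
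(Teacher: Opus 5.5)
Your proof is correct, and it takes a genuinely different route from the paper's.

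\textbf{The paper's route.} The paper argues by comparison with the limiting subordinator, in four steps. First, independence and non-negativity of the increments give the block sub-multiplicativity $\sup_{\x}P(N^{-1}\tau^{\x}_{(N,Km\log N)}\le x)\le \sup_{\x}P(N^{-1}\tau^{\x}_{(N,K\log N)}\le x)^m$. Second, Theorem \ref{convergence_dickman}, together with a crude geometric bound for large $K$, shows that these block probabilities lie within some $\epsilon_N\to 0$ of $P(Y_K\le x)$, uniformly in $K$ and $\x$. Third, it imports the Dickman small-ball estimate $P(Y_K\le x)\le Ce^{-cK\log K}$ from \cite{MR4017119}. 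Fourth, it lets the block length $K=K_N$ grow slowly with $N$.

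\textbf{Your route.} You reopen the one-increment computation that underlies Theorem \ref{convergence_dickman}: the same application of case \eqref{flem2} of Lemma \ref{flem}, with the real exponent $-\theta$ in place of $i\mu$. You then close with a Chernoff bound. Since $\theta>0$ and $T\ge 0$, this Laplace transform avoids the tail issues that made the paper use Fourier transforms there.

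\textbf{What each buys.} Your argument is shorter and self-contained. It needs no external Dickman tail estimate and no diagonal choice of $K_N$. It treats all $s\ge 0$ at once rather than only $s\in K\mathbb N$. Its constants are explicit: $C$ of order $e^{\theta x}$, with $\theta$ chosen so that $\int_0^1 u^{-1}(1-e^{-\theta u})\,du\ge 2M$. The paper's route is closer in spirit to the expected optimal decay $e^{-cs\log s}$. However, because $K_N$ must diverge slowly, it too only delivers an arbitrary exponential rate.

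\textbf{One small correction.} Your justification for the floor is backwards. Each factor is at most $1$, so having only $\lfloor s\log N\rfloor$ factors makes the product larger, not smaller. The correct bookkeeping is $\lfloor s\log N\rfloor\ge s\log N-1$. This costs a factor $e^{c_\theta/(2\log N)}\le e^{c_\theta/2}$, which you absorb into $C$. The same estimate covers the empty product when $s<1/\log N$. If $x_0$ is not required to lie in $F$, you can also discard the first increment at the cost of one more such factor.
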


This corollary states that the left side decays to zero faster than any exponential rate, uniformly in $N$. It will be important in dominated convergence arguments. For instance, it implies that the summand in the last expression of Theorem \ref{2.3} is dominated uniformly in $N$ by some integrable function of $s$ for arbitrary choices of fine-tuning constant $\vartheta\in\mathbb R$, which is not immediately clear for $\vartheta>0$. The bound is not sharp. We expect that the optimal bound will be something like $Ce^{-\alpha s\log_+ s}$ for constants $C,\alpha$ independent of $N$, but we will not need this strong a bound, which is more difficult to prove besides. 

\begin{proof}
    Fix $K>0$ and $m\in \mathbb N$. We consider $s$ of the form $Km$. In order for $ N^{-1} \tau_{(N,Km\log N)}^{\x} \leq x$, the increments must also satisfy $N^{-1} \tau_{(N,Km\log N)}^{\x} - N^{-1} \tau_{(N,K(m-1)\log N)}^{\x} \le x$, so by independence and non-negativity of the increments we have
    \begin{equation}
        \sup_{\x} P ( N^{-1} \tau_{(N,Km\log N)}^{\x} \leq x) \leq \sup_{\x} P( N^{-1} \tau_{(N,K\log N)}^{\x} \le x)^m.
    \end{equation}
    We claim there exists $\epsilon_N\downarrow 0$
    such that 
    \begin{equation}\label{a.b.}\sup_{K\in\mathbb N} \sup_{\x} \big| P(Y_K\le x) - P( N^{-1} \tau_{(N,K\log N)}^{\x} \le x) \big| \leq \epsilon_N 
    ,\end{equation} where $Y_s$ is the Dickman subordinator. Indeed, for any fixed value of $K\in \mathbb N$ the result follows from Theorem \ref{convergence_dickman}, which implies convergence of the respective CDFs. 
    Meanwhile, to deal with large values of $K$, we note that there exists $\alpha<1$ such that both quantities satisfy the uniform bounds $P(Y_K\le x) \leq P(Y_1 \le x)^K \le \alpha^K$ and $\sup_{\x} P\big( N^{-1} \tau_{(N,K\log N)}^{\x} \le x \big) \leq \sup_{\x} P\big( N^{-1} \tau_{(N,\log N)}^{\x} \le x \big)^K \le \alpha^K$, where the last estimate uses Theorem \ref{convergence_dickman}. All of this is enough to imply the above bound \eqref{a.b.} since it implies that along any two sequences $K_N$ and $\x^N$ the above difference goes to zero: $P(Y_{K_N}\le x) - P\big( N^{-1} \tau_{(N,K\log N)}^{\x^N} \le x\big) \to 0$ as $N\to \infty$.
    Combining the previous two bounds yields
    \begin{equation*}
        \sup_{\x} P \big( N^{-1} \tau_{(N,Km\log N)}^{\x} \leq M\big) \leq \big( \epsilon_N + P(Y_K \le x) \big)^m \leq 2^m \big(\epsilon_N^m + Ce^{-c Km\log K} \big), 
    \end{equation*}
    where we used the result of \cite[Equation (1.4)]{MR4017119} which implies that $P(Y_K \leq x)\le Ce^{-c K\log K}$ for some $C,c>0$ possibly depending on $x$. Now choose $K_N\uparrow \infty$ diverging slowly enough that $\delta_N:=(2\epsilon_N)^{1/K_N} \to 0$ as $N\to \infty$: for example, $K_N:= \floor{\log\log\big(\frac1{2\epsilon_N}\big)}$ will do. Recalling that $K_N\cdot m=s$, we see that the right hand side is bounded above by $\delta_N^{K_Nm} + Ce^{K_Nm(\log 2 -\log K_N)} = \delta_N^s + Ce^{cs(\log 2-\log K_N)}$. Given $M>0$ arbitrarily large, take $N_0$ large enough that $\delta_N<e^{-M}$ and $c(\log K_N - \log 2) >M$ for $N>N_0$, making $C$ bigger if necessary. Then the claim  follows.
\end{proof}

\subsection{Nonconstant macroscopically varying starting points}
Here we study a version of the above results where the deterministic starting point $x_0$ is replaced by some value $x_N\in \mathbb Z^2$ that varies with $N$, such that $N^{-1/2}x_N \to a\in \mathbb R^2$ as $N\to \infty. $ We will see that the limit depends on $a$ in a nontrivial way. If $\x\in (\mathbb R^2)^h$ and $\Phi \in C_c^\infty((\mathbb R^2)^h)$ we will use the notation
\begin{equation*}
    \g_t^{\otimes h}(\x,\Phi):= \int_{\mathbb (\R^2)^h} \g^{\otimes h}(t,\x-\y) \Phi(\y) d\y,
\end{equation*}
recalling that $\g^{\otimes h}(t,\x)= (2\pi t)^{-h} e^{-|\x|^2/2t}$ is the standard heat kernel on $(\mathbb R^2)^h$.
\begin{thm}\label{thm:macro}
    Let $t\in (0,1]$, and let $\Psi \in C_c^\infty((\mathbb R^2)^2)$. Suppose that $\x_N \to \x=(x_1,x_2)$, where $x_1\ne x_2$.
    Then we have the limit 
    \begin{align*}
        &\lim_{N\to\infty} \sum_{k=0}^\infty \sum_{0 < r_1< \cdots < r_k < Nt} \EN{2}^{\x_N} \bigg[\prod_{j=1}^k \left(e^{\boldsymbol{\zeta}_N(R_{r_j}^1 - R^2_{r_j})}-1\right)\Psi\left(\frac{\mathbf R_{Nt}}{\sqrt{N}}\right)\bigg] = \mathbf g_t^{\otimes 2}(\x, \Psi) \\
        &+ 4\pi \int_{(\mathbb R^2)^2} \Psi(w,w') \g\big(\tfrac{t}{2},\tfrac{x_1+x_2}2 - \tfrac{w+w'}{2}\big) \int_{0<u<s<t} \mathbf g(2u,x_1 - x_2) G_{\vartheta} (s-u) \mathbf g(2(t-s),w-w') \,du\,ds\,dw\,dw'.
    \end{align*}
\end{thm}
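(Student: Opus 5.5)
The plan is a first-collision/last-collision decomposition of the chaos expansion. The $k=0$ term equals $\EN{2}^{\x_N}[\Psi(N^{-1/2}\mathbf R_{Nt})]$, and by the invariance principle (Assumption \ref{a1} \eqref{a1invarianceprinciple}) it converges to $\g_t^{\otimes 2}(\x,\Psi)$. For $k\ge 1$, let $r_1$ be the first and $r_k$ the last collision time, and set $a=R^2_{r_1}$, $x_1'=R^1_{r_1}-R^2_{r_1}$, $b=R^2_{r_k}$, $y=R^1_{r_k}-R^2_{r_k}$. The Markov property of $\PN{2}$ then writes the $k\ge1$ part as
\begin{align*}
\sum_{0<r_1\le r_k<Nt}\sum_{a,x_1',b,y}\, \pN{2}(r_1;\x_N,(a+x_1',a))\,(e^{\z_N(x_1')}-1)\, V_N\,\EN{2}^{(b+y,b)}\big[\Psi(N^{-1/2}\mathbf R_{Nt-r_k})\big],
\end{align*}
where $V_N$ is the renewal kernel from $(r_1,a,x_1')$ to $(r_k,b,y)$. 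Its $k=1$ part is the identity, and in general it is exactly the quantity treated in Proposition \ref{prop:RiemannSum} with $f=e^{\z_N}-1$, applied with $h$ taken as a delta function. Since all gaps lie in the finite set $\mathrm{supp}\,\z_N$, the spatial variables $a,b$ are the only macroscopic ones.

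\textbf{Step 1 (first collision).} Apply Lemma \ref{flem} in the well-separated case \eqref{flem1}, weighted by the bounded function $e^{\z_N}-1$ summed over the finitely many gaps $x_1'$. This gives that the measure in $(r_1/N,a/\sqrt N)$ converges to $4\pi\,\g(2u,x_1-x_2)\,\g(2u,z)\,du\,dz$ with $a/\sqrt N\approx \tfrac12(z+x_1+x_2)$, times $\sum_{x'}\nu_\infty(x')(e^{\z_N(x')}-1)$. This last factor is $\nu_\infty(f)\approx\Lambda_N(f)\approx 1/\log N$. The missing $\log N$ is recovered from the Riemann sum in Step 2.

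\textbf{Step 2 (middle renewal).} By Proposition \ref{prop:RiemannSum} (with Lemma \ref{2.1}), the renewal from $(r_1,a)$ to $(r_k,b)$ equals $\sum_{s\in(\log N)^{-1}\mathbb N}e^{\vartheta s+o(1)}\tilde P_{N,s\log N}(\tau=r_k-r_1,\ S=b-a)$. I would pair this with smooth test functions in $(r_k-r_1)/N$ and $(b-a)/\sqrt N$, rather than using pointwise local probabilities. Theorem \ref{convergence_dickman} then gives convergence to
\begin{equation*}
\log N\int_0^\infty e^{\vartheta s}\,E\big[\varphi(Y_s,\tfrac1{\sqrt2}W_{Y_s})\big]\,ds .
\end{equation*}
The $\log N$ cancels the $1/\log N$ from Step 1. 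Corollary \ref{cor:dcBound} supplies the domination in $s$ needed to exchange the limit with the sum, uniformly in $N$. The identity $\int_0^\infty e^{\vartheta s}f_s(v)\,ds=G_\vartheta(v)$ (\cite{MR4017119}) identifies the time kernel, and $\tfrac1{\sqrt 2}W_{Y_s}$ contributes the Gaussian $\g(v/2,\cdot)$ in space.

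\textbf{Step 3 (last collision to time $Nt$).} Starting from a bounded gap at position $b$, the invariance principle sends $\EN{2}^{(b+y,b)}[\Psi(N^{-1/2}\mathbf R_{Nt-r_k})]$ to $\g^{\otimes2}_{t-s}((b',b'),\Psi)$, uniformly on compacts, where $b'$ is the macroscopic position. Combining Steps 1--3 and integrating out the intermediate positions with Gaussian convolution identities (as in the Remark after Lemma \ref{flem}) yields the stated formula. The center-of-mass variables collapse into $\g(t/2,\cdot)$ and the last gap into $\g(2(t-s),w-w')$.

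\textbf{Main obstacle.} The main difficulty is justifying the interchange of limits near the degenerate regions $r_k-r_1\ll N$, $r_1\ll N$, and $Nt-r_k\ll N$, where the above weak-convergence statements lose uniformity. Near $r_1\approx0$ the well-separation of $\x_N$ handles things, via Lemma \ref{hku}. Near $Nt-r_k\approx0$ I would use the boundedness of $\Psi$. The delicate regime is small $\tau$: there $G_\vartheta(v)\sim v^{-1}/\log^2(1/v)$ is integrable but only barely. I would truncate to $\tau\ge\epsilon N$, showing the remainder is $o(1)$ as $\epsilon\to0$ uniformly in $N$. For this I would bound it by the one-dimensional time renewal $\mathcal E^f$ from Theorem \ref{2.3}, restricted to $[0,\epsilon N]$; the Riemann-sum representation with Corollary \ref{cor:dcBound} controls it by $\int e^{\vartheta s}P(Y_s\le\epsilon)\,ds\to0$.
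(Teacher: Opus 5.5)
Your proposal is correct and follows essentially the same route as the paper. The $k=0$ term goes through the invariance principle. The rest is a first/last-collision decomposition: the middle renewal and final segment are handled by Proposition \ref{prop:RiemannSum}, Theorem \ref{convergence_dickman} and Corollary \ref{cor:dcBound}, which the paper packages as the uniform-in-$(u,a,x)$ estimate for $F_N$ in Lemma \ref{lem:Fdef}. The first segment is handled by Lemma \ref{flem} together with $\nu_\infty(e^{\z_N}-1)\log N\to 1$.

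The differences are minor. The paper disposes of the $k=1$ (identity) term separately as $O(1/\log N)$, since Proposition \ref{prop:RiemannSum} does not cover it. It also needs no small-$\tau$ truncation: the test function $\ind_{\{Y_s\le t-u\}}\g^{\otimes 2}_{t-u-Y_s}(\cdot,\Psi)$ is bounded and discontinuous only on a null set, so Corollary \ref{cor:dcBound} alone justifies dominated convergence.
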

This theorem will be the key input to computing the limit of the second moment. To prove it, we will need the following lemma about the behavior of the renewal function after summing out over the time parameter $v$ and terminal points $y,y'$. 
That is, we need to estimate the quantity
\begin{align}\label{eq:Fdef}
    \notag F_N(t, u ,a ,x)
    := \sum_{v = Nu}^{Nt} \sum_{b,y \in \mathbb{Z}^2}
    U_N\Big(e^{\boldsymbol{\z}_N}-1; v-Nu &; \sqrt{N}a, \sqrt{N} a +x, b, b+ y\Big) \\
    &\times \big(e^{\z_N(y)} - 1\big) \EN{2}^{(b,\,b+y)}\bigg[ \Psi\bigg(\frac{\mathbf R_{N(t-v)}}{\sqrt{N}}\bigg)\bigg].
\end{align}

\begin{lem}\label{lem:Fdef}
    Fix all $t \in (0,1]$, $A > 0$, $\Psi \in C^\infty_c((\R^2)^2)$.
    Uniformly over $u \in [0,t] \cap N^{-1}\Z$, $a \in (N^{-1/2} \mathbb Z)^2$ with $|a|\le A$, and $x$ in some finite subset of $\mathbb Z^2$, we have
    \begin{equation*}
        \bigg|\frac{F_N(t,u,a,x)}{\log N} - \int_{u}^t \int_{\mathbb R^2}G_{\vartheta}(v - u) \mathbf g_{\tfrac{v}{2} - \tfrac{u}{2}}(b - a) \mathbf g_{t - v}^{\otimes 2}((b,b), \Psi) \,dv\,db\bigg| \to 0
    \end{equation*}
    as $N \to \infty$.
    Here, $U_N$ is the renewal function defined in \eqref{eq:renewal}.
\end{lem}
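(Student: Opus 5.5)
The plan is to substitute Proposition \ref{prop:RiemannSum} into the definition \eqref{eq:Fdef} and read the result as a Riemann sum. Fix $v$ and the terminal point $b$. The sum over $y$ of $U_N(\cdots;b,b+y)(e^{\z_N(y)}-1)$ times the terminal expectation is exactly the left side of Proposition \ref{prop:RiemannSum}, with time $v-Nu$ and with $h(y')$ the function $\EN{2}^{(b,y')}[\Psi(\mathbf R_{N(t-v)}/\sqrt N)]$. The gap convention needs adjusting, but that is only a relabelling. Summing over $v$ and $b$ then removes the indicators $\{\tau=v-Nu\}$ and $\{S=b-\sqrt N a\}$. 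This gives
\begin{equation*}
F_N(t,u,a,x)=\sum_{s\in(\log N)^{-1}\mathbb N} e^{\vartheta s+o(1)}\,\tilde E_{N,s\log N}\Big[\ind_{\{\tau_{(N,s\log N)}^{\mathbf X}\le N(t-u)\}}\,\Phi_N\big(u+N^{-1}\tau,\ \sqrt N a+S,\ X_{s\log N}\big)\Big],
\end{equation*}
where $\Phi_N(v',b,z)$ denotes the terminal expectation of $\Psi$ started from $(b,b+z)$ run for time $N(t-v')$. Write $\Phi_N(v',b,z)=\EN{2}^{(b,b+z)}[\Psi(\mathbf R_{N(t-v')}/\sqrt N)]$.

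The sum over $s$ has mesh $(\log N)^{-1}$, so $F_N/\log N$ is a Riemann sum in $s$. For each fixed $s$ I will take the limit of the summand by combining three inputs:
\begin{itemize}
\item Theorem \ref{convergence_dickman}, which under $\tilde P_{N,s\log N}$ gives $(N^{-1}\tau,N^{-1/2}S)\Rightarrow (Y_s,\tfrac1{\sqrt2}W_{Y_s})$, since $\mathbf X$ lives in the finite set $\mathrm{supp}\,\z_N$;
\item the invariance principle, Assumption \ref{a1} \eqref{a1invarianceprinciple}, which gives $\Phi_N(v',\sqrt N b',z)\to \g^{\otimes2}_{t-v'}((b',b'),\Psi)$ locally uniformly for $v'<t$ and bounded $z$;
\item the resulting function, which is bounded and continuous on $\{Y_s<t-u\}$.
\end{itemize}
Boundary atoms of the indicator are negligible because $Y_s$ has a density. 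The limit summand is therefore
\begin{equation*}
e^{\vartheta s}\,\E\Big[\ind_{\{Y_s<t-u\}}\,\g^{\otimes2}_{t-u-Y_s}\big((a+\tfrac1{\sqrt2}W_{Y_s},a+\tfrac1{\sqrt2}W_{Y_s}),\Psi\big)\Big].
\end{equation*}
Using $\int_0^\infty e^{\vartheta s}f_s(r)\,ds=G_\vartheta(r)$, which is the standard Dickman renewal identity from \cite{MR4017119}, integrating over $s$ produces $\int_u^t G_\vartheta(v-u)\int \g_{(v-u)/2}(b-a)\,\g^{\otimes2}_{t-v}((b,b),\Psi)\,db\,dv$. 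This is the claimed limit.

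To pass the limit through the sum over $s$ I will use dominated convergence. Corollary \ref{cor:dcBound} with $M>\vartheta+1$ bounds the summand by $\|\Psi\|_\infty Ce^{(\vartheta-M)s}$ uniformly in $N\ge N_0$. The same bound makes the tail $s>s_0$ uniformly small, and the uniform convergence on compact $s$-ranges comes from the fact that the convergence in Theorem \ref{convergence_dickman} is uniform over $\x_N$.

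Uniformity in $u$, $a$ and $x$ follows by a subsequence argument. Along any sequences $u_N\to u$, $a_N\to a$ and $x_N$ (which is eventually constant, since $x$ lies in a finite set), the argument above still applies, and the right side is continuous in $(u,a)$.

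The main obstacle is the terminal factor near the end time, where $v$ is close to $Nt$. Here the invariance principle gives no uniform control, and we also need $\Phi_N$ to converge uniformly over starting points with small gap $z$. The factor $\Phi_N$ is bounded by $\|\Psi\|_\infty$, so I will cut out the event $\{t-u-\epsilon<N^{-1}\tau\le t-u\}$. By the convergence to $Y_s$, and the density of $Y_s$, this event has probability $O(\epsilon)$ uniformly in $s\le s_0$. On the complement, $N(t-v)\ge N\epsilon$, and the invariance principle from colliding starting points $(b,b+z)$ applies directly because Assumption \ref{a1} \eqref{a1invarianceprinciple} permits coinciding limits. Sending $\epsilon\downarrow0$ finishes the proof.
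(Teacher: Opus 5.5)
Your proposal is correct and follows essentially the same route as the paper: the renewal expansion from Proposition \ref{prop:RiemannSum}, a Riemann sum in $s$, Theorem \ref{convergence_dickman} together with the invariance principle for the terminal factor (with the discontinuity at $Y_s=t-u$ negligible), domination by Corollary \ref{cor:dcBound}, the identity $\int_0^\infty e^{\vartheta s}f_s\,ds=G_\vartheta$, and a subsequence argument for uniformity. Two small remarks: the $v=Nu$ term is not covered by Proposition \ref{prop:RiemannSum}, since renewal times are $\ge 1$, so it must be split off as the paper does, which is harmless because it is $O(1/\log N)$; and your $\epsilon$-cutoff near the terminal time is unnecessary, because the $C([0,T])$ invariance principle already gives convergence of the terminal factor uniformly down to zero remaining time (this is how the paper proceeds, and it also avoids your $O(\epsilon)$ bound, which is not uniform in small $s$ when $u$ is near $t$).
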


\begin{proof}
    By convention $U_N(f;0;x,x',y,y') = \ind_{\{y = x,y' = x'\}}$,
    so that the first term in the sum \eqref{eq:Fdef} is $(e^{\z_N} -1)\EN{2}^{(\sqrt N a,\sqrt N a + x)}[\Phi(N^{-1/2}\mathbf R_{N(t - u)})] =: F_N^{(0)}(t,u,a,x)$.
    We will actually prove that
    \begin{equation*}
        \bigg|F_N(t,u,a,x) - F_N^{(0)}(t,u,a,x) - \log N \int_{u}^t \int_{\mathbb R^2}G_{\vartheta}(v - u) \mathbf g_{\tfrac{v}{2} - \tfrac{u}{2}}(b - a) \mathbf g_{t - v}^{\otimes 2}((b,b), \Psi) \,dv\,db\bigg| = o(\log N),
    \end{equation*}
    uniformly over $u,a,x$ as in the statement of the lemma.
    Since $\norm{\z_N}_\infty \leq C/\log N$ and $\z_N$ has uniformly finite support by Assumption \ref{a1} \eqref{a1markov}, we have $\norm{e^{\z_N} - 1}_\infty \leq C'/\log N$ by Taylor expansion and therefore $\sup_{u,a,x} |F_N^{(0)}(t,u,a,x)| \leq C'\norm{\Psi}_\infty/\log N$ for all $N$.
    Here the supremum is again over the set of all relevant $u,a,x$, which does depend on $N$.
    This shows in particular that proving the bound in the display above proves the lemma.
    This also proves the lemma in the case $u = t$.
    
    The purpose of splitting off $F_N^{(0)}$ is that we can analyze the remaining terms using the exact expansion from the proof of Proposition \ref{prop:RiemannSum}.
    Define 
    \begin{equation}\label{hn1} h_N(t,a,x) := \EN{2}^{(\sqrt N a,\sqrt N a - x)}[\Psi(N^{-1/2}\mathbf R_{Nt})],
    \end{equation}
    where $t\in N^{-1}\mathbb Z_{\ge 0}$, $a\in N^{-1/2} \mathbb Z^2$ and $x$ is in some fixed finite subset of $\mathbb Z^2$. Then we get the exact expansion 
    \begin{align}\label{eq:Fdefexact}
        \notag (F_N - F_N^{(0)})(t,u,a&,x) = \sum_{k = 0}^\infty
        \big(\log N \cdot \Lambda_N(e^{\z_N} - 1) \big)^ke^{\varepsilon_N(k)} \\
        &\times \tilde{E}_{N,k}^x\bigg[ \ind_{\{\tau_{(N,k)}^{\seq X} \leq N(t - u)\}}
        \cdot h_N\big(t - u - N^{-1}\tau_{(N,k)}^{\seq X},a + N^{-1/2}S_{(N,k)}^{\seq X},X_k\big) \bigg],
    \end{align}
    where $(\varepsilon_N(k))_k$ is a sequence, as in Theorem \ref{2.3}, such that $\sup_k|\varepsilon_N(k)| \to 0$,
    and $\seq X$ is sampled from the tilted measure $\tilde P_{N,k}^x = \tilde P_{N,k}(\bullet| X_0 = x)$. In the display above, we have summed over the slice in \eqref{eq:RiemannSum} given by $\{\tau_{(N,k)}^{\seq X} = v,S_{(N,k)}^{\seq X} = b - a\}$ over $Nu < v \leq Nt$ and $b \in \Z^2$, just as in the sums that define $F_N - F_N^0$.
    Summing over time puts us over the event
    $\{\tau_{(N,k)}^{\seq X} \leq N(t - u)\}$,
    while summing out space starts the process $\mathbf R_{N(t - v)}$ with law $\PN{2}^{(b,b - y)}$ at $b = {a + N^{-1/2}S_{(N,k)}^{\seq X}}$, by the Markov property.
    Then by Corollary \ref{cor:dcBound} the expectation in \eqref{eq:Fdefexact} is bounded above by $\norm{\Psi}_\infty P_{N,k}^x\big( \tau_{(N,k)}^{\seq X} \leq N(t - u)\big) \leq Ce^{-M (\log N)^{-1}k}$, uniformly over $u,a,x$. This bound will be useful for dominated convergence just below.
    
    Write $L_{N,k}(u,a,x)$ for the expectation in \eqref{eq:Fdefexact}.
    Arguing as in Proposition \ref{prop:RiemannSum}, we can replace the renewal coefficients $\big\{\big(\log N \cdot \Lambda_N(e^{\z_N} - 1) \big)^ke^{\varepsilon_N(k)}\big\}_{k \in \N}$ by $\{e^{\vartheta s}\}_{s \in (\log N)^{-1}\N}$
    uniformly over $u,a,x$.
    If we view $L_{N,k}(u,a,x)$ as the step function $L_N(s,u,a,x) := L_{N,\lceil s \log N \rceil}(u,a,x)$ for $s > 0$, then by dominated convergence
    \begin{equation*}
        \bigg|\frac{1}{\log N}\sum_{k \geq 1} e^{\vartheta(\log N)^{-1}k} L_{N,k}(u,a,x) - \int_0^\infty e^{\vartheta s} L_N(s,u,a,x)ds \bigg| \to 0,
    \end{equation*}
    uniformly in $u,a,x$.
    Therefore it remains to show that $L_N(s)$ approximates the double integral in the statement of the lemma, that is, \begin{equation*}
        L_N(s,u,a,x) \to \int_u^t \int_{\mathbb R^2} G_\vartheta(v-u) \mathbf g_{\frac{v-u}{2}} (b-a) \mathbf g^{\otimes 2}_{t-v} ((b,b),\Psi) dvdb
    \end{equation*}
    uniformly over $u,a,x$.

    To this end, we first approximate the heat kernel uniformly by $h_N$ from \eqref{hn1}.
    That is, we show that $\sup_{u,a,x} |h_N(u,a,x) - \g_u^{\otimes 2}((a,a),\Psi)| \to 0$,
    where $\g_0^{\otimes 2}((a,a),\Psi) := \Psi(a,a)$ and the supremum is taken over $u,x$ as in the statement of the lemma, but over all $a \in \Z^2$.
    We need to do this because we have used the semigroup property to derive \eqref{eq:Fdefexact}.
    We will derive a uniform control over $\Z^2$ separately on a compact subset and on its complement.
    See that the limit follows immediately from the invariance principle of Assumption \ref{a1} \eqref{a1invarianceprinciple} and continuity of Brownian motion for fixed sequences $u_N \to u$, $a_N \to a$, and $x_N = x$. We can always guarantee $x_N = x$ by passing to a subsequence, since the $x$ live in a finite set. Over compacts, we get uniformity over $u$ and $a$.
    On the other hand, if $|a|$ is big, then $(\sqrt N a,\sqrt N a + x)$ is at least distance $c\sqrt N |a|$ away from each of the $O(N^2)$ lattice points in $\sqrt N \supp \Psi$.
    Therefore the heat kernel upper bound of Assumption \ref{a1} \eqref{a1heatkernel} shows that
    $|h_N(u,a,x)| \leq C\norm{\Psi}_\infty N^2(Nu)^{-2}(1 + (Nu)^{-1/2}c\sqrt N|a|)^{-S} = C\norm{\Psi}_\infty u^{-2}(1 + u^{-1/2}c|a|)^{-S} \leq C'|a|^{-S}$ for $0 < u \leq t$ and large enough $S$.
    At $u = 0$ we have $h_N(0,a,x) = \Psi(a,a + x/\sqrt N) \to 0$ as $|a| \to \infty$.
    So by the analogous tail estimates on $\g_u^{\otimes 2}((a,a),\Psi)$ we get
    \begin{equation*}
        \lim_{\lambda \to \infty} \sup_N \sup_{u \in [0,t]} \sup_{|a|>\lambda } \sup_{x\in F} \;(|h_N(u,a,x)| + |\g_u^{\otimes 2}((a,a),\Psi)|) \to 0,
    \end{equation*}
    where $F$ is any fixed finite subset of $\mathbb Z^2)$.
    The claimed uniformity follows from this estimate.
    Then Corollary \ref{cor:dcBound} and dominated convergence allow us to swap $h_N$ in \eqref{eq:Fdefexact} for $\g^{\otimes 2}$. Thus
    \begin{align}\label{eq:FdefUnif}
        \notag(F_N &- F_N^0)(t,u,a,x) = \log N \int_0^\infty e^{\vartheta s} \tilde E_{N,s \log N}^x\bigg[ \ind_{\big\{ N^{-1}\tau_{N,s \log N} \leq t - u \big\}} \\
        &\times \g^{\otimes 2}_{t - u - N^{-1}\tau^{\seq X}_{(N,s\log N)}}\Big((a + N^{-1/2}S_{(N,s\log N)},a + N^{-1/2}S_{(N,s\log N)}),\Psi\Big) \bigg] \, ds + o(\log N)
    \end{align}
    uniformly over relevant $u,a,x$, and from here it remains to take a joint limit of the renewal process $(\tau^{\seq X}, S^{\seq X})$.
    Indeed, compactness again allows us to pass to a subsequence such that $u_N \to u$, $a_N \to a$, and $x_N = x$.
    Then by the joint invariance principle of Theorem \ref{convergence_dickman} we have convergence of the tilted renewal expectations in the integrand in \eqref{eq:FdefUnif} to the quantity
    \begin{equation}\label{eq:FdefIntegrand}
        E\bigg[ \ind_{\{Y_s \leq t - u\}} \g^{\otimes 2}_{t - u - Y_s}\bigg( \Big(a + \frac{1}{\sqrt 2}W_{Y_s},a +  \frac{1}{\sqrt 2}W_{Y_s}\Big),\Psi \bigg) \bigg],
    \end{equation}
    where the expectation is against the joint law of the Dickman subordinator and its subordinated Brownian motion.
    Here, the application of Theorem \ref{convergence_dickman} is valid because the only discontinuity occurs at the point $Y_s = t - u$, which has probability $0$ since $Y_s$ has a density $f_s$.
    Then by the uniform bound over $s \geq 0$ in Corollary \ref{cor:dcBound} and dominated convergence,
    the entire integral in \eqref{eq:FdefUnif} converges to the quantity
    \begin{equation}
        \int_0^\infty e^{\vartheta s} E\bigg[ \ind_{\{Y_s \leq t - u\}} \g^{\otimes 2}_{t - u - Y_s}\bigg( \Big(a + \frac{1}{\sqrt 2}W_{Y_s},a +  \frac{1}{\sqrt 2}W_{Y_s}\Big),\Psi \bigg) \bigg] \, ds,
    \end{equation}
    uniformly over $u,a,x$.
    It remains to identify this limit.
    But the same bounds used for dominated convergence justify applying Fubini's theorem, so that \eqref{eq:FdefIntegrand} is equal to
    \begin{equation*}
        \int_0^{t - u} f_s(r) \int_{\R^2} \g_{\frac{r}{2}}(b - a) \g^{\otimes 2}_{t - u -r}((b,b),\Psi) \,db \,dr \\
        = \int_0^{t - u} \int_{\R^2} G_\vartheta(r)\g_{\frac{r}{2}}(b - a)\g^{\otimes 2}_{t - u -r}((b,b),\Psi) \,db \,dr,
    \end{equation*}
    as claimed.
\end{proof}

\begin{proof}[Proof of Theorem \ref{thm:macro}]
We will break this proof into several steps. 

\smallskip
\noindent \textbf{Step 1. $k=0$ term.}
In this case, it follows from the invariance principle in Assumption \ref{a1} \eqref{a1invarianceprinciple} that  
\begin{equation}\label{g1}
     \lim_{N\to\infty}\sum_{y_1, y_2 \in \mathbb Z^2}  \EN{2}^{\x_N}\bigg[\ind_{\big\{ R^1_{Nt}= y_1, R^2_{Nt}=y_2 \big\}}\bigg] \Psi\big( \y/\sqrt N\big)
     = \lim_{N\to\infty} \EN{2}^{\x_N} \bigg[\Psi\left(\frac{\mathbf R_{Nt}}{\sqrt{N}}\right)\bigg]
     =  \mathbf g_t^{\otimes 2}(\x, \Psi).  
\end{equation}

\smallskip
\noindent\textbf{Step 2. $k=1$ term.}
This term will be negligible in the limit, but it is notationally convenient to handle it separately from the rest of the sum. It can be written 
\begin{equation}\label{g0}
    \sum_{u=0}^{Nt} \sum_{a,x\in \mathbb Z^2} \pN{2} (u, (x^1_N, x^2_N), (a,a+x)) \big(e^{\z_N(x)}-1\big) \EN{2}^{(a,a+x)} \bigg[ \Psi\bigg(\frac{\mathbf R_{Nt - v}}{\sqrt{N}}\bigg)\bigg].
\end{equation}
Use $|\psi|\leq \|\psi\|_{L^\infty} $ and $|e^{\z_N}-1| \leq \frac{C}{\log N} \ind_{\{|x|\le A\}},$ for some $A>0$ not depending on $N$. Recall the heat kernel upper bound from Assumption \ref{a1} \eqref{a1heatkernel}, which implies that for fixed $x$ one has $\mathbf p_{N,2} (u; (x^1_N, x^2_N), (a,a+x)) \leq Cu^{-2} (1+u^{-1/2}|a|)^{-S}$. Then we see that \eqref{g0} is upper bounded by $C/\log N$ as $N\to \infty$, and thus vanishes in the limit.

\smallskip
\noindent\textbf{Step 3. $k \geq 2$ terms.}
We claim the remaining terms decompose as follows:
\begin{align}
    \notag \sum_{k=2}^\infty \sum_{0 < r_1< \cdots <r_k < N t} &\EN{2}^{(x^1_N,x^2_N)} \Bigg[\prod_{j=1}^k \bigg(e^{\boldsymbol{\zeta}_N(R_{r_j}^1 - R^2_{r_j})}-1\bigg)\Psi\bigg(\frac{\mathbf R_{Nt}}{\sqrt{N}}\bigg)\Bigg] \\
    \notag &\!\!\!\!\!\!\!\!\!\!\!\! = \sum_{\substack{0 < u < v < Nt \\ a, x, b, y \in \Z^2}}\pN{2}(u, (x^1_N, x^2_N),(a ,a + x)) \big(e^{\boldsymbol{\zeta}_N (x)}-1\big) \\
    &\quad\quad\quad\;\; \cdot U_N(e^{\boldsymbol{\zeta}_N}-1; v -u; a, a + x, b , b +y ) \big(e^{\boldsymbol{\zeta}_N (y)}-1\big) \mathbf E^{(b, b + y )}_{N,2} \bigg[\Psi\bigg(\frac{\mathbf R_{Nt - v}}{\sqrt{N}}\bigg)\bigg], \notag
\end{align}
where $U_N(e^{\boldsymbol{\zeta}_N}-1; v -u; a, a + x, b , b +y ) $ is the renewal function defined in \eqref{eq:renewal}. 
To see this, let $u = r_1, v = r_k$. What we are doing here is starting from two points that are a macroscopic distance apart and running the system until they are a microscopic distance apart. We then run the renewal function up to another microscopic distance, at which point we again separate to a macroscopic distance tested against $\psi$.
Finally, note that we need to tack on  $\left(e^{\boldsymbol{\zeta}_N (x)}-1\right)$ and $ \left(e^{\boldsymbol{\zeta}_N (y)}-1\right)$ since $U_N$ does not by definition contain factors of $f = e^{\boldsymbol{\zeta}_N}-1$ evaluated at the starting and ending sites. 

Then the expression above can be rewritten in the form
\begin{equation}\label{g2}
     \sum_{\substack{0 < u < Nt \\ a, x \in \mathbb Z^2}} \pN{2}\big((u\, (x^1_N, x^2_N), (a, a + x)\big) \big(e^{\z_N(x)} - 1\big) F_N\Big(t, \tfrac{u}{N} ,\tfrac{a}{\sqrt{N}} , x\Big),
\end{equation}
whereby Lemma \ref{lem:Fdef} we have 
\begin{equation*}
    F_N(t, u ,a ,x) = \log N  \int_{u}^t \int_{\mathbb R^2}G_{\vartheta}(v - u) \mathbf g_{\tfrac{v}{2} - \tfrac{u}{2}}(b - a) \mathbf g_{t - v}^{\otimes 2}((b,b), \Psi) \,dv \,db + o(\log N)
\end{equation*}
where the error is uniform over all macroscopic variables lying in some compact set. Let us also define 
\begin{equation*}
    F(t, u, a):= \int_{u}^t \int_{\mathbb R^2}G_{\vartheta}(v - u) \mathbf g_{\tfrac{v}{2} - \tfrac{u}{2}}(b - a) \mathbf g^{\otimes 2}_{t - v}((b,b), \Psi) \,dv \,db.
\end{equation*}
Using Lemma \ref{flem}, and the fact that $|e^{\z_N}-1| \leq \frac{C}{\log N}$, we see that \eqref{g2} equals
\begin{align*}
    \log N &\cdot  4 \pi  \cdot \nu_\infty\big(e^{\boldsymbol{\zeta}_N} - 1\big) \int_0^t \mathbf g(2s,x_1-x_2) \int_{\mathbb R^2} \mathbf g(2s,z) F(t, s, \tfrac12 (z+x_1+x_2)) \,dz\, ds +o(1)\\
   &= \log N \cdot 4\pi \cdot \nu_\infty(e^{\boldsymbol{\zeta}_N}-1) \int_0^t \int_s^t \int_{\mathbb{R}^2} \int_{\mathbb{R}^2} \mathbf g(2s, x_1-x_2)\, \mathbf g(2s, z)  \, \\ 
   &\quad\quad\quad\quad\quad\quad\quad\quad\quad\quad \times G_\vartheta(v-s)\, \mathbf g_{\frac{v-s}{2}}\!\left(b - \tfrac{z+x_1+x_2}{2}\right) \mathbf g_{t-v}^{\otimes 2}((b,b),\Psi)\, db\, dz\, dv\, ds +o(1).
\end{align*}
Finally, note that 
$\nu_\infty\left(e^{\boldsymbol{\zeta}_N} - 1\right) = \frac{1}{\log N}(1 + o(1))$
by \eqref{crit_tuning} and Lemma \ref{2.1}.
Putting this all together, we get that the limit of the sum over terms corresponding to $k \geq 2$ is  
\begin{align*}
    &4\pi \int_0^t \int_s^t \int_{\mathbb{R}^2} \int_{\mathbb{R}^2} \mathbf g(2s, x_1-x_2)\, \mathbf g(2s, z) G_\vartheta(v-s)\, \mathbf g_{\frac{v-s}{2}}\!\left(b - \tfrac{z+x_1+x_2}{2}\right) \mathbf g_{t-v}^{\otimes 2}((b,b),\Psi) \, db\, dz\, dv\, ds \\
    &= 4\pi \int_{(\mathbb R^2)^2} \Psi(w,w') \mathbf g(\tfrac{t}{2},\tfrac{x_1+x_2}2 - \tfrac{w+w'}{2}) \int_0^t \int_0^s \mathbf g(2s,x_1 - x_2) G_{\vartheta} (v-s) \mathbf g(2(t-v),w-w') \,dv\, ds \, dw \,dw'.
\end{align*}
Combining this with Steps 1 and 2 yields the result.
\end{proof}



\begin{thm}[Bounds for dominated convergence]\label{limitofu2} The estimate
\begin{equation*}
    \sum_{y_1, y_2 \in \mathbb Z^2} U_N(e^{\boldsymbol{\zeta}_N}-1; N t_N, x_1,x_2,y_1,y_2) - 1 \leq H(N^{-1/2} |x_1-x_2|)
\end{equation*}
holds uniformly over $t\in [0,T]$ and $x_1,x_2\in \mathbb Z^2$, where $H:(0,\infty) \to [0,\infty)$ is a decreasing function satisfying $H(x) \le C |\log x|$ for $|x|<1,$ and $H(x)\to 0$ superpolynomially fast as $x\to\infty.$
\end{thm}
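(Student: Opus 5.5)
The plan is to decompose the sum according to the first collision time, as in Step 3 of the proof of Theorem \ref{thm:macro}, and then to bound the resulting first-passage factor using the heat kernel estimate.

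\textbf{Step 1: first-collision decomposition.} Summing out $y_1,y_2$ kills the $k=0$ term, which equals $1$. Hence the left side equals
\begin{equation*}
\sum_{0<u<Nt_N}\sum_{a,x} \pN{2}\big(u;(x_1,x_2),(a,a+x)\big)\big(e^{\z_N(x)}-1\big)\, \mathcal G_N(x),
\end{equation*}
where $u$ is the first renewal time $r_1$. The factor
\begin{equation*}
\mathcal G_N(x):=\sum_{y_1,y_2}U_N\big(e^{\z_N}-1;Nt_N-u;a,a+x,y_1,y_2\big)
\end{equation*}
depends on $a$ only through translation invariance. It equals $1+\mathcal E^{e^{\z_N}-1}_x(Nt_N-u)$ in the notation of Section \ref{s:2.1}, up to the harmless shift of the first renewal time.

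\textbf{Step 2: uniform bound on the renewal factor.} Since $e^{\z_N}-1\ge 0$, the function $\mathcal E_x^f(n)$ is monotone in $n$. Take $n\le NT$ and replace $N$ by $NT$, or compare with the $t=T$ version. Theorem \ref{2.3} together with Corollary \ref{cor:dcBound} then gives
\begin{equation*}
\mathcal E_x^f(NT)\le \sum_{s\in(\log N)^{-1}\mathbb N} e^{\vartheta s+o(1)}\, Ce^{-Ms}\le C\log N,
\end{equation*}
choosing $M>\vartheta+1$. This holds uniformly over $x$ in the finite support of $\z_N$; the dependence on $T$ is absorbed into the constant. Thus $\mathcal G_N\le C\log N$. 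Combined with $|e^{\z_N}-1|\le C(\log N)^{-1}\ind_{\{|x|\le A\}}$, the whole expression is bounded by
\begin{equation*}
C\sum_{u=1}^{NT}\ \sum_{|x|\le A}\mathbf p_{N,\mathrm{dif}}(u;x_1-x_2,x).
\end{equation*}

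\textbf{Step 3: Green's function bound.} Lemma \ref{hku} gives $\mathbf p_{N,\mathrm{dif}}(u;z,x)\le Cu^{-1}(1+u^{-1/2}|z-x|)^{-S}$. Write $\rho=N^{-1/2}|x_1-x_2|$ and sum over $u\le NT$.

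When $\rho<1$, the range $u\le |z|^2$ contributes $O(1)$. The range $|z|^2\le u\le NT$ contributes
\begin{equation*}
\sum u^{-1}\approx \log\big(NT/|z|^2\big)=\log T+2|\log\rho|,
\end{equation*}
which gives $H(\rho)\le C(1+|\log\rho|)$. The factor $N^{-1/2}$ in the argument of $H$ is exactly what makes this work.

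When $\rho\ge 1$, we have $u\le NT\le T|z|^2/\rho^2$. Then
\begin{equation*}
u^{-1}(u^{-1/2}|z|)^{-S}=u^{S/2-1}|z|^{-S},
\end{equation*}
and summing up to $NT$ yields $C(NT)^{S/2}|z|^{-S}=C_T\rho^{-S}$. Since $S$ is arbitrary, this decays superpolynomially. Finally, take $H$ to be the decreasing envelope of these bounds: $C|\log\rho|$ near $0$ (more precisely $C(1+|\log \rho|)$, whose constant term can be absorbed for $\rho\le 1/2$ and otherwise handled by monotone patching) and $C\rho^{-S}$ at infinity.

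\textbf{Main obstacle.} The delicate step is Step 2, namely the uniform-in-$t$ bound $\mathcal G_N\le C\log N$ for arbitrary $\vartheta$ (possibly positive). This needs the superexponential tail of Corollary \ref{cor:dcBound} to beat $e^{\vartheta s}$, together with the error $\e_N^f(k)$ from Theorem \ref{2.3} being uniform in $k$. A secondary point is that the first renewal time $u$ and the shift in the time horizon interact only through monotonicity in the time horizon, which is legitimate because $f\ge0$.
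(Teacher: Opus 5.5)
Your proposal is correct and is essentially the paper's argument: a first-collision decomposition, a uniform $C\log N$ bound on the post-collision renewal mass paired with $|e^{\z_N}-1|\le C(\log N)^{-1}\ind_{\{|x|\le A\}}$, and then the heat-kernel Green's function sum. The differences are minor: you fold the $k=1$ term into the decomposition, and you get the $C\log N$ bound directly from Theorem \ref{2.3} and Corollary \ref{cor:dcBound}, whereas the paper reads it off from Lemma \ref{lem:Fdef}, which rests on those same two results.

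One caveat concerns your final patching step. The left side stays bounded below by a positive constant for large $N$ when $N^{-1/2}|x_1-x_2|\approx 1$. Hence no decreasing $H$ can satisfy $H(x)\le C|\log x|$ as $x\uparrow 1$. The statement should therefore be read with $C(1+|\log x|)$, which is exactly what your bound gives, and also what the paper's $H(x)=\int_0^1 u^{-1}(1+xu^{-1/2})^{-S}\,du$ gives.
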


\begin{proof}
In the infinite series defining $U_N$, note that subtracting 1 simply gets rid of the $k=0$ term (since we are summing over $y_1,y_2$). Thus we need to bound the $k\ge 1$ terms. 

To proceed, let us use the same decomposition of $U_N$ as the sum of the last expressions in \eqref{g1}--\eqref{g0}--\eqref{g2}.
We have already showed that the terms in \eqref{g1}--\eqref{g0} are upper bounded by a constant, so it suffices to upper bound the right side of \eqref{g2}.
Setting $\Psi \equiv 1$ in that expression, we just need to show that
\begin{equation*}
    \sum_{\substack{0 < u < Nt \\ a, x \in \Z^2}}\pN{2}\big(u, (x_1,x_2),(a ,a + x)\big) \left|e^{\boldsymbol{\zeta}_N (x)}-1\right|   F_N\Big(t, \tfrac{u}N, \tfrac{a}{\sqrt N},x\Big)\leq H(N^{-1/2} |x_1-x_2|),
\end{equation*}
with $H$ satisfying the bounds of the theorem statement.
    To prove this, we use the following facts:
    \begin{itemize}[leftmargin=15pt]
        \item By Assumption \ref{a1} \eqref{a1heatkernel}, we have $\pN{2}\big(u,(x_1,x_2),(a ,a + x)\big) \leq \frac{C}{u^2} \Big(1 + \frac{1}{\sqrt u} (|a-x_1|+|a+x-x_2|)\Big)^{-S}$.
        \item By Assumption \ref{a1} \eqref{a1markov}, we have \begin{equation*}
            \left|e^{\boldsymbol{\zeta}_N (x)}-1\right| \leq \frac{C}{\log N} \ind_{\{|x|\le A\}},
        \end{equation*}
    uniformly in $N$ for the diameter $A$ of $\supp \z_N$.
    \item By Lemma \ref{lem:Fdef}, we have the bound
    $F_N\big(t, \tfrac{u}N, \tfrac{a}{\sqrt N},x\big)  \leq C \log N$,
    where $C$ is uniform over all $N,t,u,a$ and $x$ in a finite set.
    \end{itemize}
Combining these bullets, we obtain the bound 
    \begin{align*}
        \sum_{\substack{0 < u  < Nt \\ a, x \in \Z^2}}\pN{2}\big(u,(x_1,x_2)&,(a ,a + x)\big) \left|e^{\boldsymbol{\zeta}_N (x)}-1\right|   F_N\Big(t, \tfrac{u}N, \tfrac{a}{\sqrt N},x\Big) \\[-10pt]
        &\leq \sum_{u=1}^{Nt} \sum_{a,x \in \mathbb Z^2} C u^{-2} \big(1 + u^{-1/2} (|a-x_1|+|a+x-x_2|)\big)^{-S} \ind_{\{|x|\le A\}} \\&\leq C' \sum_{u=1}^{Nt} u^{-1} \big( 1+ u^{-1/2} |x_1-x_2| \big)^{-S}.
    \end{align*}
    This expression can be viewed as a Riemann sum upper bounded by $H(N^{-1/2}|x_1-x_2|)$, where $H(x):= \int_0^1 u^{-1} (1+xu^{-1/2})^{-S}du$. Since $S$ can be taken as large as desired, the claim follows.
\end{proof}

\subsection{Convergence of the first and second moments of $\mathfrak H^N$}

\begin{thm}\label{1stmom}
Under Assumption \ref{a1}, we have the limit
\begin{equation*}
    \lim_{N\to\infty}\mathbb E \left[\mathfrak H^N_{0,t}(\phi,\psi)\right]  = \int_{(\R^2)^2} \phi(x) \psi(y) \g(t, x- y) \,dx\, dy.
\end{equation*}
\end{thm}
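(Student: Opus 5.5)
The plan is to reduce the first moment to the one-particle Markov chain $\mathbf P_{N,1}$ and then apply the invariance principle, with the heat kernel bound supplying uniform integrability. By Definition \ref{a2} (with $d_N\equiv0$ as agreed) and Fubini, we have
\begin{equation*}
\mathbb E\big[\mathfrak H^N_{0,t}(\phi,\psi)\big] = N^{-1}\sum_{x\in\mathbb Z^2}\phi(N^{-1/2}x)\sum_{y\in\mathbb Z^2}\mathbb E\big[H^N_{0,Nt}(x,y)\big]\psi(N^{-1/2}y).
\end{equation*}
This uses nonnegativity of $H^N$ and the fact that $\phi,\psi$ are compactly supported and bounded. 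By the $h=1$ case of Assumption \ref{a1} \eqref{a1markov} together with \eqref{a1consistency}, we have $\mathscr E^{N,1}=0$ and $\boldsymbol\eta_{N,1}=0$. Hence $\mathbb E[H^N_{0,Nt}(x,y)]=\mathbf p_{N,1}(Nt;x,y)$, and the inner sum equals $\mathbf E^{x}_{N,1}[\psi(N^{-1/2}R_{Nt})]$. (Alternatively, one iterates the linear flow property of Definition \ref{1} with independence of time increments to get $\mathbb E[H_{0,Nt}]=\mu^{*Nt}$.)

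Next, I would set $x=\sqrt N a$ with $a\in N^{-1/2}\mathbb Z^2$. The outer sum is then a Riemann sum $N^{-1}\sum_x\phi(N^{-1/2}x)\,k_N(N^{-1/2}x)$, where $k_N(a):=\mathbf E^{\sqrt N a}_{N,1}[\psi(N^{-1/2}R_{Nt})]$. By the invariance principle, Assumption \ref{a1} \eqref{a1invarianceprinciple} with $h=1$, we get $k_N(a_N)\to\mathbf g_t(a,\psi)=\int\mathbf g(t,a-y)\psi(y)\,dy$ whenever $a_N\to a$. The limit function is continuous and bounded, and $\psi$ is bounded and continuous, so the convergence of expectations follows directly from weak convergence of the time-$t$ marginal.

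I then need to upgrade this to uniform convergence on compact sets. The convergence along every sequence $a_N\to a$, for every $a$ in a compact set, gives exactly this by a standard argument by contradiction with compactness. Since $\phi$ has compact support $K$, the Riemann sum $N^{-1}\sum_{x\in\sqrt N K}\phi(N^{-1/2}x)k_N(N^{-1/2}x)$ then differs from $N^{-1}\sum\phi(N^{-1/2}x)\mathbf g_t(N^{-1/2}x,\psi)$ by $o(1)\cdot N^{-1}\#(\sqrt N K\cap\mathbb Z^2)=o(1)$. That second Riemann sum converges to $\int\phi(x)\psi(y)\mathbf g(t,x-y)\,dx\,dy$ by continuity. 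No tail control is needed, because $\phi$ is compactly supported and $|k_N|\le\|\psi\|_\infty$ by the normalization in Definition \ref{1} \eqref{1norm}. If $t_N\to t$ is allowed to vary, the same argument applies using tightness in $C([0,T])$.

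The only mildly delicate point is the case $t=0$, or rather checking that the uniformity in the starting point is legitimate. I would handle it through the sequential formulation of the invariance principle as described, and for $t=0$ the statement is trivial.
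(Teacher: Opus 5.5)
Your proposal is correct and follows the paper's proof. Like the paper, you use the $h=1$ Markov representation (with $\mathscr E^{N,1}=0$, $\boldsymbol\eta_{N,1}=0$) to write the first moment as $N^{-1}\sum_x\phi(N^{-1/2}x)\,\mathbf E^{x}_{N,1}[\psi(N^{-1/2}R_{Nt})]$, and then apply the invariance principle; the paper does the last step in one line, while you also spell out the uniform-on-compacts convergence and Riemann-sum argument that justifies it.
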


\begin{proof}
Simply by definition of the macroscopic field in Definition \ref{a2} and the conditions of Assumption \ref{a1}, we have
 \begin{align*}
     \mathbb E[\mathfrak H^N_{0,t}(\phi,\psi)] &=N^{-1}  \sum_{x \in \mathbb Z^2} \phi \left(\frac{x}{\sqrt{N}}\right) \EN{1}^{x} \bigg[ 
     \psi\left(\frac{R_{Nt}}{\sqrt{N}}\right)  \bigg].
 \end{align*}
By the invariance principle of Item \eqref{a1invarianceprinciple}
, this expression converges to 
$\int_{\R^2 \times \R^2} \phi(x) \psi(y) \mathbf g(t, x- y) \,dx\, dy$.\end{proof}

Next we move on to the second moment.

\begin{thm}\label{2ndmom}
    Consider any sequence $t_N\in N^{-1}\mathbb Z_{\ge 0}$ such that $t_N\to t\in [0,1]$. Let $\phi_1,\phi_2,\psi_1,\psi_2 \in C_c(\mathbb R^2)$. Under Assumption \ref{a1}, we have the limit
    \begin{equation*}
        \lim_{N\to\infty} \mathrm{Cov}(\mathfrak H^N_{0,t_N}(\phi_1,\psi_1),\mathfrak H^N_{0,t_N}(\phi_2,\psi_2))  = \int_{(\mathbb R^2)^4} \phi_1(z)\phi_2(z')\psi_1(w)\psi_2(w') K_t^{\vartheta} (z,z',w,w') \,dz\,dz'\,dw\,dw'.
    \end{equation*}
    Above, $\phi,\psi \in C_c(\mathbb R^2)$, and for $z,z',w,w'\in\mathbb R^2$ the kernel $K^\vartheta$ is defined by \begin{equation*}
        K_t^{\vartheta} (z,z',w,w') := {4}\pi \mathbf g(\tfrac{t}{{2}},\tfrac{z+z'}2 - \tfrac{w+w'}{2}) \int_{0<u<s<t} \mathbf g(2u,z-z') G_{\vartheta} (s-u) \mathbf g(2(t-s),w-w') \,du\,ds,
    \end{equation*}
    and $G_{\vartheta}(u) = \int_0^\infty e^{\vartheta s} f_s(u) ds$, with $f_s$ being the marginal density of the Dickman subordinator $Y_s$ from Definition \ref{dickman} above.
\end{thm}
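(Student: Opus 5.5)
We write the second moment via the Markov representation of Assumption \ref{a1} \eqref{a1markov} with $h=2$. Since $\boldsymbol\eta_{N,2}(\x)=\z_N(x_1-x_2)$, expanding $e^{\sum_{r<Nt_N}\z_N}=\prod_r(1+(e^{\z_N}-1))$ gives
\begin{equation*}
\mathbb E\big[\mathfrak H^N_{0,t_N}(\phi_1,\psi_1)\mathfrak H^N_{0,t_N}(\phi_2,\psi_2)\big] = N^{-2}\sum_{x_1,x_2}\phi_1(\tfrac{x_1}{\sqrt N})\phi_2(\tfrac{x_2}{\sqrt N})\, e^{o(1)}\sum_{k\ge0}\sum_{0<r_1<\cdots<r_k<Nt_N}\EN{2}^{(x_1,x_2)}\Big[\prod_{j}(e^{\z_N(R^1_{r_j}-R^2_{r_j})}-1)\Psi(\tfrac{\mathbf R_{Nt_N}}{\sqrt N})\Big],
\end{equation*}
with $\Psi=\psi_1\otimes\psi_2$. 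The time-$0$ factor $e^{\z_N(x_1-x_2)}$ and the error $e^{\mathscr E^{N,2}}$ are absorbed into $e^{o(1)}$, uniformly. The first moments factor by Assumption \ref{a1} \eqref{a1consistency} and converge by Theorem \ref{1stmom}. So it suffices to show that the displayed second moment converges to $\int\phi_1\phi_2\,[\mathbf g^{\otimes2}_t(\cdot,\Psi)]+\int\phi_1\phi_2\psi_1\psi_2K^\vartheta_t$, and to note that the $k=0$ term, $\int\phi_1\phi_2\,\mathbf g_t^{\otimes2}(\x,\Psi)$, equals the product of the limiting first moments. This last identity holds because $\mathbf g^{\otimes2}$ is a product kernel.

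\textbf{Pointwise limit.} The $k$-sum is $N^{-2}\sum_{x_1,x_2}\phi_1\phi_2\cdot(\text{the quantity in Theorem }\ref{thm:macro})$, with $t$ replaced by $t_N$. We view this as a Riemann sum over $\x_N/\sqrt N\to\x$. For Lebesgue-a.e.\ $\x$ we have $x_1\neq x_2$, so Theorem \ref{thm:macro} gives the pointwise limit $\mathbf g^{\otimes2}_t(\x,\Psi)+\int\Psi K^\vartheta_t(\x,\cdot)$. To pass from $t_N$ to $t$ inside that theorem, we note that its proof only used Lemma \ref{flem} and Lemma \ref{lem:Fdef}, and both tolerate $t_N\to t$. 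Subtracting the $k=0$ term and the product of means then identifies the covariance limit with the kernel $K^\vartheta_t$.

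\textbf{Domination.} This is the main obstacle, because near the diagonal $x_1\approx x_2$ Theorem \ref{thm:macro} does not apply, and the renewal sum blows up there. Here we use Theorem \ref{limitofu2}, which bounds the $k\ge1$ part (with $\Psi$ replaced by $\|\Psi\|_\infty$, using positivity of $e^{\z_N}-1\ge0$) by $\|\Psi\|_\infty H(N^{-1/2}|x_1-x_2|)$. Here $H(x)\le C|\log x|$ for $x<1$, and $H$ decays superpolynomially. Since $|\log|a_1-a_2||$ is locally integrable on $(\mathbb R^2)^2$, the Riemann sums $N^{-2}\sum\phi_1\phi_2H(N^{-1/2}|x_1-x_2|)$ converge to $\int\phi_1\phi_2 H(|a_1-a_2|)$. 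The microscopic diagonal region $|x_1-x_2|\le1$ contributes at most $O(N^{-1}\log N)\to0$. A generalized dominated convergence argument (or uniform integrability of the Riemann sums, obtained by removing an $\epsilon$-neighbourhood of the diagonal and letting $\epsilon\downarrow0$) then justifies exchanging the limit and the sum.

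Finally, if $t=0$ both sides vanish, by the same bound with $t_N\to0$. Continuity of $\phi_j,\psi_j$ suffices for the Riemann sum convergence, since they are compactly supported.
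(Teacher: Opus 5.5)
Your proposal is correct and follows essentially the same route as the paper. You expand $e^{\sum_r \z_N}$ as $\prod_r(1+(e^{\z_N}-1))$, take the pointwise limit away from the diagonal from Theorem~\ref{thm:macro}, and dominate with the logarithmic bound of Theorem~\ref{limitofu2} before applying dominated convergence; you are in fact somewhat more careful than the written proof, since you explicitly cancel the product of first moments against the $k=0$ term, bound the microscopic diagonal $|x_1-x_2|\le 1$ by $O(N^{-1}\log N)$, and address $t_N\to t$ and the case $t=0$.
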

We note that the kernel $K_\vartheta$ has logarithmic singularities along $z=z'$ and $w=w'$ which cause various complications in the proof. In particular, neither $\phi$ nor $\psi$ can be taken to approach a Dirac mass. We will use dominated convergence in conjunction with Theorem \ref{thm:macro} to deal with these.

For the nearest-neighbor discrete case, this was first shown by \cite[Proposition 3.6]{CSZ_2d}, which in turn relies heavily on the results of \cite{MR4017119}. We are going to use a similar approach, substituting generalized results such as Theorem \ref{limitofu2} for local descriptions of the renewal function.

\begin{proof}
 Set $\Psi = \psi_1\otimes \psi_2$ in Theorem \ref{thm:macro}. Then we have
 \begin{align*}
     \mathbb E[\prod_{j\in \{1,2\}}\mathfrak H^N_{0,t}(\phi_j,\psi_j)^2] &= N^{-2} \sum_{\x \in (\Z^2)^2} \phi_1\otimes \phi_2 \bigg(\frac{\x}{\sqrt{N}}\bigg) \EN{2}^{\x} \bigg[ e^{\sum_{r=1}^{Nt-1} \z_N( R^1_r - R^2_r) + \mathscr E^{N,h}_t } \psi_1 \otimes \psi_2\bigg(\frac{\mathbf R_{Nt}}{\sqrt{N}}\bigg)\bigg].
 \end{align*}
Use the crucial identity\footnote{
    Note that this is exactly where we see a discrepancy with the continuous-time case. In the continuous-time setting, $e^{\z}-1$ would be replaced by just $\z$, since Taylor expansion gives $e^{\int_0^m s_j\;dj} = \sum_k \int_{t_1< \dots <t_k\le m} s_{t_1} \cdots s_{t_j} dt_1\cdots dt_j,$ if the sum is replaced by an integral.
}
$e^{\sum_{j=1}^m s_j} = \prod_{j=1}^m (1+ (e^{s_j}-1))=\sum_k \sum_{t_1< \dots <t_k\le m} (e^{s_{t_1}}-1) \cdots (e^{s_{t_k}}-1)$ to expand the expectation as the following 
series:
\begin{align*}
    \EN{2}^{\x} \bigg[ &e^{\sum_{r=1}^{Nt-1} \z_N( R^1_r - R^2_r) + \mathscr E^{N,2}_t} \psi_1 \otimes \psi_2\bigg(\frac{\mathbf R_{Nt}}{\sqrt{N}}\bigg)\bigg]
    \\&= \EN{2}^{\x} \bigg[ \prod_{r=1}^{Nt-1}(1 + ( e^{\z_N( R^1_r - R^2_r)} - 1)) e^{  \mathscr E^{N,2}_t } \psi_1 \otimes \psi_2\bigg(\frac{\mathbf R_{Nt}}{\sqrt{N}}\bigg) \bigg] \\
    &= \sum_{k=0}^{\infty} \sum_{0 < r_1 < \cdots < r_k < Nt} \EN{2}^{\x}\bigg[ \prod_{i=1}^k (e^{{\boldsymbol{\zeta}}_N(R^1_{r_i} - R_{r_i}^2)}-1)e^{ \mathscr E^{N,2}_t}\psi_1 \otimes \psi_2\bigg(\frac{\mathbf R_{Nt}}{\sqrt{N}}\bigg)\bigg]\\
    &= (1+o(1))\mathscr K_N^{\psi_1\otimes \psi_2}(t,\x),
\end{align*}
where if $\Psi\in C_c((\mathbb R^2)^2)$ then 
\begin{equation*}
    \mathscr K_N^{\Psi}(t,\x) := \sum_{k=0}^{\infty} \sum_{0 < r_1 <\cdots < r_k < Nt}\EN{2}^{\x}\bigg[ \prod_{i=1}^k (e^{{\boldsymbol{\zeta}}_N(R^1_{r_i} - R_{r_i}^2)}-1) \Psi\bigg(\frac{\mathbf R_{Nt}}{\sqrt{N}}\bigg) \bigg].
\end{equation*}
Note that $e^{\mathscr E^{N,2}_t}$ has been replaced by 1, and that the $1 + o(1)$ error term is uniform over all variables and reflects only the fact that $\mathscr E^{N,2}_t$ converges to 0 in $L^\infty$ norm by Assumption \ref{a1} \eqref{a1markov}.

It follows from Theorem \ref{thm:macro} that $\mathscr K_N^{\Psi}(t_N,\x_N) \to \mathscr K_\infty^{\Psi}(t,\x)$ whenever $N^{-1}t_N\to t>0$ and $N^{-1/2}\x_N \to \x$ is well-separated, where $\mathscr K_\infty^{\Psi}(t,x_1,x_2)$ is defined by the limit of the expression displayed in Theorem \ref{thm:macro}. Furthermore, Theorem \ref{limitofu2} shows that
\begin{equation*}
    \sup_N \mathscr K_N^{\Psi}(t,x_1,x_2) \leq C (1+\log _- (|x_1-x_2|)).
\end{equation*}
Thus from from dominated convergence and the compact support of $\phi$, it follows immediately that 
\begin{equation*}
    N^{-2} \sum_{\x \in (\Z^2)^2} (\phi_1\otimes \phi_2) \bigg(\frac{\x}{\sqrt{N}}\bigg) \mathscr K_N^{\psi_1\otimes \psi_2}(t,\x) \to \int_{(\mathbb R^2)^2} (\phi_1\otimes \phi_2)(\x) \mathscr K_\infty^{\psi_2\otimes \psi_2} (t,\x) \,d\x.
\end{equation*}
This is exactly the desired result.
\end{proof}

\subsection{A semigroup property for limit points of $\mathfrak H^N$}

Here we prove a Chapman-Kolmogorov or semigroup-type identity for the limit point, which is one of the key properties needed to use the axiomatic characterization in Definition \ref{def:axioms} to identify the limit points.

\begin{thm}\label{convo}
    Consider a smooth and compactly supported test function $\omega: \R^2\to \R,$ such that $\int_{\mathbb R^2} \omega=1.$ Define $\omega_\e(x) = \epsilon^{-2} \omega(\epsilon^{-1}x),$ and let $\mathfrak H^N(dx,dy)$ denote the measure on $(\mathbb R^2)^2$ defined so that $\mathfrak H^N_{s,t} (\phi\otimes \psi) = \mathfrak H^N_{s,t}(\phi,\psi)$, with the right hand side defined as in Definition \ref{a2}. Then we have
    \begin{equation*}
        \lim_{\epsilon\to 0} \limsup_{N\to \infty} \mathbb E \bigg[  \bigg(\int_{(\mathbb R^2)^4 } f(x,z) \omega_{\e}(y-w)\mathfrak H^N_{t,u}(dy,dz) \mathfrak H^N_{s,t}(dx,dw) - \int_{\mathbb R^4} f(x,z) \mathfrak H^N_{s,u}(dx,dz)\bigg)^2 \bigg] = 0.
    \end{equation*}
\end{thm}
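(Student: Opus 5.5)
We will expand the square and reduce everything to second-moment computations of the type handled in Theorem \ref{thm:macro} and Theorem \ref{2ndmom}. Write $X^{N,\e}:=\int f(x,z)\omega_\e(y-w)\mathfrak H^N_{t,u}(dy,dz)\mathfrak H^N_{s,t}(dx,dw)$ and $Y^N:=\int f\,d\mathfrak H^N_{s,u}$. By density and linearity (using the uniform second-moment bounds from Theorem \ref{limitofu2}) it suffices to take $f=\phi\otimes\psi$. The expectation of the square equals $\E[(X^{N,\e})^2]-2\E[X^{N,\e}Y^N]+\E[(Y^N)^2]$. We compute the $N\to\infty$ limit of each term for fixed $\e$, and then show that the three limits coincide as $\e\to0$.

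\textbf{Rewriting the three terms.} By the flow property (Definition \ref{1} \eqref{1linear}), $Y^N$ equals $N^{-1}\sum_{x,y,z}\phi H^N_{Ns,Nt}(x,y)H^N_{Nt,Nu}(y,z)\psi$. Likewise $X^{N,\e}$ is the same sum except that the intermediate point $y$ is replaced by a pair $(y,w)$ weighted by $N^{-1}\omega_\e(N^{-1/2}(y-w))$. By independence of time increments (Definition \ref{1} \eqref{1indep}), each of the three second moments factors over $[Ns,Nt]$ and $[Nt,Nu]$. Then each is given by the Markov representation of Assumption \ref{a1} \eqref{a1markov} with $h=2$ as the two-particle chain run on $[Ns,Nt]$, followed by a ``gluing'' at time $Nt$, followed by the chain run on $[Nt,Nu]$. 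The gluing is:
\begin{itemize}
\item for $\E[(Y^N)^2]$, the identity on each coordinate;
\item for $\E[X^{N,\e}Y^N]$, a mollified jump $\omega_\e$ in one coordinate;
\item for $\E[(X^{N,\e})^2]$, independent mollified jumps in both coordinates.
\end{itemize}
Hence, at the level of kernels, each term is $\int \mathscr K$-type kernels paired through the intermediate macroscopic variables $(y_1,y_2)$ with $\omega_\e^{\otimes j}$ inserted, $j=0,1,2$.

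\textbf{Limit as $N\to\infty$.} We apply Theorem \ref{thm:macro}, in the form used in Theorem \ref{2ndmom}, separately on the two time intervals. The macroscopic kernel at time $t$ is $\mathscr K_\infty$, and we need convergence of the intermediate sum. The obstacle here is that the intermediate points $(y_1,y_2)$ may be close, where the kernels have logarithmic singularities. For $j\ge1$ the mollifier separates the coordinates at scale $\e$, so dominated convergence with the bound $C(1+\log_-|y_1-y_2|)$ from Theorem \ref{limitofu2} works. For the diagonal region in the $j=0$ term, however, the product of two such singular kernels near $y_1=y_2$ is not directly of this type. The key point is that this term must be handled directly as a single renewal sum across time $Nt$. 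That is, $\E[(Y^N)^2]$ converges to the known second moment on $[s,u]$ by Theorem \ref{2ndmom} itself. So we never decompose the $j=0$ term; we only need its limit, which is given by $K^\vartheta_{u-s}$.

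\textbf{Limit as $\e\to0$.} For $j=1,2$ we obtain explicit integrals of the form $\int K^\vartheta_{t-s}(x,x',y,y')\,\omega_\e(y-w)\,[\omega_\e(y'-w')]\,K^\vartheta_{u-t}(w,w',z,z')$, plus the Gaussian (non-renewal) parts. We must show these converge as $\e\to0$ to the kernel of the second moment on $[s,u]$. This is an identity about the SHF second moment: the Chapman--Kolmogorov property of the limiting kernels, which holds because $K^\vartheta$ together with the heat kernel forms the second moment of the SHF, a flow. We would verify it using the renewal (semigroup) identity for $G_\vartheta$, namely $G_\vartheta$ convolved through the collision structure. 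Concretely, a contribution with collisions on both sides of $t$ contributes $\int_{y_1\approx y_2}$ of two kernels each with logarithmic singularities of order $\log(1/|y_1-y_2|)$. Since $\omega_\e$ smears at scale $\e$, this is bounded by $\int_{|y|<\e}\log^2|y|\,dy\to0$ uniformly, so such contributions vanish, while the remaining terms match term by term. Equivalently, one can compare directly with Axiom 2 applied to the limiting object, but since that is circular we do the kernel computation.

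We expect the main obstacle to be the uniform-in-$N$ control near the diagonal of the intermediate variables in $\E[(X^{N,\e})^2]$. This requires the microscopic time-$Nt$ collision events to be negligible: a renewal across time $Nt$ in the prelimit is recorded in $Y^N$ but broken in $X^{N,\e}$. We would bound the discrepancy by the probability of a renewal epoch within $N\delta$ of $Nt$, using Corollary \ref{cor:dcBound} and the heat kernel bound from Assumption \ref{a1} \eqref{a1heatkernel}. This gives a contribution of order $C/\log(1/\delta)$ plus $\e$-dependent terms, which vanishes in the double limit.
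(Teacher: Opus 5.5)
Your expansion into $\E[(X^{N,\e})^2]-2\E[X^{N,\e}Y^N]+\E[(Y^N)^2]$ matches the paper, and so does your handling of the $N\to\infty$ limits: independence to factor at time $t$, Theorem \ref{2ndmom} for the unmollified term, and Theorem \ref{thm:macro} with the domination of Theorem \ref{limitofu2} for the terms with one or two mollifiers.

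The gap is in the $\e\to0$ step. The limit you obtain is $\int\prod_{j=1,2}f(x_j,z_j)(\omega_\e-\delta_0)(y_j-w_j)\,\tilde K^\vartheta_{u-t}(\y,\mathbf z)\tilde K^\vartheta_{t-s}(\x,\mathbf w)$, with $\tilde K^\vartheta=\mathbf g^{\otimes2}+K^\vartheta$. The paper observes that this is the same deterministic quantity as for the genuine SHF and cites \cite[Propositions 2.4--2.5]{MR5082783} or \cite{Tsa24}. That is not circular: the SHF is an independently constructed object, and only its second-moment kernel is involved.

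You instead propose a direct kernel computation, but the mechanism you describe is false. Consider the terms of $\int\tilde K^\vartheta_{t-s}\tilde K^\vartheta_{u-t}$ with a collision interval $(a,b)\subset(s,t)$ and another $(c,d)\subset(t,u)$. They neither vanish nor concentrate near $y_1=y_2$: for generic $b<t<c$ the two particles are macroscopically separated at time $t$. These terms are exactly what produces the part of $K^\vartheta_{u-s}$ whose collision interval straddles $t$, which no one-sided term can produce. If you discarded them as you suggest, the expression would converge to that straddling contribution rather than to zero.

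The correct computation has two ingredients. First, integrate out the relative coordinate at time $t$ using $\int_{\R^2}\mathbf g(2(t-b),r)\,\mathbf g(2(c-t),r)\,dr=\frac{1}{4\pi(c-b)}$. Second, apply the last-jump (compensation formula) decomposition of the driftless Dickman subordinator across the level $t-a$:
\begin{equation*}
\int_{a<b<t<c<d} G_\vartheta(b-a)\,\frac{1}{c-b}\,G_\vartheta(d-c)\,db\,dc=G_\vartheta(d-a).
\end{equation*}
Together with Chapman--Kolmogorov for the heat kernels of the center of mass and of the free segments, this gives the exact identity $\int\tilde K^\vartheta_{t-s}(x,x',y,y')\tilde K^\vartheta_{u-t}(y,y',z,z')\,dy\,dy'=\tilde K^\vartheta_{u-s}(x,x',z,z')$; the prefactors $4\pi\cdot4\pi\cdot\frac{1}{4\pi}=4\pi$ match. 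Separately, you need that inserting $\omega_\e$ in one or both coordinates changes these pairings by $o(1)$ as $\e\to0$. That is where integrability of the product of the two logarithmic singularities, of the type $\int_{|y|\lesssim\e}\log^2|y|\,dy\to0$, legitimately enters.

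Your last paragraph also misplaces the difficulty. $\E[(X^{N,\e})^2]$ factors by independence into two second-moment measures paired through the continuous kernel $\omega_\e\otimes\omega_\e$ at macroscopic scale $\e$. So for fixed $\e$ no estimate on renewal epochs near $Nt$ is needed. The only content beyond Theorems \ref{thm:macro}--\ref{2ndmom} is the continuum identity above, or the citation the paper uses.
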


\begin{proof}
    Let us rewrite 
        \begin{align*}\int_{(\mathbb R^2)^4 } f(x,z) \omega_{\e}(y-w)&\mathfrak H^N_{t,u}(dy,dz) \mathfrak H^N_{s,t}(dx,dw) - \int_{\mathbb R^4} f(x,z) \mathfrak H^N_{s,u}(dx,dz) \\&= \int_{(\mathbb R^2)^4} f(x,z) \big( \omega_\epsilon(y-w) - N\ind_{\{y=w\}} \big) \mathfrak H^N_{t,u}(dy,dz) \mathfrak H^N_{s,t}(dx,dw).
        \end{align*}
    Squaring this expression yields
        \begin{equation*}
            \int_{(\mathbb R^2)^8} \prod_{j \in \{1,2\}} \bigg\{ f(x_j,z_j) \big( \omega_\epsilon(y_j-w_j) - N\ind_{\{y_j=w_j\}} \big) \mathfrak H^N_{t,u}(dy_j,dz_j) \mathfrak H^N_{s,t}(dx_j,dw_j)\bigg\}.
        \end{equation*}
    We expand this product as a sum of four terms. Then we take $N\to \infty$ separately for each of the four terms, and verify that the limit equals the corresponding quantity for the SHF itself. This is done below.

    \smallskip
    \noindent\textbf{First term.} The expectation of the first term equals
    \begin{align*}
        \int_{(\mathbb R^2)^8}& \prod_{j \in \{1,2\}} f(x_j,z_j) \omega_\epsilon(y_j-w_j) \mathbb E \bigg[ \prod_{j \in \{1,2\}} \mathfrak H^N_{t,u}(dy_j,dz_j)\bigg] \mathbb E \bigg[ \prod_{j \in \{1,2\}} \mathfrak H^N_{s,t}(dx_j,dw_j)\bigg] \\
        &\!\!\!\!\!\!= \int_{(\mathbb R^2)^4} \bigg\{ \int_{(\mathbb R^2)^4} \prod_{j \in \{1,2\}} f(x_j,z_j) \omega_\epsilon(y_j-w_j) \mathbb E \bigg[ \prod_{j \in \{1,2\}} \mathfrak H^N_{t,u}(dy_j,dz_j)\bigg] \bigg\}\mathbb E \bigg[ \prod_{j \in \{1,2\}} \mathfrak H^N_{s,t}(dx_j,dw_j)\bigg] .
    \end{align*}
    Call the inner integral $v_N(x_1,x_2,w_1,w_2). $ By the second-moment convergence of Theorem \ref{2ndmom}, it follows that $v_N \to v_\infty$ as $N\to \infty$ where $$v_\infty(x_1,x_2,w_1,w_2) = \int_{(\mathbb R^2)^4} \prod_{j \in \{1,2\}} \prod_{j \in \{1,2\}} f(x_j,z_j) \omega_\epsilon(y_j-w_j) \tilde K_{u-t}^\vartheta(y_1,y_2,z_1,z_2) dy_1dy_2dz_1dz_2, $$
    where $\tilde K^\vartheta_t(\y,\mathbf z):= K^\vartheta_t(\y,\mathbf z) + \mathbf g^{\otimes 2} (t,\y-\mathbf z)$, which is the integration kernel for the second moment measure of the SHF (Definition \ref{def:axioms}). Moreover, the convergence is uniform on compacts, and therefore by the uniform-in-$N$ tail decay we see by the second-moment convergence of Theorem \ref{2ndmom} that \begin{equation*}
        \int_{(\mathbb R^2)^4} v_N(\x,\mathbf w) \mathbb E \bigg[ \prod_{j \in \{1,2\}} \mathfrak H^N_{s,t}(dx_j,dw_j)\bigg] \to \int_{(\mathbb R^2)^4} v_\infty(\x,\mathbf w)\tilde K_{t-s}^\vartheta(\x,\mathbf w) \,d\x \,d\mathbf w.
    \end{equation*}
    This is the desired result.

    \smallskip
    \noindent\textbf{Second term.} The expectation of the second term equals 
    \begin{align*}
        &\int_{(\mathbb R^2)^8} \prod_{j \in \{1,2\}}f(x_j,z_j) \omega_\epsilon(y_1-w_1) N\ind_{\{y_2=w_2\}} \E \bigg[ \prod_{j \in \{1,2\}} \mathfrak H^N_{t,u}(dy_j,dz_j)\bigg] \E \bigg[ \prod_{j \in \{1,2\}} \mathfrak H^N_{s,t}(dx_j,dw_j)\bigg] \\&= \int_{(\mathbb R^2)^4} \bigg\{ \int_{(\mathbb R^2)^4} \prod_{j \in \{1,2\}} f(x_j,z_j)\omega_\epsilon(y_1-w_1) N\ind_{\{y_2=w_2\}} \E \bigg[ \prod_{j \in \{1,2\}} \mathfrak H^N_{t,u}(dy_j,dz_j)\bigg]\bigg\} \E \bigg[ \prod_{j \in \{1,2\}} \mathfrak H^N_{s,t}(dx_j,dw_j)\bigg]
    \end{align*}
    Again define $\ell_N (x_1,x_2,w_1,w_2)$ to be the inner integral. Using Theorem \ref{thm:macro}, we see that $\ell_N$ converges uniformly on compacts to the function $$\ell_\infty(x_1,x_2,w_1,w_2) = \int_{\mathbb R^6} f(x_1,z_1)f(x_2,z_2) \omega_\epsilon(y_1-w_1)\tilde K^\vartheta_{u-t} (y_1,w_2, z_1,z_2) dy_1dz_1dz_2.$$
    Directly from the second moment convergence in Theorem \ref{2ndmom}, it then follows that \begin{align*}
        \int_{(\mathbb R^2)^4} &\ell_N(\x,\mathbf w) \mathbb E \bigg[ \prod_{j \in \{1,2\}} \mathfrak H^N_{s,t}(dx_j,dw_j)\bigg] \to \int_{(\mathbb R^2)^4} \ell_\infty(\x,\mathbf w)\tilde K_{t-s}^\vartheta(\x,\mathbf w) \, d\x \, d\mathbf w,
    \end{align*}
    as desired.

    \smallskip
    \noindent\textbf{Third term.}
    This term is completely symmetric to the second term.
    
    \smallskip
    \noindent\textbf{Fourth term.} Convergence of the fourth term follows immediately from Theorem \ref{2ndmom}: 
    \begin{equation*}
        \mathbb E \bigg[  \bigg(\int_{\mathbb R^4} f(x,z) \mathfrak H^N_{s,u}(dx,dz)\bigg)^2 \bigg] \to \int_{(\mathbb R^2)^4} \prod_{j \in \{1,2\}} f(x_j,z_j) \tilde K^\vartheta_{u-s} (\x,\mathbf z) \,d\x \,d\mathbf z,
    \end{equation*}
    as desired.
    
    \smallskip
    \noindent\textbf{Putting all the terms together.} Combining the result of all four terms, we have shown that 
    \begin{align*}
        \limsup_{N\to \infty} \;&\E\bigg[  \bigg(\int_{(\mathbb R^2)^4 } f(x,z) \omega_{\e}(y-w)\mathfrak H^N_{t,u}(dy,dz) \mathfrak H^N_{s,t}(dx,dw) - \int_{\mathbb R^4} f(x,z) \mathfrak H^N_{s,u}(dx,dz)\bigg)^2 \bigg] \\
        &= \int_{(\mathbb R^2)^8}  \prod_{j \in \{1,2\}} \bigg\{ f(x_j,z_j) \big( \omega_\epsilon(y_j-w_j) - \delta_0(y_j-w_j) \big) \bigg\}\tilde K^\vartheta_{u-t} (\y,\mathbf z) \tilde K^\vartheta_{t-s} (\x,\mathbf w) \,d\x \,d\y \,d\mathbf z \,d\mathbf w.
    \end{align*}
    Now taking $\epsilon\to 0$, this quantity is known to converge to 0, as it agrees with the analogous quantity for the SHF: see \cite[Propositions 2.4--2.5]{MR5082783}, or alternatively \cite{Tsa24}.
\end{proof}

\section{Limit of the higher moments of $\mathfrak H^N$} \label{sec:highermoments}

In this section, we study the third and fourth moments of the macroscopic field. Since the arguments are similar, we will only consider the fourth moment, leaving the analogous but slightly easier case of the third moment to the reader. This is strictly for notational simplicity, as our method of proof easily extends to prove similar bounds for all other moments, provided one assumes Markov representations for the corresponding moments of the prelimiting field. 

For $\Phi,\Psi:(\R^2)^4 \to \R$ and $s,t\in N^{-1} \mathbb Z_{\ge 0}$ with $s\le t$, 
we define 
\begin{equation}\label{gnst}
    \mathcal G^N_{s,t} (\Phi,\Psi):= N^{-4} \sum_{\substack{\x \in (\Z^2)^4 \\ \y \in (\Z^2)^4}} \Phi(N^{-1/2} \x) \mathbb E\Bigg[\prod_{j=1}^4 H_{Ns,Nt}^N(x_j,y_j) \Bigg]\Psi(N^{-1/2} \y),
\end{equation}
along with the analogous quantity for the SHF,
\begin{equation*}
    \mathcal G^{\mathrm{SHF}}_{s,t} (\Phi,\Psi) := \int_{(\mathbb R^2)^4} \Phi(\x) \Psi(\y)\mathbb E\Bigg[\prod_{j=1}^4 Z^\vartheta_{s,t}(dx_j,dy_j) \Bigg].
\end{equation*}

\begin{thm}[Exact limit of the fourth moment]\label{4thmom}
    Consider sequences $s_N,t_N\in N^{-1} \mathbb Z$ such that $s_N\to s$ and $t_N\to t$ with $0\le s<t\le 1$. Under Assumption \ref{a1}, we have 
    \begin{align*}\lim_{N\to \infty}  \mathcal G^N_{s_N,t_N} (\Phi,\Psi)&= \mathcal G^{\mathrm{SHF}}_{s,t} (\Phi,\Psi),
    \end{align*}
   for all nonnegative $\Phi,\Psi \in C_c\big((\mathbb R^2)^4\big)$. The analogous results hold for the third and fifth moments.
\end{thm}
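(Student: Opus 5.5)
By stationarity in time it suffices to take $s_N=0$. We start from the Markov representation of Assumption \ref{a1} \eqref{a1markov} with $h=4$. This writes $\mathcal G^N_{0,t_N}(\Phi,\Psi)$ as $N^{-4}\sum_{\x}\Phi(N^{-1/2}\x)\,\EN{4}^{\x}\big[e^{\mathscr E^{N,4}_{Nt}+\sum_{r<Nt}\boldsymbol\eta_{N,4}(\mathbf R_r)}\Psi(N^{-1/2}\mathbf R_{Nt})\big]$. The error $\mathscr E^{N,4}$ is replaced by $0$ at cost $1+o(1)$, exactly as in the proof of Theorem \ref{2ndmom}.

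\textbf{Collision-pattern decomposition.} We decompose each path of $\mathbf R$ according to its collision pattern, i.e.\ the partition of $\{1,2,3,4\}$ recording which pairs are within distance $A$. Define stopping times $\sigma_0=0<\rho_1<\sigma_1<\rho_2<\cdots$. Here $\rho_i$ is the first time after $\sigma_{i-1}$ that some pair $J_i$ comes within distance $A$. The time $\sigma_i$ is the first time after $\rho_i$ that the pair $J_i$ separates to distance at least $\delta_N\sqrt N$, with $\delta_N\downarrow0$ slowly, or that a second pair comes within distance $A$. These intervals correspond to the factors $\mathbf g^{\otimes4}$ and $\mathbf U^{J_i}_\infty$ of Definition \ref{mom_co}, and the analysis proceeds in four steps.
\begin{enumerate}
\item \emph{Free stretches $[\sigma_{i-1},\rho_i]$.} Here $\boldsymbol\eta_{N,4}\equiv0$, since $\z_N$ has support in the ball of radius $A$. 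By Assumption \ref{a1} \eqref{a1consistency} the chain factorizes away from collisions, and by \eqref{a1invarianceprinciple} it converges to the heat kernel $\mathbf g^{\otimes4}$.
\item \emph{Pair stretches $[\rho_i,\sigma_i]$ with no non-pair collision.} Here $\boldsymbol\eta_{N,4}=\z_N(R^a-R^b)$, and the consistency assumption makes the chain look like $\PN{2}\otimes\PN{1}^{\otimes2}$ while the remaining pairs stay separated. Expanding $e^{\z_N}=1+(e^{\z_N}-1)$, this stretch reproduces the renewal function $U_N$ of Definition \ref{def:renewal}. Lemma \ref{lem:Fdef} and Theorem \ref{thm:macro} then identify its contribution as $U^\vartheta_\infty$, with the entry factor $4\pi\nu_\infty(e^{\z_N}-1)\log N\to 4\pi$ coming from Lemma \ref{flem}.
\item \emph{Summation over patterns.} Summing over $m$ and over pair partitions $J_1,\dots,J_m$ with $J_i\ne J_{i-1}$ yields the $m$-th term $\mathfrak I^{(4)}_{\mathrm{SHF}}(\vartheta;m,t,\Phi,\Psi)$. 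Consecutive equal pairs merge into a single renewal, which is why $J_i\ne J_{i-1}$. The mesoscopic cutoff $\delta_N$ has to be removed using Theorem \ref{limitofu2}.
\item \emph{Interchange of limit and sum.} To pass the limit inside the sum over $m$ we need a uniform-in-$N$ dominating bound, summable in $m$.
\end{enumerate}

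\textbf{Domination.} For step 4 we bound each pair stretch, after summing the terminal gap, by Theorem \ref{limitofu2}, which gives a logarithmic singularity $H(N^{-1/2}|x_a-x_b|)$. Each free stretch is bounded by the heat kernel estimate of Assumption \ref{a1} \eqref{a1heatkernel}. This gives an $m$-fold iterated bound of the same shape as the one used for the SHF moments in \cite{GQT,CSZ_2d}. To make it summable in $m$ we would introduce a weight $e^{-\lambda t}$ in Laplace variables, or use an operator-norm bound on a weighted $L^p$ space as in \cite{CSZ_2d}, so that each renewal contributes a factor strictly less than $1$ for large $\lambda$. Corollary \ref{cor:dcBound} controls the number of renewals within a single pair stretch.

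\textbf{Main obstacle: non-pair collisions.} The hard part is the set $T^{(4)}$ of non-pair collisions, where $\boldsymbol\eta_{N,4}$ is only known to be $O(1/\log N)$ and the chain $\PN{4}$ is uncontrolled. We must show that the total time spent in $T^{(4)}$ contributes $o(1)$, including its effect through the exponential weight. The plan is to bound the expected number of visits to $T^{(4)}$ before time $Nt$, weighted by the renewal factors. A visit requires two pairs, or a triple, to be simultaneously within distance $A$. By the heat kernel bound in \eqref{a1heatkernel}, the Green's function of such a codimension-$4$ set is $\sum_r r^{-2}$, which is $O(1)$ rather than $O(\log N)$. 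Each visit then carries a factor $\|\boldsymbol\eta_{N,4}\|_\infty\le C/\log N$, which should make the total contribution vanish. The same stopping-time decomposition, with the difficulty increasing monotonically in $h$, gives the cases $h=3$ and $h=5$.
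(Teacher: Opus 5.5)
Your architecture matches the paper's: stopping times at collision-pattern changes, pair stretches that factor as $\PN{2}\otimes\PN{1}^{\otimes 2}$ because they are stopped when another pair approaches, Laplace damping with operator norms, and the $\sum_r r^{-2}$ versus $C/\log N$ count for non-pair collisions (essentially Theorem~\ref{thm:no_trips}). The gap is in Steps 3--4, at the claim that ``consecutive equal pairs merge into a single renewal''. Your pair stretches are \emph{constrained}, and that constraint is what gives you the factorization. After expanding $e^{\z_N}=1+(e^{\z_N}-1)$, you get terms in which a $J$-renewal is followed by an excursion through other patterns $K\neq J$ carrying \emph{no} interaction, and then by another $J$-renewal. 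These do not merge into a constrained renewal. If you merge them into the unconstrained renewal $\mathscr V^J_N$ of Remark~\ref{part_v5}, you get a $\PN{4}$-object, and neither Theorem~\ref{limitofu2} nor Lemma~\ref{lem:Fdef} applies to it; the paper flags exactly this. If you keep these terms separate, your domination breaks in two ways. First, interaction-free stretches carry no small factor, so indexing by pattern changes gives no decay in $m$. Second, bounding the free piece between the two $J$-renewals by the heat kernel from $(\Z^2)^4_J$ to $(\Z^2)^4_J$ gives an $\ell^2$ operator norm of order $\log N$, because the kernel $(1+|\x-\y|^2)^{-3}$ is only log-summable over the six-real-dimensional set $(\Z^2)^4_J$. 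So no choice of $\lambda$ makes the iterated bound uniform in $N$. The real difficulty lies here, not in the non-pair collisions.

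The missing ingredients are those of Theorems~\ref{contraction}--\ref{part_v2.5} and Corollary~\ref{part_v3}, in four steps.
\begin{enumerate}
\item Sum all interaction-free pattern visits between consecutive renewals exactly, so that they contract into one free kernel.
\item Keep the bound $\mathscr U^J_{N,\mathrm{con}}\le\mathscr U^J_{N,\mathrm{free}}$.
\item For a return $J\to J$, retain the first intermediate pattern $K\ne J$. The chain is then bounded by $\sum_K\mathscr Q^{J,K}_{N,\mathrm{free}}\mathscr Q^{K,J}_{N,\mathrm{free}}$, and each factor is uniformly bounded by Lemma~\ref{lem:HLStype}.
\item Per-renewal smallness comes from $\|\mathsf U^J_{\lambda,N}\|\le g(\lambda)\to 0$ (Theorem~\ref{u_bound}), not from Theorem~\ref{limitofu2}, which gives no decay in $\lambda$.
\end{enumerate}
For the limit, you must show that the repeated-pair terms vanish, which is Proposition~\ref{prop:flem4}\eqref{prop:flem4eq}. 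You must also show that $\mathscr U_{N,\mathrm{con}}$ can be replaced by $\mathscr U_{N,\mathrm{free}}$, which is Proposition~\ref{u-u}\eqref{u-udif}. That replacement works only because the neighbouring factors have no logarithmic singularity in the $J$-gap, and this, not merging, is where $J_i\neq J_{i-1}$ comes from. You further need the coupling of Proposition~\ref{prop:flem4}\eqref{prop:flem4neq} to transfer Lemma~\ref{flem}, which is a $\PN{2}$ statement, to entrances into $J$-collisions under $\PN{4}$. These entrances can start from configurations that are not well separated, and the invariance principle does not control them. Once this bookkeeping is done, your $\delta_N$ cutoff plays no role.
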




Note that if $\Phi = \phi^{\otimes 4}$ and $\Psi = \psi^{\otimes 4}$, with $\phi,\psi \in C_c^\infty(\mathbb R^2)$, then $\mathcal G^N_{s,t} (\Phi,\Psi) = \mathbb E[ \mathfrak H^N_{s,t}(\phi,\psi)^4], $ where $\mathfrak H^N$ is the macroscopic field from Definition \ref{a2}. 

We will break the proof into shorter increments, split across Subsections \ref{first}--\ref{last}. 
The main idea is a stopping-time argument based on when collisions for new partition patterns first occur.

    \subsection{Setup for the higher moments calculation}\label{first}

    First we will set up some notation.

    \begin{defn}\label{precsim}
    For any vector $\x = (x_1,x_2,x_3,x_4) \in (\mathbb Z^2)^4$,
    define the relation $R_{\x} \subset \{1,2,3,4\}^2$ pairwise on the coordinates of $\mathbf x$ by $(i,j) \in R_{\x} \iff |x_i-x_j| \leq A$.  Here $A$ is the range of dependency in Assumption \ref{a1} \eqref{a1markov}. Extend this relation by transitivity, so that the resulting equivalence relation induces a partition $I$ of $\{1,2,3,4\}$. Write in this case $\mathbf x \sim I$.

    We write $I \vdash \{1,2,3,4\}$ if $I$ is a partition of $\{1,2,3,4\}$.
    We denote by $\hat{0} := \{\{1\}, \{2\}, \{3\}, \{4\}\}$ the \textit{trivial partition} consisting of all singletons, and say that $I$ is a \emph{nontrivial} partition of $\{1,2,3,4\}$ if $I \neq \hat{0}$.
    Finally, we say that $\mathbf x \precsim I$ if $x \sim I$ or $x \sim \hat{0}$. 
    
\end{defn}

\begin{remark}\label{disj}
    Our convention on the symbol $\sim$ implies that the regions $\{\x : \x \sim J\}$ are disjoint subsets of $(\mathbb Z^2)^4$ for distinct partitions $J\vdash \{1,2,3,4\}$. 
    Thus, a given vector $\x$ uniquely determines the partition $J$. This is in contrast with \cite{CSZ_2d}, where the authors adopt the convention that $\x \sim I$ implies $\x\sim J$ if the partition $J$ is a \textit{refinement} of the partition $I$.
\end{remark}

\begin{defn}[Partition update times]\label{stop}
Consider the Markov chain $\mathbf P_{N,4}$ and let $(\mathbf R_t)_{t\ge 0}$ denote its realization. Define a sequence of stopping times $0 < \tau_1< \dots <\tau_M < Nt $ such that the partition pattern of $\mathbf R_t$ updates to a new non-trivial partition at each stopping time $\tau_i$, and such that the sequence $(\tau_i)$ represents all such partition updates in the interval $[1, Nt - 1]$.
In other words, if $\mathbf R_0 \sim I$, then $\tau_1 := \min\{t \in \mathbb N : \mathbf R_t \sim J \textrm{ for some } J \notin \{\hat 0,I\}\}$, and the other $\tau_i$ are defined inductively in a similar manner. 
\end{defn}

Note that $M$ is random, and represents the total number of partition updates on the interval $[1,Nt - 1].$ Moreover, $\{M = 0\}$ is exactly the event where $\tau_1 \geq Nt$. We adopt this convention for all processes we consider. Now, if $I_1,\dots,I_M \vdash \{1,2,3,4\}$ is the corresponding random sequence of nontrivial partitions observed during the time interval $[1,Nt - 1]$, we can view the whole probability space as the disjoint union 
\begin{equation}\label{decomp}
    \bigcup_{m \ge 0} \bigcup_{\{0 < s_1 < \dots < s_m < Nt\}} \bigcup_{\substack{J_1, \dots ,J_m \vdash \{1,2,3,4\} \\ J_i\ne J_{i-1}, J_i \neq \hat{0} }} \{ M=m, \tau_j=s_j, I_j=J_j, \forall j\}.
\end{equation}
The overarching plan is to prove Theorem \ref{4thmom} by separating the sample space according to this decomposition, and then analyzing the associated terms separately. 

Thus applying the Markov representation of moments in Assumption \ref{a1} \eqref{a1markov} and using the decomposition \eqref{decomp}, we arrive at the following expansion of the fourth moment.

\begin{prop}[Partition expansion --- Version I]\label{part_v1}
For $t\in N^{-1}\mathbb Z_{\ge 0}$ and $\Phi,\Psi\ge 0$, we have
\begin{align}\label{s=1234a}
    \notag \mathcal G^N_{0,t}(\Phi,\Psi)= N^{-4} &\sum_{m\ge 0} \sum_{0<s_1< \dots <s_m < Nt} \sum_{\substack{J_1,\ldots,J_m \vdash \{1,2,3,4\} :\; \\
    J_i \ne J_{i-1}, J_i \neq \hat{0}}}\sum_{\x \in (\Z^2)^4} \Phi(N^{-1/2}\x) \\
    &\times \mathbf E_{N,4}^\mathbf x \bigg[\ind_{\{ M=m, \tau_j=s_j, I_j=J_j, \forall j=1,\ldots,m\}} e^{\mathscr E^{N,4}_t+ \sum_{r=0}^{Nt-1} \h_{N,4}(\mathbf R_r)} \Psi\big(\mathbf R_{Nt}/\sqrt N\big) \bigg],
\end{align}
where $\h_{N,4}$ is the additive functional appearing in the moment representation of Assumption \ref{a1}.
\end{prop}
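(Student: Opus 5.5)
The proposition is essentially bookkeeping. It combines the definition \eqref{gnst} with the Markov representation of Assumption \ref{a1} \eqref{a1markov} and a partition of unity on path space. I would proceed in three steps.

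\textbf{Step 1.} Set $s=0$ in \eqref{gnst} and apply Assumption \ref{a1} \eqref{a1markov} with $h=4$ and time $Nt$. Summing the identity against $\Psi(N^{-1/2}\y)$ over $\y\in(\Z^2)^4$ gives
\begin{equation*}
\mathcal G^N_{0,t}(\Phi,\Psi)=N^{-4}\sum_{\x}\Phi(N^{-1/2}\x)\,\mathbf E^{\x}_{N,4}\Big[e^{\mathscr E^{N,4}_t+\sum_{r=0}^{Nt-1}\h_{N,4}(\mathbf R_r)}\Psi(\mathbf R_{Nt}/\sqrt N)\Big].
\end{equation*}
The interchange of the $\y$-sum with the expectation is justified by Tonelli, since everything is nonnegative ($\Phi,\Psi\ge0$, and the exponential is positive).

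\textbf{Step 2.} Check that the events in \eqref{decomp} partition the path space, up to a null set. By Remark \ref{disj}, each state $\mathbf R_r$ determines a unique partition. Hence the random sequence of update times $\tau_1<\tau_2<\cdots$ in $[1,Nt-1]$, together with the associated nontrivial partitions $I_j$, is a deterministic function of the path.

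These update times are finite in number, since $M\le Nt$ and there is at most one update per integer time. By construction, consecutive partitions satisfy $I_j\ne I_{j-1}$ and $I_j\ne\hat 0$. One point needs checking: updates are defined relative to the last nontrivial partition visited, rather than literally the previous one. This is consistent with the constraint $J_i\ne J_{i-1}$ as indexed in \eqref{decomp}, with $J_0$ the partition of $\mathbf R_0$.

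Therefore the events $\{M=m,\tau_j=s_j,I_j=J_j\ \forall j\}$, indexed by $m$, by times $0<s_1<\cdots<s_m<Nt$ and by admissible $(J_j)$, are pairwise disjoint. Their union is the whole space, because every path realizes exactly one such tuple. It follows that
\begin{equation*}
1=\sum_m\sum_{s}\sum_{J}\ind_{\{M=m,\tau_j=s_j,I_j=J_j\ \forall j\}}
\end{equation*}
pointwise.

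\textbf{Step 3.} Insert this identity inside the expectation from Step 1 and interchange the sums with $\mathbf E^{\x}_{N,4}$ and with $\sum_\x$. The interchange is again justified by Tonelli and nonnegativity, and in any case the index set for fixed $t$ is finite. This yields \eqref{s=1234a}.

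The only step requiring care is the claim in Step 2 that the decomposition is genuinely disjoint and exhaustive. In particular, one must pin down the convention for $J_0$ and the treatment of boundary times $0$ and $Nt$, which are excluded from the $\tau_j$ by Definition \ref{stop}. I would verify this directly from Definition \ref{stop} and Remark \ref{disj}; beyond that, no analytic obstacle arises.
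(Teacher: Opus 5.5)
Your proposal is correct and follows the paper's argument: apply the Markov representation of Assumption \ref{a1} \eqref{a1markov} at time $Nt$, insert the pathwise partition \eqref{decomp} of path space by the partition update times of Definition \ref{stop}, and justify all interchanges by nonnegativity. Your worry about the convention for $J_0$ is harmless, since on $\{M\ge 1\}$ the event $\{I_1=J_1\}$ is empty whenever $J_1$ is the partition of $\mathbf R_0$; also, the decomposition is exact for every path, so no null set needs to be discarded.
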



Here is an important reduction that we make: without loss of generality, we can say that $\mathscr E^{N,4}_t=0$, because we know from Assumption \ref{a1} that $\mathscr E^{N,4}_t\to 0$ in $L^\infty$ uniformly over all initial conditions for $\PN{4}$, and therefore \eqref{s=1234a} can be rewritten with a $1+o(1)$ prefactor and without $\mathscr E^{N,4}_t$, where the asymptotic term is uniform over all parameters. Thus we ignore the errors $\mathscr E^{N,h}_t$ henceforth. 

An important role will be played by \emph{pair partitions}.

\begin{defn}
    We call $J\vdash \{1,2,3,4\}$ a \emph{pair partition} if $J$ contains two singletons and one pair.
\end{defn}


%

The expression \eqref{s=1234a} is still not in a form that is amenable to analysis. Our task is now to express this quantity in a more manageable way, so as to control it.
%
The main idea will be to rewrite this in terms of the renewal functions $U_N(f,t,x,x',y,y')$ which were defined in \eqref{eq:renewal}.

\begin{defn}[Notational conventions]\label{nota}
We will adopt the following conventions:
\begin{itemize}[leftmargin=15pt]
    \item Given a collection $T$ of subsets of $\{1,2,3,4\},$ we will write $T^{(r)}$ for the union of all $R \in T$ such that $|R| = r$.
    So if $J$ is a partition, the elements of $J^{(1)}$ are the singletons of $J$.
    \item Given a subset $B \subset \{1,2,3,4\}$ and $\x = (x_1,x_2,x_3,x_4) \in (\Z^2)^4$, we define the restriction operator $\x \mapsto \x|_B : (\Z^2)^4 \to (\Z^2)^{|B|}$ by $
    \x|_B = (x_n : n \in B).$ 
    \item We write $\gap_J \mathbf b$ for the gap\footnote{
        Technically the notion of gap is not well-defined: one may just as well define it as $x_2-x_1$ instead of $x_1-x_2$. But this sign difference will never matter, because the gap will always be input into an even function on $\mathbb Z^2$. For precision, we take the convention that the smaller index goes first.
    }
    between the pair in $J$. That is, if $J = \{\{1,2\}, \{3\},\{4\}\}$ and $\mathbf b= (b_1, \dots ,b_4)$, then $\gap_J \mathbf b= b_1-b_2$.
    \item  Define the set 
    \begin{equation*}
        (\mathbb{Z}^2)_J^h:=\big\{\mathbf{x} \in(\mathbb{Z}^2)^h: \mathbf x \sim J \big\}.
    \end{equation*}
    Fix $1 < q <\infty$. Let $\ell^q((\mathbb{Z}^2)_J^h)$ be the Banach space of functions $f:(\mathbb{Z}^2)_J^h \rightarrow \mathbb{R}$ such that $\norm{f}_q := (\sum_{\mathbf x \in (\mathbb{Z}^2)_J^h}|f(\mathbf x)|^q)^{1 / q}<\infty$. 
    \item If $f\in \ell^q((\mathbb{Z}^2)^4)$ and $g\in \ell^p((\mathbb{Z}^2)^4)$ with $\frac1p + \frac1q=1$, we will write $\langle f,g\rangle := \sum_{\x \in (\mathbb Z^2)^4} f(\x) \mathbf g(\x).$
\end{itemize}
\end{defn}
To provide an example of how we use this notation, let $J$ be a pair partition. Then $\mathbf b|_{J^{(1)}}$ encodes the positions of the two singletons in $J$. If $J = \{\{1,2\},\{3\},\{4\}\}$, we could then write $\mathbf p_{N, 1}^{\otimes 2}(r,\mathbf b|_{J^{(1)}},\mathbf c|_{J^{(1)}})$ for $\mathbf p_{N,1}(r,b_3,c_3)\mathbf p_{N,1}(r,b_4,c_4)$.
    

    \subsection{Operator expansions}
    In this section, we introduce operators that encode the partition expansion \eqref{part_v1}. These operators are indexed by partitions representing the collision patterns between particles. 
    There are also situations where the specific collision pattern is irrelevant. We will therefore introduce the symbol $*$ when no specific constraint is imposed on the collision pattern of $\mathbf x$. We will abuse notation, allowing for statements like $I = *$ and moreover adopting the conventions that $\mathbf x \sim *$ for all $\mathbf x \in (\mathbb Z^2)^4$ and $(\mathbb Z^2)^h_*=(\mathbb Z^2)^h$. 

Define a family of operators as follows. 

\begin{defn}\label{pij} For any partitions $I, J \vdash \{1,2,3,4\}$, let $\mathscr P_N^{I,J}(t) : \ell^p((\mathbb Z^2)_J^4)\to \ell^p((\mathbb Z^2)_I^4)$ be the operator defined by the kernel
\begin{equation*}
        \mathscr P_N^{I,J}(t,\x,\y) = \ind_{\{\x\sim I, \y\sim J\}} \mathbf E^{\x}_{N,4} \bigg[ e^{\sum_{r=0}^{t-1} \boldsymbol{\eta}_{N,4}(\mathbf R_r)} \ind_{\{\mathbf R_t=\y, \; \mathbf R_r \precsim I\; \forall 0\le r<t\}} \bigg],
\end{equation*}
where $t>0$ and $\x,\y\in (\Z^2)^4.$ Note that this definition makes sense even if $I$ or $J = \hat{0}$. 
\end{defn}


We immediately have the following result.

\begin{prop}[Partition expansion --- Version II]\label{op_exp} The quantity $\mathcal G_{0,t}^N (\Phi,\Psi)$ from \eqref{gnst} can be rewritten as
\begin{equation*}
    \sum_{m\ge 0} \sum_{0 <s_1<\dots<s_m < Nt} \sum_{\substack{J_0, \ldots,J_{m+1} \vdash \{1,2,3,4\} \\ J_i \ne J_{i-1}  \text{ for } 1 \leq i \leq m \\ J_i \neq \hat{0} \text{ for } 1 \leq i \leq m}} \brac*{\Phi_{(N)} , \mathscr P^{J_0,J_1}_N(s_1)\mathscr P^{J_1,J_2}_N(s_2-s_1) \cdots \mathscr P^{J_m,J_{m+1}}_N(Nt-s_m) \Psi_{(N)}},
\end{equation*}
where $\Phi_{(N)}(\x) = N^{-2} \Phi(N^{-1/2}\x).$ 
\end{prop}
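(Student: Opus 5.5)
The plan is to start from Proposition \ref{part_v1} (with $\mathscr E^{N,4}_t$ set to zero, as justified after that proposition) and split each summand according to the positions of the chain at the update times, using the strong Markov property of $\mathbf P_{N,4}$ at the stopping times $\tau_j$ from Definition \ref{stop}. On the event $\{M=m,\tau_j=s_j,I_j=J_j\ \forall j\}$, between consecutive update times $s_{i-1}$ and $s_i$ the chain stays in the region $\{\mathbf y \precsim J_{i-1}\}$. It then lands in $\{\mathbf y\sim J_i\}$ at time $s_i$, with $J_i\notin\{\hat 0,J_{i-1}\}$ by construction. After $s_m$ it stays $\precsim J_m$ up to time $Nt-1$, and at time $Nt$ it is at some $\mathbf y\sim J_{m+1}$, where $J_{m+1}$ is unconstrained.

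The key steps are as follows. First, I fix $J_0$ as the partition with $\mathbf R_0 \sim J_0$. Since the regions $\{\x\sim J\}$ are disjoint (Remark \ref{disj}), I can insert $\sum_{J_0}\ind_{\{\x\sim J_0\}}=1$ into the $\x$-sum; the same device with $J_{m+1}$ handles the terminal point $\mathbf R_{Nt}$. Second, I write the indicator of the event as a product over $i=1,\dots,m+1$ of the path constraints $\{\mathbf R_r\precsim J_{i-1}$ for $s_{i-1}\le r<s_i\}\cap\{\mathbf R_{s_i}\sim J_i\}$, with $s_0=0$ and $s_{m+1}=Nt$. I also need to check that this product reproduces the event exactly. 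The point is that "no update to a new nontrivial partition in $(s_{i-1},s_i)$" is the same as "$\mathbf R_r\in\{\sim J_{i-1}\}\cup\{\sim\hat 0\}$", which is exactly $\precsim J_{i-1}$, and that the final segment has no further update before $Nt$. Third, I split the exponential $e^{\sum_{r=0}^{Nt-1}\h_{N,4}(\mathbf R_r)}$ additively over the time blocks $[s_{i-1},s_i)$. Applying the Markov property at the deterministic times $s_1,\dots,s_m$ then turns the expectation into an iterated sum over the intermediate positions $\mathbf y_i=\mathbf R_{s_i}$ of products of the kernels $\mathscr P^{J_{i-1},J_i}_N(s_i-s_{i-1},\mathbf y_{i-1},\mathbf y_i)$ from Definition \ref{pij}. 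Each factor carries the correct indicators $\ind_{\{\mathbf y_{i-1}\sim J_{i-1},\mathbf y_i\sim J_i\}}$ and the time-homogeneous exponential weight. Finally, the factor $N^{-4}\Phi(N^{-1/2}\x)\Psi(N^{-1/2}\y)$ is redistributed as $\Phi_{(N)}(\x)\cdot N^{-2}\Psi(N^{-1/2}\y)$, which gives the pairing $\brac{\Phi_{(N)},\cdots\Psi_{(N)}}$. All terms are nonnegative because $\Phi,\Psi\ge0$, so Tonelli justifies the interchanges of sums.

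The only delicate point is bookkeeping at the boundaries. At time $0$, $J_0$ can be $\hat 0$ or any partition, and the case $J_1=J_0$ is excluded automatically since updates must change the partition. At time $Nt$, the kernel $\mathscr P^{J_m,J_{m+1}}_N(Nt-s_m)$ constrains only $r<Nt$ to be $\precsim J_m$, while $\mathbf R_{Nt}$ is free, matching the requirement that update times lie in $[1,Nt-1]$. A similar issue arises in matching the time index $r=0$ in the first block with the convention $s_0=0$. I expect this boundary matching to be the main, though routine, obstacle.
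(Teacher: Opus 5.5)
Your proposal is correct and is essentially the paper's argument. Like the paper, you disintegrate the expansion of Proposition \ref{part_v1} along the values of the update times, the intermediate positions and the partitions, then apply the Markov property of $\mathbf P_{N,4}$ at the deterministic times $s_i$; your boundary bookkeeping ($\mathbf R_0\sim J_0$, the $\precsim J_{m}$ constraint holding only for $r<Nt$, a free terminal $J_{m+1}$) fills in details the paper leaves implicit.
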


\begin{proof}
   Use \eqref{s=1234a}, disintegrate along all possible spatial and temporal locations for the given stopping times $\tau_j$, and then apply the Markov property for $\PN{4}$. This immediately yields the claim. 
\end{proof}

We will now further decompose the transition operators defined above.

\begin{defn}[Constrained evolution operators]\label{cons}
    Let $I,J\vdash \{1,2,3,4\}$ be partitions. We define the operators $\mathscr Q_{N,\mathrm{con}}^{I,J} (t): \ell^p((\mathbb Z^2)_J^4)\to \ell^p((\mathbb Z^2)_I^4)$ and $\mathscr U_{N,\mathrm{con}}^{J} (t): \ell^p((\mathbb Z^2)_J^4)\to \ell^p((\mathbb Z^2)_J^4)$ respectively by the following kernels:
    \begin{align*}
        \mathscr Q_{N,\mathrm{con}}^{I,J} (t,\x,\y)& := \ind_{\{\x\sim I, \y\sim J\}} \mathbf E^{\x}_{N,4} \bigg[ \ind_{\{\mathbf R_t=\y, \;\mathbf R_r \precsim I\; \forall 0\le r<t\}} \bigg],\\
        \mathscr U^J_{N,\mathrm{ con}}(t,\mathbf x,\mathbf y)&:=\begin{cases} \big( e^{\boldsymbol{\eta}_{N,4}(\x)} - 1\big)\ind_{\{\x=\y\}} \ind_{\{\x\sim J\}}, & t=0 \\[5pt] \big( 1-e^{-\boldsymbol{\eta}_{N,4}(\x)}\big)
        \mathscr P_N^{J,J}(t,\x,\y)\big( e^{\boldsymbol{\eta}_{N,4}(\y)} - 1\big) \ind_{\{\x,\y\sim J\}}, & t>0.\end{cases} 
    \end{align*}
Note that $ \mathscr U^{\hat{0}}_{N,\mathrm{ con}}(t,\mathbf x,\mathbf y) = 0$ because if $\x \sim \hat{0}$ then $\big( e^{\boldsymbol{\eta}_{N,4}(\y)} - 1\big) = 0$.
\end{defn}
Here the word \emph{constrained} refers to the restriction on intermediate collision patterns by conditions such as $\ind_{\{\mathbf R_r \precsim I\}}$. We will later define \emph{free} versions of these operators without this constraint.

With this notation at hand, we have the following identity.
\begin{lem}\label{recur}For any two partitions $I,J$, we have the recursion 
\begin{equation*}\mathscr P_N^{I,J}(t) =  \mathscr Q_{N,\mathrm{con}}^{I,J}(t)  + \sum_{0\le u \le v< t} \mathscr Q^{I,I}_{N,\mathrm{con}}(u) \mathscr U^I_{N,\mathrm{con}} (v-u) \mathscr Q^{I,J}_{N,\mathrm{con}}(t-v).
\end{equation*}
\end{lem}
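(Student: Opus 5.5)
The identity is a first-visit/last-visit decomposition of the multiplicative functional. We fix $\x\sim I$ and $\y\sim J$ and work pathwise under $\mathbf P^{\x}_{N,4}$, on the event $\{\mathbf R_t=\y,\ \mathbf R_r\precsim I\ \forall\, 0\le r<t\}$. Write $\eta_r:=\boldsymbol{\eta}_{N,4}(\mathbf R_r)$.

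\textbf{Step 1: telescoping expansion.} We use
\begin{equation*}
e^{\sum_{r=0}^{t-1}\eta_r} = 1+\sum_{0\le u\le v<t} \Big(\prod_{r<u}1\Big)\, c_{u,v},
\end{equation*}
where $c_{u,u}:=e^{\eta_u}-1$ and, for $u<v$,
\begin{equation*}
c_{u,v}:=(e^{\eta_u}-1)\,e^{\sum_{u<r<v}\eta_r}\,(e^{\eta_v}-1).
\end{equation*}
This is obtained by expanding $\prod_r\big(1+(e^{\eta_r}-1)\big)$ over subsets of $\{0,\dots,t-1\}$ and grouping the subsets according to their minimal element $u$ and maximal element $v$; summing over all choices of the interior elements resums to $e^{\sum_{u<r<v}\eta_r}$. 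For $u<v$ we rewrite
\begin{equation*}
c_{u,v}=(1-e^{-\eta_u})\,e^{\sum_{u\le r<v}\eta_r}\,(e^{\eta_v}-1),
\end{equation*}
which is exactly the form of the weight in $\mathscr U^I_{N,\mathrm{con}}(v-u)$: the middle factor $e^{\sum_{u\le r<v}\eta_r}$ is the weight built into $\mathscr P_N^{I,I}(v-u)$. The diagonal term $u=v$ matches the $t=0$ case of $\mathscr U^I_{N,\mathrm{con}}$. The leading term $1$ contributes precisely $\mathscr Q^{I,J}_{N,\mathrm{con}}(t,\x,\y)$.

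\textbf{Step 2: Markov splitting.} In each term we apply the Markov property of $\mathbf P_{N,4}$ at times $u$ and $v$, disintegrating over $\mathbf R_u=\mathbf a$ and $\mathbf R_v=\mathbf b$. The constraint $\{\mathbf R_r\precsim I,\ r<t\}$ factorizes into constraints on $[0,u)$, $[u,v)$ and $[v,t)$. The three resulting factors are:
\begin{itemize}
\item on $[0,u)$: the unweighted constrained kernel $\mathscr Q^{I,I}_{N,\mathrm{con}}(u,\x,\mathbf a)$;
\item on $[u,v)$: the weighted kernel $\mathscr P_N^{I,I}(v-u,\mathbf a,\mathbf b)$, multiplied by the endpoint factors $(1-e^{-\eta(\mathbf a)})$ and $(e^{\eta(\mathbf b)}-1)$;
\item on $[v,t)$: $\mathscr Q^{I,J}_{N,\mathrm{con}}(t-v,\mathbf b,\y)$.
\end{itemize}
We use the convention $\mathscr Q^{I,I}_{N,\mathrm{con}}(0)=\mathrm{Id}$ on $(\mathbb Z^2)^4_I$, so that $u=0$ is consistent with $\x\sim I$. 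Summing over $\mathbf a,\mathbf b$ and over $0\le u\le v<t$ then gives the stated recursion.

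\textbf{Main point to check: the indicators $\ind_{\{\mathbf a\sim I\}}$ and $\ind_{\{\mathbf b\sim I\}}$.} These appear in the operator kernels, but pathwise we only know $\mathbf a,\mathbf b\precsim I$, i.e.\ each is $\sim I$ or $\sim\hat 0$. We must show the terms with $\mathbf a\sim\hat 0$ or $\mathbf b\sim\hat 0$ vanish. If $\mathbf b\sim\hat 0$, then all pairwise distances exceed $A$. In particular $\mathbf b\notin T^{(4)}$, so Assumption \ref{a1} \eqref{a1markov} gives
\begin{equation*}
\boldsymbol{\eta}_{N,4}(\mathbf b)=\sum_{i<j}\z_N(b_i-b_j)=0,
\end{equation*}
because $\supp\z_N$ lies in the ball of radius $A$. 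The factor $e^{\eta(\mathbf b)}-1$ therefore kills the term; the same argument handles $\mathbf a\sim\hat 0$ via $e^{\eta_u}-1$. This is the only nontrivial point. The remaining care is bookkeeping: the endpoint weight conventions in $\mathscr P^{I,I}_N$ (which sums $\eta$ over $u\le r<v$, excluding $v$) must line up exactly with $c_{u,v}$.
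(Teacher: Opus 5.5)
Your proposal is correct and follows the paper's argument. Both expand $\prod_{r}\big(1+(e^{\boldsymbol{\eta}_{N,4}(\mathbf R_r)}-1)\big)$ over subsets, split off the empty set and the singletons ($u=v$), group larger subsets by their first and last elements, resum the interior, and apply the Markov property at those two times. Your explicit check that terms with $\mathbf R_u\sim\hat 0$ or $\mathbf R_v\sim\hat 0$ vanish (because $\boldsymbol{\eta}_{N,4}=0$ there) is left implicit in the paper's proof; it is the same observation as the paper's remark that $\mathscr U^{\hat 0}_{N,\mathrm{con}}=0$.
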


\begin{proof}
    By considering the two different cases for $\mathscr U^J_{N,\mathrm{ con}}$, the above recursion can be rewritten as $$\mathscr P_N^{I,J}(t) =  \mathscr Q_{N,\mathrm{con}}^{I,J}(t) + \sum_{u=0}^{t-1} \mathscr N^{I}_N(u)\mathscr Q_{N,\mathrm{con}}^{I,J}(t-u) + \sum_{0\le u <v< t} \mathscr Q^{I,I}_{N,\mathrm{con}}(u) \mathscr U^I_{N,\mathrm{con}}(v-u) \mathscr Q^{I,J}_{N,\mathrm{con}}(t-v),$$ 
    where $\mathscr N_N^{I}(t,\x,\y) = \mathscr Q^{I,I}_{N,\mathrm{con}} (t,\x,\y) \big( e^{\boldsymbol{\eta}_{N,4}(\mathbf y)} - 1\big).$ This is the version we prove.  Expand $$e^{\sum_{r=0}^{t-1} \boldsymbol{\eta}_{N,4}(\mathbf R_r)}=\prod_{r=0}^{t-1} (1+ e^{\boldsymbol{\eta}_{N,4}(\mathbf R_r)} -1)=\sum_{k=0}^\infty \sum_{0\le r_1<\dots<r_k <t} \prod_{j=1}^k (e^{\boldsymbol{\eta}_{N,4}(\mathbf R_{r_j})} -1). $$
    From this expansion, separate three different quantities: the $k=0$ term, the $k=1$ term, and the $k\ge 2$ terms. Then apply the expectation operator  $\mathbf E^{\x}_{N,4} \big[ \big( \cdot\big) \ind_{\{\mathbf R_t=\y, \; \mathbf R_r \precsim I\; \forall 0\le r<t\}} \big]$.

    Notice that the $k=0$ term corresponds exactly to the operator $\mathscr Q_{N,\mathrm{con}}^{I,J}.$ The $k=1$ term corresponds exactly to the operators $\mathscr N_N(t)$. Consequently, the $k\ge 2$ terms will correspond exactly to the remainder $\big( e^{\boldsymbol{\eta}_{N,4}(\x)} - 1\big)\mathscr P_N^{J,J}(t,\x,\y)\big( e^{\boldsymbol{\eta}_{N,4}(\y)} - 1\big)$, with the caveat that an extra factor of $e^{-\boldsymbol{\eta}_{N,4}(\x)}$ should be multiplied in order to start the sum in the exponential in the definition of the operator $\mathscr P^{J,J}_N$ at $r=0$ rather than $r=1$. 
\end{proof}

Recall that we use the symbol $*$ when we impose no restriction on the  collision pattern, i.e., $\x \sim *$ is true for all $\x \in (\Z^2)^4$.
\begin{defn}[Free heat kernel operators]\label{ufree} For any two nontrivial partitions $I,J\vdash \{1,2,3,4\}$, including $I = *$ or $J = *$, we define the free heat kernel operator
\begin{equation*}
    \mathscr Q_{N,\mathrm{free}}^{I,J}(t, \x,\y):=\mathbf p_{N,4}(t,\x,\y) \ind_{\{ \x\sim I, \y\sim J\}}.
\end{equation*}
\end{defn}
Note that $\mathscr Q^{*,*}_{N,\mathrm{free}} (t,\x,\y) = \mathbf p_{N,4}(t,\x,\y)$, which acts as the unconstrained 4-point heat kernel operator on the unrestricted space $\ell^2( (\mathbb Z^2)^4).$ It is also important to realize that the operators do \textit{not} satisfy a nice semigroup identity, that is, $\mathscr Q^{I,J}_{N,\mathrm{free}}(r)\mathscr Q^{J,K}_{N,\mathrm{free}}(t-r) \ne \mathscr Q_{N,\mathrm{free}}^{I,K}(t).$ This is due to the boundary conditions, and the equality would only be true if $J=*$.

The following set of operations on these transition kernels allows us to further reduce \eqref{op_exp}.

\begin{defn}[Chained operators]\label{def:chained}
 Let $I,J \vdash\{1,2,3,4\}$ be any two set partitions. We define 
    \begin{equation}\label{3.11}
        \mathscr Q^{I,J}_{N,\mathrm{chain}} (t, \x,\y) := \bigg(\sum_{m=0}^{\infty} \,\sum_{\substack{J_0,\dots,J_{m-1} \vdash \{1,2,3,4\} : \\ J_i \ne J_{i-1}, J_i \neq \hat{0}}} \,\sum_{\substack{0< s_0 <\dots<s_{m} \leq t}} \prod_{i=0}^{m+1} \mathscr Q_{N,\mathrm{con}}^{J_{i-1},J_i} (s_i-s_{i-1}) \bigg) (\x,\y),
    \end{equation}
    where we always adopt the convention that $J_{-1}=I$, $J_{m}=J_{m+1}=J$, $s_{-1}=0$, and $s_{m+1}=t$.
    We further define
    \begin{align*}\label{3.11}
        \mathscr Q^{*,J}_{N,\mathrm{chain}} &:= \mathscr Q^{J,J}_{N,\mathrm{con}}+\sum_{I \vdash \{1,2,3,4\}}\mathscr Q^{I,J}_{N,\mathrm{chain}} ,\qquad  \mathscr Q^{I,*}_{N,\mathrm{chain}} (t, \x,\y) := \mathscr Q^{I,I}_{N,\mathrm{con}}+\sum_{J \vdash \{1,2,3,4\}}\mathscr Q^{I,J}_{N,\mathrm{chain}}, \qquad \\
        \mathscr Q^{*,*}_{N,\mathrm{chain}} &:= \sum_{I,J \vdash \{1,2,3,4\}}\mathscr  Q^{I,J}_{N,\mathrm{chain}}.
    \end{align*}    
\end{defn}

\begin{thm}[Contraction identities]\label{contraction} Let $I,J \vdash\{1,2,3,4\}$ be any two set partitions or $*$, with $\mathscr Q^{I,J}_{N,\mathrm{chain}} (t, \x,\y)$ as in Definition \ref{def:chained}. We have the following allowable operator contractions.

\begin{enumerate}     \item\label{thm:contraction1} $\mathscr Q^{I,J}_{N,\mathrm{chain}} (t, \x,\y)= \mathscr Q^{I,J}_{N,\mathrm{free}}(t,\x,\y)$ if $I\ne J$.
  \item\label{thm:contraction2} $\mathscr Q^{I,J}_{N,\mathrm{chain}} (t, \x,\y)=\mathscr Q^{I,I}_{N,\mathrm{free}} (t,\x,\y) - \mathscr Q^{I,I}_{N,\mathrm{con}}(t,\x,\y)$ if $I = J$, which is in turn upper bounded by the kernel $\big[\sum_{r=1}^{t-1} \sum_{K\notin \{I,\hat 0\}} \mathscr Q^{I,K}_{N,\mathrm{free}}(r)\mathscr Q^{K,I}_{N,\mathrm{free}}(t-r)\big](\x,\y)$ if $I\ne *$, where the sum is over all nontrivial partitions $K\vdash \{1,2,3,4\}$ that are different from $I$. Here $\mathscr Q_{N,\mathrm{con}}^{*,*} = \sum_{K\vdash \{1,2,3,4\}} \mathscr Q_{N,\mathrm{con}}^{K,K}$ in the case where $I=*.$
    \end{enumerate}

    
\end{thm}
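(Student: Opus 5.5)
The plan is to read the chained kernel \eqref{3.11} as a \emph{first-passage (last-exit) decomposition} of the unweighted $4$-point heat kernel $\mathbf p_{N,4}(t,\x,\y)$ according to the successive times at which the collision pattern of $\mathbf R$ changes to a new nontrivial partition. This is the same bookkeeping as the stopping times $\tau_i$ of Definition \ref{stop}. No multiplicative functional appears in any of the kernels $\mathscr Q$, so everything is a statement about path probabilities of $\PN{4}$. Both identities then follow from a bijection between paths and the index data $(m,(s_i),(J_i))$ appearing in \eqref{3.11}.

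\textbf{Step 1: path decomposition.} Fix $\x\sim I$, $\y\sim J$ and consider any path $(\mathbf R_r)_{0\le r\le t}$ with $\mathbf R_0=\x$ and $\mathbf R_t=\y$. Define $s_0<s_1<\cdots$ inductively, exactly as the $\tau_i$: $s_i$ is the first time after $s_{i-1}$ at which $\mathbf R$ lies in a nontrivial partition $J_i$ different from the current one $J_{i-1}$, with $J_{-1}=I$. Between consecutive update times the path stays in $\{\precsim J_{i-1}\}$, so it contributes exactly to the kernel $\mathscr Q^{J_{i-1},J_i}_{N,\mathrm{con}}(s_i-s_{i-1})$. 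The last segment, from the final update time to $t$, contributes to $\mathscr Q^{J,J}_{N,\mathrm{con}}$, with the convention that the time-zero constrained kernel is the identity on $\{\y\sim J\}$ when the last update happens at time $t$ itself.

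Conversely, every admissible tuple $(m,(s_i),(J_i))$ with $J_i\ne J_{i-1}$ and $J_i\ne\hat 0$, together with compatible constrained segments, glues via the Markov property into a unique such path. This gluing is just the Chapman--Kolmogorov equation for $\mathbf p_{N,4}$ applied at the intermediate points, restricted to the disjoint sets $\{\x\sim K\}$ of Remark \ref{disj}. The product over $i$ in \eqref{3.11} therefore equals the $\PN{4}$-probability of reaching $\y$ with that prescribed update history. Summing over histories yields the probability of reaching $\y$ with at least one update. The step I expect to need the most care is the boundary conventions: $s_0>0$, $s_m\le t$, the case $s_m=t$, and the constraint $J_0\ne J_{-1}=I$. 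These must be matched so that each path is counted exactly once; this is the main obstacle, although it is purely combinatorial.

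\textbf{Step 2: the two cases.} If $I\ne J$, every path from $\x\sim I$ to $\y\sim J$ must undergo at least one update, since at time $t$ it sits in $J\notin\{I,\hat 0\}$. Hence all paths are counted and the chain kernel equals $\mathbf p_{N,4}(t,\x,\y)\ind_{\{\x\sim I,\y\sim J\}}=\mathscr Q^{I,J}_{N,\mathrm{free}}$, which is item \eqref{thm:contraction1}. If $I=J$, the paths with no update are exactly those with $\mathbf R_r\precsim I$ for all $r<t$. These are precisely the paths counted by $\mathscr Q^{I,I}_{N,\mathrm{con}}(t)$, and they are excluded from \eqref{3.11}: its $m=0$ term would require $J_0\ne I$, yet $J_0=J=I$. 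This gives $\mathscr Q^{I,I}_{N,\mathrm{free}}-\mathscr Q^{I,I}_{N,\mathrm{con}}$. For $I=*$ (and mixed cases with $*$) the same identities follow by summing the partition-indexed versions over $I,J$, using Definition \ref{def:chained}.

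\textbf{Step 3: the upper bound.} The difference $\mathscr Q^{I,I}_{N,\mathrm{free}}-\mathscr Q^{I,I}_{N,\mathrm{con}}$ is the probability that the path goes from $\x$ to $\y$ and visits some nontrivial $K\ne I$ at some time $r$. Since $\mathbf R_0=\x\sim I$ and $\mathbf R_t=\y\sim I$, such an $r$ must lie in $\{1,\dots,t-1\}$. A union bound over the pair $(r,K)$, followed by the Markov property at time $r$, bounds this by $\sum_{r=1}^{t-1}\sum_{K\notin\{I,\hat0\}}\sum_{\mathbf z\sim K}\mathbf p_{N,4}(r,\x,\mathbf z)\,\mathbf p_{N,4}(t-r,\mathbf z,\y)$. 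This is exactly the kernel of $\sum_{r=1}^{t-1}\sum_{K\notin\{I,\hat 0\}}\mathscr Q^{I,K}_{N,\mathrm{free}}(r)\,\mathscr Q^{K,I}_{N,\mathrm{free}}(t-r)$. All kernels involved are nonnegative, so the inequality holds pointwise.
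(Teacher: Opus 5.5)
Your proposal is correct and follows the paper's argument closely. You decompose $\PN{4}$-paths by their partition-update times and use the Markov property so that distinct update histories are disjoint events whose probabilities are the products in Definition~\ref{def:chained}. You note that for $I=J$ the update-free paths are exactly those counted by $\mathscr Q^{I,I}_{N,\mathrm{con}}(t)$, and you union-bound over the first intermediate visit $(r,K)$. You are also more explicit than the paper about the boundary conventions ($s_m=t$, and the time-zero constrained kernel acting as the identity).

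One small caution concerns your one-line treatment of $*$. Summing the partition-indexed identities works for $(*,J)$ with $J$ nontrivial, since the needed case $I=\hat{0}$ is covered by your bijection. It does not work for a $*$ on the right. There the sum over $J$ in Definition~\ref{def:chained} must account for endpoints $\mathbf y\sim\hat{0}$, and for $J=\hat{0}$ the final factor $\mathscr Q^{\hat{0},\hat{0}}_{N,\mathrm{con}}$ does not pin down an entry time: a path lingering in $\hat{0}$ is counted once per admissible $s_m$. This is a defect in the paper's own definition, not just in your reduction. That case is better proved directly by your same path decomposition, using a final constrained segment whose endpoint pattern is unrestricted; this is the object that Lemma~\ref{recur} actually produces.
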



\begin{proof}\textbf{Proof of Item \eqref{thm:contraction1}.}
    To illustrate the main idea, we first prove a simpler identity, namely that the $m=0$ term of $\mathscr Q^{I,J}_{N,\mathrm{chain}} (\x,\y)$ is upper bounded by the right side. To be explicit, we have
    \begin{equation*}
        \sum_{s=0}^t \sum_{\mathbf a \in \mathbb (\mathbf Z^2)^4_J} \EN{4}^{\mathbf \x}\bigg[\ind_{\{\mathbf R_0 \sim I, \tau_1 = s\}}  \ind_{\{\mathbf R_{s} =\mathbf a \}} \bigg] \EN{4}^{\mathbf a} \bigg[\ind_{\{\tau_1 > t - s\}} \ind_{\{\mathbf R_{t-s} = \mathbf y\}}\bigg]\leq  \ind_{\{\mathbf x \sim I,\mathbf y \sim J\}} \mathbf p_{N,4} (t,\mathbf \x, \mathbf y). 
    \end{equation*}
   This is not immediate from the semigroup property alone, since we are also summing over $s$. What justifies the bound is the constrained evolution of $\mathscr Q_{N,\mathrm{con}}^{I,J}(s)$, which ensures that this sum over $s$ is exactly a decomposition of the first partition update time $\tau_1$. In other words, for a fixed value of $s$, the inner sum on the left hand side is the probability of  $\mathbf x \to \mathbf y$ under $\PN{4}$ such that the unique partition update in the time interval $[0,t]$ is $I \to J$ and occurs precisely at time $s$. Thus distinct $s$ correspond to disjoint events, so that summing over $s$ and using the Markov property yields the above inequality.
    
    For the full sum over all possible $m\ge 0$, the argument is very similar, except we consider all possible $m$-tuples of partition updates, and the time $s_i$ corresponds to the time of the $i$\textsuperscript{th} update. In particular, since we define $J_m = J_{m+1} = J$, we enter state $J$ for the final time at time $s_m$ and remain there until time $s_{m+1} = t$. This is significant, because if we did not take $J_m = J_{m+1}$, then we could never be in state $J$ at time $t-1$ unlike in $\mathscr Q^{I,J}_{N,\mathrm{free}}(t,\x,\y)$. When summing over $m$, this gives us all possible sequences of intermediate collision patterns, in particular, allowing us to enter $J$ for the final time at any time in the interval $[1, t]$. Therefore we obtain $\mathscr Q^{I,J}_{N,\mathrm{free}}(t,\x,\y)$.
    
    \smallskip
    \noindent\textbf{Proof of Item \eqref{thm:contraction2}.} If $I=J$, then importantly the $m=0$ term of the series is 0, the sum being empty due to the constraint $J_i\ne J_{i-1}$.
    Given this observation, the proof of Item \eqref{thm:contraction2} is extremely similar to that of Item \eqref{thm:contraction1}. The only difference is that since the series starts from $m=1$, there must be some intermediate time $r\in \{1,\dots,t-1\}$ at which we enter a nontrivial partition pattern $K$ that is different from $I$, which is why we need to subtract $\mathscr Q^{I,I}_{N,\mathrm{con}}(t,\x,\y)$. 
    Taking a union bound over all such $K$ then yields the claimed upper bound.
\end{proof}

\subsection{The final expansion}

The observations above lead to the following expansion, which is finally in a tractable form for taking the $N\to \infty$ limit.

\begin{thm}[Partition expansion --- Version III]\label{part_v2.5}
    For pair partitions $I,J\vdash \{1,2,3,4\},$ let $\mathscr Q^{I,J}_{N,\mathrm{chain}}$ be as in \eqref{3.11}. Let $\Phi, \Psi\ge 0$. Then the quantity $\mathcal G_{0,t}^N (\Phi,\Psi)$ from \eqref{gnst} is exactly equal to $\sum_{m=0}^\infty \mathfrak I_N(m;t,\Phi,\Psi)$ where $$\mathfrak I_N(0,t,\Phi,\Psi) := \big\langle \Phi_{(N)} , \mathscr Q_{N,\mathrm{free}}^{*,*} (Nt) \;\Psi_{(N)}\big\rangle,$$ and for $m\ge 1$ one has
    \begin{align*}
        \mathfrak I_N&(m;t,\Phi,\Psi)\\
        := &\sum_{\substack{u_1\le v_1 < \dots \\ < u_m\le v_m<Nt} } \sum_{\substack{J_1,\dots,J_m \vdash \{1,2,3,4\} \\ J_i \neq \hat{0}}} \bigg \langle \Phi_{(N)} , \mathscr Q_{N,\mathrm{chain}}^{*,J_1} (u_1)\bigg[ \prod_{i=1}^m \mathscr U_{N,\mathrm{con}}^{J_i}(v_i-u_i) {\mathscr Q}_{N,\mathrm{chain}}^{J_i,J_{i+1}}  (u_{i+1} - v_i)\bigg]
    \Psi_{(N)}\bigg\rangle  ,
    \end{align*}
where we do\textbf{ not} impose the constraint $J_{i-1}\ne J_i$ on the summands, and we take $u_{m+1}=Nt$ and $J_{m+1}=*$. As always, $\Phi_{(N)}(\x):=N^{-2} \Phi(N^{-1/2}\x)$ and likewise for $\Psi_{(N)}.$ 
\end{thm}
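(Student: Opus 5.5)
We start from Proposition \ref{op_exp} (Version II) and rewrite each constrained transition operator $\mathscr P_N^{J_i,J_{i+1}}$ with Lemma \ref{recur}. The first observation is about $\boldsymbol\eta_{N,4}$. On $\{\x\sim\hat 0\}$ we have $\boldsymbol\eta_{N,4}=0$, since $\z_N$ is supported in the ball of radius $A$ and no pair is within distance $A$. Hence $\mathscr P_N^{\hat 0,J}=\mathscr Q^{\hat 0,J}_{N,\mathrm{con}}$, and $\mathscr U^{\hat0}_{N,\mathrm{con}}=0$. For a nontrivial $I$, Lemma \ref{recur} writes $\mathscr P_N^{I,J}(t)$ as a sum of two kinds of terms:
\begin{itemize}
\item a ``no interaction'' term $\mathscr Q^{I,J}_{N,\mathrm{con}}$;
\item a ``one block of interaction'' term $\mathscr Q^{I,I}_{N,\mathrm{con}}(u)\,\mathscr U^I_{N,\mathrm{con}}(v-u)\,\mathscr Q^{I,J}_{N,\mathrm{con}}(t-v)$, where $u\le v$ are the first and last times in the stretch at which a factor $e^{\boldsymbol\eta}-1$ is picked up.
\end{itemize}

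Next we substitute this into every factor of the product in Version II and expand multilinearly. Each resulting term is a word in the letters $\mathscr Q_{\mathrm{con}}$ and $\mathscr U_{\mathrm{con}}$. We regroup each word according to its $\mathscr U$ letters. Suppose the $\mathscr U$ letters occur in stretches with partitions $K_1,\dots,K_m$ (nontrivial, since $\mathscr U^{\hat0}=0$) and on time windows $[u_i,v_i]$. Between two consecutive $\mathscr U$ letters, say between $v_i$ and $u_{i+1}$, the word is a product of $\mathscr Q_{\mathrm{con}}$ operators. These carry partitions that start at $K_i$ and end at $K_{i+1}$, change at each junction ($J_i\ne J_{i-1}$), and are nontrivial at the intermediate update times; the pieces $\mathscr Q^{I,I}_{\mathrm{con}}(u)$ and $\mathscr Q^{I,J}_{\mathrm{con}}(t-v)$ coming from Lemma \ref{recur} sit at the two ends. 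We then sum over all intermediate partition sequences and update times with the $\mathscr U$ letters held fixed. This is exactly the series defining $\mathscr Q^{K_i,K_{i+1}}_{N,\mathrm{chain}}(u_{i+1}-v_i)$ in Definition \ref{def:chained}, with the conventions $J_{-1}=I$ and $J_m=J_{m+1}=J$.

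The boundary segments work the same way. Before the first $\mathscr U$ letter, the initial partition $J_0$ of $\x$ is free, and summing over it produces $\mathscr Q^{*,K_1}_{N,\mathrm{chain}}$. After the last $\mathscr U$ letter, the final partition $J_{m+1}$ of $\y$ is free, and summing over it produces $\mathscr Q^{K_m,*}_{N,\mathrm{chain}}$. The extra $\mathscr Q^{J,J}_{N,\mathrm{con}}$ term in the definition of the starred chains accounts for the case with no update in that segment. Words with no $\mathscr U$ letter at all give a sum of pure $\mathscr Q_{\mathrm{con}}$ chains between arbitrary partitions of $\x$ and $\y$. Since the kernels of $\mathscr P$ with $\boldsymbol\eta\equiv0$ sum up to the full heat kernel, by the same first-update decomposition as in the proof of Theorem \ref{contraction}, this collapses to $\mathscr Q^{*,*}_{N,\mathrm{free}}(Nt)=\mathbf p_{N,4}(Nt)$. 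That is the $m=0$ term $\mathfrak I_N(0)$.

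The constraint $K_{i-1}\ne K_i$ is dropped because two consecutive interaction blocks may carry the same partition, as long as a different partition, or $\hat 0$, is visited in between; the chain operator $\mathscr Q^{K,K}_{N,\mathrm{chain}}$ then includes the excursion. We also need $u_i\le v_i<u_{i+1}$, and this follows from the time ranges in Lemma \ref{recur} together with $s_0>0$ in \eqref{3.11}.

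The main obstacle is the bookkeeping bijection itself. We must check that every term of the Version II expansion, after applying Lemma \ref{recur} at each $J_i$, is counted exactly once in $\sum_m\mathfrak I_N(m)$ and that nothing extra appears. We will handle this by going back to the path level, as in the proof of Theorem \ref{contraction}. For a single trajectory, we expand $e^{\sum_r\boldsymbol\eta}=\sum\prod(e^{\boldsymbol\eta}-1)$ over subsets of times. For each subset, we group maximal runs of chosen times that share a constant collision partition (with no $\hat0$ or other partition in between) into $\mathscr U$ blocks; the factor $1-e^{-\boldsymbol\eta}$ in $\mathscr U$ corrects the $r=0$ endpoint convention. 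The unchosen stretches are then decomposed by their partition update times into chains. Because the map from (path, subset) to (blocks, update times, partitions) is injective and exhaustive, with all terms nonnegative when $\Phi,\Psi\ge0$ so that rearrangement is legitimate, the identity follows.
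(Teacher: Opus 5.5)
Your operator-level argument is exactly the paper's proof, and it is correct. That argument applies Lemma~\ref{recur} to every factor in Proposition~\ref{op_exp}, expands, and merges each run of $\mathscr Q_{N,\mathrm{con}}$ letters between consecutive $\mathscr U$ letters into one $\mathscr Q_{N,\mathrm{chain}}$.

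The error is in how you describe the blocks, which is the step you single out as the crux. You say two consecutive blocks may carry the same partition $K$ when only $\hat 0$ is visited in between, and your path-level rule cuts blocks at visits to $\hat 0$. Both claims contradict the definitions:
\begin{itemize}
\item $\mathscr U^K_{N,\mathrm{con}}$ is built from $\mathscr P^{K,K}_N$, whose constraint $\mathbf R_r\precsim K$ allows $\hat 0$ excursions between chosen times.
\item $\mathscr Q^{K,K}_{N,\mathrm{chain}}=\mathscr Q^{K,K}_{N,\mathrm{free}}-\mathscr Q^{K,K}_{N,\mathrm{con}}$ (Theorem~\ref{contraction}) vanishes on every path that stays $\precsim K$.
\end{itemize}
Take chosen times $r_1<r_2$ at which the path is $\sim K$, separated only by a $\hat 0$ excursion. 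Its weight lives in the one-block term containing $\mathscr U^K_{N,\mathrm{con}}(r_2-r_1)$. Your rule sends it to a two-block term whose middle chain kernel is zero on that path. So the map you describe is not a bijection onto the terms of $\sum_m\mathfrak I_N(m)$; it would prove an identity for differently defined operators.

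The correct rule is that a block is a maximal run of chosen times with no partition update, in the sense of Definition~\ref{stop}, between them; visits to $\hat 0$ are allowed inside a block. This is automatically what the operator-level expansion produces. Lemma~\ref{recur} gives one block per $\mathscr P_N$ factor, and distinct factors are separated by a genuine update to a new nontrivial partition. That is the actual reason the constraint $J_{i-1}\ne J_i$ can be dropped.

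There are two smaller points.
\begin{itemize}
\item The tail after the last block is not term-by-term a copy of $\mathscr Q^{K_m,*}_{N,\mathrm{chain}}$. In Proposition~\ref{op_exp}, updates are recorded only before time $Nt$ and the final partition is unconstrained. So the no-update tail is $\sum_{J'}\mathscr Q^{K_m,J'}_{N,\mathrm{con}}(Nt-v_m)$, which includes paths ending in $\hat 0$ or in a fresh partition at time $Nt$, not just $\mathscr Q^{K_m,K_m}_{N,\mathrm{con}}$. The two agree only after summation: both equal $\ind_{\{\x\sim K_m\}}\mathbf p_{N,4}(Nt-v_m)$ by Item~\eqref{thm:contraction1} of Theorem~\ref{contraction}, the same argument you use for the $m=0$ term.
\item Assumption~\ref{a1} only gives $\z_N\ge 0$. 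The functional $\boldsymbol\eta_{N,4}$ may be negative at non-pair collisions, so the terms are not all nonnegative. Rearrangement is still legitimate, because for fixed $N$ all time and partition sums are finite and the spatial sums converge absolutely.
\end{itemize}
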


\begin{proof}
    First consider the equivalent expression for $\mathcal G^N_{0,t}$ that is given in Proposition \ref{op_exp}. Then expand each copy of $\mathscr P^{I,J}_N(t)$ according to the decomposition in Lemma \ref{recur}. In this large expansion, note that if we have many consecutive copies of $\mathscr Q^{J_{i-1},J_i}_{N,\mathrm{con}}$, then these may all be concatenated into a single copy of $\mathscr Q_{N,\mathrm{chain}}$. Rearranging terms in this way, all appearances of the variables $s_i$ will then go away completely (they are ``contracted away"), leaving only the variables $u_i,v_i$, thus exactly yielding the stated bound.
\end{proof}

We remark that the value of $m$ in the above expansion no longer has the same meaning as in the previous subsection. In Propositions \ref{part_v1} and \ref{op_exp}, the value $m$ represented the number of stopping times (i.e., novel collision pattern updates) in the time interval $[1,Nt - 1]$. Here, infinitely many different values of $m$ from the original expansion will now contribute to a single value of $m$ in the new expansion, thanks to the contractions occurring at every level.

\begin{remark}[Comparing the moment expansion with \cite{GQT,CSZ_2d, MR4945087} and related works]\label{part_v5} One can further contract terms in the above expansion to simplify even further. Specifically, if we define the kernels $$\mathscr V_N^J(t,\x,\y) := \ind_{\{\x,\y\sim J\}} (1-e^{-\boldsymbol{\eta}_{N,4}(\x)}) \mathbf E^{\x}_{N,4} \bigg[ e^{\sum_{r=1}^{t-1} \boldsymbol{\eta}_{N,4}(\mathbf R_r)\ind_{\{\mathbf R_r\sim J\}}} \ind_{\{\mathbf R_t=\y \}} \bigg]  (e^{\boldsymbol{\eta}_{N,4}(\y)} -1),$$ then see that $\mathscr V^J_N$ is an unconstrained counterpart of the renewal operator $\mathscr U_{N,\mathrm{con}}$ from Definition \ref{cons}. Then we have one more contraction identity,
\begin{align}
    \notag \mathscr V_N^J&(t,\x,\y) = \mathscr U_{N,\mathrm{con}}^J(t,\x,\y) \\+ \bigg(&\sum_{m=1}^\infty \sum_{\substack{ v_0 < u_1 \le v_1 < \\\dots\le v_{m-1}< u_{m} \leq t}} \bigg[\prod_{j=0}^{m-1} \mathscr U^J_{N,\mathrm{con}} (v_j-u_{j}) \big(\mathscr Q^{J,J}_{N,\mathrm{free}}-\mathscr Q^{J,J}_{N,\mathrm{con}} \big) (u_{j+1}-v_j) \bigg] \mathscr U^J_{N,\mathrm{con}}(t-u_{m}) \bigg)(\x,\y),\label{contract2}
\end{align}
 where we agree that $u_0=0$. Proving this identity is similar to the arguments above. First, start from the original expression for $\mathscr V_N,$ and consider the same collection of stopping times as in Definition \ref{stop}. Then use these stopping times and the Markov property to generate an operator expansion for $\mathscr V_N$ similar to Proposition \ref{op_exp}. Then use the contraction identities of Theorem \ref{contraction}, and we will arrive exactly at the above series expansion \eqref{contract2} for $\mathscr V_N^J$. Note that one will only see contractions of the form $I=J$, which will then yield terms of the form $\big(\mathscr Q^{J,J}_{N,\mathrm{free}}-\mathscr Q^{J,J}_{N,\mathrm{con}} \big)$ as in Item \eqref{thm:contraction2} of Theorem \ref{contraction}.
 
    Plugging \eqref{contract2} back into Therorem \ref{part_v2.5}, we can contract all terms involving a consecutive partition, noting that $\mathscr Q^{J,J}_{N,\mathrm{chain}} = \mathscr Q^{J,J}_{N,\mathrm{free}}-\mathscr Q^{J,J}_{N,\mathrm{con}}$ by Theorem \ref{contraction}. Then we will see that the quantity $\mathcal G_{0,t}^N (\Phi,\Psi)$ from \eqref{gnst} is exactly equal to $\sum_{m=0}^\infty \widehat{\mathfrak I_N}(m;t,\Phi,\Psi)$ where $$\widehat{\mathfrak I_N}(0,t,\Phi,\Psi) := \big\langle \Phi_{(N)} , \mathscr Q_{N,\mathrm{free}}^{*,*} (Nt) \;\Psi_{(N)}\big\rangle,$$ and for $m\ge 1$ one has \begin{align*}\widehat{\mathfrak I_N}&(m;t,\Phi,\Psi)\\
    := &\sum_{\substack{u_1\le v_1<u_2 \le v_2<\dots\\ < u_m\le v_m<Nt} } \;\sum_{\substack{J_1,\dots,J_m \vdash \{1,2,3,4\} \\ J_i\ne J_{i-1}, J_i \neq \hat{0}}} \; \bigg \langle \Phi_{(N)} , \mathscr Q_{N,\mathrm{free}}^{*,J_1} (u_1)\bigg[ \prod_{i=1}^m \mathscr V_{N}^{J_i}(v_i-u_i) {\mathscr Q}_{N,\mathrm{free}}^{J_i,J_{i+1}}  (u_{i+1} - v_i)\bigg]
    \Psi_{(N)}\bigg\rangle  , \end{align*}
where we \emph{do} impose the constraint $J_{i-1}\ne J_i$ on the summands, we agree that $u_{m+1}=Nt$ and $J_{m+1}=*$. As always, $\Phi_{(N)}(\x):=N^{-2} \Phi(N^{-1/2}\x)$ and likewise for $\Psi_{(N)}.$ 

This expansion for $\widehat{\mathfrak I_N}$ is more similar to the expansions seen in works such as \cite{CSZ_2d}, and we now explain why we will use the uncontracted version in Theorem \ref{part_v2.5} rather than this one with $\widehat{\mathfrak I_N}$. The reason is that before taking limits of these expressions, it will first be necessary to \textit{upper-bound} the summands in the series expansion. It seems very difficult to find a direct argument to upper-bound $\mathscr V_N$ from its definition, without going through the expansion \eqref{contract2} anyways. The reason for this is that we do not necessarily have $\mathbf P_{N,4} =\mathbf P_{N,2}^{\otimes 2}$ for the models that satisfy Assumption \ref{a1}. Thus, in deriving bounds, we always work with $\mathscr U_{N,\mathrm{con}}$ rather than $\mathscr V_N,$ and because of this, it is easier to leave $\mathscr V_N$ expanded as \eqref{contract2}.
\end{remark}

\subsection{Upper bounds}

While Theorem \ref{part_v2.5} is an exact expansion that will be useful later, we will first need to obtain upper bounds that will be useful for dominated convergence arguments. We pursue this now for the next few subsections.

\begin{defn}[Free evolution operators]\label{ufree1}
If $J\vdash \{1,2,3,4\}$ is a \textit{pair partition}, then we also define the free renewal operator:
\begin{align*} &\mathscr U_{N,\mathrm{free}}^J(t,\mathbf x,\mathbf y)\\&:=\begin{cases}(e^{\z_N(\gap_J(\x))}-1) \ind_{\{\x=\y\}} \ind_{\{\x\sim J\}} ,& t=0,\\\left( e^{\z_N(\mathrm{gap}_{J} \mathbf x)} - 1\right)U_N( e^{\z_N} - 1,t, \mathbf x | _{J^{(2)}},\mathbf y| _{J^{(2)}}) \left( e^{\z_N(\mathrm{gap}_{J} \mathbf y)} - 1\right)\mathbf p_{N, 1}^{\otimes 2}(t, \mathbf x|_{J^{(1)}}, \mathbf y|_{J^{(1)}}), &t>0.
\end{cases}
\end{align*}
Here $U_N$ is the renewal function from \eqref{eq:renewal}, and $J^{(i)}$ denotes the collection of $i$-element sets in $J$, as in Definition \ref{nota}.

For nontrivial \textit{non-pair partitions} $J\vdash\{1,2,3,4\}$, we also define the quantity 
\begin{align*} &\mathscr U_{N,\mathrm{free}}^J(t,\mathbf x,\mathbf y)\\&:=\begin{cases}(e^{|\boldsymbol{\eta}_{N,4}(\mathbf x)|}-1) \ind_{\{\x=\y\}} \ind_{\{\x\sim J\}} ,& t=0,\\ \ind_{\{\x,\y\sim J\}}(e^{|\boldsymbol{\eta}_{N,4}(\mathbf x)|}-1)  \mathbf E_{N,4}^{\mathbf x} \bigg[ e^{\sum_{r=1}^{t-1} |\boldsymbol{\eta}_{N,4} (\mathbf R_r)| \ind_{\{\mathbf R_r \sim J\}}}  \ind_{\{\mathbf R_t=\mathbf y\}} \bigg]  (e^{|\boldsymbol{\eta}_{N,4}(\mathbf y)|}-1), &t\ge 0,\end{cases} 
\end{align*}
\end{defn}

It should be emphasized that the above definitions are somewhat artificial because of the absolute values, in the sense that they are designed for upper bounds rather than exact expansions as we derived in the previous section.

\begin{lem}\label{u_pair} For\textrm{ pair} partitions $I,J \vdash \{1,2,3,4\}$, we can express $$\mathscr P_N^{I,J}(t,\x,\y) = \ind_{\{\x\sim I, \y\sim J\}} \mathbf E^{\x}_{N,4} \bigg[ e^{\sum_{r=0}^{t-1} \z_N(\gap_J(\mathbf R_r))} \ind_{\{\mathbf R_t=\y, \;\mathbf R_r \precsim I\; \forall 0\le r<t\}} \bigg].$$
\end{lem}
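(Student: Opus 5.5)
The plan is a pointwise identification of the exponent inside the expectation in Definition \ref{pij}: we show that $\boldsymbol{\eta}_{N,4}(\mathbf R_r)$ agrees with a single pair interaction for every time $r$ that contributes. The event in Definition \ref{pij} contains the indicator of $\{\mathbf R_r \precsim I \;\forall\, 0\le r<t\}$. So for every $0\le r<t$ the configuration $\mathbf R_r$ satisfies either $\mathbf R_r\sim I$ or $\mathbf R_r\sim\hat 0$. Hence it suffices to prove the deterministic identity
\begin{equation*}
\boldsymbol{\eta}_{N,4}(\mathbf z)=\z_N(\gap_I \mathbf z)\qquad\text{for all } \mathbf z\in(\mathbb Z^2)^4 \text{ with } \mathbf z\precsim I .
\end{equation*}
We then substitute it into the exponent term by term, and the stated representation follows at once. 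The gap appearing in the formula is that of the partition which is constrained on $[0,t)$, namely $I$. If $I\neq J$, the pair of $J$ is at distance $>A$ at all times $r<t$, so one must read $\gap_J$ as $\gap_I$. The statement is literally as written when $I=J$, which is the case fed into $\mathscr U^{J}_{N,\mathrm{con}}$.

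To prove the identity we first show $\mathbf z\notin T^{(4)}$, so that Assumption \ref{a1} \eqref{a1markov} gives $\boldsymbol{\eta}_{N,4}(\mathbf z)=\sum_{i<j}\z_N(z_i-z_j)$.
\begin{itemize}
\item If $\mathbf z\sim\hat 0$, then $|z_i-z_j|>A$ for every $i<j$. In particular no two distinct pairs are within distance $A$, so $\mathbf z\notin T^{(4)}$.
\item If $\mathbf z\sim I$ with $I$ a pair partition, say with pair $\{a,b\}$, then the equivalence relation generated by $R_{\mathbf z}$ in Definition \ref{precsim} has exactly one non-singleton class $\{a,b\}$. Thus $|z_a-z_b|\le A$, while every other pair $\{i,j\}\neq\{a,b\}$ satisfies $|z_i-z_j|>A$, since otherwise the generated class would be larger or there would be a second class. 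Again no two distinct pairs lie within $A$, so $\mathbf z\notin T^{(4)}$.
\end{itemize}

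Next we use that $\z_N$ is supported in the closed ball of radius $A$. This makes every term of the sum with $|z_i-z_j|>A$ vanish. In the case $\mathbf z\sim\hat 0$ all terms vanish, and also $\z_N(\gap_I\mathbf z)=0$ because the $I$-pair is separated by more than $A$. In the case $\mathbf z\sim I$ the only surviving term is $\z_N(z_a-z_b)=\z_N(\gap_I\mathbf z)$, using that $\z_N$ is even, which is harmless by the sign convention on $\gap$. Summing over $r=0,\dots,t-1$ inside the indicator then yields the claimed expression.

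The only delicate point is the boundary convention. We must check that "$\sim$" in Definition \ref{precsim} uses $\le A$ while the support of $\z_N$ and the set $T^{(4)}$ use the same closed-ball convention. If the conventions differed, say a strict inequality in one place, we would enlarge $A$ slightly in Definition \ref{precsim}; this is permitted since $A$ is only required to be $N$-independent. Otherwise the argument is purely combinatorial and needs no probabilistic estimate.
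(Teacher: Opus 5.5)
Your argument is correct and is the same as the paper's: every configuration on a contributing trajectory is $\precsim I$, hence lies outside $T^{(4)}$, so $\boldsymbol{\eta}_{N,4}$ splits into pair terms and only the $I$-pair term can be nonzero (the boundary conventions do match, since both $\sim$ and $T^{(4)}$ use $\le A$, so no enlargement of $A$ is needed). Your remark that the exponent must read $\gap_I$ rather than $\gap_J$ when $I\neq J$ is right, and the paper's one-line proof glosses over it; this is harmless because the lemma is only applied with $I=J$, in the proof of \eqref{tric}.
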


\begin{proof}
    Here we are only looking at pair partitions, so Assumption \ref{a1} implies that the additive functional $\boldsymbol{\eta}_{N,4}$ splits according to the formula $\boldsymbol{\eta}_{N,h}(\x) = \sum_{1\le i<j\le 4} \z_N(x_i-x_j)$ for any $\x=(x_1,x_2,x_3,x_4)$ that lies in the trajectory of the 4-tuple of paths $\mathbf R$. For a given \textit{pair} partition, only one of the terms of this sum will be nonzero, precisely corresponding to the pair of $J$.
\end{proof}

We have an easy upper bound.
\begin{lem} For any nontrivial partition $J\vdash \{1,2,3,4\}$, one has 
\begin{equation}
    \label{tric} \mathscr U^J_{N,\mathrm{con}}(t,\mathbf x,\mathbf y)\le \mathscr U^J_{N,\mathrm{free}}(t,\mathbf x,\mathbf y),
\end{equation}
\end{lem}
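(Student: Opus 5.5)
The bound \eqref{tric} is pointwise on kernels, so I will compare the two definitions directly. The argument splits into the pair/non-pair cases and the $t=0$/$t>0$ cases. At $t=0$ both kernels are supported on $\{\x=\y,\ \x\sim J\}$. For a pair partition $J$, Lemma \ref{u_pair} (or directly Assumption \ref{a1} \eqref{a1markov}) gives $\boldsymbol{\eta}_{N,4}(\x)=\z_N(\gap_J\x)$ whenever $\x\sim J$, because $\x\notin T^{(4)}$ and only the pair of $J$ lies within distance $A$. The two $t=0$ kernels therefore coincide. For a non-pair $J$, I will use the elementary inequality $e^{a}-1\le e^{|a|}-1$.

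Now take $t>0$ and a non-pair $J$. Write $\mathscr U^J_{N,\mathrm{con}}(t,\x,\y)=(e^{\boldsymbol{\eta}(\x)}-1)\,\mathbf E^{\x}_{N,4}\big[e^{\sum_{r=1}^{t-1}\boldsymbol{\eta}(\mathbf R_r)}\ind_{\{\mathbf R_t=\y,\ \mathbf R_r\precsim J\}}\big](e^{\boldsymbol{\eta}(\y)}-1)$, using $(1-e^{-\boldsymbol{\eta}(\x)})e^{\boldsymbol{\eta}(\x)}=e^{\boldsymbol{\eta}(\x)}-1$ to absorb the $r=0$ factor. The key observation is that on $\{\mathbf R_r\precsim J\}$ we have either $\mathbf R_r\sim\hat 0$, where $\boldsymbol{\eta}_{N,4}(\mathbf R_r)=0$ since all gaps exceed $A$ and $\z_N$ is supported in the $A$-ball, or $\mathbf R_r\sim J$. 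Hence $\boldsymbol{\eta}(\mathbf R_r)\le|\boldsymbol{\eta}(\mathbf R_r)|\ind_{\{\mathbf R_r\sim J\}}$. I then drop the constraint indicator, which only enlarges a nonnegative expectation, and bound the outer factors by $e^{|\cdot|}-1$. There is one subtle point: the outer factors may be negative, so the inequality is not immediate termwise. I will handle this by bounding $|\mathscr U_{\mathrm{con}}|$ by $\mathscr U_{\mathrm{free}}$, which is what is needed for dominated convergence. If the claimed inequality is read literally, it follows trivially whenever the left side is negative.

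For a pair partition $J$ and $t>0$, Lemma \ref{u_pair} gives the exponent as $\sum_r\z_N(\gap_J\mathbf R_r)$ with $\z_N\ge0$, so the outer factors $e^{\z_N}-1\ge0$ and everything is nonnegative. Removing the constraint $\mathbf R_r\precsim J$ bounds the middle expectation by the unconstrained $\mathbf E^{\x}_{N,4}[e^{\sum_{r=1}^{t-1}\z_N(\gap_J\mathbf R_r)}\ind_{\{\mathbf R_t=\y\}}]$. The remaining task is to compare this four-particle quantity with the product $U_N(e^{\z_N}-1;t,\cdot)\,\mathbf p_{N,1}^{\otimes2}$ appearing in $\mathscr U^J_{N,\mathrm{free}}$. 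I expect this to be the main obstacle, since $\mathbf P_{N,4}$ does not factor as $\mathbf P_{N,2}\otimes\mathbf P_{N,1}^{\otimes2}$ globally.

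My plan is to keep the constraint rather than drop it. While $\mathbf R_r\precsim J$ with $J$ a pair partition, the pair is separated by more than $A$ from each singleton and the two singletons are separated from each other. By Assumption \ref{a1} \eqref{a1consistency}, each one-step transition from such a state factors as $\mathbf p_{N,2}\otimes\mathbf p_{N,1}\otimes\mathbf p_{N,1}$. Iterating the Markov property, the constrained path weight equals the product-chain weight restricted to paths satisfying the same constraint, and this is at most the unconstrained product-chain weight. Expanding $e^{\sum\z_N}=\sum_k\sum_{r_1<\dots<r_k}\prod(e^{\z_N}-1)$ then identifies the pair factor as exactly $U_N(e^{\z_N}-1;t,\x|_{J^{(2)}},\y|_{J^{(2)}})$, up to the boundary factors at $r=0$ and $r=t$ that are already accounted for. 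The singleton factor becomes $\mathbf p_{N,1}^{\otimes2}(t,\cdot)$. This is why dropping the constraint is done only after the factorization, and it yields \eqref{tric}.
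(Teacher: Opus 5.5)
Your proof is correct and follows the paper's argument. For non-pair $J$ the bound comes from the absolute values built into $\mathscr U^J_{N,\mathrm{free}}$. For pair $J$ you use the consistency condition to replace $\mathbf E_{N,4}$ by $\mathbf E_{N,2}\otimes\mathbf E_{N,1}^{\otimes 2}$ on the constrained event, and only afterwards drop $\ind_{\{\mathbf R_r\precsim J\}}$, which is exactly the order the paper uses. You also note that the outer factors $e^{\boldsymbol{\eta}_{N,4}}-1$ can be negative for non-pair $J$, and use $|e^{a}-1|\le e^{|a|}-1$ to get $|\mathscr U^J_{N,\mathrm{con}}|\le\mathscr U^J_{N,\mathrm{free}}$; this sharpens the paper's ``trivially by construction'', and it is the form actually needed when bounding products of these kernels in Corollaries \ref{part_v3} and \ref{np_irr}.
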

\begin{proof} If $J$ is not a pair partition, then trivially the expression $\mathscr U^J_{N,\mathrm{free}}$ has been defined with absolute values so that it upper-bounds the constrained version $\mathscr U^J_{N,\mathrm{con}}$ by construction. 

Thus, we only need to consider pair partitions. Since we are looking only at pair partitions, notice that the definition of the operator $\mathscr U^J_{N,\mathrm{ con}}$ from Lemma \ref{recur} simplifies to $$\mathscr U^J_{N,\mathrm{ con}}(t,\mathbf x,\mathbf y)=\begin{cases} \left( e^{\zeta_N(\mathrm{gap}_{J} \mathbf x)} - 1\right)\ind_{\{\x=\y\}}\ind_{\{\x,\y\sim J\}}, & t=0 \\ \left( 1-e^{-\zeta_N(\mathrm{gap}_{J} \mathbf x)} \right)\mathscr P_N^{J,J}(t,\x,\y)\left( e^{\zeta_N(\mathrm{gap}_{J} \mathbf y)} - 1\right)\ind_{\{\x,\y\sim J\}}, & t>0.
\end{cases} $$ In the definition of $\mathscr P_N^{J,J}(t,\x,\y)$ for pair partitions $J$, note that $\mathbf E_{N,4}$ can be replaced by $\mathbf E_{N,2}\otimes \mathbf E_{N,1}^{\otimes 2}$, simply by construction (it should be assumed that the copy of $\mathbf E_{N,2}$ corresponds to the pair element of the partition $J$). Then, using the expression in Lemma \ref{u_pair}, simply remove all no-collision constraints in the definition of the no-collision kernels, in other words use $\ind_{\{\mathbf R_r \precsim I\; \forall 0\le r<t\}} \le 1$. 
\end{proof}

\begin{cor}[Partition expansion upper bound --- Version I]\label{part_v3}
    Let $\Phi, \Psi\ge 0$. The quantity $\mathfrak I_N$ from Theorem \ref{part_v2.5} satisfies the upper bound for any $m\ge 0$: \begin{align*}\mathfrak I_N&(m;t,\Phi,\Psi)\\&\le \sum_{\substack{u_1\le v_1<u_2 \le v_2<\dots\\ < u_m\le v_m<Nt} } \;\sum_{\substack{J_1,\dots,J_m \vdash \{1,2,3,4\} \\ J_i \neq \hat{0}}} \; \big\langle \Phi_{(N)} , \mathscr Q_{N,\mathrm{free}}^{*,J_1} (u_1)\bigg[ \prod_{i=1}^m \mathscr U_{N,\mathrm{free}}^{J_i}(v_i-u_i) \widehat{\mathscr Q}_{N,\mathrm{free}}^{J_i,J_{i+1}}  (u_{i+1} - v_i)\bigg]
    \Psi_{(N)}\big\rangle  , 
    \end{align*}
where we do\textbf{ not} impose the constraint $J_{i-1}\ne J_i$ on the summands, we agree that $u_{m+1}=Nt$ and $J_{m+1}=*$, and where
\begin{equation}\label{qhat}\widehat{\mathscr Q}_{N,\mathrm{free}}^{I,J} (t):= \begin{cases}
   \mathscr Q_{N,\mathrm{free}}^{I,J}(t) , & I\ne J,\\  \sum_{r=0}^{t-1} \sum_{\substack{K \vdash \{1,2,3,4\} \\ K \ne I, \hat{0}}} \mathscr Q_{N,\mathrm{free}}^{I,K}(r)\mathscr Q_{N,\mathrm{free}}^{K,I} (t-r), & I=J.
\end{cases}
\end{equation}
The sum is over pair partitions $K\vdash \{1,2,3,4\}$ different from $I$.

For $m=0$, we agree that the right side of the upper bound written above should be understood as $\langle \Phi_{(N)} , \mathscr Q_{N,\mathrm{free}}^{*,*} (Nt) \; \Psi_{(N)}\rangle.$ As always, $\Phi_{(N)}(\x):=N^{-2} \Phi(N^{-1/2}\x)$ and likewise for $\Psi_{(N)}.$
\end{cor}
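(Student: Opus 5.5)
The bound will follow by term-by-term domination inside the exact expansion of Theorem \ref{part_v2.5}, using positivity throughout. Since $\Phi,\Psi\ge 0$ and every kernel appearing there is nonnegative, the operator products are monotone in each factor. It therefore suffices to dominate each individual factor, entrywise as a kernel, by its counterpart in the claimed upper bound. Positivity holds because $\z_N\ge 0$ makes the pair-partition factors $e^{\z_N}-1$ and $1-e^{-\z_N}$ nonnegative. For non-pair partitions, $\mathscr U_{N,\mathrm{con}}$ is only bounded in absolute value by the free version. I would handle this by working with the absolute value of the whole expression, i.e.\ noting $|\mathfrak I_N|$ is at most the expansion with each kernel replaced by its absolute value. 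All other factors are already nonnegative.

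\textbf{Step 1: the renewal factors.} The middle factors $\mathscr U^{J_i}_{N,\mathrm{con}}(v_i-u_i)$ are replaced by $\mathscr U^{J_i}_{N,\mathrm{free}}(v_i-u_i)$ using \eqref{tric}. For non-pair $J$, I would check that the bound also holds with absolute values. This is immediate, since $|e^{a}-1|\le e^{|a|}-1$, $|1-e^{-a}|\le e^{|a|}-1$, and $e^{\sum \boldsymbol\eta_{N,4}}\le e^{\sum|\boldsymbol\eta_{N,4}|}$. The constraint $\mathbf R_r\precsim J$ together with $\x,\y\sim J$ only restricts the event, so the indicator can be relaxed as in the definition.

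\textbf{Step 2: the chained factors.} Three cases arise.
\begin{itemize}
\item The first factor is $\mathscr Q^{*,J_1}_{N,\mathrm{chain}}(u_1)$. By Definition \ref{def:chained} and Theorem \ref{contraction}, this equals $\mathscr Q^{J_1,J_1}_{N,\mathrm{con}}$ plus $\sum_{I\ne J_1}\mathscr Q^{I,J_1}_{N,\mathrm{free}}$ plus $\mathscr Q^{J_1,J_1}_{N,\mathrm{free}}-\mathscr Q^{J_1,J_1}_{N,\mathrm{con}}$, which is exactly $\mathscr Q^{*,J_1}_{N,\mathrm{free}}(u_1)$. It is an equality, not merely a bound. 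The main subtlety is bookkeeping whether the initial partition may be $\hat 0$, so I would sum over all $I$ including $\hat 0$.
\item For interior factors with $J_i\ne J_{i+1}$, Item \eqref{thm:contraction1} of Theorem \ref{contraction} gives equality with $\mathscr Q^{J_i,J_{i+1}}_{N,\mathrm{free}}=\widehat{\mathscr Q}^{J_i,J_{i+1}}_{N,\mathrm{free}}$.
\item For interior factors with $J_i=J_{i+1}$, Item \eqref{thm:contraction2} gives domination by $\sum_r\sum_{K}\mathscr Q^{J_i,K}_{N,\mathrm{free}}(r)\mathscr Q^{K,J_i}_{N,\mathrm{free}}(t-r)$. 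This is $\widehat{\mathscr Q}$ as in \eqref{qhat}.
\end{itemize}
The last factor has $J_{m+1}=*$. Analogously to the first factor, $\mathscr Q^{J_m,*}_{N,\mathrm{chain}}$ contracts to $\mathscr Q^{J_m,*}_{N,\mathrm{free}}=\widehat{\mathscr Q}^{J_m,*}_{N,\mathrm{free}}$, since $J_m\ne *$.

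\textbf{Step 3: assembly.} Inserting these bounds into the nonnegative product and summing over all $u_i,v_i,J_i$ gives the stated inequality. The case $m=0$ holds trivially with equality.

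\textbf{Main obstacle.} The most delicate points are the range of $K$ in \eqref{qhat} and the $r=0$ term. Theorem \ref{contraction} lists $K\notin\{I,\hat0\}$ over all nontrivial partitions with $r\ge 1$, whereas \eqref{qhat} sums from $r=0$. Including extra nonnegative terms only enlarges the bound, which settles the $r=0$ term. The restriction to pair partitions mentioned after \eqref{qhat} I would read as a notational slip, taking all nontrivial $K\ne I$ so that the domination is valid. I would also re-verify the union bound in Item \eqref{thm:contraction2}: on the event that the chain leaves $I$ and returns, there is a first time $r$ of entering some $K\ne I$, and the free kernels obtained by dropping the constraints dominate that event.
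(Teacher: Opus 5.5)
Your proposal is correct and is essentially the paper's proof. Both argue by term-by-term domination in the expansion of Theorem \ref{part_v2.5}, using Theorem \ref{contraction} to bound each $\mathscr Q_{N,\mathrm{chain}}$ factor by $\widehat{\mathscr Q}_{N,\mathrm{free}}$ (with equality except when $J_i=J_{i+1}$), and \eqref{tric} for the renewal factors.

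Your extra care is well placed: since $\boldsymbol{\eta}_{N,4}$ may be negative on non-pair collisions, what is really needed is $|\mathscr U^J_{N,\mathrm{con}}|\le \mathscr U^J_{N,\mathrm{free}}$, which the absolute values in Definition \ref{ufree1} guarantee and which also underlies Corollary \ref{np_irr}. Likewise, summing over all nontrivial $K\ne I$ in \eqref{qhat}, rather than only pair partitions, is indeed necessary for the bound to hold.
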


\begin{proof}
    We apply the previous corollary. The contraction identities of Theorem \ref{contraction} exactly state that $\mathscr Q^{I,J}_{N,\mathrm{chain}}(t,\x,\y) \leq \widehat{\mathscr Q}^{I,J}_{N,\mathrm{free}}(t,\x,\y)$. Furthermore, we use the bound \eqref{tric} to observe that $\mathscr U_{N,\mathrm{con}}^J (t,\x,\y) \leq \mathscr U_{N,\mathrm{free}}^J (t,\x,\y),$ completing the proof.
\end{proof}

\subsection{Exponential damping of the operators}
Next, we are going to continue pursuing upper bounds for the quantity $\mathfrak I_N$ from Theorem \ref{part_v2.5} by upper bounding the quantity in Corollary \ref{part_v3}. To do this, we introduce an exponential damping in the time variable, and then extend the range of summation of all the time variables to macroscopic order.
The reason for doing this is that the sums then factor into products of integral operators, which can be controlled by a uniform upper bound after the damping. 

\begin{defn}[Damped operators]\label{damp}
Let $I,J\vdash \{1,2,3,4\}$ be nontrivial. We fix $\lambda >0$, and define $$\mathsf U_{\lambda,N}^{J} := \sum_{r=0}^{2N} e^{-\lambda N^{-1}r} \mathscr U_{N,\mathrm{free}}^J(r) , \hspace{1 in} \mathsf Q^{I, J}_{\lambda, N}:=\sum_{r=0}^{2N} e^{-\lambda N^{-1}r} \widehat{\mathscr Q}_{N,\mathrm{free}}^{I,J}(r), $$
where we recall $\mathscr U_{N,\mathrm{free}}^J(r)$ and $\widehat{\mathscr Q}_{N,\mathrm{free}}^{I,J}$ from Definition \ref{ufree1} and \eqref{qhat}, respectively. We also define the operators $\mathsf Q^{I, J}_{\lambda, N}$ in the natural way if $I$ or $J$ are taken to be $*$. We interpret $\mathsf U_{\lambda,N}^{J}$ as an integral operator from $\ell^q\big(\big(\mathbb{Z}^2\big)_J^4\big)$ to $\ell^q\big(\big(\mathbb{Z}^2\big)_J^4\big)$, and likewise we interpret $\mathsf Q^{I, J}_{\lambda, N}$ as an integral operator from $\ell^q\big(\big(\mathbb{Z}^2\big)_I^4\big)$ to $\ell^q\big(\big(\mathbb{Z}^2\big)_J^4\big)$.
Let $p$ be such that $\frac1p + \frac1q = 1$. If $K: \ell^q\big(\big(\mathbb{Z}^2\big)_I^4\big) \to \ell^q\big(\big(\mathbb{Z}^2\big)_J^4\big)$, we define the operator norm

$$
\left\|K\right\|_{\ell^q\to\ell^q} :=\sup _{\substack{f, g:(\mathbb{Z}^2)_J^4 \rightarrow \mathbb{R} \\\|f\|_p=\|g\|_q=1}}\;\; \sum_{\substack{\x\in \left(\mathbb{Z}^2\right)_I^4 \\ \y \in (\mathbb{Z}^2)_J^4}} f(\x) K(\x,\y) \mathbf g(\y)
$$
Always, $\|f\|_p$ is a shorthand notation for $\|f\|_{\ell^p((\mathbb Z^2)^4_J)}$ whenever $f\in \ell^p((\mathbb Z^2)^4_J).$
\end{defn}

Rewriting the above corollary in terms of operator notation, we obtain an upper bound as follows.
\begin{prop}[Partition expansion upper bound --- Version II]\label{part_v4} Let $p,q>1$ with $\frac1p+\frac1q=1$. Let $\Phi, \Psi\ge 0$. The quantity $\mathfrak J_N$ from Theorem \ref{part_v2.5} satisfies the upper bound for $m\ge 1$:  
\begin{align}
    \mathfrak I_N&(m;t,\Phi,\Psi) \notag \\&\le\notag \frac{e^{\lambda} \| \Psi_{(N)}\|_\infty}{N} \sum_{\substack{J_1,\ldots,J_m \vdash \{1,2,3,4\} \\ J_i \neq \hat{0} }} \bigg\langle\Phi_{(N)} \;,\; \mathsf Q_{0,N}^{*,J_1}\left[\prod_{i = 1}^m\mathsf U_{\lambda,N}^{J_{i}} \mathsf Q_{0,N}^{J_{i},J_{i+1}}\right]\ind_{\sqrt N B}\bigg\rangle \\
    &\leq \frac{e^{\lambda}}{N}\sum_{\substack{J_1,\ldots,J_m \vdash \{1,2,3,4\} \\ J_i \neq \hat{0} } }\left\|\Phi_{(N)}\right\|_p \left\|\mathsf Q_{0,N}^{*,J_1}\right\|_{\ell^q\to\ell^q}\left[\prod_{i = 1}^m\big\|\mathsf U_{\lambda,N}^{J_{i}}\big\|_{\ell^q\to\ell^q} \big\| \mathsf Q_{0,N}^{J_{i},J_{i+1}}\big\|_{\ell^q\to\ell^q}\right]\left\|\Psi_{(N)}\right\|_\infty \| \ind_{\sqrt N B}\|_q. \label{operator_bound}
\end{align}
Above, we do\textbf{ not} impose the constraint $J_{i-1}\ne J_i$ on the summands, and we agree that $u_{m+1}=Nt$ and $J_{m+1}=*$, and furthermore $B\subset (\mathbb R^2)^4$ is any ball containing the support of $\Psi.$ As always, $\Phi_{(N)}(\x):=N^{-2} \Phi(N^{-1/2}\x)$ and likewise for $\Psi_{(N)}.$
\end{prop}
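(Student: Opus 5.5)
The first inequality follows from Corollary \ref{part_v3} by relaxing the time constraints and inserting the exponential damping; the second is then Hölder's inequality combined with submultiplicativity of operator norms.

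\textbf{Step 1: relaxing the time sums.} Start from the upper bound of Corollary \ref{part_v3}. Every kernel there is nonnegative, because $\Phi,\Psi\ge0$, the $\mathscr U_{N,\mathrm{free}}$ are built from nonnegative quantities ($\z_N\ge0$, absolute values of $\h_{N,4}$), and the $\widehat{\mathscr Q}$ are transition densities. Hence enlarging the range of summation can only increase the expression.

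Write the increments $r_0=u_1$, $\rho_i=v_i-u_i$, $r_i=u_{i+1}-v_i$ for $i\le m$. The constraint $u_1\le v_1<\dots<v_m<Nt$ says the increments are nonnegative and sum to $Nt\le N$. Fix the last increment $r_m=Nt-v_m$. Then the remaining increments are only required to satisfy $r_0+\sum_{i<m} r_i+\sum_i\rho_i\le N$, and we drop even this coupling, letting each range independently over $[0,2N]$.

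We attach the damping factor $e^{-\lambda N^{-1}\rho_i}$ to each $\mathscr U$ increment. Since $\sum_i\rho_i\le N$ on the original range, the product of these factors is at least $e^{-\lambda}$. Multiplying by $e^{\lambda}$ therefore compensates, and this is exactly where the prefactor $e^{\lambda}$ comes from. The $\widehat{\mathscr Q}$ increments carry no damping, giving $\mathsf Q_{0,N}$, and the $\mathscr U$ increments give $\mathsf U_{\lambda,N}$.

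The terminal factor needs separate handling. We bound $\Psi_{(N)}\le\|\Psi_{(N)}\|_\infty\ind_{\sqrt N B}$. The last operator $\widehat{\mathscr Q}^{J_m,*}_{N,\mathrm{free}}(Nt-v_m)$ carries a fixed time rather than a summed one. To convert it into the summed operator $\mathsf Q^{J_m,*}_{0,N}$ we average the terminal time over a window of length of order $N$, which produces the factor $1/N$. This is the delicate point. I would justify it with the heat kernel upper bound of Assumption \ref{a1} \eqref{a1heatkernel}: the quantity $\mathbf p_{N,4}(r;\cdot,\sqrt N B)$ is comparable for all $r$ in a macroscopic window, since $\sqrt N B$ has macroscopic size. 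Alternatively, for $I=J$ one can absorb the extra sum already present in $\widehat{\mathscr Q}$, or use the enlargement to $2N$ together with a Markov-type averaging over the final time. This yields the first inequality. The notation $\mathsf Q_{0,N}^{J_m,*}$ with $J_{m+1}=*$ is understood as the free kernel into all of $(\Z^2)^4$.

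\textbf{Step 2: the operator-norm bound.} Here I would simply apply Hölder's inequality, $\langle \Phi_{(N)},K g\rangle\le\|\Phi_{(N)}\|_p\|Kg\|_q$ with $g=\ind_{\sqrt N B}$. Then I would use $\|K_1\cdots K_n\|_{\ell^q\to\ell^q}\le\prod\|K_i\|_{\ell^q\to\ell^q}$, each operator being viewed between the appropriate restricted spaces $\ell^q((\Z^2)^4_J)$, with indicator restrictions only decreasing norms. Summing over the partitions then gives \eqref{operator_bound}.

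The main obstacle is the extraction of the $1/N$ factor from the fixed terminal time in Step 1. Everything else is a monotone relaxation of the sums plus standard norm inequalities.
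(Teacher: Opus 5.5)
The overall structure of your argument is the same as the paper's: relax the simplex to a box, damp only the $\mathscr U$-increments to produce $e^{\lambda}$, bound $\Psi_{(N)}\le\|\Psi_{(N)}\|_\infty\ind_{\sqrt N B}$, then apply H\"older and submultiplicativity. The gap is in the one nontrivial step, which you flagged yourself and did not close. What is needed is, for $v<Nt$ and all $\mathbf y$,
\[
\mathbf p_{N,4}(Nt-v,\mathbf y,\ind_{\sqrt N B})\le\frac{C}{N}\sum_{u=N}^{2N}\mathbf p_{N,4}(u-v,\mathbf y,\ind_{\sqrt N B}).
\]
This is a \emph{lower} bound on the averaged right-hand side in terms of the left-hand side, and a heat kernel \emph{upper} bound cannot produce it directly. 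The two-sided comparability you invoke is also false uniformly in $\mathbf y$: already for simple random walk it fails when $\mathbf y$ is far from $\sqrt N B$. Moreover, the time $Nt-v$ on the left can be microscopic. Your fallback options do not help either. The terminal operator is $\widehat{\mathscr Q}^{J_m,*}_{N,\mathrm{free}}=\mathscr Q^{J_m,*}_{N,\mathrm{free}}$, since $J_m\neq *$, so there is no internal sum to absorb. And ``Markov-type averaging'' is missing its key input.

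The missing idea is to split at the intermediate time $Nt-v$. For $N\le u\le 2N$, the Markov property gives
\[
\mathbf p_{N,4}(u-v,\mathbf y,\ind_{\sqrt N B})\ge\sum_{\mathbf z\in\sqrt N B}\mathbf p_{N,4}(Nt-v,\mathbf y,\mathbf z)\,\mathbf p_{N,4}(u-Nt,\mathbf z,\ind_{\sqrt N B}).
\]
So it suffices to have $\mathbf p_{N,4}(s,\mathbf z,\ind_{\sqrt N B})\ge c>0$ uniformly over $\mathbf z\in\sqrt N B$ and $0\le s\le 2N$. The paper obtains this from the invariance principle of Assumption \ref{a1} \eqref{a1invarianceprinciple} by a compactness argument. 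Averaging over $u$ then yields the first display, with a constant $C$ that the statement suppresses. Since $u-v\in[1,2N]$, the right side is dominated by the kernel of $\mathsf Q^{J_m,*}_{0,N}$ uniformly in $v$. This is what decouples the terminal factor from the other time sums.

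If you prefer to use only the heat kernel upper bound, it can enter exactly here, but only indirectly. Summing it (with $S>8$) over displacements $|\mathbf w|>K\sqrt{2N}$ gives a uniform tail bound. Hence $\mathbf p_{N,4}(s,\mathbf z,\ind_{\sqrt N B'})\ge\frac12$ for $\mathbf z\in\sqrt N B$ and $s\le 2N$, where $B'$ is the concentric ball enlarged by $\sqrt2K$ with $K$ large. The same splitting then proves the bound with $\ind_{\sqrt N B'}$ in place of $\ind_{\sqrt N B}$, which is harmless for the later use in Corollary \ref{abs_conv}.
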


The advantage here over Corollary \ref{part_v3} is that the time variables $u_j,v_j$ no longer appear in the expression, thus leading to a simpler and more tractable expansion.

\begin{proof}
Going from the first bound to the second one is trivial, so we explain how to derive the first bound.

We use the same Laplace transform idea as in \cite[Proof of Theorem 6.1]{CSZ_2d}. Fix $\lambda >0$ and multiply the expression in Corollary \ref{part_v3} by $1 = e^\lambda e^{-\frac{\lambda}{N}(u_1 + (v_1 - u_1) + (u_2 - v_1) + \cdots +(v_m - a_{m - 1}) + (N - v_m))}$. In obtaining upper bounds on terms of order $m \geq 2$ in the moment expansion of Corollary \ref{part_v3}, it will suffice to take $t=1$ and then extend the range over which the $m$-block contribution is summed, from the simplex $0 < u_1 \leq v_1 < \cdots < u_m \leq v_m < N$ to the rectangle $\{(\mathbf u,\mathbf v) : u_{i + 1} - v_i \in \{1,\ldots,2N\},u_i - v_i \in \{0,\ldots,2N\}\}$. Ignore the damping terms associated to the $\mathscr Q_N$-factors, in other words upper-bound them by 1. This then \textit{almost} yields that the expression in Corollary \ref{part_v3} is upper-bounded by the first expression above. 


The only missing step is that we do not yet have a valid argument for replacing the final undamped operator $\mathscr Q_N^{J_m,*}$ by its damped version $\mathsf Q^{J_m,*}$ appearing above, and we have also not explained the factor $\frac1N$ appearing as a prefactor in the statement of the proposition. To explain these, we claim (analogously to \cite[(6.11)]{CSZ_2d}) that one has $$ \mathscr Q_{N,\mathrm{free}}^{J,*} (N-v,\y,\ind_{\sqrt N B} ) \le \frac{C}{N} \sum_{u=N}^{2N} \mathscr Q_{N,\mathrm{free}}^{J,*} (u-v, \y,\ind_{\sqrt N B}),$$
uniformly over $N>v$ and $\y\in (\mathbb Z^2)^4.$ We show something stronger: uniformly over $2N>u>N>v$ and $\y\in (\mathbb Z^2)^4$ we have 
\begin{align*}
   \mathbf p_{N,4}(u - v ,\mathbf y,  \ind_{\sqrt N B}) &\ge \sum_{\mathbf z \in (\sqrt{N} B)^4} \mathbf p_{N,4}( N-v, \mathbf y, \mathbf z) \mathbf p_{N,4}( u-N, \mathbf z,\ind_{\sqrt N B})\\& \ge c \sum_{\mathbf z \in(\sqrt{N} B)^4} \mathbf p_{N,4}( N-v, \mathbf y, \mathbf z) \\& = c \mathbf p_{N,4} (N - v ,\mathbf y, \ind_{\sqrt N B}) 
\end{align*}
where $c>0$ is independent of $N$. The first line is simply the Markov property for $\mathbf P_{N,4}$. In the second line, we are using by the invariance principle of Assumption \ref{a1} that uniformly over all $u\in \{N,\dots,2N\}$ and $\mathbf z \in \sqrt N B$, one has that $\mathbf p_{N,4}( u-N, \mathbf z,\ind_{\sqrt{N}B})>c$ with $c>0$ independent of $N$. Indeed, consider any two sequences $t_N \in \{1,\dots,N\}$ and $\mathbf z_N \in \sqrt N B$ with $N^{-1} t_N\to t\in [0,1]$ and $N^{-1/2} \mathbf z_N \to \mathbf z \in (\mathbb R^2)^4$. If $t>0$, the invariance principle says that $\mathbf p_{N,4}(t_N,\mathbf z_N, \sqrt NB)$ necessarily converges as $N\to \infty$ to a positive constant, namely $\mathbf g^{\otimes 4} (t,\mathbf z-B)$, with $\mathbf g^{\otimes 4}$ being the standard heat kernel transition density on $(\mathbb R^2)^4$. On the other hand, if $t=0$, then it is easy to see that $\mathbf p_{N,4}(t_N,\mathbf z_N, \sqrt NB)$ is asymptotically bounded below by $1/2$ (in fact, the limit is 1 if $\mathbf z$ is an interior point of $B$, whereas it could be anything between 1/2 and 1 if $\mathbf z$ is a boundary point). This completes the proof. 
\end{proof}


\subsection{Bounds on the heat kernel operators}\label{q:op}

It is not yet clear that the right side of \eqref{operator_bound} defines a quantity that is summable in $m$. To prove this, we need to study precisely how large the operator norms can be. First, we will bound the $\mathsf Q_{\lambda, N}$ operator norms. We will bound the operators on $\ell^2$ for simplicity. The arguments easily extend to $\ell^q$ with $q\ne 2$, though we will not use this. First, we need two lemmas.

\begin{lemma}\label{lem:GreenBound}
Consider the Green's function $\sum_{t=1}^{N} \mathbf p_{N,4} (t,\mathbf x,\mathbf y) $. There exists $C$ such that for all $\mathbf x, \mathbf y \in (\mathbb Z^2)^4$, we have the estimate
\begin{equation*}
    \sup_N \sum_{t=1}^{N} \mathbf p_{N,4} (t,\mathbf x,\mathbf y)  \leq 
    \frac{C}{(1+|\mathbf y-\mathbf x|^2)^{3}}.
\end{equation*}
\end{lemma}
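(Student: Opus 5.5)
The plan is to bound the Green's function termwise using the heat kernel upper bound in Assumption \ref{a1} \eqref{a1heatkernel} with $h=4$ (and $d_N\equiv 0$ as agreed). That bound gives
\begin{equation*}
\mathbf p_{N,4}(t,\x,\y)\le C_S\, t^{-4}\big(1+t^{-1/2}|\y-\x|\big)^{-S},
\end{equation*}
uniformly in $N$, and we may choose $S$ as large as we like. Put $D:=|\y-\x|$. We extend the sum to all $t\ge 1$; this can only increase it and removes the dependence on $N$, which yields the $\sup_N$ for free. It then suffices to prove
\begin{equation*}
\sum_{t=1}^\infty t^{-4}\big(1+t^{-1/2}D\big)^{-S}\le \frac{C}{(1+D^2)^{3}}.
\end{equation*}

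\emph{Case $D\le 1$.} Here we just use $(1+\cdots)^{-S}\le 1$, which gives the bound $\sum_t t^{-4}<\infty$. This is comparable to $(1+D^2)^{-3}$, since that quantity lies in $[1/8,1]$.

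\emph{Case $D>1$.} We split the sum at $t=D^2$.
\begin{itemize}
\item For $t\ge D^2$ we drop the decay factor and get $\sum_{t\ge D^2}t^{-4}\le C D^{-6}$.
\item For $1\le t<D^2$ we use $(1+t^{-1/2}D)^{-S}\le t^{S/2}D^{-S}$. This gives $D^{-S}\sum_{t<D^2}t^{S/2-4}$. Choosing $S>6$, the sum is at most $C D^{S-6}$, so this part is also $\le CD^{-6}$.
\end{itemize}
Since $D^{-6}\le 8(1+D^2)^{-3}$ when $D>1$, the claim follows.

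There is no real obstacle here; the only points needing care are the following.
\begin{itemize}
\item The constant in Assumption \ref{a1} \eqref{a1heatkernel} is uniform in $N$ for each fixed $S$, and a single $S$ (say $S=8$) suffices.
\item The estimate is stated for $r\ge 1$, which matches the sum starting at $t=1$.
\end{itemize}
In the general case $d_N\neq 0$, one replaces $\y-\x$ by $\y-\x-N^{-1}t(d_N,\dots,d_N)$, as per the paper's convention.
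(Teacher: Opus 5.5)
Your proof is correct and rests on the same input as the paper: the $h=4$ heat kernel upper bound, summed over $t$ with a fixed $S>6$. The paper compares the sum with $N^{-3}\int_0^1 s^{-4}(1+s^{-1/2}|\mathbf z_N|)^{-S}\,ds$ and rescales via $r=|\mathbf z_N|^2/s$, whereas you extend the sum to all $t\ge 1$ and split at $t=|\mathbf y-\mathbf x|^2$. This is only a difference in bookkeeping, and yours is slightly more direct, since it avoids the sum-to-integral comparison and makes the $\sup_N$ automatic.
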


\begin{proof}

If $\mathbf x =\mathbf y$, then then we just use the bound $\sum_{t = 1}^N \mathbf p_{N,4} (t,\mathbf x ,\mathbf y)  \leq \sum_{t=1}^N t^{-4} \leq  C$.

By Assumption \ref{a1} \eqref{a1heatkernel}, we have that $\mathbf p_{N,4} (t,\mathbf x,\mathbf y) \leq Ct^{-4} (1+t^{-1/2} |\mathbf x-\mathbf y|)^{-S},$ for all $\mathbf x,\mathbf y\in (\mathbb Z^2)^4$ and for some $S > 8$. Therefore, if $\mathbf x \ne  \mathbf y$, we have 
\begin{align*}
 \sum_{t=1}^{N} \mathbf p_{N,4} (t,\mathbf x,\mathbf y) & \leq    C \sum_{t=1}^{N} t^{-4}(1+t^{-1/2} |\mathbf x-\mathbf y|)^{-S}\\
 &\leq C  N^{-3} \int_0^1 s^{-4}(1+s^{-1/2} |\mathbf z_N|)^{-S}ds
\end{align*}
where $\mathbf z_N = (\mathbf x - \mathbf y)/N^{1/2}$. 

We set $r = s^{-1}|\mathbf z_N|^2$. We then obtain 
\begin{align*}
 N^{-3} \int_0^1 s^{-4}(1+s^{-1/2} |\mathbf z|)^{-S}ds &=  (N|\mathbf z_N|^2)^{-3} \int_{|\mathbf z_N|^2}^{\infty} r^{2}(1+r^{1/2})^{-S} dr \\
 &\leq C (N|\mathbf z_N|^2)^{-3}.
\end{align*}
The result follows.

\end{proof}

\begin{lemma}\label{lem:HLStype}
    Fix nontrivial partitions $I,J \vdash \{1,\ldots,4\}$, and require that $I\ne J$.
    If $f \in \ell^2((\mathbb Z^2)^4_I)$ and $g \in \ell^2((\mathbb Z^2)^4_J)$, then
    \begin{equation*}
        \sum_{\mathbf x \in (\mathbb Z^2)^4_I,\mathbf y \in (\mathbb Z^2)^4_J} \frac{f(\mathbf x)g(\mathbf y)}{(1 + |\mathbf x - \mathbf y|^2)^3} \leq C\norm{f}_2\norm{g}_2.
    \end{equation*}
\end{lemma}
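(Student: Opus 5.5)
The plan is to prove the bound by a weighted Schur test, after reducing both collision sets to lower-dimensional lattices. Write $K(\x,\y):=(1+|\x-\y|^2)^{-3}$. For a nontrivial partition $I$, the set $(\mathbb Z^2)^4_I$ is, up to a bounded fibre, a copy of $(\mathbb Z^2)^{|I|}$. We parametrize it by one base point $a_B$ per block $B\in I$ (say the coordinate with the smallest index), plus bounded offsets $x_i-a_B$ with $|x_i-a_B|\le 3A$. Since the offsets range over a finite set whose size depends only on $A$, it suffices to prove the bound for each fixed choice of offsets on the $I$-side and on the $J$-side. After this reduction $f$ and $g$ become functions on $(\mathbb Z^2)^{|I|}$ and $(\mathbb Z^2)^{|J|}$, and $|\x-\y|$ is comparable, up to an additive constant, to $\big(\sum_{i=1}^4|a_{[i]_I}-b_{[i]_J}|^2\big)^{1/2}$. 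Here $a_{[i]_I}$ and $b_{[i]_J}$ are the base points of the blocks of $I$ and $J$ containing $i$. Note that $|I|,|J|\le 3$, so each side has dimension at most $6$.

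Next I would dispose of the easy cases. If $|I|\le 2$ and $|J|\le 2$ (non-pair partitions), then both $\sum_{\y}K(\x,\y)$ and $\sum_{\x}K(\x,\y)$ are sums of $(1+|z|^2)^{-3}$ over a lattice of dimension at most $4<6$. They are bounded uniformly, so the unweighted Schur test gives the claim. The problematic case is when at least one of $I,J$ is a pair partition, so that one side is $6$-dimensional and the unweighted sum over it diverges logarithmically. Here I use $I\ne J$: by swapping the roles of $(I,f)$ and $(J,g)$ if necessary (the statement is symmetric), there is a pair $\{k,\ell\}$ lying in one block of $J$ but in different blocks of $I$. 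Then $|\x-\y|\gtrsim |x_k-x_\ell|$ up to $O(A)$, since $y_k\approx y_\ell$. More importantly, summing over $\y\in(\mathbb Z^2)^4_J$ with $\x$ fixed integrates out only a $6$-dimensional set, which costs a logarithm of the scale $1+|x_k-x_\ell|$. Concretely, for a pair partition $J$,
\begin{equation*}
\sum_{\y\in(\mathbb Z^2)^4_J}(1+|\x-\y|^2)^{-3}\lesssim 1+\log(1+|x_k-x_\ell|)^{-1}\cdots,
\end{equation*}
and more precisely the sum of the kernel times $(1+|y_k-y_\ell|)^{-\alpha}$-type weights is controlled by powers of $1+|x_k-x_\ell|$.

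So I would run Schur's test with power weights $w_I(\x)=(1+|x_k-x_\ell|)^{\alpha}$ on the $I$-side and $w_J\equiv$ (a suitable power of the distance between the two blocks of $J$ that are separated in $I$) on the $J$-side. The goal is to show $\sum_{\y}K(\x,\y)w_J(\y)\le Cw_I(\x)$ and $\sum_{\x}K(\x,\y)w_I(\x)\le Cw_J(\y)$, with some $\alpha\in(0,1)$ and, if needed, a negative exponent on the other side. Each such sum reduces, after translating away the common base point, to a convolution of a radial power $|z|^{-6}$ on $(\mathbb Z^2)^2$ or $(\mathbb Z^2)^3$ against a power weight. It is then bounded by the standard discrete Hardy--Littlewood--Sobolev/Riesz-potential estimate $\sum_{z\in\mathbb Z^d}(1+|z|)^{-\beta}(1+|z-w|)^{-\gamma}\lesssim(1+|w|)^{d-\beta-\gamma}$, valid for $\beta,\gamma<d<\beta+\gamma$.

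The main obstacle is choosing the exponents so that both Schur inequalities close simultaneously. The margins are tight: the kernel decays exactly at the critical rate $|z|^{-6}$ for a $6$-dimensional side, so the weight exponents must be picked strictly inside a small window to keep every Riesz-potential estimate off its endpoint. I would first try $\alpha=1$ on one side and $-1$ on the other, mirroring the choice in \cite[Lemma 6.8]{CSZ_2d}, where the same bound appears for independent walks. If the bookkeeping of offsets makes that fail, I would fall back to a symmetric choice of small exponents $\pm\epsilon$ and track the logarithmic corrections.
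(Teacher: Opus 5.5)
The paper has no argument of its own for this lemma; it only cites \cite[Lemma 6.8]{CSZ_2d}. A self-contained weighted Schur test is therefore a reasonable route. Your reduction to base points (after replacing $f,g$ by $|f|,|g|$ extended by zero) is fine, and so is the unweighted Schur test when neither partition is a pair partition. The gap is in the step you yourself call the main obstacle: the weights are never fixed, and what you do write there fails.

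First, your displayed row-sum bound is false. If $J$ is a pair partition, $(\mathbb Z^2)^4_J$ has about $R^6$ points at distance $\sim R$ from $\x$. Hence $\sum_{\y\in(\mathbb Z^2)^4_J}(1+|\x-\y|^2)^{-3}=+\infty$ for every $\x$; the divergence comes from $|\y|\to\infty$, not from a logarithm of $1+|x_k-x_\ell|$. This forces the sign of the weights: a Schur weight on a six-dimensional side must decay along it. With your inequalities $\sum_{\y}Kw_J\le Cw_I$ and $\sum_{\x}Kw_I\le Cw_J$, the choice $w_I(\x)=(1+|x_k-x_\ell|)^{\alpha}$ with $\alpha>0$ makes $\sum_{\x}K(\x,\y)w_I(\x)$ diverge whenever $I$ is a pair partition. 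In particular it fails in the pair--pair case, which is the heart of the lemma. ``$+1$ on one side and $-1$ on the other'' only makes sense for the ratio $w_J(\y)/w_I(\x)$ in a H\"older splitting, where both weights are in fact decaying.

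Second, your reduction to a radial Riesz-potential estimate loses the key information. After summing out the directions in which the weight is constant, you must keep $d(\x,L_J)^2$ as a mass term, where $L_J:=\{\x\in(\mathbb R^2)^4: x_i=x_j \text{ if } [i]_J=[j]_J\}$. For example, take $I=\{\{1,2\},\{3\},\{4\}\}$ and $J=\{\{3,4\},\{1\},\{2\}\}$. The projected centre $w_0$ has $|w_0|\approx|x_1-x_2|=O(1)$, so an estimate centred at $w_0$ gives only $O(1)$. But the bound you need must decay in $|x_3-x_4|$.

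A choice that closes is the following. Take $p(\x)=(1+d(\x,L_J))^{-a}$ and $q(\y)=(1+d(\y,L_I))^{-a}$ for any $a\in(0,2)$; for a pair partition $J$ with pair $\{k,\ell\}$ one has $d(\x,L_J)\asymp|x_k-x_\ell|$. To check $\sum_{\y}K(\x,\y)q(\y)\le Cp(\x)$, split into two cases.
\begin{itemize}
\item If $J$ is not a pair partition, the unweighted sum is already $\lesssim(1+d(\x,L_J))^{-2}\le(1+d(\x,L_J))^{-a}$.
\item If $J$ is a pair partition, set $D=d(\x,L_J)$ and write $|\x-\y|^2=D^2+|\pi_J\x-\y|^2$ for $\y\in L_J$. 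The subspace $V=L_I\cap L_J$ has codimension $2c\ge 2$ in $L_J$, because no nontrivial partition is strictly finer than a pair partition. Moreover $d(\y,L_I)\asymp d(\y,V)$ on $L_J$. Summing out the directions along $V$ leaves
\[
\sum_{w\in\mathbb Z^{2c}}(1+D^2+|w-w_0|^2)^{-c}(1+|w|)^{-a}.
\]
A dyadic decomposition in $|w-w_0|$ at scales $2^j(1+D)$ bounds this by $C(1+D)^{-a}$ uniformly in $w_0$, whenever $0<a<2c$.
\end{itemize}
The other Schur inequality is symmetric. So the admissible window for $a$ is all of $(0,2)$, not a thin one.
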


\begin{proof}
    This follows from \cite[Lemma 6.8]{CSZ_2d} after accounting for a constant factor picked up from the slight difference in how we define the spaces $(\mathbb Z^2)^4_I$.
\end{proof}

\begin{thm}\label{thm:Qbounds}
    For any two nontrivial partitions $I,J\vdash \{1,2,3,4\}$ (which could be equal \footnote{Note that for $I=J,$ we have defined these operators differently than \cite{CSZ_2d}, see Corollary \ref{part_v3}. If we did not do this, then as pointed out in \cite{CSZ_2d}, the operators $\mathsf Q^{I,I}_{0,N}$ would \textit{not} have an operator norm that is bounded independently of $N$.}), we have that
    \begin{equation}\label{eq:QIJ}
        \sup_{N>1} \|\mathsf Q^{I,J}_{0,N}\|_{\ell^2\to\ell^2} <\infty.
    \end{equation}
    Furthermore, in the free state endpoint cases $I = *$ or $J = *$, we have the estimates
    \begin{equation}\label{eq:Q*J}
         \| \mathsf Q^{*,J}_{0,N}\|_{\ell^2\to\ell^2} \leq CN^{1/2} ,\qquad \|\mathsf  Q^{I,*}_{0,N}\|_{\ell^2\to\ell^2} \leq CN^{1/2}.
    \end{equation}
\end{thm}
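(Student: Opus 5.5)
We bound each operator directly through its kernel, treating the cases $I\ne J$, $I=J$, and the endpoint cases with $*$ in turn. Since $\lambda=0$, the kernel of $\mathsf Q^{I,J}_{0,N}$ is $\sum_{r=0}^{2N}\widehat{\mathscr Q}^{I,J}_{N,\mathrm{free}}(r,\x,\y)$. All kernels involved are nonnegative, so operator norms on $\ell^2$ are controlled by pairing against nonnegative $f,g$.

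\textbf{Case $I\ne J$.} By \eqref{qhat}, the kernel is $\ind_{\{\x\sim I,\y\sim J\}}\sum_{r=1}^{2N}\mathbf p_{N,4}(r,\x,\y)$; the $r=0$ term vanishes, since $\x\ne\y$ when $\x\sim I$ and $\y\sim J$ with $I\ne J$ (Remark \ref{disj}). Lemma \ref{lem:GreenBound}, applied with $N$ replaced by $2N$ (its proof only used Assumption \ref{a1} \eqref{a1heatkernel}, so the constant is unchanged), bounds this kernel by $C(1+|\x-\y|^2)^{-3}$. Lemma \ref{lem:HLStype} then gives \eqref{eq:QIJ}.

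\textbf{Case $I=J$.} Here the kernel is
\begin{equation*}
\sum_{r}\sum_{K\ne I,\hat 0}\big(\mathscr Q^{I,K}_{N,\mathrm{free}}*\mathscr Q^{K,I}_{N,\mathrm{free}}\big)(r).
\end{equation*}
Summing the time convolution over $r\le 2N$ is bounded by the product of the two time-summed kernels, each summed up to $2N$. Hence $\mathsf Q^{I,I}_{0,N}\le\sum_{K}\mathsf Q^{I,K}_{0,N}\mathsf Q^{K,I}_{0,N}$ entrywise. Each factor has $I\ne K$, so by the previous case its norm is bounded uniformly in $N$. Submultiplicativity, together with the finite sum over $K$, gives \eqref{eq:QIJ} for $I=J$.

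\textbf{Endpoint cases.} The $*$ cases are where the $N^{1/2}$ arises; I expect this to be the main step. Consider $\mathsf Q^{*,J}_{0,N}$, whose kernel is $\ind_{\{\y\sim J\}}\sum_{r\le 2N}\mathbf p_{N,4}(r,\x,\y)$ with $\x$ unrestricted. The Green's function bound alone is not square-summable in $\x$ uniformly in $N$, so I would use a Schur test with constant weights. Two row/column sums are needed.
\begin{itemize}
\item \textbf{Sum over $\x$.} For fixed $\y$, $\sum_{\x}\sum_{r\le 2N}\mathbf p_{N,4}(r,\x,\y)$ is $O(N)$: by Assumption \ref{a1} \eqref{a1heatkernel}, $\sum_{\x}\mathbf p_{N,4}(r,\x,\y)\le C$ for each $r$, and there are $2N+1$ values of $r$. (The kernel is not symmetric, but the heat kernel bound is symmetric in $\x,\y$.)
\item \textbf{Sum over $\y\sim J$.} For fixed $\x$, $(\mathbb Z^2)^4_J$ is confined near a codimension-two set: for a pair partition, one gap lies within $A$. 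Integrating $r^{-4}(1+r^{-1/2}|\x-\y|)^{-S}$ over $\y$ in this slab gives $O(r^{-1})$, and summing over $r\le 2N$ gives $O(\log N)$. For non-pair partitions the codimension is higher and the sum is $O(1)$.
\end{itemize}
Schur's test then gives $\|\mathsf Q^{*,J}_{0,N}\|\le C\sqrt{N\log N}$, which is slightly too weak. To remove the logarithm, I would instead use weights $w(\x)=(1+|\gap(\x)|)^{\alpha}$ adapted to the distance from the collision set, in the spirit of \cite[Lemma 6.7]{CSZ_2d}. Alternatively, I would first bound $\|\mathsf Q^{*,J}_{0,N}g\|_2$ via Young's inequality in the free coordinates and then handle the gap coordinate by the two-dimensional estimate $\sum_{z}\sum_{r\le N}r^{-1}(1+r^{-1/2}|z|)^{-S}\cdot(\cdots)$, yielding $CN^{1/2}$. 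I expect this weighted Schur step to be the main obstacle.

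The bound for $\mathsf Q^{I,*}_{0,N}$ follows symmetrically, since the heat kernel bound is symmetric in $\x,\y$ and Schur's test is symmetric under taking adjoints.
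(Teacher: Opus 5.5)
Your cases $I\ne J$ and $I=J$ match the paper's proof exactly: Lemmas \ref{lem:GreenBound} and \ref{lem:HLStype}, then the entrywise bound $\mathsf Q^{I,I}_{0,N}\le\sum_{K}\mathsf Q^{I,K}_{0,N}\mathsf Q^{K,I}_{0,N}$ and submultiplicativity.

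The gap is in the endpoint estimate \eqref{eq:Q*J}. The Schur test applied to the time-summed kernel gives only $C\sqrt{N\log N}$, as you note yourself. Neither of your proposed repairs is actually carried out. The Young's-inequality route ends in an unspecified ``$(\cdots)$''. The weighted Schur test is only gestured at via \cite[Lemma 6.7]{CSZ_2d}, whose Gaussian Green's function tails come from a local limit theorem that Assumption \ref{a1} does not supply.

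The missing idea is to apply the Schur test to each time slice separately and sum the norms afterwards. Write $\mathsf Q^{*,J}_{0,N}=\sum_{r=0}^{2N}T_r$, where $T_r$ has kernel $\ind_{\{\y\sim J\}}\mathbf p_{N,4}(r,\x,\y)$. You have in fact already computed both slice-wise sums:
\begin{itemize}
\item $\sup_{\x}\sum_{\y\sim J}\mathbf p_{N,4}(r,\x,\y)\le Cr^{-1}$;
\item $\sup_{\y}\sum_{\x}\mathbf p_{N,4}(r,\x,\y)\le C$.
\end{itemize}
So the Schur test gives $\|T_r\|_{\ell^2\to\ell^2}\le Cr^{-1/2}$ for $r\ge1$, and $\|T_0\|\le1$. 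Hence
\begin{equation*}
\|\mathsf Q^{*,J}_{0,N}\|_{\ell^2\to\ell^2}\le 1+C\sum_{r=1}^{2N}r^{-1/2}\le CN^{1/2}.
\end{equation*}
This is precisely the paper's argument. The paper writes it for $\mathsf Q^{I,*}$, using $\sup_{\x}\sum_{\y}\mathbf p_{N,4}(r,\x,\y)\le1$ on the unconstrained side, and obtains the other case by adjointness.

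The logarithm in your bound is purely an artifact of summing in $r$ before taking square roots. By Cauchy--Schwarz, $\sum_r\sqrt{a_rb_r}\le(\sum_ra_r)^{1/2}(\sum_rb_r)^{1/2}$. With $a_r\asymp1$ and $b_r\asymp r^{-1}$, the left side is $O(N^{1/2})$ while the right side is $O(\sqrt{N\log N})$. No weights and no splitting into near and far fields are needed.
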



\begin{proof}
    To prove \eqref{eq:QIJ}, we need to consider two cases because of the case-wise definition of $\widehat{\mathscr Q}_{N,\mathrm{free}}^{I,J}$ in Corollary \ref{part_v3}. The first case is $I\ne J$. By definition, \eqref{eq:QIJ} says there exists $C > 0$ such that
    \begin{equation*}
        \sum_{\mathbf x \in (\mathbb Z^2)^4_I, \mathbf y \in (\mathbb Z^2)^4_J} f(\mathbf x)\mathsf Q_{0,N}^{I,J}(\mathbf x,\mathbf y)g(\mathbf y) \leq C\norm{f}_2\norm{g}_2
    \end{equation*}
    for all $f \in \ell^2((\mathbb Z^2)^4_I), g\in \ell^2((\mathbb Z^2)^4_J)$,
    and this follows immediately from Lemmas \ref{lem:GreenBound} and \ref{lem:HLStype}.
    
    The second case is $I=J$. In this case, by the expression for $\widehat{\mathscr Q}_{N,\mathrm{free}}^{I,J}$ given in Corollary \ref{part_v3}, we see that one has 
    \begin{align*}\mathsf Q_{0,N}^{I,I} &= \sum_{t=1}^{2N} \sum_{r=0}^{t-1}\sum_{\substack{K \vdash \{1,2,3,4\} \\ K \ne I, \hat{0}}} \mathscr Q_{N,\mathrm{free}}^{I,K} (r) \mathscr Q_{N,\mathrm{free}}^{K,I}(t-r) \\&= \sum_{\substack{K \vdash \{1,2,3,4\} \\ K \ne I, \hat{0}}} \sum_{r=0}^{2N}\mathscr Q_{N,\mathrm{free}}^{I,K} (r) \bigg( \sum_{t=r+1}^{2N} \mathscr Q_{N,\mathrm{free}}^{K,I} (t-r) \bigg)\\&\leq \sum_{\substack{K \vdash \{1,2,3,4\} \\ K \ne I, \hat{0}}} \bigg( \sum_{r=0}^{2N} \mathscr Q_{N,\mathrm{free}}^{I,K}(r)\bigg) \bigg( \sum_{t=0}^{2N} \mathscr Q_{N,\mathrm{free}}^{K,I}(t)\bigg)= \sum_{\substack{K \vdash \{1,2,3,4\} \\ K \ne I, \hat{0}}} \mathsf Q^{I,K}_{0,N} \mathsf Q^{K,I}_{0,N}.
    \end{align*}
    where $\le$ means that one has an inequality of the respective kernels of these operators for all $(\x,\y) \in (\mathbb Z^2)^4\times (\mathbb Z^2)^4.$ Taking operator norms gives $\|Q^{I,I}_{0,N} \| \leq \sum_{\substack{K \vdash \{1,2,3,4\} \\ K \ne I, \hat{0}}} \|\mathsf Q^{I,K}_{0,N} \|\|\mathsf Q^{K,I}_{0,N}\|,$ which by the previous case is bounded independently of $N$.
    
    
To prove (\ref{eq:Q*J}), it is enough to show that $\big\{\mathsf Q_{0,N}^{I,*}\}_N$ is uniformly bounded by the claimed estimate: the complementary bound on $\norm{\mathsf Q_{0,N}^{*,J}}_{\ell^2 \to \ell^2}$ then follows from symmetry in $\ell^2((\mathbb Z^2)^4_I)$ and $\ell^2((\mathbb Z^2)^4)$. Fix $r$ and define the operator $T_r : \ell^2((\mathbb Z^2)^4) \to \ell^2((\mathbb Z^2)^4_I)$ by $$T_r  g(\mathbf x) := \sum_{y \in (\mathbb Z^2)^4} \mathbf p_{N,4}(r,\mathbf x,\mathbf y)g(\mathbf y),$$
    so that $\mathsf Q^{I,*}_{0,N} = \sum_{r = 0}^N  T_r$. It is enough to prove that $\norm{T_r}_{\ell^2((\mathbb Z^2)^4) \to \ell^2((\mathbb Z^2)^4_I)} \leq Cr^{-1/2}$, as it then follows from considering $\lambda = 0$ that $\norm{Q_{\lambda,N}^{I,*}}_{\ell^2 \to \ell^2} \leq CN^{1/2}$ uniformly over $\lambda$ for some $C > 0$.
    The Schur test gives us the bound
    \begin{equation*}
        \norm{T_r}_{\ell^2 \to \ell^2} \leq \sqrt{\sup_{\mathbf x \in (\mathbb Z^2)^4_I} \sum_{\mathbf y \in (\mathbb Z^2)^4} \mathbf p_{N,4}(r,\mathbf x,\mathbf y)} \cdot \sqrt{\sup_{\mathbf y \in (\mathbb Z^2)^4} \sum_{\mathbf x \in (\mathbb Z^2)^4_I} \mathbf p_{N,4}(r,\mathbf x,\mathbf y)},
    \end{equation*}
    the first term of which is trivially bounded above by $1$.
    To control the second term, the heat kernel upper bound is enough.
    First see that $(\mathbb Z^2)^4_I$ is isomorphic to an $M$-fattening of $(\mathbb Z^2)^3$ 
    . In particular, boxes in $(\mathbb Z^2)^4_I$ have the same asymptotic volume and boundary growth rates as in $(\mathbb Z^2)^3$, with a constant correction factor of $|\{w \in \mathbb Z^2 : |w| \leq M\}|$. Therefore the change of variables $r^{-1/2}(\mathbf x - \mathbf y) = \mathbf w$ yields
    $\sum_{\mathbf x \in (\mathbb Z^2)^4_I} \mathbf p_{N,4}(r,\mathbf x,\mathbf y) \leq \frac{C}{r^4} \cdot r^3\int_1^\infty s^6 \cdot s^{-S} ds \leq \frac{C}{r}$, as $S > 8 > 6$.
    The claimed bound on $T_r$ then follows, which concludes the proof.
\end{proof}

A more direct proof by convolution $\sum_{\mathbf y}\mathbf p_{N,4}(r,\mathbf x,\mathbf y)\mathbf p_{N,4}(r,\mathbf w,\mathbf y)$, which can be thought of as a $TT^*$ argument, also goes through. The key idea here is the separate consideration of each time slice $T_r$ of $Q_N$: without the local central limit theorem used to derive the Gaussian tail of the Green's function in \cite[Lemma 6.7]{CSZ_2d}, one cannot control the sum over far-field terms $|\mathbf x - \mathbf y| \gg \sqrt N$ in the sum-splitting argument used by the authors of that paper.

\subsection{Bounds on collision operators: pair partitions only}\label{m=2b}

 Here we bound the operator norms of the $\mathsf U^J_{\lambda,N}$ from Definition \ref{damp}, but only where $J$ is a pair partition.

\begin{thm}\label{u_bound}
    Let $J\vdash \{1,2,3,4\}$ be a pair partition. We have that $\sup_N\|\mathsf U^J_{\lambda,N}\|_{\ell^2 \to \ell^2} \leq  g(\lambda)$ where $g:[0,\infty) \to [0,\infty)$ is non-increasing, and furthermore $\lim_{\lambda\to \infty}  g(\lambda)=0$.\footnote{In \cite{CSZ_2d} the authors can show explicitly that $g(\lambda) \le \frac{C}{\log \lambda}.$} 
\end{thm}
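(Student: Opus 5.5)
For a pair partition $J$, say $J=\{\{1,2\},\{3\},\{4\}\}$, the kernel of $\mathsf U^J_{\lambda,N}$ factors. Write $\x=(x_1,x_2,x_3,x_4)$, and split it into the center of mass of the pair, the pair gap $\gap_J\x$, and the two singleton coordinates. The factor $\mathbf p_{N,1}^{\otimes 2}(r,\cdot,\cdot)$ acts on the singleton coordinates. It is a Markov (stochastic) kernel, and since $\mathbf P_{N,1}$ is a translation-invariant random walk it is also doubly stochastic. Hence it has $\ell^2$ operator norm at most $1$, uniformly in $r$.

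The plan is therefore to apply the Schur test to the whole kernel. Because the singleton part contributes a factor at most $1$ in both row and column sums, it can be summed out immediately. The row sums reduce to
\begin{equation*}
\sup_{x\in\mathbb Z^2,\,|x|\le A}\;\sum_{r=0}^{2N} e^{-\lambda r/N}\sum_{b\in\mathbb Z^2}\sum_{y} \big(e^{\z_N(x)}-1\big)\,U_N\big(e^{\z_N}-1;r;(0,-x),(b,b-y)\big)\big(e^{\z_N(y)}-1\big),
\end{equation*}
and the column sums are symmetric, since the time reversal is handled by the same renewal bounds. Using the expansion \eqref{eq:RiemannSum}/Proposition \ref{prop:RiemannSum}, with $h\equiv 1$ and after summing over the spatial endpoint, this becomes
\begin{equation*}
\sum_{k\ge1}\big(\log N\,\Lambda_N(e^{\z_N}-1)\big)^k e^{\e_N(k)}\,\widetilde E_{N,k}\big[e^{-\lambda N^{-1}\tau^{\mathbf X}_{(N,k)}}\ind_{\{\tau^{\mathbf X}_{(N,k)}\le 2N\}}\big],
\end{equation*}
up to the $r=0$ and $k=1$ boundary terms. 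Those boundary terms are $O(1/\log N)$ by $\|e^{\z_N}-1\|_\infty\le C/\log N$, exactly as in Step 2 of the proof of Theorem \ref{thm:macro}. The gap coordinate ranges only over the finite set $\{|x|\le A\}$, so the supremum is over finitely many starting gaps.

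By \eqref{crit_tuning}, the prefactor is $e^{\vartheta s+o(1)}$ with $s=k/\log N$. The sum is then bounded by
\begin{equation*}
C\log N\int_0^\infty e^{\vartheta s}\,\widetilde E\big[e^{-\lambda N^{-1}\tau_{(N,s\log N)}}\big]\,ds .
\end{equation*}
This is too large by exactly the factor $\log N$. The point is that the endpoint factor $(e^{\z_N(y)}-1)$ is of size $1/\log N$, and it has already been used inside \eqref{eq:RiemannSum} to generate the last renewal step. Done carefully, as in Proposition \ref{prop:RiemannSum}, the initial factor $(e^{\z_N(x)}-1)=O(1/\log N)$ cancels this $\log N$. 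The Schur row bound then becomes
\begin{equation*}
g_N(\lambda)\;:=\;C\int_0^\infty e^{\vartheta s}\sup_{\mathbf x}\widetilde E_{N,s\log N}\Big[e^{-\lambda N^{-1}\tau^{\mathbf x}_{(N,s\log N)}}\Big]\,ds+o(1).
\end{equation*}

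It remains to bound $g_N(\lambda)$ uniformly in $N$ by a non-increasing $g(\lambda)$ with $g(\lambda)\to0$. I split the Laplace transform as $P(N^{-1}\tau\le\delta)+e^{-\lambda\delta}$. Corollary \ref{cor:dcBound}, with $M>\vartheta+1$, gives $\sup_{\mathbf x}P(N^{-1}\tau_{(N,s\log N)}\le\delta)\le C_\delta e^{-Ms}$ for $N\ge N_0(\delta)$. This handles all $s$ at once, but it does not by itself give decay in $\lambda$.

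For the decay in $\lambda$ I take the limit instead. By Theorem \ref{convergence_dickman} and dominated convergence, dominated via Corollary \ref{cor:dcBound} with $x=2$, the quantity $g_N(\lambda)$ converges to
\begin{equation*}
C\int_0^\infty e^{\vartheta s}\,\E\big[e^{-\lambda Y_s}\big]\,ds=C\int_0^\infty e^{\vartheta s-s\int_0^1(1-e^{-\lambda u})u^{-1}du}\,ds .
\end{equation*}
This equals $C/(\log\lambda+O(1)-\vartheta)$ for large $\lambda$, and so tends to $0$. To make the bound uniform in $N$, and not just a limit, I set $g(\lambda)=\sup_{N\ge N_0}g_N(\lambda)$ together with the finitely many small $N$, which are bounded trivially. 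Monotonicity in $\lambda$ is automatic. The decay $g(\lambda)\to0$ follows if the convergence $g_N\to g_\infty$ is uniform on $\lambda\in[\lambda_0,\infty)$, which holds since all the functions involved are monotone in $\lambda$ (a Dini/Pólya-type argument).

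I expect the main obstacle to be the bookkeeping of the $\log N$ factors. The $1/\log N$ from one endpoint factor must exactly offset the $\log N$ Riemann-sum normalization, and the column sum must be treated with the reversed renewal structure. Under the tilted measure $\tilde P_{N,k}$ the chain $\mathbf X$ is not reversible, so a bound on the row sums does not automatically give the column sums. I would handle this by noting that Lemma \ref{2.1} and Corollary \ref{cor:dcBound} hold uniformly over all gap sequences in a finite set, which covers both directions.
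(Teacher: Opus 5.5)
Your argument follows the paper's proof closely: a Schur test with the singleton factor $\mathbf p_{N,1}^{\otimes 2}$ summed out, then Proposition \ref{prop:RiemannSum} with $h\equiv 1$, then the initial factor $e^{\z_N(x)}-1=O(1/\log N)$ cancelling the $\log N$ of the Riemann sum, and finally Theorem \ref{convergence_dickman} with domination from Corollary \ref{cor:dcBound}. Your monotonicity (P\'olya-type) argument is equivalent to the paper's ``pointwise limit plus $g_N(\lambda_N)\to 0$ along every $\lambda_N\uparrow\infty$''.

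However, one step fails as written. Between the display containing $\ind_{\{\tau^{\mathbf X}_{(N,k)}\le 2N\}}$ and your definition of $g_N(\lambda)$, you dropped the indicator, and it cannot be dropped. Write $\phi(\lambda)=\int_0^1(1-e^{-\lambda u})u^{-1}\,du$. Your claimed limit is then $C\int_0^\infty e^{(\vartheta-\phi(\lambda))s}\,ds$. This is $+\infty$ whenever $\phi(\lambda)\le\vartheta$, for instance at $\lambda=0$ for every $\vartheta\ge 0$, so the $g$ you construct is not $[0,\infty)$-valued. The domination you cite also does not apply. Corollary \ref{cor:dcBound} with $x=2$ bounds $P(\tau\le 2N)$, which dominates $\tilde E[e^{-\lambda\tau/N}\ind_{\{\tau\le 2N\}}]$ but not $\tilde E[e^{-\lambda\tau/N}]$. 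For the same reason, in your $\delta$-split the term $e^{-\lambda\delta}$ is not integrable against $e^{\vartheta s}$ when $\vartheta\ge 0$.

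The fix is to keep the indicator, as the paper does in \eqref{eq:laplaceDickman}. The integrand is then dominated by $Ce^{(\vartheta-M)s}$ uniformly in $N\ge N_0$. The limit becomes $\int_0^\infty e^{\vartheta s}\,\E[e^{-\lambda Y_s}\ind_{\{Y_s\le 2\}}]\,ds$, which is finite for every $\lambda\ge 0$ and tends to $0$ as $\lambda\to\infty$ by dominated convergence, since $Y_s>0$ almost surely for $s>0$. Your split also works once written as $P(\tau\le \delta N)+e^{-\lambda\delta}P(\tau\le 2N)$.

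There are two smaller points. First, for the column sums you need no time reversal, and the non-reversibility of $\tilde P_{N,k}$ is irrelevant. Fix the terminal state and bound the supremum over the terminal gap by the sum over its finitely many values. Then sum over the initial center of mass. By translation invariance of $\mathbf p_{N,2}$, this removes the spatial constraint $S^{\mathbf X}_{(N,k)}=y-x$ in Proposition \ref{prop:RiemannSum}, leaving the same forward quantity for each initial gap $z$. The sum of $e^{\z_N(z)}-1$ over $|z|\le A$ then supplies the $O(1/\log N)$ that cancels $\log N$, exactly as for the rows; this is the paper's argument.

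Second, bounding finitely many small $N$ ``trivially'' gives no decay in $\lambda$, and no argument can. The $r=0$ term $\mathscr U^J_{N,\mathrm{free}}(0)$ is a nonnegative diagonal kernel, independent of $\lambda$, with norm $\max_z(e^{\z_N(z)}-1)>0$; this is the $\lambda$-independent $o(1)$ in your $g_N$. What your argument and the paper's actually prove is $\limsup_{N\to\infty}\|\mathsf U^J_{\lambda,N}\|_{\ell^2\to\ell^2}\le g(\lambda)$ with $g$ non-increasing and $g(\lambda)\to 0$. This is all that Corollary \ref{abs_conv} uses.
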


\begin{proof}
Note that for any operator $A$ on $\ell^2(T)$ for a countable set $T$, we can represent it in the canonical basis as $A(x,y)$ with $x,y\in T$, and the Schur test says that if $A(x,y)\ge 0$ then $\|A\|_{\ell^2\to\ell^2} \leq \big( \sup_x \sum_y A(x,y) \big)^{1/2} \big(\sup_y\sum_x  A(x,y) \big)^{1/2} \leq \max\{ \sup_x \sum_y A(x,y), \sup_y \sum_xA(x,y)\}. $ Thus, after recalling the definition of the operator from Definition \ref{ufree}, we have 
\begin{align*}\|\mathsf U^J_{\lambda,N}\|_{\ell^2 \to \ell^2} \leq C \max\bigg\{ \sup_{x,x'}& {(e^{\zeta_N(x-x')}-1)}\sum^N_{t={0}} e^{-\frac{\lambda t}{N}} \sum_{y,y'} U_N(e^{\zeta_N}-1, t, x,x',y,y'){(e^{\zeta_N(y-y')}-1)},\; \\&  \sup_{y,y'} {(e^{\zeta_N(y-y')}-1)}\sum_{t=0}^N e^{-\frac{\lambda t}{N}} \sum_{x,x'} {(e^{\zeta_N(x-x')}-1)}U_N(e^{\zeta_N}-1, t, x,x',y,y')\bigg\},
\end{align*}
where the constant $C$ is to account for the fact that we have already summed out the other two coordinates corresponding to the heat kernel parts involving $\mathbf p_{N,1}$. 

\textbf{Supremum over $x, x'$.}
First we will bound the sup over $x,x'.$ 
By Proposition \ref{prop:RiemannSum} (taking $h \equiv 1$), we have 
    \begin{align*}(e^{\zeta_N(x-x')}-1)\sum_{ y, y' \in \mathbb Z^2} &U_N(e^{\boldsymbol{\zeta}_N}-1; t; x, x', y, y')(e^{\zeta_N(y-y')} -1)\\& \leq\frac{C}{\log N} \sum_{y \in \mathbb Z^2} \sum_{s\in (\log N)^{-1}\mathbb N} e^{\vartheta s +\epsilon_N(s\log N)}  \times \widetilde E_{N,s\log N} \big[\ind_{\{ \tau^{\mathbf X}_{(N, s\log N)} = t, S^{\mathbf X}_{(N, s \log N)} = y-x\}}\big] \\
    &=\frac{C}{\log N} \sum_{s\in (\log N)^{-1}\mathbb N} e^{\vartheta s +\epsilon_N(s\log N)}  \times \widetilde E_{N,s\log N} \big[\ind_{\{ \tau^{\mathbf X}_{(N, s\log N)} = t\}}\big].
\end{align*}
where $\tau^{\mathbf X}$ is the prelimiting Dickman subordinator defined with $X_0 = x- x'$. It follows that 
\begin{align}
g_N(\lambda)&:=\frac1{\log N}\sum_{t=0}^N e^{-\frac{\lambda t}{N}} \sum_{y,y'} U_N(e^{\zeta_N}-1, t, x,x',y,y')(e^{\zeta_N(y-y')}-1)\notag \\
&= \frac{1}{\log N}\sum_{s\in (\log N)^{-1}\mathbb N} e^{\vartheta s +\epsilon_N(s\log N)}  \times \widetilde E_{N,s\log N} \big[e^{-\lambda N^{-1} \tau^{\mathbf X}_{(N, s\log N)} }\ind_{\{ \tau^{\mathbf X}_{(N, s\log N)} \leq N\}}\big]. \notag \\
&= (1 + o(1))\int_{0}^{\infty}e^{\vartheta s}\widetilde E_{N,s\log N} \big[e^{-\lambda N^{-1} \tau^{\mathbf X}_{(N, s\log N)} }\ind_{\{ \tau^{\mathbf X}_{(N, s\log N)} \leq N\}}\big] ds.
\label{eq:laplaceDickman}
\end{align}
The error term is uniform over $N,\lambda,x,x'.$ We claim that $g_N(\lambda) \leq g(\lambda)$ for some non-increasing function $g:[0,\infty)\to [0,\infty)$ with the important property that $g(\lambda) \to 0 $ as $\lambda \to \infty$, as stated in the theorem. To prove this, we will show that $g_N$ converges pointwise as $N\to\infty$ and that $g_N(\lambda_N)\to 0$ for any sequence $\lambda_N\uparrow +\infty$.

By the dominated convergence theorem (justified by the super-exponential decay bound of Corollary \ref{cor:dcBound}) and the convergence result of Theorem \ref{convergence_dickman}, we have that 
\begin{align*}
    \lim_{N \to \infty}g_N(\lambda) = \int_{0}^{\infty}e^{\vartheta s}\mathbb E[e^{-\lambda Y_s }\ind_{\{Y_s \leq 1\}}]ds =:g(\lambda),
\end{align*}
where $(Y_s)_{s\ge 0}$ is the Dickman subordinator. Finally, let $\lambda_N \uparrow \infty$ as $N \to \infty$. It is known that the law of $Y_s$ for $s>0$ has no atoms, thus we have that for all $s > 0$
\begin{align*}
\limsup_{N \to \infty} \widetilde E_{N,s\log N} \big[e^{-\lambda_N N^{-1} \tau^{\mathbf X}_{(N, s\log N)} }\ind_{\{ \tau^{\mathbf X}_{(N, s\log N)} \leq N\}}\big]  \leq \mathbb P(Y_s = 0)=0.
\end{align*}
It follows that
\begin{align*}
    \lim_{N \to \infty}g_N(\lambda_N) =\int_{0}^{\infty}e^{\vartheta s}\left(\lim_{N \to \infty}\widetilde E_{N,\lfloor s\log N \rfloor} \big[e^{-\lambda_N N^{-1} \tau^{\mathbf X}_{(N, s\log N)} }\ind_{\{ \tau^{\mathbf X}_{(N, s\log N)} \leq N\}}\big]\right)ds = 0.
\end{align*}
\textbf{Supremum over $y, y'$.}
Next we bound the sup over $y,y'$. We have $$ \sup_{y,y'}\sum_{t=1}^N e^{-\frac{\lambda t}{N}} \sum_{x,x'} (e^{\zeta_N(x-x')}-1)U_N(e^{\zeta_N}-1, t, x,x',y,y') (e^{\zeta_N(y - y')}-1).$$
Note that $\sup_{y,y'} \leq \sup_y \sum_{y'} $, thus the above sum is upper bounded by 
$$\sup_{y}\sum_{t=1}^N e^{-\frac{\lambda t}{N}} \sum_{x,x'} (e^{\zeta_N(x-x')}-1)\sum_{y'} U_N(e^{\zeta_N}-1, t, x,x',y,y') (e^{\zeta_N(y - y')}-1).$$
Using Proposition 2.4, we obtain
\begin{align*}
    &\sup_{y}\sum_{t=1}^N e^{-\frac{\lambda t}{N}} \sum_{x,x'} (e^{\zeta_N(x-x')}-1) \sum_{s\in (\log N)^{-1} \mathbb N} e^{\vartheta s} \tilde P_{N,s\log N}( \tau^{\mathbf X}_{(N,s\log N)} =t, S^{\mathbf X}_{(N,s\log N)} = y-x)
    \\
    &= \sup_y \sum_{x,x'} (e^{\zeta_N(x-x')}-1) \sum_{s\in (\log N)^{-1} \mathbb N} e^{\vartheta s} \tilde E_{N,s\log N}\Big[ e^{-N^{-1} \lambda \tau^{\mathbf X}_{(N,s\log N)}} \ind_{\{ \tau^{\mathbf X}_{(N, s\log N)} \leq N,S^{\mathbf X}_{(N,s\log N)} = y-x\}}\Big] \\
    &= \sup_y \sum_{|z| \leq M} \sum_x (e^{\zeta_N(z)}-1) \sum_{s\in (\log N)^{-1} \mathbb N} e^{\vartheta s} \tilde E_{N,s\log N}\Big[ e^{-N^{-1} \lambda \tau^{\mathbf X}_{(N,s\log N)}}\ind_{\{ \tau^{\mathbf X}_{(N, s\log N)} \leq N,S^{\mathbf X}_{(N,s\log N)} = y-x\}}\Big] \\
    &= \sup_y \sum_{|z| \leq M}  (e^{\zeta_N(z)}-1) \sum_{s\in (\log N)^{-1} \mathbb N} e^{\vartheta s} \tilde E_{N,s\log N}\Big[ e^{-N^{-1} \lambda \tau^{\mathbf X}_{(N,s\log N)}}\ind_{\{ \tau^{\mathbf X}_{(N, s\log N)} \leq N\}}\Big] \\
    &\leq \frac{4C M^2}{\log N} \cdot  \sum_{s\in (\log N)^{-1} \mathbb N} e^{\vartheta s} \tilde E_{N,s\log N}\Big[ e^{-N^{-1} \lambda \tau^{\mathbf X}_{(N,s\log N)}}\ind_{\{ \tau^{\mathbf X}_{(N, s\log N)} \leq N\}}\Big].
\end{align*}
Now this is essentially the same expression as \eqref{eq:laplaceDickman}, thus that same argument shows that this is upper bounded by the same decreasing function $g(\lambda)$ with $g(\lambda)\to 0$ as $\lambda \to \infty.$
\end{proof}

\subsection{Bounds on collision operators: non-pair partitions}

Thus far, we have only analyzed the contribution of pair partitions to the overall sum in Proposition \ref{part_v2.5}. This section will be devoted to the analysis of the non-pair contributions. The difficult aspect of these non-pair collisions is that we can no longer reduce the Markov chain $\mathbf P_{N,4}$ to $\mathbf P_{N,2} \otimes \mathbf P_{N,1}^{\otimes 2}, $ instead we need to deal with the full complexity of $\mathbf P_{N,4}.$

Recall from Definition \ref{ufree} that for nontrivial \textit{non-pair partitions} $J\vdash \{1,2,3,4\}$ we defined the quantity
\vspace{-5pt}
\begin{align*}
    \mathscr U_{N,\mathrm{free}}^J&(t,\mathbf x,\mathbf y)\\[1ex] &:=
    \begin{cases}
        (e^{|\boldsymbol{\eta}_{N,4}(\mathbf x)|}-1) \ind_{\{\x=\y\}} \ind_{\{\x,\y\sim J\}} ,& t=0,\\[1ex]
        \ind_{\{\x,\y\sim J\}}(e^{|\boldsymbol{\eta}_{N,4}(\mathbf x)|}-1)  \mathbf E_{(N,4)}^{\mathbf x} \Big[ e^{\sum_{r=0}^{t-1} |\boldsymbol{\eta}_{N,4} (\mathbf R_r)| \ind_{\{\mathbf R_r \sim J\}}}  \ind_{\{\mathbf R_t=\mathbf y\}} \Big]  (e^{|\boldsymbol{\eta}_{N,4}(\mathbf y)|}-1), &t\ge 0.
    \end{cases} 
\end{align*}
By writing $e^{\sum_0^{t-1} a_r} = \prod_1^t (1+e^{a_r}-1) = \sum_{k=0}^\infty \sum_{0\le r_1<\dots<r_k<t} \prod_{j=1}^k (e^{a_{r_j}}-1),$  we see that for $t>0$, the quantity $\mathscr U^J_{N,\mathrm{free}}(t,\x,\y)$ can also be written as 
\begin{align}\label{unonpair}
    \notag\ind_{\{\x,\y\sim J\}}&(e^{|\boldsymbol{\eta}_{N,4}(\mathbf x)|}-1) \\
    \times \bigg(  &\sum_{k=0}^\infty \sum_{0\le r_1<\dots<r_k<t}\mathbf E_{(N,4)}^{\mathbf x} \bigg[ \bigg\{\prod_{j=1}^k (e^{|\boldsymbol{\eta}_{N,4} (\mathbf R_{r_j})|}-1) \ind_{\{\mathbf R_{r_j} \sim J\}} \bigg\}  \ind_{\{\mathbf R_t=\mathbf y\}} \bigg] \bigg)  (e^{|\boldsymbol{\eta}_{N,4}(\mathbf y)|}-1).
\end{align}
This is perhaps a more useful description when deriving bounds.

\begin{thm}[Irrelevance of non-pair collisions I]\label{thm:no_trips}
    For any nontrivial $J\vdash \{1,2,3,4\}$ with $|J|\le 2$ (i.e., $J$ which is neither $\hat{0}$ nor a pair partition), we have that $$\lim_{N\to \infty} \| \mathsf U^J_{\lambda,N} \|_{\ell^2 \to \ell^2} =0,$$
    for all $\lambda>0.$ Furthermore, for any nontrivial partitions $I,J$ (not necessarily pair partitions) we have the bounds $\sup_N\|\mathsf Q^{I,J}_{0,N} \|_{\ell^2\to\ell^2} <\infty$ and $\|\mathsf Q_{0,N}^{I,*}\|_{\ell^2\to\ell^2} + \|\mathsf Q_{0,N}^{*,I}\|_{\ell^2\to\ell^2} \leq CN^{1/2}.$
\end{thm}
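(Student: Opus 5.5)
The second half of the statement is essentially already in hand. Lemma \ref{lem:GreenBound} bounds the Green's function of $\mathbf P_{N,4}$ by $C(1+|\x-\y|^2)^{-3}$, using only the heat kernel bound of Assumption \ref{a1} \eqref{a1heatkernel}; it never uses the pair structure of the partitions. Lemma \ref{lem:HLStype} likewise holds for arbitrary nontrivial $I\ne J$. So the proof of Theorem \ref{thm:Qbounds} carries over verbatim to non-pair partitions. In the case $I=J$ one again writes $\mathsf Q^{I,I}_{0,N}\le\sum_{K}\mathsf Q^{I,K}_{0,N}\mathsf Q^{K,I}_{0,N}$ with $K\ne I$. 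The $N^{1/2}$ bounds for the $*$ endpoints follow from the Schur test on each time slice $T_r$. For this we only need that $(\mathbb Z^2)^4_I$ embeds in a bounded fattening of $(\mathbb Z^2)^{k}$ with $k\le 3$. The lower dimension only improves the estimate: $\sum_{\x\in(\mathbb Z^2)^4_I}\mathbf p_{N,4}(r,\x,\y)\le Cr^{-4}r^{k}\le C/r$.

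The main content is $\|\mathsf U^J_{\lambda,N}\|\to0$ for $|J|\le 2$. I would use the Schur test, bounding $\sup_\x\sum_\y$ and $\sup_\y\sum_\x$ of the kernel $\sum_{t\le 2N}e^{-\lambda t/N}\mathscr U^J_{N,\mathrm{free}}(t,\x,\y)$. The key geometric fact is this: if $|J|\le 2$, then any $\x\sim J$ has at least two blocks' worth of coordinates glued together. After quotienting by the diagonal translation, $(\mathbb Z^2)^4_J$ is a bounded fattening of $(\mathbb Z^2)^{|J|-1}$ with $|J|-1\le 1$. The Markov chain restricted to relative coordinates then lives in effective dimension $2\cdot 3=6$, and the set $\{\x\sim J\}$ has codimension at least $4$ there. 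Hence the relative-coordinate Green's function summed over $\{\y\sim J\}$ is summable:
\[
\sum_{t\ge1}\sup_{\x}\sum_{\y\sim J}\mathbf p_{N,4}(t,\x,\y)\le \sum_{t\ge1} C t^{-4}\,t^{|J|}\le \sum_{t\ge1} Ct^{-2}<\infty,
\]
using the heat kernel bound and the fact that $\y$ ranges over a set of effective dimension $2|J|\le 4$.

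Now expand $\mathscr U^J_{N,\mathrm{free}}$ as in \eqref{unonpair}. The $k$-th term is bounded by $\delta_N^{k+2}$ times a $(k+1)$-fold convolution of this restricted Green's function, where $\delta_N:=\sup|e^{|\h_{N,4}|}-1|\le C/\log N$. Write $G_J:=\sup_\x\sum_{t\ge1}\sum_{\y\sim J}\mathbf p_{N,4}(t,\x,\y)\le C_0$, which is uniform in $N$ by the estimate above. This gives
\[
\sup_{\x}\sum_{t\le 2N}\sum_{\y}\mathscr U^J_{N,\mathrm{free}}(t,\x,\y)\le \delta_N^2\sum_{k\ge0}(\delta_N (C_0+1))^{k}(C_0+1)\le C\delta_N^2\to0.
\]
The sum over $k$ converges once $\delta_N(C_0+1)<1$. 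The transposed Schur sum is handled identically by symmetry of the bounds.

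The delicate step is the claim $\sup_\x\sum_{\y\sim J}\mathbf p_{N,4}(t,\x,\y)\le Ct^{-4+|J|}$. For $|J|=2$ with block sizes $(2,2)$ or $(3,1)$ I need to count lattice points: for fixed $\x$, the points $\y$ with $\y\sim J$ and $|\y-\x|\lesssim \sqrt t$ number at most $C t^{|J|}$, and the superpolynomial tail of the heat kernel (with $S$ large) controls the far region. This is a volume count like the one in Theorem \ref{thm:Qbounds}. If it goes through, even the damping factor $\lambda$ is unnecessary.
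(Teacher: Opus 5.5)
Your proposal is correct and follows essentially the same route as the paper. The $\mathsf Q$ bounds are exactly Theorem \ref{thm:Qbounds}, which is already stated for arbitrary nontrivial partitions. For $\mathsf U^J$ the paper likewise uses the Schur test, the expansion \eqref{unonpair}, the volume bound $\sum_{\y\sim J}\mathbf p_{N,4}(r;\x,\y)\le Cr^{|J|-4}\le Cr^{-2}$, and a geometric series in $k$ with ratio $O(1/\log N)$.

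One small correction: the $t=0$ summand $(e^{|\h_{N,4}(\x)|}-1)\ind_{\{\x=\y\}}$ carries only one factor of $\delta_N$. Your final bound should therefore read $C\delta_N$ rather than $C\delta_N^2$; the paper obtains $C/\log N + C/(\log N)^2$. This still tends to zero, so the conclusion is unaffected.
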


\begin{proof}
    The operator norm bounds for the $\mathsf Q^{I,J}$ have already been proved in Subsection \ref{q:op}
    . Thus we focus on $\mathsf U^J.$

    It follows from the heat kernel upper bound in Assumption \ref{a1}, that \begin{equation}
        \sum_{\mathbf y \sim J}  \mathbf p_{N,4}(r ; \mathbf x, \mathbf y) \leq Cr^{|J|-4}.\label{prop:sumJ}
    \end{equation}
    We can again use the Schur test to guarantee that $$\| U^J_{\lambda,N} \|_{\ell^2 \to \ell^2} \leq \max \bigg\{ \sup_{\mathbf y} \sum_{\mathbf x} U^J_{\lambda,N}(\mathbf x, \mathbf y),\sup_{\mathbf x} \sum_{\mathbf y}U^J_{\lambda,N}(\mathbf x, \mathbf y)\bigg\} .$$
    To bound this maximum by a vanishing quantity, one may use a renewal function interpretation of the kernel $U_N^J$ again, however, there is one key difference compared to the case where $J$ is a pair partition. In this case, when bounding the renewal function, because of triple-collisions or double-double collisions, the decay factor of the heat kernel will not yield $(r_k-r_{k-1})^{-1}$ as before, but rather it will yield terms that are instead upper-bounded by $(r_k-r_{k-1})^{-2}$ which are absolutely summable. Thus only chaoses of finite order contribute, as opposed to chaoses of logarithmically growing order. In particular, the renewal interpretation is unnecessary and one can instead brutally use the heat kernel upper bounds.

    We first note that we have $\sup_{\mathbf z: |\mathbf z| \leq M}(e^{|\boldsymbol{\eta}_{N,4}(\mathbf z)|}-1) \leq \frac{C}{\log N}$ by Assumption \ref{a1}. 
   Then letting $\mathbf x_0 = \mathbf x$ and $\mathbf x_{k+1} = \mathbf y$, and using \eqref{unonpair} and \eqref{prop:sumJ}, we have
  \begin{align*}\sum_{\mathbf y}&\sum_{t=1}^{2N}\mathscr U_{N,\mathrm{free}}
    (t, \mathbf x, \mathbf y)\notag \\
    &\leq  
       \frac{C}{(\log N)^2} \sum_{k=0}^\infty  \sum_{0 <  r_1 <\dots< r_k <  t} \sum_{\mathbf x_1,\dots,\mathbf x_{k+1}\in (\mathbb Z^2)^4}\prod_{j=1}^{k} \left(e^{|\eta_N(\mathbf x_j)|} - 1\right) \ind_{\{\mathbf x^j  \sim J\}}  \prod_{j=1}^{k} \mathbf p_{N,4}(r_j-r_{j-1} ; \mathbf x_{j-1}, \mathbf x_j) \\
       &\leq \frac{C}{(\log N)^2} \sum_{k=0}^\infty  \left(\frac{C}{\log N}\right)^k  \sum_{0 <  r_1 <\dots< r_k <  t\le 2N}  \prod_{j=1}^{k+1} \frac{1}{(r_j - r_{j-1})^{2}} \\
       &\leq \frac{C}{(\log N)^2} \sum_{k=0}^\infty \left(\frac{C}{\log N}\right)^{k}  \bigg(\sum_{r=1}^\infty r^{-2} \bigg)^{k+1}  \\&\le \frac{C}{(\log N)^2} \sum_{k=0}^\infty \left(\frac{C'}{\log N}\right)^{k} \\&\leq \frac{C''}{(\log N)^2} .
    \end{align*}

Separately considering the (simpler) $t=0$ case, and adding it to the above bound, we have 
  \begin{align}
    \sum_{\mathbf y} \mathsf U_{\lambda, N}^J(\mathbf x, \mathbf y)\notag &\leq \sum_{\y} \mathscr U_{N,\mathrm{free}} (0,\x,\y) + \sum_{\y} \sum_{t=1}^{2N} \mathscr U_{N,\mathrm{free}}(t,\x,\y) \le \frac{C}{\log N} +\frac{C''}{(\log N)^2}. 
    \end{align}
which goes to $0$ as $N \to \infty$. 

Now we deal with the sum over $\mathbf x$. Notice that since the heat kernel upper bounds from Assumption \ref{a1} only depends on $\mathbf x - \mathbf y$, it follows that \eqref{prop:sumJ} still holds with $\sum_{\y\sim J}$ replaced by $\sum_{\x\sim J},$ thus the proof is completely analogous and yields an upper bound of $ \frac{C}{\log N} +\frac{C''}{(\log N)^2}$. \end{proof}

\begin{cor}\label{abs_conv}
    The right side of \eqref{operator_bound} defines an absolutely convergent series in $m$, for sufficiently large $\lambda$. In fact the terms of that series are upper-bounded, uniformly in $N$, by an absolutely convergent series in $m$. Consequently, the terms of the exact expansion of Theorem \ref{part_v2.5} are upper-bounded by the same series.
\end{cor}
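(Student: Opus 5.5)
The plan is to bound the right side of \eqref{operator_bound} term by term with the operator-norm estimates already established, and then sum a geometric series in $m$. Take $p=q=2$. The ingredients are as follows. Theorem \ref{thm:Qbounds} and Theorem \ref{thm:no_trips} give $\sup_N\|\mathsf Q^{I,J}_{0,N}\|_{\ell^2\to\ell^2}\le C_Q$ for all nontrivial $I,J$, including $I=J$. The same results give $\|\mathsf Q^{*,J_1}_{0,N}\|\le CN^{1/2}$ and $\|\mathsf Q^{J_m,*}_{0,N}\|\le CN^{1/2}$. Theorem \ref{u_bound} gives $\|\mathsf U^{J}_{\lambda,N}\|\le g(\lambda)$ for pair partitions $J$. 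Theorem \ref{thm:no_trips} gives $\|\mathsf U^{J}_{\lambda,N}\|\le \delta_N\to0$ for non-pair $J$.

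There is one subtlety. In \eqref{operator_bound} the last factor is $\mathsf Q^{J_m,J_{m+1}}$ with $J_{m+1}=*$, applied to $\ind_{\sqrt N B}$. So the $N^{1/2}$ growth appears at both ends, and we must check that the prefactors cancel it.

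We have $\|\Phi_{(N)}\|_2=N^{-2}\big(\sum_{\x}\Phi(N^{-1/2}\x)^2\big)^{1/2}\approx N^{-2}\cdot N^{2}\|\Phi\|_{L^2}=\|\Phi\|_{L^2}$. Also $\|\ind_{\sqrt N B}\|_2\approx CN^{2}$ and $\|\Psi_{(N)}\|_\infty=N^{-2}\|\Psi\|_\infty$. So the product of the prefactor $e^\lambda/N$, the two end operators, and these norms is
\[
\frac{e^\lambda}{N}\cdot \|\Phi\|_{L^2}\cdot CN^{1/2}\cdot CN^{1/2}\cdot N^{-2}\|\Psi\|_\infty\cdot CN^2=O(e^\lambda),
\]
uniformly in $N$. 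I expect this bookkeeping to be the main point to verify carefully; in particular one has to confirm that the outermost $\mathsf Q$ with a $*$ index really occurs only twice. If the $N$-powers did not match, I would instead bound the last step through the damping argument of Proposition \ref{part_v4}.

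For the middle product, the number of nontrivial partitions of $\{1,2,3,4\}$ is a fixed constant $P=14$. Summing over $J_1,\dots,J_m$, the $m$th term is therefore at most
\[
C(\Phi,\Psi)\,e^\lambda\,\big(P\,C_Q\max\{g(\lambda),\delta_N\}\big)^m,
\]
where one of the $m$ middle $\mathsf Q$ factors has been absorbed into the $*$-endpoint estimate. Choose $\lambda$ large enough that $PC_Qg(\lambda)\le\frac12$, which is possible since $g(\lambda)\to0$. Then choose $N_0$ so that $PC_Q\delta_N\le\frac12$ for $N\ge N_0$. The $m$th term is then bounded by $Ce^\lambda2^{-m}$ uniformly in $N\ge N_0$, which is summable. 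The finitely many $N<N_0$ only affect constants: for each fixed $N$ the series still converges once $\lambda$ is large, because Theorem \ref{thm:no_trips} gives the finite bound $\|\mathsf U^J_{\lambda,N}\|\le C/\log N$, which can be absorbed into the constant. Alternatively, the statement can simply be read as uniform in $N$ large.

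Finally, Proposition \ref{part_v4} and Corollary \ref{part_v3} show that each nonnegative term $\mathfrak I_N(m;t,\Phi,\Psi)$ of Theorem \ref{part_v2.5} is dominated by the corresponding term of \eqref{operator_bound}. The same summable majorant $Ce^\lambda2^{-m}$ therefore bounds the exact expansion, uniformly in $N$ and in $t\le1$. The case $m=0$ is handled directly from $\langle\Phi_{(N)},\mathscr Q^{*,*}_{N,\mathrm{free}}(Nt)\Psi_{(N)}\rangle\le\|\Phi\|_{L^1}\|\Psi\|_\infty$.
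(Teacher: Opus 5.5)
Your proposal is correct and is essentially the paper's proof, which simply combines Theorems \ref{thm:Qbounds}, \ref{u_bound}, and \ref{thm:no_trips} to get geometric decay in $m$ for large $\lambda$. Your explicit check that the two $N^{1/2}$ endpoint factors are cancelled by the $e^{\lambda}/N$ prefactor and by the scalings of $\Phi_{(N)}$, $\Psi_{(N)}$, $\ind_{\sqrt N B}$ is a useful addition that the paper leaves implicit.

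One small caveat concerns fixed small $N$. There a per-factor bound $C/\log N$ cannot be ``absorbed into the constant'', since it enters to the $m$-th power. This is harmless for two reasons: only large $N$ matters for the dominated-convergence applications, and for each fixed $N$ the exact expansion of Theorem \ref{part_v2.5} has only finitely many nonzero terms ($m\le Nt$).
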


\begin{proof}
    Theorems \ref{thm:Qbounds}, \ref{u_bound}, and \ref{thm:no_trips} together guarantee that 
    if we take $\lambda$ large enough, then we obtain a convergent series in $m$ for each fixed value of $N$, in fact with an exponential decay rate in the $m$-variable that is independent of $N$.
\end{proof}

\begin{cor}[Irrelevance of non-pair collisions II]\label{np_irr}
    In the exact expansion of Theorem \ref{part_v2.5}, define the \textbf{non-pair contribution} \begin{align*}&\mathfrak I_N^{\mathrm{non\;pair}}(m;t,\Phi,\Psi):=\\&\sum_{\substack{u_1\le v_1<u_2 \le v_2<\dots\\ < u_m\le v_m<Nt} } \;\sum_{\substack{J_1,\dots,J_m \vdash\{1,2,3,4\}\\\mathrm{not\;all\;pair\;partitions},\; J_i \neq \hat{0} }} \; \big \langle \Phi_{(N)} , \mathscr Q_{N,\mathrm{chain}}^{*,J_1} (u_1)\bigg[ \prod_{i=1}^m \mathscr U_{N,\mathrm{con}}^{J_i}(v_i-u_i) {\mathscr Q}_{N,\mathrm{chain}}^{J_i,J_{i+1}}  (u_{i+1} - v_i)\bigg]
    \Psi_{(N)}\big\rangle  , \end{align*}
    where we take $u_{m+1} := Nt$ and $J_{m+1} := *$.
    Note that we do not impose the constraint $J_{i-1}\ne J_i$ on the summands. Then we have that $$\lim_{N\to \infty} \sum_{m=1}^\infty |\mathfrak I_N^{\mathrm{non\;pair}}(m;t,\Phi,\Psi)|=0.$$
\end{cor}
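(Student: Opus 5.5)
The plan is to dominate each non-pair term by the operator-norm bound of Proposition \ref{part_v4}, and then use the fact that at least one factor in every such term is small. First, for each fixed $m$ and each sequence $J_1,\dots,J_m$ containing at least one non-pair partition, we apply the pointwise kernel bounds from the proof of Corollary \ref{part_v3}: $\mathscr Q_{N,\mathrm{chain}}\le \widehat{\mathscr Q}_{N,\mathrm{free}}$ (Theorem \ref{contraction}) and $\mathscr U_{N,\mathrm{con}}\le \mathscr U_{N,\mathrm{free}}$ (\eqref{tric}). All kernels are nonnegative and $\Phi,\Psi\ge 0$, so $|\mathfrak I_N^{\mathrm{non\;pair}}|$ is bounded by the same sum with free operators. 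We then repeat the Laplace-damping argument of Proposition \ref{part_v4} verbatim, with the sum restricted to non-pair sequences. This gives
\begin{equation*}
|\mathfrak I_N^{\mathrm{non\;pair}}(m)|\le \frac{e^\lambda}{N}\sum_{\substack{J_1,\dots,J_m\\ \text{not all pair}}}\|\Phi_{(N)}\|_2\,\|\mathsf Q^{*,J_1}_{0,N}\|\prod_{i=1}^m\|\mathsf U^{J_i}_{\lambda,N}\|\,\|\mathsf Q^{J_i,J_{i+1}}_{0,N}\|\,\|\Psi_{(N)}\|_\infty\|\ind_{\sqrt N B}\|_2 .
\end{equation*}

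Next we record the sizes of the factors. A Riemann-sum computation gives $\|\Phi_{(N)}\|_2\le C N^{-2}\cdot N^{2}=C$ and $\|\ind_{\sqrt N B}\|_2\le CN^2$. Also $\|\Psi_{(N)}\|_\infty\le CN^{-2}$. By Theorems \ref{thm:Qbounds} and \ref{thm:no_trips}, the two endpoint $\mathsf Q$ operators together contribute at most $CN$, and every interior $\mathsf Q^{I,J}_{0,N}$ is bounded by a constant $C_Q$ uniformly in $N$. Collecting the powers of $N$ gives $N^{-1}\cdot N\cdot N^{-2}\cdot N^2=O(1)$, so the prefactor is bounded; this is the same bookkeeping that makes Corollary \ref{abs_conv} work. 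Now let $a=\sup_N\max_{J\ \mathrm{pair}}\|\mathsf U^J_{\lambda,N}\|\le g(\lambda)$ by Theorem \ref{u_bound}. Let $b_N(\lambda)=\max_{J\ \mathrm{non\text{-}pair}}\|\mathsf U^J_{\lambda,N}\|$; then $b_N(\lambda)\to0$ for each fixed $\lambda$ by Theorem \ref{thm:no_trips}, and in fact $b_N\le C/\log N$ with $C$ independent of $\lambda$. Since the sequence contains at least one non-pair partition, the product of the $\mathsf U$ norms is at most $\max(a,b_N)^{m-1}b_N$. There are at most $15^m$ choices of $J_1,\dots,J_m$, so $|\mathfrak I_N^{\mathrm{non\;pair}}(m)|\le C e^\lambda\, b_N\,(15 C_Q\max(a,b_N))^{m-1}$.

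Finally, we fix $\lambda$ large enough that $15C_Q g(\lambda)<1/2$, and take $N$ large enough that $15C_Qb_N<1/2$. Summing the geometric series over $m\ge1$ then gives $\sum_m|\mathfrak I_N^{\mathrm{non\;pair}}(m)|\le 2Ce^\lambda b_N\to0$. The main point needing care is that the damping parameter $\lambda$ must be chosen uniformly in $N$ before taking $N\to\infty$, while the smallness must come from $b_N$. This is legitimate because the bound in Theorem \ref{thm:no_trips} is uniform in $\lambda\ge0$: it simply drops the damping factor. A secondary check is that the restriction $J_i\neq\hat 0$, together with the decomposition defining $\widehat{\mathscr Q}$, lets every interior $\mathsf Q$ factor use the uniform bound of Theorem \ref{thm:Qbounds}, including the case $J_i=J_{i+1}$.
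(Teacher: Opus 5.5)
Your proof is correct and uses the same ingredients as the paper's: the damped bound of Proposition \ref{part_v4}, the uniform $\mathsf Q$ bounds, $g(\lambda)\to 0$ for pair partitions, and the vanishing non-pair $\mathsf U$ norms from Theorem \ref{thm:no_trips}. The only difference is the last step, where the paper uses dominated convergence via Corollary \ref{abs_conv} plus term-by-term vanishing for fixed $m$, while you factor out $b_N$ uniformly in $m$ and sum a geometric series, which also gives an $O(1/\log N)$ rate. One small fix: for non-pair $J$ the kernel $\mathscr U^J_{N,\mathrm{con}}$ need not be nonnegative, since $\boldsymbol{\eta}_{N,4}$ has no sign on the non-pair collision set, so you should use the triangle inequality with $|\mathscr U^J_{N,\mathrm{con}}|\le \mathscr U^J_{N,\mathrm{free}}$ (guaranteed by the absolute values in Definition \ref{ufree1}) rather than positivity.
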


\begin{proof}

         By Corollary \ref{abs_conv} and the dominated convergence theorem, we can commute the sum over $m$ with the limit $N\to \infty$.
         
         Thus, it suffices to prove the term-by-term convergence to zero: $\lim_{N\to \infty} \mathfrak I_N^{\mathrm{non\;pair}}(m;t,\Phi,\Psi)=0$ for each fixed $m\ge 1$. Since we are considering only those terms in which there is a non-pair partition, Theorem \ref{thm:no_trips} guarantees that for each $m$, there is at least one factor in the product of \eqref{operator_bound} that vanishes in the limit $N\to\infty$. Thus for each fixed $m$, the corresponding term of the series \eqref{operator_bound} indeed goes to 0 as $N\to\infty$.
\end{proof}

\subsection{Exact limits of the kernels}

With the upper bounds proved for the expansion in Theorem \ref{part_v2.5}, we now focus on extracting exact limits for the specific kernels appearing in that theorem. This will be the main step towards proving Theorem \ref{4thmom}. Thanks to Corollary \ref{np_irr}, we can fully restrict our attention to pair partitions only. First we have the following lemma, which is an important generalization of Lemma \ref{flem} to the Markov chain $\PN{4}$.

\begin{lem}\label{u.b.}
    Fix $y\in \mathbb Z^2$. Uniformly over all $\bfa\in (\mathbb Z^2)^4$ we have that $$\sum_{r=0}^{t} \sum_{\substack{\vect b \in (\Z^2)^4 : \\ \gap_J(\mathbf b) = y}} \mathbf p_{N,4} (r,\vect a,\vect b)  \leq H\big (t^{-1/2} |\gap_J(\bfa)| \big).$$ Here $H: (0,\infty) \to [0,\infty)$ is a decreasing function satisfying $H(x) \leq C\log_-(x)$ for $x\in [0,1]$ and $H(x) \to 0$ superpolynomially fast as $x\to\infty.$
\end{lem}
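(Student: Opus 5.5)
The plan is to bound the sum by the $h=4$ heat kernel upper bound of Assumption \ref{a1} \eqref{a1heatkernel} (with $d_N\equiv 0$), summing out the two free directions of $\mathbf b$ and keeping only the relative coordinate of the $J$-pair. Let $J=\{\{i,j\},\{k\},\{\ell\}\}$. For fixed $y$, the set $\{\mathbf b:\gap_J(\mathbf b)=y\}$ is parametrized by $(b_j,b_k,b_\ell)\in(\mathbb Z^2)^3$, with $b_i=b_j+y$. Note that $|\mathbf b-\mathbf a|\ge |b_i-a_i|+|b_j-a_j|$ up to constants, and $|b_i-a_i|+|b_j-a_j|\ge |(b_i-b_j)-(a_i-a_j)|=|y-\gap_J(\mathbf a)|$. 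Hence for $r\ge1$ we can split the decay exponent $S=S_1+S_2$ and bound
\[
\mathbf p_{N,4}(r,\mathbf a,\mathbf b)\le C r^{-4}\big(1+r^{-1/2}|\mathbf b-\mathbf a|\big)^{-S_1}\big(1+r^{-1/2}|y-\gap_J(\mathbf a)|\big)^{-S_2}.
\]
Summing the first factor over the six free lattice dimensions $(b_j,b_k,b_\ell)$ gives a factor $C r^{3}$ once $S_1>6$. This leaves
\[
\sum_{\mathbf b}\mathbf p_{N,4}(r,\mathbf a,\mathbf b)\le C r^{-1}\big(1+r^{-1/2}|y-\gap_J(\mathbf a)|\big)^{-S_2}.
\]
This is the same bound as Lemma \ref{hku}, which one could alternatively obtain by projecting onto the pair coordinates. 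The $r=0$ term is at most $1$.

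Since $y$ is fixed, $|y-\gap_J(\mathbf a)|\ge |\gap_J(\mathbf a)|-|y|$. Write $x=t^{-1/2}|\gap_J(\mathbf a)|$. The sum is then bounded by
\[
1+C\sum_{r=1}^{t} r^{-1}\big(1+r^{-1/2}(|\gap_J(\mathbf a)|-|y|)_+\big)^{-S_2}.
\]
Compare this sum to the integral as in the proof of Theorem \ref{limitofu2}, substituting $r=tu$. It is bounded by $C\int_{1/t}^1 u^{-1}\big(1+u^{-1/2}x'\big)^{-S_2}\,du$ with $x'=t^{-1/2}(|\gap_J(\mathbf a)|-|y|)_+$. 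The candidate is
\[
H(x):=C+C\int_0^1 u^{-1}(1+u^{-1/2}x)^{-S_2}\,du .
\]
It is decreasing, bounded by $C(1+|\log x|)$ for $x<1$ (split the integral at $u=x^2$), and for $x>1$ it decays like $x^{-S_2}$. Since $S_2$ is arbitrary, this decay is superpolynomial.

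The main obstacle is twofold. First, the additive constant $1$ from the $r=0$ term and the shift by $|y|$ are incompatible with decay of $H$ at infinity. Second, $\log_-x$ vanishes at $x=1$ while the sum does not. Both are handled by restating the target as $H(x)\le C(1+\log_- x)$ on $[0,1]$, which is the form actually used in Theorem \ref{limitofu2} and which I take to be the intended meaning. The remaining task is the regime $|\gap_J(\mathbf a)|\ge t^{1/2}$, where the $r=0$ term must vanish and the shift must be absorbed. For $r=0$, $\mathbf b=\mathbf a$ forces $\gap_J(\mathbf a)=y$, so this term is zero once $|\gap_J(\mathbf a)|>|y|$. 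If $|\gap_J(\mathbf a)|\ge 2|y|$, then $(|\gap_J(\mathbf a)|-|y|)_+\ge \tfrac12|\gap_J(\mathbf a)|$, so $x'\ge x/2$. The finitely many configurations with $|\gap_J(\mathbf a)|<2|y|$ and $x\ge1$ force $t\le 4|y|^2$; there the sum is bounded by a constant depending only on $y$, which can be absorbed by enlarging $H$ on a bounded range of $x$. Taking $H(x)\mapsto C H(x/2)$ finishes the proof.
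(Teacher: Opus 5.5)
Your proof is correct and follows the same route as the paper. You apply the $h=4$ heat kernel bound, sum out the three free coordinates of $\mathbf b$ to get $C r^{-1}(1+r^{-1/2}|\gap_J(\mathbf a)-y|)^{-S}$, and compare with the Riemann integral $\int_0^1 u^{-1}(1+u^{-1/2}x)^{-S}\,du$. Your extra bookkeeping is legitimate and fills in points the paper's two-line argument passes over: the $r=0$ term, the shift by the fixed $y$, and reading the small-$x$ bound as $C(1+\log_- x)$, since $C\log_- x$ together with monotonicity would force $H\equiv 0$ on $[1,\infty)$.
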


\begin{proof}
    Use the heat kernel upper bound in Assumption \ref{a1}, and we obtain that the desired quantity is upper-bounded by some constant multiple of $$\sum_{r=0}^{t} \sum_{\substack{\vect b \in (\Z^2)^4 : \\ \gap_J(\mathbf b) = y}} r^{-4} \big(1+ r^{-1/2} |\bfa -\mathbf b|\big)^{-S} \le C\sum_{r=0}^{t}  r^{-1} \big(1+ r^{-1/2} |\gap_J(\bfa)|\big)^{-S} \leq H_S\big(t^{-1/2} |\gap_J(\bfa)| \big),$$ where by a Riemann sum approximation, $H_S(x)$ is equal to some constant multiple of the function $x\mapsto \int_0^1 s^{-1} (1+ xs^{-1/2})^{-S}ds.$ It is clear that this function has a log singularity near $x=0$ and tends to 0 like $x^{-S}$ as $x\to\infty.$ Since $S>0$ is arbitrary, the claim follows.
\end{proof}

\begin{defn}For nontrivial $J \vdash \{1,2,3,4\}$, we define the macroscopic analog $(\R^2)^4_J := \{\mathbf x \in (\R^2)^4 : x_i = x_j \textrm{ if } [i]_J = [j]_J\}$ where $[i]_J$ is the partition element of $J$ containing $i$. We also define $(\mathbb R^2)^4_*=(\mathbb R^2)^4$.
\end{defn}

While it is true in some sense that $(\R^2)^4_J$ are the macroscopic analogues of $(\mathbb Z^2)^4_J$, it is not the case that these macroscopic versions are disjoint or share all the same properties.

\begin{prop}\label{prop:flem4}
    Fix any two partitions $I,J\vdash \{1,2,3,4\}$ where $I$ is either $*$ or a pair partition, and $J$ is a pair partition. Consider any sequence $\vect a_N \in (\Z^2)_I^4$ such that $N^{-1/2} \vect a_N \to \vect a \in (\R^2)_I^4$. 
    Then for all bounded $F \in C([0,t] \times (\R^2)^4)$ we have the following limits for $t>0$ and $\mathbf b\in (\mathbb Z^2)^4$:
    \begin{enumerate}
    \item\label{prop:flem4neq} Assume $I\ne J$. If $ \vect a \not\in (\R^2)^4_J$, then \begin{align*} \lim_{N\to\infty} 
        \sum_{r = 0}^{Nt} \sum_{\substack{\vect b \in (\Z^2)^4 : \\ \gap_J(\mathbf b) = y}} \mathscr Q^{I,J}_{N,\mathrm{free}}(r,\vect a_N,\vect b)F\Big( \frac{r}{N},\frac{\mathbf b}{\sqrt{N}} \Big) = 4\pi \cdot \nu_\infty(y) \int_0^t \int_{(\R^2)^4_J} \mathbf g^{\otimes 4}(t,\vect z - \vect a)F(u,\vect z) \,d\vect z \,du.
    \end{align*}

    \item\label{prop:flem4eq} In the case where $I=J$ and $\bfa \in (\R^2)^4_J$, we have that $$ \lim_{N\to\infty} \sum_{r = 0}^{Nt} \sum_{\substack{\vect b \in (\Z^2)^4 : \\ \gap_J(\mathbf b) = y}} \big[ \mathscr Q^{J,J}_{N,\mathrm{free}} -\mathscr Q^{J,J}_{N,\mathrm{con}}\big](r,\vect a_N,\vect b)F\Big( \frac{r}{N},\frac{\mathbf b}{\sqrt{N}} \Big) = 0,$$
    provided that $\bfa \notin (\mathbb R^2)^4_K$ for any nontrivial $K \neq J.$ 
\end{enumerate}
     The analogous claim holds for a sequence $(F_N)$ of functions such that $F_N \to F$ uniformly on compacts.

    %
    
\end{prop}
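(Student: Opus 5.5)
The plan is to reduce both items to the two-particle statement of Lemma \ref{flem}, using the factorization of $\mathbf P_{N,4}$ away from non-pair collisions (Assumption \ref{a1} \eqref{a1consistency}), the invariance principle, and the heat kernel bounds.

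\textbf{Item \eqref{prop:flem4neq}.} Write $J=\{\{i,j\},\{k\},\{l\}\}$. Since $\bfa\notin(\R^2)^4_J$ and $I\ne J$ (with $I$ a pair partition or $*$), the pair $(a_i,a_j)$ is macroscopically separated, so the gap $\gap_J(\bfa_N)$ is of order $\sqrt N$. The indicator $\ind_{\{\bfa_N\sim I\}}$ is automatically satisfied. The indicator $\ind_{\{\mathbf b\sim J\}}$ is implied by $\gap_J(\mathbf b)=y$, except on the set where some other coordinate is within distance $A$ of the pair. That exceptional set is a non-pair set, and by the heat kernel bound (as in \eqref{prop:sumJ}) it carries mass at most $\sum_r r^{-2}$ over $r\ge \epsilon N$. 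Hence it is negligible, provided the small times $r\le \epsilon N$ are handled separately.

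The small times are negligible as well. By Lemma \ref{u.b.}, $\sum_{r\le \epsilon N}\sum_{\gap_J\mathbf b=y}\mathbf p_{N,4}\le H(\epsilon^{-1/2}|a_i-a_j|)$, which tends to $0$ as $\epsilon\to0$ because $H$ decays superpolynomially.

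For $r\in[\epsilon N,Nt]$, apply the Markov property at time $\epsilon N$, exactly as in Case 1a of the proof of Lemma \ref{flem}. By the invariance principle, $N^{-1/2}\mathbf R_{\epsilon N}$ converges to a Brownian position that is almost surely well-separated. On the time window $[\epsilon N,Nt]$, the chain $\mathbf P_{N,4}$ started well-separated is close to $\mathbf P_{N,2}\otimes\mathbf P_{N,1}^{\otimes2}$ until the first non-pair collision. The probability that such a collision occurs and the pair then has gap $y$ is again bounded by the $r^{-2}$ heat kernel estimate, so it vanishes.

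On the factorized part, the pair coordinates are handled by Lemma \ref{flem}, and the singletons by the invariance principle for $\mathbf P_{N,1}$ together with uniform integrability from the heat kernel bound. This gives $4\pi\nu_\infty(y)\int\mathbf g(2u,\cdot)\mathbf g(2u,\cdot)\mathbf g^{\otimes2}(u,\cdot)F$. The identity $\g(\tfrac u2,\cdot)\g(2u,\cdot)=\g(u,\cdot)\g(u,\cdot)$ from the remark after Lemma \ref{flem} rewrites this as the integral of $\mathbf g^{\otimes4}(u,\mathbf z-\bfa)$ over $(\R^2)^4_J$. Finally, approximate a general bounded continuous $F$ by products and Riemann-step functions in time, as in Case 1b, and let $\epsilon\to0$. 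The same argument covers $F_N\to F$ uniformly on compacts, with tails controlled by the heat kernel bound.

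\textbf{Item \eqref{prop:flem4eq}.} By Theorem \ref{contraction}, the kernel $\mathscr Q^{J,J}_{N,\mathrm{free}}-\mathscr Q^{J,J}_{N,\mathrm{con}}$ is bounded by $\sum_{s}\sum_{K\ne J,\hat0}\mathscr Q^{J,K}_{N,\mathrm{free}}(s)\mathscr Q^{K,J}_{N,\mathrm{free}}(r-s)$. So it suffices to show that, starting from $\bfa_N$ with $\bfa$ off every $(\R^2)^4_K$ for $K\neq J$, the probability of visiting a $K$-configuration before time $Nt$ and then returning to gap $y$ is $o(1)$ after summing over $r$.

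Split the intermediate time $s$. For $s\le \epsilon N$, the singleton coordinates are macroscopically far from the pair and from each other, so the probability of reaching a $K$-configuration is $o(1)$; this is Lemma \ref{u.b.} applied to the relevant gap. For $s>\epsilon N$, bound $\sum_{K}\mathscr Q^{J,K}(s)$ by its heat kernel density times the volume of $\{\mathbf z\sim K\}$. The remaining factor $\sum_{r-s}\sum_{\gap_J=y}\mathbf p_{N,4}(r-s,\mathbf z,\cdot)$ is at most $H$ of the $J$-gap of $\mathbf z$ by Lemma \ref{u.b.}. The main obstacle is bounding the combined sum over $s$: one needs that starting at a $K$-configuration (where the $J$-gap may be small only in a non-pair configuration), the extra collision costs a factor $s^{-1}$ beyond the pair's $s^{-1}$. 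I would try to show this by summing the heat kernel bound over $\mathbf z\sim K$ and using $|K|\le 2$ for non-pair $K$ or disjoint pairs, giving a total of order $\sum_{s\ge\epsilon N}s^{-2}\cdot\log N$, which is $o(1)$. If this estimate falls short, I would fall back on the pair-case computation of Theorem \ref{thm:no_trips}.
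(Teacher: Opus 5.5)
Your outline for Item \eqref{prop:flem4neq} matches the paper's general case: small times via Lemma \ref{u.b.}, the Markov property at time $\epsilon N$, then comparing $\mathbf P_{N,4}$ with $\mathbf P_{N,2}\otimes\mathbf P_{N,1}^{\otimes 2}$ and applying Lemma \ref{flem}. The gap is in the comparison step, which carries the whole proof. First, the two chains agree only while the configuration has pattern $J$ or $\widehat 0$. Assumption \ref{a1}\eqref{a1consistency} factorizes $\mathbf p_{N,4}$ only across $A$-separated groups, so the coupling breaks at the first entrance time $\tau$ into the set $V$ of \eqref{uset}. That set also contains the other \emph{pair} patterns ($x_k\approx x_l$, $x_i\approx x_k$, \dots), and for these, summing the heat kernel over $\{\mathbf z\sim K\}$ gives only $r^{-1}$, not $r^{-2}$. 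Second, the error is the expected number of times $u\in(\tau,Nt]$ at which $\gap_J(\mathbf R_u)=y$. This is not a simultaneous-collision event, so no $\sum_r r^{-2}$ bound applies. A crude product bound does not work either: $P(\tau\le Nt)$ is of order $1/\log N$, while the Green's function to $\{\gap_J=y\}$ started from $\mathbf R_\tau$ can be of order $\log N$ (Lemma \ref{u.b.}). What you need is the argument of \eqref{eq5a}--\eqref{vbar}. Bound the error by $C\|F\|_\infty\,\mathbf E[\ind_{\{\tau\le Nt\}}(1+\log_-(N^{-1/2}|\gap_J(\mathbf R_\tau)|))]$. On $\{|\gap_J(\mathbf R_\tau)|\ge\lambda\sqrt N\}$, use $P(\tau\le Nt)\to0$ for a well-separated start. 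On the complement, dominate the hitting law by $\mathbf p_{N,4}(r,\bfa_N,\cdot)$ and pass to a Riemann integral over $\overline V\cap\{|x_i-x_j|\le\lambda\}$, which is small as $\lambda\to0$.

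For Item \eqref{prop:flem4eq}, your first reduction already cannot give an $o(1)$ bound. The bound from Theorem \ref{contraction}\eqref{thm:contraction2} weights each path by the number of intermediate times it spends in a $K$-configuration. In two dimensions, that number is of order $\log N$ once a visit occurs, which cancels the $1/\log N$ probability of a visit. Concretely, take independent walks and $K=\{\{i,k\},\{j\},\{l\}\}$. Then $\sum_{\mathbf z\sim K}\mathbf p_{N,4}(s,\bfa_N,\mathbf z)\asymp s^{-1}$ for $s\asymp N$, and from a typical such $\mathbf z$ the Green's function to $\{\gap_J=y\}$ over the remaining time is of order one. So your convolution bound, summed over $r$ and $\gap_J(\mathbf b)=y$, is bounded below by a positive constant from $s\in[Nt/4,Nt/2]$ alone. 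Your estimate $\sum_{s\ge\epsilon N}s^{-2}\log N$ uses $|K|\le 2$, but the dominant terms are the pair partitions $K\ne J$, which have $|K|=3$, and Theorem \ref{thm:no_trips} does not cover them. Instead, use the exact identity $[\mathscr Q^{J,J}_{N,\mathrm{free}}-\mathscr Q^{J,J}_{N,\mathrm{con}}](r,\bfa_N,\mathbf b)=\mathbf P_{N,4}^{\bfa_N}(\mathbf R_r=\mathbf b,\ \tau<r)$, with $\tau$ the first hitting time of $V$, and apply the strong Markov property only at $\tau$. This leads back to \eqref{log-}, which vanishes as in Item \eqref{prop:flem4neq}: every coordinate outside the $J$-pair starts macroscopically separated, and $\bfa\notin\overline V$.
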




\begin{proof} \textbf{Proof of Item \eqref{prop:flem4neq}.} 
Without loss of generality, we take $J = \{\{1,2\},\{3\},\{4\}\}.$ Note that $\mathscr Q^{I,J}_{N,\mathrm{free}}$ is defined with the kernel $\mathbf p_{N,4}$; however, if it were instead defined with the kernel $\mathbf p_{N,2}\otimes \mathbf p_{N,1}^{\otimes 2}$, then the result would actually be immediate from Lemma \ref{flem} and the invariance principle in Assumption \ref{a1}. Thus, the core of our argument is to show that the difference between these two quantities goes to zero, using a direct coupling argument. 


\textbf{Special case:} We will first prove the claim under the simplifying assumption that $I=*$ and the limiting vector $\bfa$ satisfies $a_i\ne a_j$ for $i\ne j$, thus all coordinates of $\bfa_N$ are pairwise well-separated in the sense of Definition \ref{well_sep}. In this simpler scenario, we will use a coupling between the Markov chains $\mathbf P_{N,2}\otimes \mathbf P_{N,1}^{\otimes 2}$ and $\mathbf P_{N,4}$. Set 
\begin{equation}\label{uset}V:=\{ \x\in (\mathbb Z^2)^4: \x \in (\mathbb Z^2)^4_K \text{ for some nontrivial } K \vdash \{1,2,3,4\}, K\ne J\},
\end{equation} 
and define the coupled chain $\mathbf P_{N,\mathrm{coup}}$ with one-step transition kernel on the state space $(\mathbb Z^2)^4\times (\mathbb Z^2)^4$ given by $$\mathbf p_{N,\mathrm{coup}} ((\x,\x'), (\y,\y')) = \begin{cases}
    \ind_{\{ \y=\y'\}} \mathbf p_{N,4}(\x,\y), & \x=\x'\notin V,\\ \pN{4}(\x,\y)\cdot (\pN{2}\otimes \pN{1}^{\otimes 2}) (\x',\y'), & \text{otherwise}.
\end{cases}$$
In other words, particles from the two copies of $(\mathbb Z^2)^4$ travel together as long as there are no other particle collisions except from the selected pair $\{1,2\}$. Otherwise we evolve the two Markov chains independently. A calculation shows that, under the coupled chain, the $\x$-marginal evolves as $\mathbf P_{N,4}$ and the $\x'$-marginal evolves as $\mathbf P_{N,2} \otimes \mathbf P_{N,1}^{\otimes 2}$. Note here that we are using the consistency conditions from Assumption \ref{a1} \eqref{a1consistency}. 

Denote the realization of the coupled chain as $(\mathbf R_t,\mathbf R_t')_{t\ge 0}$. We can define a stopping time $\tau$ to be the first time $t \ge 0$ that $\mathbf R_t\in V$ or $\mathbf R_t'\in V$. Notice that $\mathbf R_t  = \mathbf R_t'$ for all $0 \le t \le \tau$, as long as we start from a state of the form $(\x,\x)$. In other words, the two marginals of the coupled chain evolve together up until the stopping time $\tau$.

We thus have the following series of bounds:
\begin{align}
     \notag &\bigg|\sum_{r = 0}^{Nt} \sum_{\substack{\vect b \in (\Z^2)^h_J : \\ b_1-b_2 = y}} \big(\pN{4}- \pN{2}\otimes \pN{1}^{\otimes 2} \big)(r,\vect a_N,\vect b) F\Big( \frac{r}{N},\frac{\mathbf b}{\sqrt{N}} \Big) \bigg|\\&= \notag \bigg|\sum_{r =0}^{Nt}\mathbf E^{(\bfa_N,\bfa_N)}_{N,\mathrm{coup}} \bigg[ \ind_{\{\tau = r\}} \mathbf E_{N,\mathrm{coup}}^{(\mathbf R_r, \mathbf R_r)} \bigg[ \sum_{u=0}^{N(t-s)-r } \ind_{\{R^1_u - R^2_u = y\}}F(N^{-1}u, N^{-1/2} \mathbf R_u) \\&\hspace{3in} \notag - \ind_{\{(R^1-R^2)'_u = y\}} F(N^{-1}u, N^{-1/2} \mathbf R_u')\bigg]\bigg]\bigg| \\&\leq \notag \|F\|_{L^\infty} \sum_{r=0}^{Nt} \mathbf E^{(\bfa_N,\bfa_N)}_{N,\mathrm{coup}} \bigg[ \ind_{\{\tau = r\}} \mathbf E_{N,\mathrm{coup}}^{(\mathbf R_r, \mathbf R_r)} \bigg[ \sum_{u=0}^{N(t-s)-r } \ind_{\{R^1_u - R^2_u = y\}}+ \ind_{\{(R^1-R^2)'_u  = y\}} \bigg]\bigg] \\&= \|F\|_{L^\infty} \sum_{r=0}^{Nt} \sum_{\x \in V } \mathbf E^{(\bfa_N,\bfa_N)}_{N,\mathrm{coup}} \big[ \ind_{\{\tau = r, \mathbf R_r=\x\}} \big] \mathbf E_{N,\mathrm{coup}}^{(\x,\x)} \bigg[ \sum_{u=0}^{N(t-s)-r } \ind_{\{R^1_u - R^2_u = y\}}+ \ind_{\{(R^1-R^2)'_u  = y\}} \bigg]\label{eq5a}.
\end{align}
We used the Markov property in the second line. Next, notice by the heat kernel upper-bounds in Assumption \ref{a1} that uniformly over all $\x\in (\mathbb Z^2)^3$ and $0\le s<t\le T$, one has that 
\begin{align*}
    \mathbf E_{N,\mathrm{coup}}^{(\x,\x)} \bigg[ \sum_{u=0}^{N(t-s)-r } \ind_{\{R^1_u - R^2_u = y\}}+ \ind_{\{(R^1-R^2)'_u  = y\}} \bigg] &= \sum_{u=0}^{N(t-s)-r}  \sum_{\substack{\vect b \in (\Z^2)^h_J : \\ b_1-b_2 = y}} \big(\pN{4}+ \pN{2}\otimes \pN{1}^{\otimes 2} \big)(u,\x,\vect b) \\&\leq C\sum_{u=0}^{N(t-s)}  \sum_{\substack{\vect b \in (\Z^2)^4_J : \\ b_1-b_2 = y}} u^{-4} \big( 1+u^{-1/2} |\mathbf x-\mathbf b|\big)^{-S} \\&\leq C\big(1+\log_- (N^{-1/2} |x_1-x_2|)\big).
\end{align*}
Thus \eqref{eq5a} is upper-bounded by some constant multiple of 
\begin{equation}\label{log-}\sum_{r=0}^{Nt} \sum_{\x \in V } \mathbf E^{(\bfa_N,\bfa_N)}_{N,\mathrm{coup}} \big[ \ind_{\{\tau = r, \mathbf R_r=\x\}} \big] \cdot \big(1+\log_- (N^{-1/2} |x_1-x_2|)\big).
\end{equation}
Break the sum into $|x_1-x_2|\le \lambda \sqrt N$ and $|x_1-x_2|\ge \lambda \sqrt N$, where $\lambda \in(0,1)$ is a fixed constant. Call the two sums $I_1^N(\lambda)$ and $I_2^N(\lambda)$ respectively. Trivially we have that $$\lim_{N\to\infty} I^N_2(\lambda) \leq (1+|\log \lambda|) \lim_{N\to\infty} \mathbf P_{N,4}^{\mathbf a_N} (\tau \le Nt) = 0,$$  where we are using the invariance principle (noting that 2-dimensional Brownian motion will not hit singletons almost surely), and the well-separatedness condition on the $\mathbf a_N$.

Meanwhile, we have from the heat kernel bounds that $\mathbf E^{(\bfa_N,\bfa_N)}_{N,\mathrm{coup}} \big[ \ind_{\{\tau = r, \mathbf R_r=\x\}} \big] \leq \mathbf p_{N,4} (r,\mathbf a_N,\x) \leq Cr^{-4} \big( 1+ r^{-1/2} |\mathbf a_N-\x|\big)^{-S}  ,$ thus
\begin{align*}
    I^N_1(\lambda) \leq C\sum_{r=0}^{Nt} \sum_{\x\in V: |x_1-x_2| \leq \lambda \sqrt N} r^{-4} \big( 1+ r^{-1/2} |\mathbf a_N-\x|\big)^{-S}  \cdot \big(1+\log_- (N^{-1/2} |x_1-x_2|)\big).
\end{align*}

Reinterpreting this as a Riemann sum, we see that if $N^{-1/2} \mathbf a_N\to \mathbf a \notin (\mathbb R^2)^4_J$, then 
\begin{equation}\label{vbar}\limsup_{N\to \infty} I_1^N(\lambda)\leq  C\int_{\bar V} \bigg[\int_0^tr^{-4} (1+r^{-1/2} |\mathbf a-\x| \big)^{-S}dr\bigg] \cdot \big( 1+ \log_- |x_1-x_2|\big) \cdot \ind_{\{|x_1-x_2| \leq \lambda\}} d\x,
\end{equation}
where $\bar V = \{\x\in (\mathbb R^2)^4: x_i=x_j$ for $i \in \{1,2\}$, $j \in \{3,4\}\}$, and $d\x$ is the Lebesgue measure on $\bar V$. To see why the integral converges, note that $\bfa$ satisfying $a_i\ne a_j$ means that there exists $\delta>0$ such that $|\bfa-\x| \ge \delta>0$ for all $\x\in \bar V$. From here, it is not difficult to see that the integral converges absolutely. Thus by taking $\lambda$ small, we can make the integral as small as desired, and consequently \eqref{eq5a} tends to 0 as $N\to\infty$. Thus we may replace $\mathbf p_{N,4}$ by $\mathbf p_{N,2} \otimes \mathbf p_{N,1}^{\otimes 2}$, in which case the claimed limit is justified by Lemma \ref{flem} together with the invariance principle of Assumption \ref{a1}.

\textbf{General case:} Now let us consider the general scenario in which the coordinates of the limiting vector $\bfa$ are no longer required to satisfy $a_i\ne a_j$, and likewise $I$ is no longer restricted to $*$. The idea is to run the Markov chain $\mathbf P_{N,4}$ up to time $N\epsilon$, then restart the chain using the Markov property and use the special case above. Then we bound the accumulated effect on the interval $[0,N\epsilon]$ and show it is very small if $\epsilon$ is very small. Precisely, use the Markov property to write 
\begin{align}
    \notag \sum_{r = 0}^{Nt} \sum_{\substack{\vect b \in (\Z^2)^4 : \\ \gap_J(\mathbf b) = y}} \mathscr Q^{I,J}_{N,\mathrm{free}}(r,\vect a_N,&\vect b)F\Big( \frac{r}{N},\frac{\mathbf b}{\sqrt{N}} \Big) = \sum_{r=0}^{N\epsilon} \sum_{\substack{\vect b \in (\Z^2)^4 : \\ \gap_J(\mathbf b) = y}} \mathscr Q^{I,J}_{N,\mathrm{free}}(r,\vect a_N,\vect b)F\Big( \frac{r}{N},\frac{\mathbf b}{\sqrt{N}} \Big) \\&+ \mathbf E_{N,4}^{\bfa_N} \bigg[ \sum_{r=0}^{N(t-\epsilon)} \sum_{\substack{\vect b \in (\Z^2)^4 : \\ \gap_J(\mathbf b) = y}} \mathscr Q^{I,J}_{N,\mathrm{free}}(r,\mathbf R_{N\epsilon},\vect b)F\Big( \frac{r}{N}+\epsilon,\frac{\mathbf b}{\sqrt{N}} \Big)\bigg]. \label{qlim0}
\end{align}
Let us first focus on taking the limit of the second term on the right side. By Skorohod's lemma and the invariance principle in Assumption \ref{a1}, we can assume that $N^{-1/2} \mathbf R_{N\epsilon} \to \sqrt{\epsilon } \mathbf Z$ almost surely, for a standard Normal $\mathbf Z$ in $(\mathbb R^2)^4.$ Then none of the coordinates of $\mathbf Z$ are equal, so with probability 1, we conclude from the simpler scenario above that 
\begin{align}
    \notag\lim_{N\to\infty} \sum_{r=0}^{N(t-\epsilon)} \sum_{\substack{\vect b \in (\Z^2)^4 : \\ \gap_J(\mathbf b) = y}} &\mathscr Q^{I,J}_{N,\mathrm{free}}(r,\mathbf R_{N\epsilon},\vect b)F\Big( \frac{r}{N}+\epsilon,\frac{\mathbf b}{\sqrt{N}} \Big) \\
    &= 4\pi \cdot \nu_\infty(y) \int_0^{t-\epsilon} \int_{(\R^2)^4_J} \mathbf g^{\otimes 4}(t,\vect z - \sqrt{\epsilon}\vect Z)F(u+\epsilon,\vect z) \,d\vect z \,du.
\end{align}
By Lemma \ref{u.b.} and the heat kernel upper bounds of Assumption \ref{a1}, we have uniform integrability of the left side, so we can commute the limit and the expectation, and we obtain that 
\begin{align}
    \lim_{N\to\infty} \mathbf E_{N,4} \bigg[\sum_{r=0}^{N(t-\epsilon)} \sum_{\substack{\vect b \in (\Z^2)^4 : \\ \gap_J(\mathbf b) = y}} &\mathscr Q^{I,J}_{N,\mathrm{free}}(r,\mathbf R_{N\epsilon},\vect b)F\Big( \frac{r}{N}+\epsilon,\frac{\mathbf b}{\sqrt{N}} \Big)\bigg]\notag  \\&= 4\pi \cdot \nu_\infty(y) \int_\epsilon^{t} \int_{(\R^2)^4_J} \mathbf g^{\otimes 4}(t,\vect z - \bfa )F(u+\epsilon,\vect z) \,d\vect z \,du.\label{qlim1} 
\end{align}
On the other hand, the limiting vector $\bfa \notin (\mathbb R^2)^4_J$, which means $|\gap_J(\bfa_N)| \ge dN^{1/2}$ for some $d>0$, so that Lemma \ref{u.b.} also implies that 
\begin{equation}\label{qlim2}\limsup_{N\to\infty}\sum_{r=0}^{N\epsilon} \sum_{\substack{\vect b \in (\Z^2)^4 : \\ \gap_J(\mathbf b) = y}} \mathscr Q^{I,J}_{N,\mathrm{free}}(r,\vect a_N,\vect b)F\Big( \frac{r}{N},\frac{\mathbf b}{\sqrt{N}} \Big) \leq C\|F\|_\infty H(d\epsilon^{-1/2}), 
\end{equation}
where $H(x)\to 0$ as $x\to\infty$. Combining \eqref{qlim1} and \eqref{qlim2}, and making $\epsilon$ as small as desired, then yields the result.

\smallskip
\noindent\textbf{Proof of Item \eqref{prop:flem4eq}.} 
Let us assume that $\bfa \in (\mathbb R^2)^4_J$ but $\bfa \notin (\mathbb R^2)^4_K$ for nontrivial $K \vdash \{1,2,3,4\}$ with $K \neq J$. With $V$ as in \eqref{uset}, let us denote by $\tau$ the first hitting time of $V$. Then we see that 
\begin{align*}
    \sum_{r = 0}^{Nt} &\sum_{\substack{\vect b \in (\Z^2)^4 : \\ \gap_J(\mathbf b) = y}} \big[ \mathscr Q^{J,J}_{N,\mathrm{free}} -\mathscr Q^{J,J}_{N,\mathrm{con}}\big](r,\vect a_N,\vect b)F\Big( \frac{r}{N},\frac{\mathbf b}{\sqrt{N}} \Big) \\&\le \|F\|_\infty \sum_{r=0}^{Nt} \sum_{\x \in V} \mathbf E_{N,4}^{\bfa_N} [ \ind_{\{\tau=r, \mathbf R_r=\x\}}] \bigg( \sum_{u=0}^{Nt-r} \sum_{\substack{\vect b \in (\Z^2)^4 : \\ \gap_J(\mathbf b) = y}}\mathbf p_{N,4} (u,\x,\mathbf b) \bigg) \\&\le C\|F\|_\infty \sum_{r=0}^{Nt} \sum_{\x \in V } \mathbf E_{N,4}^{\bfa_N}\big[ \ind_{\{\tau = r, \mathbf R_r=\x\}} \big] \cdot \big(1+\log_- (N^{-1/2} |x_1-x_2|)\big).
\end{align*}
Thus we have arrived exactly at \eqref{log-}, and proving that it tends to 0 is done exactly as is done there, noting that $\bfa \notin (\mathbb R^2)^4_K$ for $K \neq J$ means that $\bfa \notin \bar V$ with $\bar V$ as in \eqref{vbar}. This completes the proof.\end{proof}

\begin{defn}\label{ujsxy}
    We define the notation $$U^\vartheta_\infty(t,\vec x,\vec y) = 4\pi G_{\vartheta}(t)\mathbf  g(\tfrac{t}{2}, y_1-x_1) $$  for $\vec x,\vec y \in (\R^2)^2$ and $t>0$. For pair partitions $J\vdash \{1,2,3,4\}$, we also define $$\mathbf U^J_\infty (s,\x,\y) := U^\vartheta_\infty( s, \mathbf x | _{J^{(2)}},\mathbf y| _{J^{(2)}}) \cdot \mathbf g^{\otimes 2}(s, \mathbf x|_{J^{(1)}}, \mathbf y|_{J^{(1)}}),$$ where $s>0$ and $\x,\y\in (\mathbb R^2)^4$, where $J^{(r)}$ denotes the union over $r$-element sets in $J$ as in Definition \ref{nota}, and where $\mathbf g^{\otimes h}$ is the standard heat kernel on $(\mathbb R^2)^h$.
\end{defn}

\begin{prop}[Limit of $\mathscr U_{N,\mathrm{con}}$ and $\mathscr U_{N,\mathrm{free}}$ for pair partitions]\label{u-u}
    Fix a pair partition $J\vdash \{1,2,3,4\}$. Recall $\mathscr U_{N,\mathrm{con}}$ and $\mathscr U_{N,\mathrm{free}}$ from Definitions \ref{cons} and \ref{ufree1}, respectively. The following two limits hold. 
    \begin{enumerate}
        \item\label{u-utest} Consider any sequence $\vect a_N \in (\Z^2)_J^4$ such that $N^{-1/2} \vect a_N \to \vect a \in (\R^2)^4_J$, and let $H$ be as in Lemma \ref{u.b.}. 
        If $F \in C([0,T] \times (\R^2)^4)$ with $\sup_{0\le s\le N}F(s,N^{-1/2} \mathbf z) \leq \sum_{K\ne J} H(|\gap_K(\x)|)$, with the sum being over pair partitions not equal to $J$, then we have the following asymptotic for $t\in [0,1]$:
        \begin{align*}  
            \sum_{0\le r<Nt} \sum_{\substack{\vect b \in (\Z^2)^4_J }} \mathscr U^{J}_{N,\mathrm{free}}&(r,\vect a_N,\vect b)F\Big( \frac{r}{N},\frac{\mathbf b}{\sqrt{N}} \Big) \\&= \frac1{4\pi}(e^{\z_N(\gap_J(\bfa_N))} -1)\log N \int_0^t \int_{(\R^2)^4_J} \mathbf U^J_\infty (s,\bfa,\vect z)F(u,\vect z) \,d\vect z \,du +o(1),
        \end{align*}
        where $\mathbf U^J_\infty (s,\x,\y)$ is as in Definition \ref{ujsxy}, and the $o(1)$ term is uniform over all variables. 
         The analogous claim holds for a sequence $(F_N)$ of functions such that $F_N \to F$ uniformly on compacts. 
         
         \item\label{u-udif}  Let $F^1_N,F^2_N: (\mathbb R^2)^4 \to \mathbb R_+$ such that $ \sup_{r\le N} F^i_N(r,\x) \leq \sum_{K\ne J} H(|\gap_K(\x)|)$ where $H$ is as in Lemma \ref{u.b.}, and the sum is over pair partitions not equal to $J$. 
         Suppose also that the $F_i$ satisfy $\sup_N \|N^{-3/2} \sup_{r\le N}  F^i_N (r,N^{-1/2}\bullet) \|_{\ell^2((\Z^2)^4_J)} < \infty$.
         Then
         \begin{align*}
            N^{-4} \sum_{0\le u\le v<Nt} \sum_{\substack{\vect a,\vect b \in (\Z^2)^4_J }} F^1_N\bigg(  \frac{u}{N},\frac{\bfa}{\sqrt N} \bigg)\big[\mathscr U^{J}_{N,\mathrm{free}}&-\mathscr U_{N,\mathrm{con}}^J\big](v-u,\vect a,\vect b)F^2_N\bigg( t-\frac{v}{N},\frac{\mathbf b}{\sqrt{N}} \bigg) = 0.
        \end{align*}
     \end{enumerate}
\end{prop}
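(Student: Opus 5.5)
For Item \eqref{u-utest}, the plan is to reduce to the two-particle renewal analysis of Section \ref{sec:renewal}. For a pair partition $J$, say $J=\{\{1,2\},\{3\},\{4\}\}$, the kernel $\mathscr U^J_{N,\mathrm{free}}(r,\bfa,\mathbf b)$ factorizes by Definition \ref{ufree1}. It is the product of $(e^{\z_N(\gap_J\bfa)}-1)U_N(e^{\z_N}-1;r;\bfa|_{J^{(2)}},\mathbf b|_{J^{(2)}})(e^{\z_N(\gap_J\mathbf b)}-1)$ with $\mathbf p_{N,1}^{\otimes 2}(r,\bfa|_{J^{(1)}},\mathbf b|_{J^{(1)}})$.

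We first take $F$ of product form $F(u,\mathbf z)=\ell(u)\phi(z_1,z_2)\chi(z_3,z_4)$, where $\ell$ is smooth and $\phi,\chi$ are compactly supported. The general case then follows by Stone--Weierstrass, followed by a truncation that uses the tail bound $F\le\sum_K H(|\gap_K|)$ together with the heat kernel bounds. The singleton factor then converges by the invariance principle (Assumption \ref{a1} \eqref{a1invarianceprinciple}) to $\mathbf g^{\otimes 2}(u,\cdot)$ tested against $\chi$, uniformly in $r/N$ on compacts. For the pair factor, we apply Proposition \ref{prop:RiemannSum} with $h\equiv1$ and sum over the terminal position weighted by $\phi$. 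This expresses the sum over $r$ and $\mathbf b|_{J^{(2)}}$ as $\sum_{s}e^{\vartheta s+o(1)}\tilde E_{N,s\log N}[\ell(N^{-1}\tau)\phi(N^{-1/2}(a+S))]$, with an overall prefactor $(e^{\z_N(\gap_J\bfa_N)}-1)$ left over.

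Multiplying and dividing by $\log N$ gives a Riemann sum with mesh $(\log N)^{-1}$, and Corollary \ref{cor:dcBound} supplies the domination. By Theorem \ref{convergence_dickman}, the pair $(N^{-1}\tau,N^{-1/2}S)$ converges to $(Y_s,2^{-1/2}W_{Y_s})$. Integrating $\int_0^\infty e^{\vartheta s}f_s(u)\,ds=G_\vartheta(u)$ then gives $\log N\int_0^t G_\vartheta(u)\,\mathbf g(\tfrac u2,\cdot)\cdots$. Matching constants with $U^\vartheta_\infty=4\pi G_\vartheta\mathbf g(\tfrac t2,\cdot)$ produces the factor $\frac1{4\pi}$, and the coupling of the time variables between the two factors is handled exactly as in Lemma \ref{lem:Fdef}. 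The boundary $r$ near $0$ is controlled by the bound $\mathscr U\le C/\log N$ at $t=0$ and by Lemma \ref{u.b.}. The error is uniform because $\gap_J\bfa_N$ ranges over a finite set.

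For Item \eqref{u-udif}, the difference $\mathscr U_{\mathrm{free}}-\mathscr U_{\mathrm{con}}$ is nonnegative (by \eqref{tric}). It consists only of paths of the $\mathbf P_{N,2}\otimes\mathbf P_{N,1}^{\otimes2}$ chain that, between the two pair-collision endpoints, either visit $V$ from \eqref{uset}, or for which $\mathbf P_{N,4}$ differs from the product chain. Using the coupling of Proposition \ref{prop:flem4}, and noting that in $\mathscr U_{\mathrm{con}}$ the path cannot leave $\precsim J$, we write the difference as a sum over the first hitting time $\tau$ of $V$. Before $\tau$ we get a renewal segment bounded by $\mathscr U_{N,\mathrm{free}}$, and after $\tau$ a segment bounded by the unconstrained heat kernel followed by a pair collision. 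After testing with $F^1,F^2$, this gives the bound $\mathsf U^J\,\mathsf Q^{J,V}\,\mathsf Q^{V,J}\,\mathsf U^J$-type operator products. The weighted $\ell^2$ hypothesis on $F^i_N$ combined with Theorem \ref{thm:Qbounds} and Theorem \ref{u_bound} gives uniform boundedness of these products.

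To get vanishing, I would show that the contribution comes only from collisions with a third particle occurring while the pair is within distance $\lambda\sqrt N$, as in the $I_1^N(\lambda)$ and $I_2^N(\lambda)$ splitting of Proposition \ref{prop:flem4}. The $I_2$ part vanishes by the invariance principle because points are polar. The $I_1$ part is at most a constant times $\int_{\bar V}\mathbf 1_{|x_1-x_2|<\lambda}\log_-$, which tends to $0$ as $\lambda\to0$. The main obstacle is that $\bfa$ here lies on $(\mathbb R^2)^4_J$, so the log singularity and the renewal time-scale interact. I would handle this by first running the renewal up to time $N\epsilon$, where the renewal mass is $O(\epsilon)$-small by Theorem \ref{u_bound} with $\lambda$ large. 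After time $N\epsilon$ the pair has macroscopically separated midpoints from the singletons, and then the argument of Proposition \ref{prop:flem4} \eqref{prop:flem4eq} applies.
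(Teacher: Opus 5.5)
Your Item (1) is the paper's argument. The paper cites Lemma~\ref{lem:Fdef} plus the invariance principle for the two singleton coordinates, and your Proposition~\ref{prop:RiemannSum}/Dickman/Corollary~\ref{cor:dcBound} computation re-derives that lemma.

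Item (2) is where you take a genuinely different, and leaner, route. The paper expands $\mathscr U^J_{N,\mathrm{free}}$ into the multi-excursion series \eqref{contract3}, indexed by chains $J_1,\dots,J_m\sqsupset J$, and then:
\begin{itemize}
\item discards every term containing a coarser $J_j$, using the non-pair estimates of Theorem~\ref{thm:no_trips}--Corollary~\ref{np_irr};
\item controls the remaining sum over $m$ geometrically, by damping with $\lambda$ large enough that $Cg(\lambda)<1$;
\item kills each fixed-$m$ term by iterating Item (1) and Proposition~\ref{prop:flem4}\eqref{prop:flem4eq}.
\end{itemize}

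Your first-hitting decomposition is instead a telescoping over renewal segments. With $a_j\ge b_j\ge 0$ the free and constrained segment kernels,
\[
\prod_j a_j-\prod_j b_j=\sum_i\Big(\prod_{j<i}b_j\Big)(a_i-b_i)\Big(\prod_{j>i}a_j\Big).
\]
No coupling is needed here, since on $\{\x\precsim J\}$ the one-step kernels of $\mathbf P_{N,4}$ and $\mathbf P_{N,2}\otimes\mathbf P_{N,1}^{\otimes 2}$ already agree by Assumption~\ref{a1}\eqref{a1consistency}. This yields the exact one-defect identity
\[
\mathscr U^J_{N,\mathrm{free}}-\mathscr U^J_{N,\mathrm{con}}
=\mathscr U^J_{N,\mathrm{con}}\star\big(\tilde{\mathscr Q}^{J,J}_{N,\mathrm{free}}-\mathscr Q^{J,J}_{N,\mathrm{con}}\big)\star\mathscr U^{J}_{N,\mathrm{free}},
\]
where $\star$ is time convolution and the endpoint weights $e^{\z_N}-1$ are as in the definitions.

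This is a real simplification over the paper:
\begin{itemize}
\item There is no series in $m$, so finite damped norms (even at $\lambda=0$) already give the domination.
\item Triple and double--double collisions need no separate bookkeeping; they are just one way the defect excursion enters $V$.
\item The vanishing input from Proposition~\ref{prop:flem4}\eqref{prop:flem4eq} is used once, not inside an induction.
\end{itemize}
The price is small. The defect excursion can end at a coarse collision point, so the trailing free renewal acts on $\{\x:\gap_J\x\in\supp\z_N\}$ rather than on $(\Z^2)^4_J$. The Schur-test proof of Theorem~\ref{u_bound} only involves the pair coordinates and gives the same bound on that larger set.

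Two small corrections.

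First, the renewal mass up to time $N\epsilon$ is not $O(\epsilon)$. Theorem~\ref{u_bound} bounds it by $e^{\lambda\epsilon}g(\lambda)$, i.e.\ by $e\,g(1/\epsilon)$ with $\lambda=1/\epsilon$. This vanishes only like $1/\log(1/\epsilon)$, matching $\int_0^\epsilon G_\vartheta$, which is still enough for your purpose.

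Second, the real obstacle is not that $\bfa\in(\R^2)^4_J$; that is exactly the setting of Proposition~\ref{prop:flem4}\eqref{prop:flem4eq}. The obstacle is that the defect excursion starts from a random point, which must avoid the other collision sets $(\R^2)^4_K$ macroscopically and uniformly. Your $N\epsilon$ cutoff is the right fix, since after macroscopic time $\epsilon$ the renewal endpoint has a bounded density on $(\R^2)^4_J$.
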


The two hypotheses in Item \eqref{u-udif} may seem somewhat confusing. Neither condition implies the other, but they are compatible in the sense that if $F(r,N^{-1/2} \bullet)$ has uniform-in-$r$ compact support and if $ F(r,N^{-1/2}\x) \leq \sum_{K\ne J} H(|\gap_K(\x)|)$ then $ \|\sup_{r\le N} N^{-3/2} F^i_N (r,N^{-1/2}\bullet) \|_{\ell^2((\Z^2)^4_J)} < \infty$. One could generalize this to a single condition that covers both bounds, but we did not do this.


\begin{proof}
    \textbf{Proof of Item \eqref{u-utest}.} Since $\mathscr U_{N,\mathrm{free}}$ is defined in terms of the two-point renewal function $U_N$ from \eqref{eq:renewal}, the proof is immediate from Lemma \ref{lem:Fdef} and the invariance principle of Assumption \ref{a1} \eqref{a1invarianceprinciple}. 
    
    \smallskip
    \noindent\textbf{Proof of Item \eqref{u-udif}.} By \eqref{tric}, we do have the upper $\mathscr U^{J}_{N,\mathrm{con}}(t,\vect a,\vect b)\le \mathscr U^{J}_{N,\mathrm{free}}(t,\vect a,\vect b)$, thus one may try to proceed by obtaining a matching lower bound. 
    The exact relationship between $\mathscr U_{N,\mathrm{con}}$ and $\mathscr U_{N,\mathrm{free}}$ can be written as the infinite series
    \begin{align}
        \notag &\mathscr U_{N,\mathrm{free}}^J(t,\x,\y) = \mathscr U_{N,\mathrm{con}}^J(t,\x,\y) \\&+ \bigg(\sum_{m=1}^\infty \sum_{J_1,\ldots,J_m \sqsupset J}
        \sum_{\substack{ v_0 < u_1 \le v_1 < \\\dots<v_{m-1}\le u_{m} <t}} \bigg[\prod_{j=0}^{m-1} \tilde{\mathscr U}^{J_j}_{N,\mathrm{con}} (v_j-u_{j}) \tilde{\mathscr Q}^{J_j,J_j}_{N,\mathrm{chain}}
    (u_{j+1}-v_j) \bigg] \tilde{\mathscr U}^{J_m}_{N,\mathrm{con}}(t-u_{m}) \bigg)(\x,\y),\label{contract3}
    \end{align}
     where we agree on the following list of conventions:
     \begin{enumerate} \item $u_0=0$, and $J_0=J_m=J$.
     \item $\tilde{\mathscr Q}^{I,J}_{N,\mathrm{chain}} (t,\x,\y) := \ind_{\{\x\sim I, \y\sim J\}} \big[ (\mathbf p_{N,2} \otimes \mathbf p_{N,1}^{\otimes 2} ) (t,\x,\y) - \ind_{\{I=J\}} \mathscr Q^{I,J}_{N,\mathrm{con}}(t,\x,\y) \big]$, which plays a role completely analogous to $\mathscr Q_{N,\mathrm{chain}}^{I,J}$ from Theorem \ref{contraction}, except that it is defined in terms of the Markov chain $\mathbf P_{N,2}\otimes \mathbf P_{N,1}^{\otimes 2}$ as opposed to $\mathbf P_{N,4}$. 
     \item We have that $\tilde{\mathscr U}^J_{N,\mathrm{ con}}(t,\mathbf x,\mathbf y):=\big( e^{\z_N(\gap_J(\x) } - 1\big)\ind_{\{\x=\y\}} \ind_{\{\x\sim J\}}$ if $t=0$, and 
     \begin{align}
         \notag\tilde{\mathscr U}^I_{N,\mathrm{ con}}(&t,\mathbf x,\mathbf y) = \big(1 - e^{-\z_N(\gap_J(\x))}\big) \\
            &\times (\mathbf E_{N,2}\otimes \mathbf E_{N,1}^{\otimes 2})^{\x} \bigg[ e^{\sum_{r=0}^t \z_N(\gap_J(\mathbf R_s))\ind_{\{\mathbf R_s \sim I\}} } \ind_{\{\mathbf R_t =\y\}} \bigg] \big( e^{\z_N(\gap_J(\y) } - 1\big) \ind_{\{\x,\y\sim J\}}
     \end{align}
     if $t>0$. We emphasize that $I$ is not required to be a pair partition here. Thus, $\tilde{\mathscr U}^I_{N,\mathrm{con}} $ are exactly like the constrained evolution operators from Definition \ref{cons}, except that the additive functional has been changed from $\boldsymbol{\eta}_{N,4}$ to $\z_N(\gap_J(\bullet))$ and the Markov chain has changed from $\mathbf E_{N,4}$ to $\mathbf E_{N,2}\otimes \mathbf E_{N,1}^{\otimes 2}$ (in fact if $I=J$ then $\tilde{\mathscr U}^I_{N,\mathrm{con}} = \mathscr U^I_{N,\mathrm{con}} $).
     \item $I\sqsupset J$ means that the partition $J$ is a refinement of the partition $I$, so that each element of $I$ is a union of elements of $J$. Since $J$ is a pair partition, note that $I\sqsupset J$ implies either that $I=J$ or that $I$ encodes higher-order collisions such as triple collisions.
     \end{enumerate}

     A proof of \eqref{contract3} is similar to the contraction identities already described earlier, and is left to the reader (see Theorem \ref{contraction} and Remark \ref{part_v5} for similar identities). 
    
     Next, we note that all of the terms of \eqref{contract3} with some $J_j\ne J$ must have a non-pair collision, and essentially repeating the operator norm bound from Theorem \ref{thm:no_trips} - Corollary \ref{np_irr} will show that these higher-order collision terms vanish.
    
     Thus, we can restrict our attention to the series \eqref{contract3} in which all $J_j=J$. We need to show 
    \begin{align}
        \notag \lim_{N\to\infty}  N^{-4}&\sum_{0\le u\le v<Nt} \sum_{\substack{\vect a, \vect b \in (\Z^2)^4_J }} F^1_N\Big (\frac{u}{N},\frac{\bfa}{\sqrt N}\Big) \bigg(\sum_{m=1}^\infty \sum_{\substack{ u\le v_0 < u_1 \le v_1 < \\\dots<v_{m-1}\le u_{m} <v}} \bigg[\prod_{j=0}^{m-1} \mathscr U^J_{N,\mathrm{con}} (v_j-u_{j}) \\&\times \big(\tilde{\mathscr Q}^{J,J}_{N,\mathrm{free}}-\mathscr Q^{J,J}_{N,\mathrm{con}} \big) (u_{j+1}-v_j) \bigg] \mathscr U^J_{N,\mathrm{col}}(v-u_{m}) \bigg)(\bfa_N,\mathbf b)F_N^2\Big( t-\frac{v}{N},\frac{\mathbf b}{\sqrt{N}} \Big)=0,\label{ser}
    \end{align}
    where $\tilde{\mathscr Q}_{N,\mathrm{free}}(t,\x,\y):=\ind_{\{\x\sim I, \y\sim J\}}  (\mathbf p_{N,2} \otimes \mathbf p_{N,1}^{\otimes 2} ) (t,\x,\y)$. Written another way, we want to show \begin{align}\notag \lim_{N\to\infty}    \sum_{m=1}^\infty \frac1{N}&\sum_{0\le u\le v<Nt} \sum_{\substack{ u\le v_0 < u_1 \le v_1 < \\\dots<v_{m-1}\le u_{m} <v}} \bigg\langle G^1_N,\bigg[\prod_{j=0}^{m-1} \mathscr U^J_{N,\mathrm{con}} (v_j-u_{j}) \\&\times \big(\tilde{\mathscr Q}^{J,J}_{N,\mathrm{free}}-\mathscr Q^{J,J}_{N,\mathrm{con}} \big) (u_{j+1}-v_j) \bigg] \mathscr U^J_{N,\mathrm{col}}(v-u_{m}) G^2_N \bigg\rangle_{\ell^2((\mathbb Z^2)^4_J)}=0, \label{ser3}
    \end{align}
    where $G^i_N(\x)= N^{-3/2}\sup_{t\le N} F^i_N(t, \x/\sqrt N)$.  To prove this, we proceed in two steps. First we show that the terms decay exponentially fast in the variable $m$. Second, we show that for each fixed value of $m$, the associated term goes to 0 as $N\to \infty$. 
    
    \smallskip
    \noindent\textbf{Step 1. Exponential decay of terms in the $m$-variable.} We will exponentially damp the operators exactly as in Definition \ref{damp} and Proposition \ref{part_v4}. 
    Let $\mathsf U_{\lambda}$ and $\mathsf Q_{\lambda}$ denote the damping of $\mathscr U^J_{N,\mathrm{con}} $ and $\big(\tilde{\mathscr Q}^{J,J}_{N,\mathrm{free}}-\mathscr Q^{J,J}_{N,\mathrm{con}} \big)$, respectively. Thanks to the extra factor of $\frac1N$, we see that the $m$-term of the above series \eqref{ser3} is upper-bounded by 
    \begin{align*} e^{\lambda}\bigg\langle  G^1_N ,\big( \mathsf U_\lambda \mathsf Q_0)^{m} \mathsf U_\lambda G^2_N\bigg\rangle_{\ell^2((\mathbb Z^2)^4_J)}& \le  e^{\lambda} \|G^1_N \|_{\ell^2} \|\mathsf U_\lambda \|_{\ell^2\to\ell^2}^{m+1} \|\mathsf Q_0\|_{\ell^2\to\ell^2}^{m} \|G^1_N\|_{\ell^2} \leq e^{\lambda} C^{m} g(\lambda)^{m+1}.
    \end{align*}
    On the right side, $g(\lambda)$ is exactly as in Proposition \ref{u_bound}, and we are also using Proposition \ref{thm:Qbounds} to bound the $\mathsf Q_0$-operator norms. We are also using the hypotheses of the proposition that say that $\|G^i_N\|_{\ell^2}$ are bounded independently of $N$. By making $\lambda$ so large that $g(\lambda)<1/C$ we see that the terms do decay exponentially fast in the variable $m$. 
    
    \smallskip
    \noindent\textbf{Step 2. Each term goes to zero.} The fact that for fixed $m$, the associated term of the series on the left side \eqref{ser} goes to 0 follows immediately from an induction argument, repeatedly applying the result of Item \eqref{u-utest} of this proposition, together with Item \eqref{prop:flem4eq} of Proposition \ref{prop:flem4} in successive iteration. Here we are also using the bound $F^i_N(t,N^{-1/2}\x) \leq \sum_{K\ne J} H(|\gap_K(\x)|)$ which is important for applying those results.
\end{proof}

\subsection{Putting everything together}\label{last}
 Now we can finally prove Theorem \ref{4thmom}. The idea is to use the exact expansion of Theorem \ref{part_v2.5}, then commute the limit $N\to\infty$ with the sum over $m$. The reason that we are allowed to commute the two limits is Corollary \ref{abs_conv} and the dominated convergence theorem. Then we will calculate the limit of the individual terms for each value of $m$, using Theorem \ref{convergence_dickman} and induction. 

\begin{thm}\label{lim=}
    Recall the exact expansion of $ \mathfrak I_N(m,t,\Phi,\Psi)$ from Theorem \ref{part_v2.5}. For each $m\in \mathbb N$ we have that $\lim_{N\to \infty} \mathfrak I_N(m,t,\Phi,\Psi) = \mathfrak I_{\mathrm{SHF}} (m,t,\Phi,\Psi),$ where 
    \begin{align*}
    \mathfrak I_{\mathrm{SHF}} &(m,t,\Phi,\Psi) := \sum_{\substack{J_1,\dots,J_m \vdash\{1,2,3,4\}\\ J_i\ne J_{i-1} \\ \mathrm{pair\;partitions\;only}}}  \iint_{\substack{u_1<v_1 <\dots<u_m<v_m<t \\ \prod_{i = 1}^{m+1} (\R^2)^4_{J_{i-1}} \times (\R^2)^4_{J_i}}} 
    \;\Phi(\mathbf c_0)\\
     \times &\prod_{i=1}^{m}\bigg\{  
    \mathbf g^{\otimes 4} (u_i-v_{i-1},\mathbf c_{i-1}, \mathbf b_i) \times \mathbf U^{J_i}_\infty( v_i - u_i, \mathbf b_i ,\mathbf c_i) 
    \bigg\}\mathbf g^{\otimes h} (t-v_m , \mathbf c_m,\mathbf b_{m+1})\Psi(\mathbf b_{m+1})\;d\vec ud \vec v d\vec{\mathbf b}d \vec{\mathbf c},
\end{align*}
where $\mathbf U^J_\infty(s,\x,\vect y)$ is as in Definition \ref{ujsxy}, and where we take $J_0=J_{m+1} = *$ in the above expression, as well as $v_0=0$ and $u_{m+1}=t$.
\end{thm}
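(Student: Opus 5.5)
We fix $m$ and work from the exact expansion of Theorem \ref{part_v2.5}. First, by Corollary \ref{np_irr}, we discard every summand in which some $J_i$ is a non-pair partition; only tuples $(J_1,\dots,J_m)$ of pair partitions remain. Next we dispose of the tuples with $J_i=J_{i-1}$ for some $i$. By Theorem \ref{contraction}\eqref{thm:contraction2}, the corresponding middle factor is $\mathscr Q^{J,J}_{N,\mathrm{chain}}=\mathscr Q^{J,J}_{N,\mathrm{free}}-\mathscr Q^{J,J}_{N,\mathrm{con}}$. We plan to show these terms vanish in the limit by Proposition \ref{prop:flem4}\eqref{prop:flem4eq}, using the uniform operator bounds (Theorems \ref{thm:Qbounds}, \ref{u_bound}) for domination. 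In every surviving term we replace $\mathscr U^{J_i}_{N,\mathrm{con}}$ by $\mathscr U^{J_i}_{N,\mathrm{free}}$, which costs $o(1)$ by Proposition \ref{u-u}\eqref{u-udif}. We also replace $\mathscr Q^{J_{i},J_{i+1}}_{N,\mathrm{chain}}$ by $\mathscr Q^{J_i,J_{i+1}}_{N,\mathrm{free}}$ exactly, via Theorem \ref{contraction}\eqref{thm:contraction1}, and do the same at the endpoints with $*$.

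The limit is then identified by an induction that integrates from the right, in the spirit of the proof of Theorem \ref{thm:macro}. Define recursively the test functions
\[
F^{(m)}_N(r,\mathbf b):=\big(\mathscr Q^{J_m,*}_{N,\mathrm{free}}(Nt-r)\Psi_{(N)}\big)(\mathbf b)\cdot N^{2},
\]
and, going leftward,
\[
F^{(i-1)}_N = \sum \mathscr Q^{J_{i-1},J_i}_{N,\mathrm{free}}\,\mathscr U^{J_i}_{N,\mathrm{free}}\,F^{(i)}_N .
\]
At each stage we apply Proposition \ref{u-u}\eqref{u-utest} to the $\mathscr U^{J_i}$ factor. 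This produces $\frac{1}{4\pi}(e^{\z_N}-1)\log N\,\mathbf U^{J_i}_\infty$, with the prefactor evaluated at the microscopic gap. We then apply Proposition \ref{prop:flem4}\eqref{prop:flem4neq} to the incoming $\mathscr Q^{J_{i-1},J_i}_{N,\mathrm{free}}$ factor, summing over the microscopic gap $y$. That sum yields $4\pi\sum_y\nu_\infty(y)(e^{\z_N(y)}-1)\log N=4\pi\,\nu_\infty(e^{\z_N}-1)\log N\to 4\pi$, by \eqref{crit_tuning} and Lemma \ref{2.1}. The factors of $4\pi$ and $1/4\pi$ cancel, leaving exactly $\mathbf g^{\otimes4}(u_i-v_{i-1},\cdot,\cdot)\,\mathbf U^{J_i}_\infty(v_i-u_i,\cdot,\cdot)$, with the Riemann sums in $u_i,v_i$ becoming integrals. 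The outermost pairing against $\Phi_{(N)}$ converges by the invariance principle, since $\mathbf g^{\otimes4}$ gives the $*\to J_1$ factor.

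To apply Proposition \ref{prop:flem4}\eqref{prop:flem4neq}, the macroscopic point must avoid $(\mathbb R^2)^4_{J}$ for the incoming partition. For the previous collision point, this is guaranteed by $J_{i-1}\neq J_i$, because points of $(\mathbb R^2)^4_{J_{i-1}}$ lying in $(\mathbb R^2)^4_{J_i}$ form a lower-dimensional set of measure zero under the limiting integrals. We plan to excise a small neighbourhood of this set and control it with Lemma \ref{u.b.}.

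The main obstacle is justifying the hypotheses of Propositions \ref{u-u} and \ref{prop:flem4} uniformly along the induction. The intermediate functions $F^{(i)}_N$ must converge uniformly on compacts away from the diagonals. They must also be dominated by $\sum_{K\neq J}H(|\gap_K|)$ with $H$ as in Lemma \ref{u.b.}, which carries logarithmic singularities at other collision sets. For this we would first prove, by the same induction, the domination bound using Lemma \ref{u.b.} together with Theorem \ref{limitofu2}. Local uniform convergence would then follow from the continuity statements already established (the "analogous claim for $F_N\to F$"). Finally, the logarithmic singularities of $F^{(i)}$ near $(\mathbb R^2)^4_K$ are integrable against the heat kernels, so dominated convergence closes each step.
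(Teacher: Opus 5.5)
Your proposal is correct and follows essentially the same route as the paper. You discard non-pair terms via Corollary \ref{np_irr}, kill the consecutive terms through the factor $\mathscr Q^{J,J}_{N,\mathrm{free}}-\mathscr Q^{J,J}_{N,\mathrm{con}}$ and Proposition \ref{prop:flem4}\eqref{prop:flem4eq}, replace $\mathscr U_{N,\mathrm{con}}$ by $\mathscr U_{N,\mathrm{free}}$ via Proposition \ref{u-u}\eqref{u-udif}, and identify the nonconsecutive limit by a backward induction from $J_m$ alternating Proposition \ref{u-u}\eqref{u-utest} with Proposition \ref{prop:flem4}\eqref{prop:flem4neq}; your explicit bookkeeping of the $4\pi$ factors through $\nu_\infty(e^{\z_N}-1)\log N\to 1$ and the excision near $(\mathbb R^2)^4_{J_{i-1}}\cap(\mathbb R^2)^4_{J_i}$ only make explicit what the paper leaves implicit.
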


\begin{proof}
\textbf{Step 1. Splitting the sum.}
Recalling the expansion in Theorem \ref{part_v2.5}, define the \textit{pair contribution} to be the part of that expression only coming from pair partitions: \begin{align}\notag &\mathfrak I_N^{\mathrm{pair}}(m;t,\Phi,\Psi)\\&:= \sum_{\substack{u_1\le v_1<u_2 \le v_2<\dots\\ < u_m\le v_m<Nt} } \;\sum_{\substack{J_1,\dots,J_m \vdash\{1,2,3,4\} \\ \mathrm{pair\;partitions\;only}}} \; \big \langle \Phi_{(N)} , \mathscr Q_{N,\mathrm{chain}}^{*,J_1} (u_1)\bigg[ \prod_{i=1}^m \mathscr U_{N,\mathrm{con}}^{J_i}(v_i-u_i) {\mathscr Q}_{N,\mathrm{chain}}^{J_i,J_{i+1}}  (u_{i+1} - v_i)\bigg]
    \Psi_{(N)}\big\rangle  ,  \label{jpair}\end{align}
    where we do\textbf{ not} impose the constraint $J_{i-1}\ne J_i$ on the summands, we agree that $u_{m+1}=Nt$ and $J_{m+1}=*$. By Corollary \ref{np_irr}, it suffices to prove that $\lim_{N\to \infty} \mathfrak I_N^{\mathrm{pair}}(m,t,\Phi,\Psi) = \mathfrak I_{\mathrm{SHF}} (m,t,\Phi,\Psi).$ 

    Let us further split this sum into \textbf{consecutive} and \textbf{nonconsecutive} summands: $$\mathfrak I_N^{\mathrm{pair}}(m;t,\Phi,\Psi) = \mathfrak I_N^{\mathrm{C}}(m;t,\Phi,\Psi)+\mathfrak I_N^{\mathrm{NC}}(m;t,\Phi,\Psi),$$ where the first term on the right side corresponds to the sum over those pair partitions $J_1,\dots,J_m$ which satisfy $J_i=J_{i-1}$ for \textit{some} $i\le m$, and the second term on the right side corresponds to the sum over those pair partitions $J_1,\dots,J_m$ which satisfy $J_i\ne J_{i-1}$ for \textit{all} $i\le m$. We will show that $$\lim_{N\to \infty} \mathfrak I_N^{\mathrm{NC}}(m,t,\Phi,\Psi) = \mathfrak I_{\mathrm{SHF}} (m,t,\Phi,\Psi),\qquad \mathrm{and}\qquad\lim_{N\to \infty} \mathfrak I_N^{\mathrm{C}}(m,t,\Phi,\Psi) =0,$$ which will complete the proof.
    
\smallskip
\noindent\textbf{Step 2. Limit of $\mathfrak I_N^{\mathrm{NC}}$.} For the non-consecutive part, since $J_i \neq J_{i+1}$, it follows from the contraction identity of Theorem \ref{contraction} that every copy of $\mathscr Q_{N,\mathrm{chain}}$ can be replaced by $\mathscr Q_{N,\mathrm{free}}$. 

Thanks to Item \eqref{u-udif} of Proposition \ref{u-u}, we can replace every single instance of $\mathscr U_{N,\mathrm{con}}$ with the simpler operator $\mathscr U_{N,\mathrm{free}}$ of Definition \ref{ufree1}. Indeed if we consider any expansion of the form $$\sum_{\substack{u_1\le v_1<u_2 \le v_2<\dots\\ < u_m\le v_m<Nt} } \big \langle \Phi_{(N)}, \mathscr Q_{N,\mathrm{free}}^{*,J_1} (u_1)\big[ \prod_{i=1}^{m} \mathscr U_{N,\mathrm{con}}^{ J_{i}}(v_i-u_i) {\mathscr Q}_{N,\mathrm{free}}^{ J_{i}, J_{i+1}}  (u_{i+1} - v_i)\big]
    \Psi_{(N)}\big\rangle,$$ then by Lemma \ref{u.b.} the terms (respectively) to the left and to the right of any given copy of $\mathscr U_{N,\mathrm{con}}$ will satisfy the bounds needed (respectively) by $F_1$ and $F_2$ in Proposition \ref{u-u}. More precisely, letting $A^\star$ denote the $\ell^2$-adjoint of $A$, we can isolate the individual operator $ \mathscr U_{N,\mathrm{con}}^{ J_{i}}(v_i-u_i)$ by rewriting the expression as 
    \begin{align*}
        \sum_{\substack{u_1\le v_1<u_2 \le v_2<\dots\\ < u_m\le v_m<Nt} }\bigg\langle \Big(\prod_{j<i} &\mathscr U_{N,\mathrm{con}}^{ J_{j}}(v_j-u_j) {\mathscr Q}_{N,\mathrm{free}}^{ J_{j}, J_{j+1}}  (u_{j+1} - v_j) \Big)^{\star} \Phi_{(N)}, \mathscr U_{N,\mathrm{con}}^{J_i} (v_i-u_i) \\&\times \bigg({\mathscr Q}_{N,\mathrm{free}}^{ J_{i}, J_{i+1}}  (u_{i+1} - v_i) \Big[\prod_{j>i}^{m} \mathscr U_{N,\mathrm{con}}^{ J_{j}}(v_j-u_j) {\mathscr Q}_{N,\mathrm{free}}^{ J_{j}, J_{j+1}}  (u_{j+1} - v_j)\Big] 
        \Psi_{(N)} \bigg)\bigg\rangle
    \end{align*}
    Thus, by setting 
    \begin{align*}     F^1_N(u_i,\bfa )&:=\sum_{\substack{u_1\le v_1<u_2 \le v_2<\dots\\ < u_{i-1}\le v_{i-1}<u_i} }\Big(\prod_{j<i} \mathscr U_{N,\mathrm{con}}^{ J_{j}}(v_j-u_j) {\mathscr Q}_{N,\mathrm{free}}^{ J_{j}, J_{j+1}}  (u_{j+1} - v_j) \Big)^{\star} \Phi (\bfa),  \\
       F^2_N (v_i,\bfa ) &:= \sum_{\substack{v_i<u_{i+1}\le v_{i+1}< \dots\\ < u_m\le v_m<Nt} }
     {\mathscr Q}_{N,\mathrm{free}}^{ J_{i}, J_{i+1}}
    (u_{i+1} - v_i) \Big[\prod_{j>i}^{m}
    \mathscr U_{N,\mathrm{con}}^{ J_{j}}(v_j-u_j)
    {\mathscr Q}_{N,\mathrm{free}}^{ J_{j}, J_{j+1}}
    (u_{j+1} - v_j)\Big] 
    \Psi(\bfa),
    \end{align*} 
    we wish to apply Proposition \ref{u-u}. To do so, we first need to show that the $F^i_N$ satisfy the bound $\sup_N \|N^{-3/2} \sup_{0\le r\le N}F^i_N(r,N^{-1/2}\bullet) \|_{\ell^2((\Z^2)^4_J)} < \infty$
    . This follows immediately from damping the operators, and then applying the operator bounds for the damped operators $\mathsf U_{\lambda,N}, \mathsf Q_{\lambda,N}$ in Propositions \ref{thm:Qbounds} and \ref{u_bound} (even taking $\lambda=0$ would suffice here), noting that $\mathscr U_{N,\mathrm{con}} \le \mathscr U_{N,\mathrm{free}}$. Meanwhile, if we repeatedly apply the result of Item \eqref{u-utest} of Proposition \ref{u-u} and Item \eqref{prop:flem4eq} of Proposition \ref{prop:flem4} in successive iteration, we will obtain the bound $F^i_N(r,N^{-1/2}\x) \leq \sum_{K\ne J} H(|\gap_K(\x)|)$ as desired.  

    By an inductive (in $i$) application of Proposition \ref{u-u}, we replace $\mathscr U^{J_i}_{N,\mathrm{con}}$ one by one with $\mathscr U^{J_i}_{N,\mathrm{free}}$ in the expressions for $F_1$ and $F_2$, where at each successive step the argument above still holds after this replacement (noting that some instances of $\mathscr U_{N,\mathrm{con}}$ might be replaced by instances of $\mathscr U_{N,\mathrm{free}}$ after the first few replacements, which does not affect the arguments since we use the bound $\mathscr U_{N,\mathrm{con}} \le \mathscr U_{N,\mathrm{free}}$ anyways).
    
    Hence, in the above expression, if we write $\mathscr U_{N,\mathrm{con}}^{J_i} (v_i-u_i) = \mathscr U_{N,\mathrm{con}}^{J_i} (v_i-u_i)-\mathscr U_{N,\mathrm{free}}^{J_i} (v_i-u_i)+\mathscr U_{N,\mathrm{free}}^{J_i} (v_i-u_i)$, then the part corresponding to $\mathscr U_{N,\mathrm{con}}^{J_i} (v_i-u_i)-\mathscr U_{N,\mathrm{free}}^{J_i} (v_i-u_i)$ will vanish by Item \eqref{u-udif} of Proposition \ref{u-u}, thus leaving only $\mathscr U_{N,\mathrm{free}}^{J_i} (v_i-u_i).$ Thus, we will henceforth replace every instance of $\mathscr U_{N,\mathrm{con}}$ by $\mathscr U_{N,\mathrm{free}}$ in the original expansion \eqref{jpair}.
 
For the limit of these non-consecutive terms with $\mathscr U_{N,\mathrm{con}}$ replaced by $\mathscr U_{N,\mathrm{free}}$, the sum over partitions is a finite sum, so we just need to prove that the limit equals the correct quantity for any \textit{fixed} sequence $ J_1, J_2, J_3,\dots,J_m$ of nonconsecutive pair partitions of $\{1,2,3,4\}$. Thus we will henceforth fix such a sequence $ (J_i)_{i= 1}^m$. Explicitly, the goal is then to show that 
\begin{align*}
    &\lim_{N\to\infty} \sum_{\substack{u_1\le v_1<u_2 \le v_2<\dots\\ < u_m\le v_m<Nt} } \;\big \langle \Phi_{(N)} , \mathscr Q_{N,\mathrm{free}}^{*,J_1} (u_1)\bigg[ \prod_{i=1}^m \mathscr U_{N,\mathrm{free}}^{J_i}(v_i-u_i) {\mathscr Q}_{N,\mathrm{free}}^{J_i,J_{i+1}}  (u_{i+1} - v_i)\bigg]
    \Psi_{(N)}\big\rangle \\&=  \iint_{\substack{u_1<v_1 <\dots<u_m<v_m<t \\ \prod_{i = 1}^{m+1} (\R^2)^4_{J_{i-1}} \times (\R^2)^4_{J_i}}} 
    \;\Phi(\mathbf c_0) \prod_{i=1}^{m}\bigg\{  
    \mathbf g^{\otimes 4} (u_i-v_{i-1},\mathbf c_{i-1}, \mathbf b_i) \\&\hspace{1 in}\times \mathbf U^{J_i}_\infty( v_i - u_i, \mathbf b_i ,\mathbf c_i) 
    \bigg\}\mathbf g^{\otimes h} (t-v_m , \mathbf c_m,\mathbf b_{m+1})\Psi(\mathbf b_{m+1})\;d\vec ud \vec v d\vec{\mathbf b}d \vec{\mathbf c}.
\end{align*}
We can actually prove a more general claim in which $\Phi_{(N)}$ on the left side is replaced by $N^4 \ind_{\{\mathbf z_N\}}$ where $N^{-1/2} \mathbf z_N\to \mathbf z$ with $\gap_{J_1}(\mathbf z)>0$. Then $\Phi$ on the right side would be replaced by $\delta_{\mathbf z}$. 

Proving this essentially just follows from an inductive application of Item \eqref{flem1} of Proposition \ref{prop:flem4} and Item \eqref{u-udif} of Proposition \ref{u-u}. To be precise, one starts from the terminal partition $J_m$ and works backwards, first applying Item \eqref{prop:flem4eq} of Proposition \ref{prop:flem4}, which will yield a logarithmic singularity as in Lemma \ref{u.b.}. Then this is followed by an application of Item \eqref{u-utest} of Proposition \ref{u-u}, which will yield a bounded function. Then we reach the next partition $J_{m-1}$ going in backwards order, and we deal with $J_{m-1}$ in a similar fashion, and so on. 

These logarithmic singularities that are produced upon each application of Proposition \ref{prop:flem4} will not cause any problem \textit{precisely because} the partitions are non-consecutive, which is what allows us to apply Proposition \ref{u-u} immediately afterwards. If instead we looked at consecutive partitions $J \to J$ then the functions $F_i$ in Proposition \ref{u-u} would not satisfy the required upper bound as there would be a logarithmic singularity due to a term $H(|\gap_J(\x)|)$. 
\textbf{}

 \smallskip
\noindent\textbf{Step 3. Limit of $\mathfrak J_N^{\mathrm{NC}}$.} For the consecutive part, use the same exact induction as in Step 2. However, since there is at least one factor $\mathfrak J_N^{\mathrm{NC}}(t,m,\Phi,\Psi)$ for which there is a repeated pair partition, Theorem \ref{contraction} says that a factor $ \mathscr Q^{I,I}_{N,\mathrm{chain}} = \mathscr Q^{I,I}_{N,\mathrm{free}}-\mathscr Q^{I,I}_{N,\mathrm{con}}$ will appear within the operator expansion for $\mathfrak J_N$. Instead of using Item \eqref{prop:flem4neq} in Proposition \ref{prop:flem4}, we instead use Item \eqref{prop:flem4eq} whenever we encounter the first such factor.
This means that the contribution of this quantity $\mathscr Q^{I,I}_{N,\mathrm{free}}-\mathscr Q^{I,I}_{N,\mathrm{con}}$ on macroscopic scales will be \textit{vanishing}, as opposed to converging to a meaningful strictly positive limit (as was the case for all terms in Step 2). While performing the induction, the contribution of all other terms will converge to some bounded limit (which could also be zero). Thus, the result of the induction will reveal that the limit equals zero. This completes the proof.
\end{proof}


Finally, we can prove Theorem \ref{4thmom}.

\begin{proof}[Proof of Theorem \ref{4thmom}]
By Definition \ref{def:axioms}, we have that $\sum_{m\ge 0} \mathfrak I_{\mathrm{SHF}}(m, t, \Phi,\Psi) = \mathcal G_{0,t}^{\mathrm{SHF}}(\Phi,\Psi)$, where the latter was defined just before Theorem \ref{4thmom}. 
Using this expression, the result of Theorem \ref{lim=} immediately gives the result for the fourth moment, with Corollary \ref{abs_conv} and dominated convergence justifying the interchange of the large-$N$ limit with the infinite sum over $m$.

    For the third and fifth moment, note that there is no real obstruction to simply replacing $\{1,2,3,4\}$ with the more general set $\{1,\ldots,h\}$ for any $h$ such that the conditions of Assumption \ref{a1} hold. Every part of the proof generalizes readily. We worked with $h=4$ just for notational simplicity and concreteness.
\end{proof}

\section{Proofs of main results} \label{sec:mainResults}

In this section, we will prove Theorems \ref{mr}--\ref{check} and \ref{mr2}--\ref{check2}. Before getting to the main results, we first discuss the topology in which we prove tightness.

\subsection{Tightness and proof of Theorem \ref{mr}}

Recall that the macroscopic density field $\mathfrak H^N$ is defined by
\begin{equation*}
    \mathfrak H^N_{s,t}(\phi,\psi):= N^{-1}\sum_{(x,y)\in( \mathbb Z^2)^2} \phi(N^{-1/2} x) H^N_{Ns,Nt}(x,y) \psi(N^{-1/2} y),
\end{equation*}
with $H^N$ as in Assumption \ref{a1}, $\phi,\psi \in C_c(\mathbb R^2)$, and $s,t \in N^{-1}\mathbb Z$ with $s\le t.$
Since we are only interested in finite-dimensional convergence in the time trajectory, we may pass to the constant interpolation $\mathfrak H^N_{s,t} = \mathfrak H^N_{N^{-1} \lfloor Ns\rfloor, N^{-1} \lfloor Nt\rfloor}$ between points of $N^{-1}\mathbb Z$. Since $H^N$ is nonnegative, we can view $\mathfrak H^N_{s,t}$ for fixed $s,t$ as a random variable valued in the space $\mathcal{M}_{\mathrm{loc}}(\R^2 \times \R^2)$ of locally finite Radon measures on $\R^2\times \R^2$ equipped with the \emph{vague topology}. 
    
\begin{defn} A sequence of deterministic measures $(\mu_n)$ on $\R^d$ is said to converge \emph{vaguely} to $\mu$ if $\int \varphi \,d \mu_n \to \int \varphi \,d\mu$ for all $\varphi \in C_c(\R^d)$.
We denote this convergence by $\mu_n \xrightarrow{v} \mu.$
\end{defn}

Give a sequence $(\mu_n)$ of $\mathcal{M}_{\mathrm{loc}}(\R^2 \times \R^2)$-valued random measures, it makes sense to ask about convergence in distribution to a limiting random measure $\mu$. This mode of convergence is called \emph{convergence in distribution with respect to the vague topology} and is denoted by $\mu_n   \xrightarrow{vd} \mu$. 

\begin{thm}[Theorem 4.11 in \cite{MR3642325}] \label{thm:vdconvergence}
    The space $\mathcal M_{\mathrm{loc}} (\mathbb R^2\times \mathbb R^2)$ equipped with the vague topology is completely metrizable and thus Polish. Furthermore, showing that $\mu_n   \xrightarrow{vd} \mu$ is equivalent to showing that $\mu_n(\varphi)   \xrightarrow{d} \mu(\varphi)$ for all $\varphi \in C_c(\R^2 \times \R^2)$. 
\end{thm}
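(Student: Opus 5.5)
This is a standard fact about random measures, quoted from \cite{MR3642325}. If I had to reprove it, I would argue in two stages: first the topology of $\mathcal M_{\mathrm{loc}}(\R^2\times\R^2)$, then the criterion for convergence in distribution.

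\textbf{Stage 1: metrizability, completeness and separability.}
\begin{enumerate}
\item Fix an exhaustion $K_1\subset K_2\subset\cdots$ of $\R^4$ by compact balls. Choose cutoffs $\chi_j\in C_c^+(\R^4)$ with $\ind_{K_j}\le\chi_j\le\ind_{K_{j+1}}$.
\item By Stone--Weierstrass, pick a countable family $\{\varphi_k\}\subset C_c^+(\R^4)$ containing all the $\chi_j$. It should be uniformly dense in $\{\varphi\in C_c^+:\supp\varphi\subset K_j\}$ for every $j$, with approximants supported in $K_{j+1}$.
\item Define $d(\mu,\nu):=\sum_k 2^{-k}\big(1\wedge|\mu(\varphi_k)-\nu(\varphi_k)|\big)$.
\item To see that $d$ induces the vague topology, approximate an arbitrary $\varphi\in C_c$ with support in $K_j$ uniformly by some $\varphi_k$. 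The error is controlled by $\|\varphi-\varphi_k\|_\infty\,\mu(\chi_{j+1})$. Since $\chi_{j+1}$ is itself in the family, this mass bound is uniform along a $d$-convergent sequence.
\item For completeness, take a $d$-Cauchy sequence. Then $\mu_n(\varphi_k)$ converges for each $k$, and the same approximation gives convergence of $\mu_n(\varphi)$ for every $\varphi\in C_c$. The limit is a positive linear functional on $C_c$, so the Riesz representation theorem produces a Radon measure $\mu$ with $\mu_n\xrightarrow{v}\mu$.
\item For separability, finite sums of point masses with rational locations and rational weights are vaguely dense.
\end{enumerate}

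\textbf{Stage 2: the criterion for $\mu_n\xrightarrow{vd}\mu$.} The forward direction is the continuous mapping theorem, since $\mu\mapsto\mu(\varphi)$ is vaguely continuous.

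For the converse, I would argue in two steps.
\begin{enumerate}
\item \emph{Tightness.} First I would characterize vaguely relatively compact sets as those with $\sup_{\mu}\mu(K)<\infty$ for every compact $K$. This follows by a diagonal extraction over the countable family $\{\varphi_k\}$ together with Riesz, exactly as in the completeness step. Tightness of $(\mu_n)$ then follows from tightness of the real random variables $\mu_n(\chi_j)$, for each $j$.
\item \emph{Identification of the limit.} Any subsequential limit law $\mathcal L$ is determined by the joint laws of $(\mu(\psi_1),\dots,\mu(\psi_m))$, because these cylinder functionals generate the Borel $\sigma$-field of the Polish space. By Cramér--Wold these joint laws are in turn determined by the laws of $\mu(\sum_i a_i\psi_i)$. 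The hypothesis fixes exactly these one-dimensional laws, so all limit points coincide with the law of $\mu$.
\end{enumerate}

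\textbf{Main obstacle.} I expect the compactness characterization to be the step needing care. One must check that a vague limit is again Radon and that no mass escapes within compacts. I would handle this with the cutoffs $\chi_j$, which give uniform mass bounds on each $K_j$, together with Riesz representation.
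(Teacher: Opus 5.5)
Your proposal is correct and reconstructs the standard argument behind this result, which the paper does not prove but simply quotes from \cite{MR3642325}: metrization via a countable family in $C_c^+$ plus Riesz representation, relative compactness $\iff$ local boundedness, tightness from the real variables $\mu_n(\chi_j)$, and identification of subsequential limits through the one-dimensional laws of $\mu(\varphi)$ (Cram\'er--Wold plus the fact that evaluation maps generate the Borel $\sigma$-field). The only slip is cosmetic: nonnegative $\varphi_k$ cannot uniformly approximate a signed $\varphi$, so in Stage~1 apply the approximation to $\varphi^+$ and $\varphi^-$ separately (or include signed functions in the countable family).
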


In Polish spaces, Prokhorov's theorem gives easy criteria for tightness in terms of compact sets.

We want to take a limit of $\mathfrak H^N_{s,t}$ as a measure-valued stochastic process indexed by $s,t$. Using Theorem \ref{thm:vdconvergence}, showing that $\mathfrak H^N_{s,t}$ converges in finite-dimensional distributions in the $s,t$ variables where each $\mathfrak H^N_{s,t}\in \mathcal M_{\mathrm{loc}}((\mathbb R^2)^2)$ to a limiting process $\mathfrak H_{s,t}$ is equivalent to showing that we have 
\begin{equation}
    \bigg( \int_{(\mathbb R^2)^2} f_i(x,y) \mathfrak H^N_{s_i,t_i}(dx,dy)\bigg)_{i=1}^k \xrightarrow{d}   \bigg(\int_{(\mathbb R^2)^2} f_i(x,y) \mathfrak H_{s_i,t_i}(dx,dy)\bigg)_{i=1}^k
\end{equation}
for all $k$ and all collections $f_i \in C_c^\infty ((\mathbb R^2)^2)$ and all times $s_i\le t_i \; (1\le i \le k)$.

\begin{prop}[Tightness]\label{tight}
    Fix any $s<t$. The collection $\mathfrak H^N_{s,t}$ is tight as $N\to \infty$ with respect to the vague topology on $\mathcal M_{\mathrm{loc}}(\mathbb R^2\times \mathbb R^2).$
\end{prop}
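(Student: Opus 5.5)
The plan is to use the standard tightness criterion for random measures under the vague topology: a family of random locally finite measures $(\mu_N)$ on $\mathbb R^2\times\mathbb R^2$ is tight if and only if, for every compact set $K\subset(\mathbb R^2)^2$ (equivalently, for a nonnegative $\varphi\in C_c((\mathbb R^2)^2)$ dominating $\ind_K$), the real random variables $\mu_N(\varphi)$ are tight. This criterion is in \cite[Theorem 4.10]{MR3642325}, alongside Theorem \ref{thm:vdconvergence}. Because $\mathfrak H^N_{s,t}$ is a nonnegative measure, it therefore suffices to show that for each compactly supported nonnegative $\varphi$,
\begin{equation*}
\sup_N \mathbb E\big[\mathfrak H^N_{s,t}(\varphi)\big]<\infty .
\end{equation*}
Markov's inequality then gives tightness of $\mathfrak H^N_{s,t}(\varphi)$.

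To bound this first moment, dominate $\varphi\le \phi\otimes\psi$ with $\phi,\psi\in C_c(\mathbb R^2)$ nonnegative, for instance $\phi=\psi=\ind$-type bump functions equal to $\|\varphi\|_\infty$ on a ball containing the projections of $\mathrm{supp}\,\varphi$. Positivity of $H^N$ gives $\mathfrak H^N_{s,t}(\varphi)\le \mathfrak H^N_{s,t}(\phi,\psi)$. By the stationarity of increments in Definition \ref{1}, $\mathfrak H^N_{s,t}$ has the same law as $\mathfrak H^N_{0,t-s}$, up to the constant interpolation in time and the shift by $d_N$, which is absorbed into the test functions. Theorem \ref{1stmom} then gives convergence of $\mathbb E[\mathfrak H^N_{0,t-s}(\phi,\psi)]$ to a finite heat-kernel integral, so the sequence is bounded.

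For a more robust version that does not rely on the limit, one can bound directly: since $\mathbf p_{N,1}$ is a probability kernel,
\begin{equation*}
\mathbb E[\mathfrak H^N_{0,t}(\phi,\psi)]\le N^{-1}\|\psi\|_\infty\sum_x\phi(N^{-1/2}x)\le C\|\psi\|_\infty\|\phi\|_{L^1},
\end{equation*}
uniformly in $N$. This is a Riemann-sum bound and is immediate. If one also wants tightness in a space of measures with control on mass escaping to infinity, the second moment bound from Theorem \ref{2ndmom} would supply uniform integrability. That is not needed for the vague topology.

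The only point requiring care is quoting the correct tightness criterion for vague convergence of random measures, which reduces the claim to compact test functions. Everything else is the first-moment computation already carried out in Theorem \ref{1stmom}, so no real obstacle is expected.
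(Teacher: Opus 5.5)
Your proposal is correct and follows the same route as the paper: reduce vague tightness to tightness of the scalar pairings $\mathfrak H^N_{s,t}(\phi,\psi)$, then use the first-moment control of Theorem \ref{1stmom}. Your version is more careful than the paper's in two ways: you cite the tightness criterion rather than the convergence criterion of Theorem \ref{thm:vdconvergence}, and your direct bound via $\sum_y \mathbf p_{N,1}(r;x,y)=1$ removes any need for the stationarity and $d_N$-shift reduction. One small fix: in dominating a general $\varphi\in C_c((\mathbb R^2)^2)$ by a product $\phi\otimes\psi$, take one of the two bumps equal to $1$ on the ball rather than both equal to $\|\varphi\|_\infty$.
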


\begin{proof}
    Using Theorem \ref{thm:vdconvergence}, we just need to show tightness of $\mathfrak H^N_{s,t}(\phi,\psi)$ for fixed $\phi,\psi$. But this is immediate from the first moment convergence in Theorem \ref{1stmom}.
\end{proof}

Finally, we use the axiomatic result of \cite{Tsa24} to conclude the proof of the theorem. We first state the main result of \cite{Tsa24}.

\begin{thm}[{\cite[Theorem 1.9]{Tsa24}}]
    \label{tsai} Let $G$ be a countable dense subset of $\mathbb R.$ Suppose that $\{M_{s,t} \}_{s<t}$ is a family of $\mathcal M_{\mathrm{loc}}(\R^2 \times \R^2)$-valued random variables indexed by $s,t \in G$ and assume that these random variables satisfy the following assumptions:
    \begin{enumerate}
        \item $\{M_{s_j,t_j}\}_{j = 1}^k$ are independent under $\mathbb P$ whenever $\{(s_j,t_j)\}_{j = 1}^k$ are disjoint intervals.

        \item For all $f\in C_c^\infty(\R^2 \times \R^2)$ and all $\psi\in C_c^\infty(\mathbb R^2)$ with $\int_\mathbb R\psi=1$, we have convergence
        \begin{equation*}
            \int_{(\R^2)^4 } \epsilon^{-2} \psi(\epsilon^{-1}(y_1-y_2))\cdot f(x,z) M_{t,u}(dy_1,dz) M_{s,t}(dx,dy_2)  \to \int_{(\mathbb R^2)^2} f(x,z) M_{s,u}(dx,dz)
        \end{equation*}
        in probability as $\epsilon\to 0$, whenever $s < t < u$.

        \item $\mathbb E \big[ \prod_{j=1}^h M_{s,t} (\phi_j \otimes \psi_j)\big] = \mathbb E \big[ \prod_{j=1}^h Z^\vartheta_{s,t} (\phi_j \otimes \psi_j)\big]$ for all $h \in \{1,2,3,4\}$ and all $\phi_j,\psi_j \in C_c^\infty(\mathbb R^2)$, where $Z^\vartheta_{s,t} (\phi,\psi)$ is the SHF with parameter $\vartheta \in \R$ defined in Definition \ref{def:axioms}.

    \end{enumerate}
    Then for any finite collection $\{(s_j,t_j)\}_{j = 1}^m$ of indices in $G$ with $s_j \leq t_j$, the collection $\{M_{s_j,t_j}\}_{j=1}^m$ has the same joint law as that of $\{Z^\vartheta_{s_j,t_j}\}_{j=1}^m$.
    Each coordinate here is equipped with the vague topology on $\mathcal M_{\mathrm{loc}}(\R^2 \times \R^2)$. 
\end{thm}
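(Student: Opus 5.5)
The statement is quoted from \cite{Tsa24}, so in the paper we only invoke it. If I had to reprove it, I would use a Lindeberg-type replacement built on the Chapman--Kolmogorov structure of Axiom (2). The point is that Axioms (1)--(3) hold for both $M$ and $Z^\vartheta$. So it suffices to show that for fixed $f_j\in C_c^\infty((\R^2)^2)$ and a bounded smooth $F$ with bounded derivatives, $\E F\big((M_{s_j,t_j}(f_j))_j\big)=\E F\big((Z^\vartheta_{s_j,t_j}(f_j))_j\big)$.

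\textbf{Step 1: decomposition.} Refine all the times $s_j,t_j$ to a common mesh $s=r_0<r_1<\dots<r_n=u$ in $G$ with spacing $\delta$. Iterate Axiom (2) with mollifiers $\omega_\epsilon$ to write each $M_{s_j,t_j}(f_j)$ as the limit in probability of an $n$-fold multilinear expression
\[
\mathcal L_{\epsilon}(M_{r_0,r_1},\dots,M_{r_{n-1},r_n}),
\]
in which consecutive factors are glued by $\omega_\epsilon(y-y')$. By Axiom (1) the blocks $M_{r_{i-1},r_i}$ are independent, and the same decomposition holds for $Z^\vartheta$ with independent blocks. I would then fix $\epsilon$ first and let $\delta$ be small, or couple the two as needed.

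\textbf{Step 2: block-by-block swap.} Construct hybrids $\mathcal H_k$ that use $Z$-blocks on the first $k$ intervals and $M$-blocks on the rest, all independent. Since $\mathcal L_\epsilon$ is affine in each single block, $\E F(\mathcal H_k)-\E F(\mathcal H_{k-1})$ Taylor-expands to third order in the $k$th block. Conditionally on the other blocks, the coefficients are deterministic test functions of that block. The first three mixed moments of $M_{r_{k-1},r_k}$ and $Z_{r_{k-1},r_k}$ against product test functions agree by Axiom (3). Extending this from product test functions to the general smooth ones produced by the gluing needs a density argument, using the local finiteness given by the moment bounds. Hence the terms of order one to three cancel.

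\textbf{Step 3: remainder.} The remainder is controlled by fourth moments of the centered block $M_{r_{k-1},r_k}-\g_{r_k-r_{k-1}}$ paired against the smoothed functions. These are explicitly given by $\sum_m\mathfrak I^{(4)}_{\mathrm{SHF}}$ for both fields. Summing over $k$ gives an error of order $n\cdot\sup_k(\text{fourth-moment remainder over an interval of length }\delta)$.

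This last estimate is the main obstacle. At criticality the fluctuation of a short block decays only logarithmically, like $1/\log(1/\delta)$, rather than polynomially in $\delta$. A naive sum over $n\sim\delta^{-1}$ blocks therefore diverges. My first attempt would exploit that the test functions seen by a block are smoothed at scale $\epsilon\gg\sqrt\delta$: the fourth cumulant computed from $\mathfrak I^{(4)}$ then carries an extra factor $\delta/\epsilon^{2}$, up to logarithms, from the heat kernel Green's function estimates. With $\delta=\delta(\epsilon)$ chosen small enough, the total error is then $o(1)$. Finally, let $\epsilon\to0$ and use Axiom (2) to pass from $\mathcal L_\epsilon$ back to $M_{s_j,t_j}(f_j)$. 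Together with Theorem \ref{thm:vdconvergence}, this identifies the finite-dimensional laws.
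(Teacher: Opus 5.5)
You are right that the paper does not prove this statement; it imports it from \cite{Tsa24} and only invokes it in the proof of Theorem~\ref{mr}, so nothing more is needed there. Your reconstruction, however, has a genuine gap in how $\epsilon$ and $\delta$ are coupled.

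\textbf{The two limits are incompatible.} Iterating Axiom (2) only gives $\mathcal L_\epsilon\to M_{s,t}(f)$ as $\epsilon\to0$ with the mesh held fixed. In the regime your Step 3 needs, namely $\epsilon\gg\sqrt\delta$ with $n\asymp\delta^{-1}$ gluings, the approximation is false. Each $\omega_\epsilon$-gluing inserts an independent spatial displacement of variance of order $\epsilon^2$. Hence $\mathbb E\,\mathcal L_\epsilon=\int f(x,z)\,p_n(z-x)\,dx\,dz$, where $p_n$ is the law of $n$ Gaussian steps of variance $\delta$ plus $n-1$ independent $\omega_\epsilon$-steps. Its total variance is of order $t-s+n\epsilon^2$, and $n\epsilon^2\asymp\epsilon^2/\delta\to\infty$. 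So $\mathbb E\,\mathcal L_\epsilon\to0$, whereas $\mathbb E\,M_{s,t}(f)=\int f(x,z)\,\mathbf g(t-s,z-x)\,dx\,dz$. Consequently neither ``fix $\epsilon$ and let $\delta\to0$'' nor ``choose $\delta=\delta(\epsilon)$, then let $\epsilon\to0$ and invoke Axiom (2)'' can be carried out. Letting the mollification scale vary with the gluing time does not help, since you would need $\sum_i\epsilon_i^2\to0$ while every $\epsilon_i\gg\sqrt\delta$. The smoothing that was meant to supply the factor $\delta/\epsilon^2$ is therefore unavailable.

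\textbf{What is actually needed.} The order that works is to fix the mesh and let $\epsilon\to0$ first, then refine the mesh. Passing $\epsilon\to0$ through the Taylor terms needs uniform fourth-moment bounds on the glued hybrids; by independence and Axiom (3) these are again SHF computations. In this order the swapped block is paired with the \emph{rough} composed flows on both sides. The $k$-th remainder is then controlled by $\mathbb E[D_k^4]$, where $D_k$ is the difference between $Z^\vartheta_{s,t}(f)$ and the same composition with the block $Z^\vartheta_{r_{k-1},r_k}$ replaced by its mean $\mathbf g(r_k-r_{k-1},y'-y)\,dy\,dy'$. The whole argument hinges on showing $\sum_k\mathbb E[D_k^4]\to0$ as $\delta\to0$ from the four-point formula $\sum_m\mathfrak I^{(4)}_{\mathrm{SHF}}$, and your sketch does not supply this estimate.

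Heuristically the estimate holds, but for a different reason than the one you give. After centering, every replica must take part in a collision inside the block. Two-point densities have only logarithmic singularities on the diagonal, so one colliding pair costs roughly $\delta\log(1/\delta)$; in particular $\sum_k\mathbb E[D_k^2]$ grows like $\log(1/\delta)$. Four replicas require at least two colliding pairs, giving $\mathbb E[D_k^4]\lesssim\delta^2(\log\tfrac1\delta)^{C}$, which is summable over $k$. Note also that the relevant quantity is the full fourth central moment, whose leading part comes from these product-of-pairs diagrams, not the fourth cumulant.
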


\begin{proof}[Proof of Theorem \ref{mr}] Let $G \subset \R$ be any countable dense subset containing the set of indices $\{(s_j,t_j)\}_{j=1}^m$ specified in the theorem statement. Let $\mathfrak H^\infty$ denote a realization from any limit point of $\mathfrak H^N$ in the infinite product space $\mathcal M_{\mathrm{loc}}(\R^2 \times \R^2)^G$ equipped with its product $\sigma$-algebra (such limit points exist by Theorem \ref{tight} and Kolmogorov's extension theorem, together with a diagonal argument). By Theorem \ref{4thmom}, the fifth moment of $ \mathfrak H^\infty_{s,t}(\phi,\psi)$ converges, giving uniform integrability of the first four moments. Then by Theorems \ref{1stmom}, \ref{2ndmom}, and \ref{4thmom} 
the first four moments of $ \mathfrak H^\infty_{s,t}(\phi,\psi)$ match those of the SHF.
Moreover, by Theorem \ref{convo}, $\mathfrak H^\infty$ has the convolution property needed for Theorem \ref{tsai}. The independence assumption of Theorem \ref{tsai} is trivially checked, as it holds in the prelimit. Thus an application of Theorem \ref{tsai} completes the proof.
\end{proof}

\subsection{Proof of Theorem \ref{mr2}}

When going from the discrete case to the continuous case, the first thing to note is that the definition of the renewal variables change, and it is important to start the time-integral from $t=0$ rather than $t=1$, otherwise the critical tuning \eqref{crit_tuning2} would be wrong.

\begin{defn}[Renewal variables for the continuous-time case]\label{cont_inc}
    We define the random renewal increments $(T^{x \to y}_{(N, j)}, A_{(N, j)}^{x \to y}) \in [0,N] \times \mathbb I^2$ by the joint density
\begin{equation}\label{(t,a)''}
    f_{(N,j)}^{x \to y}(r,a) =\frac{\mathbf p_{N,2} \big(r ; (x,0) , (a,a-y)\big)\ind_{\{0< r< N\}}}{\mathscr S_N(x,y)}, \qquad \mathscr S_N(x,y):= \int_0^N \mathbf p_{N,\mathrm{dif}} (s;x,y)ds.
\end{equation}
Note that if $\mathbb I=\mathbb R$, then $\mathscr S_N$ is no longer $L^\infty$ because of a logarithmic singularity along $x = y$ (see the short-time estimate in Assumption \ref{a3} \eqref{a3heatkernel}). Thus if $x=y$ and $\mathbb I=\mathbb R$, then we adopt the natural definition $(T^{x\to x}_{(N,j)}, A^{x\to x}_{(N,j)}) = (0,0)$. See that by integrating over $a\in \mathbb I^2$ we recover the marginal law for $T^{x\to y}_{(N,j)}$ given by the density $g_{(N,j)}^{x \to y} (r) = \frac{\mathbf p_{N,\mathrm{dif}}(r,x,y) \ind_{\{0< r< N\}} }{\mathscr S_N(x,y)}.$
For any deterministic sequence $\x = (x_j)_j\subset \mathbb Z^2$ of such gaps, we then define the partial sums
\begin{equation*}
    (\tau_{(N, k)}^{\x}, S_{(N,k)}^{\mathbf x}):= \sum_{j=1}^k (T^{x_{j-1} \to x_j}_{(N,j)} , A^{x_{j-1} \to x_j}_{(N,j)}).
\end{equation*}
\end{defn}

With this definition, we can already prove the discrete-space case of Theorem \ref{mr2}. 

\begin{proof}[Proof of Theorem \ref{mr2} in the case $\mathbb I=\mathbb Z$]
In continuous time, the overall strategy of replacing $\sum_{r=0}^{Nt}$ with $\int_0^{Nt} dr$ suffices everywhere. Every step of the proof is essentially identical after this replacement, using the renewal variables defined just above in place of the discrete versions from \eqref{(t,a)}. 

One thing that we should explain in some depth is the discrepancy between \eqref{crit_tuning2} and \eqref{crit_tuning}, namely that $\z_N$ appears in the critical tuning instead of $e^{\z_N} - 1$. Let us explain where this discrepancy appears in the proof. That is because a Taylor expansion of the exponential yields that
\begin{equation}
    e^{\int_0^t \boldsymbol{\eta}_{N,4} (\mathbf R_r) dr } = \sum_{k=1}^\infty \int_{0< r_1<\dots<r_k<t} \prod_{j=1}^k \boldsymbol{\eta}_{N,4} (\mathbf R_{r_j}) dr_1\cdots dr_k, \label{c1}
\end{equation}
whereas in the discrete case we instead have by $e^{\sum_j a_j} = \prod_j (e^{a_j}+1-1) = \sum_{j_1<\dots<j_k} \prod_1^k(e^{a_{j_i}}-1)$ that
\begin{equation}
    e^{\sum_{r=0}^t \boldsymbol{\eta}_{N,4} (\mathbf R_r)  } = \sum_{k=1}^\infty \sum_{0< r_1<\dots<r_k<t} \prod_{j=1}^k (e^{\boldsymbol{\eta}_{N,4} (\mathbf R_{r_j})}-1) dr_1\cdots dr_k. \label{c2}
\end{equation}
This change should be propagated into all renewal function calculations, in particular into \eqref{eq:timerenewal}--\eqref{un} and Lemma \ref{2.1}, as well as Proposition \ref{prop:RiemannSum}, and each of the operator definitions in Section \ref{sec:highermoments}. In the proofs of the second and fourth moment convergence results, all factors of $e^{\z_N(x)}-1$ should be replaced simply by $\z_N(x)$ throughout the entire discourse. These replacements will justify the required scaling \eqref{crit_tuning2}.
\end{proof}

Next we discuss the case $\mathbb I=\mathbb R$, which has several additional complications. 
We indicate only those places in the proof that need to be adjusted when $\mathbb I=\mathbb R$.



The first proof of Lemma \ref{2.1} no longer works since $\mathscr S_N$ are no longer bounded due to the log singularity. A truncation argument fixes this, but it is easier to adapt the second proof. We now pursue this approach after restating Lemma \ref{2.1} in the continuous setting.

\begin{lem}[Continuous-space version of Lemma \ref{2.1}]\label{2.1'}
    Let $f:\mathbb R^2 \to [0,\infty)$ be compactly supported, let $X_j$ be sampled i.i.d.\ from $\mu$ as in \eqref{mu_f}, and let $\mathfrak h_N$ be as in \eqref{h_n}.
    There exists a sequence of real numbers $\lambda_N(f)$ converging to 1 as $N\to\infty$, such that
    \begin{equation*}
        \lim_{N\to\infty} \sup_{k\in\mathbb N} \bigg| k\log \lambda_N(f) - \log E \Big[ e^{\sum_{j=1}^k \mathfrak h_N(X_{j-1},X_j) } \Big] \bigg| =0.
    \end{equation*}
    This sequence $\lambda_N(f)$ is equal to $\Lambda_N(f)/\nu_\infty(f)$, with $\Lambda_N(f)$ as in Definition \ref{def:lambdacont}. 
    \end{lem}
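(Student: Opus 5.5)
We adapt Proof 2 of Lemma \ref{2.1}, replacing the finite matrix $Q_N$ by an integral operator on $L^2(J)$, $J=\supp f$ compact. As before, we tilt the i.i.d.\ law $\mu(dx)=f(x)\nu_\infty(x)dx/\nu_\infty(f)$ by $e^{\sum_j \mathfrak h_N(X_{j-1},X_j)-\log g_N(X_{j-1})}$, where $g_N(x)=E[e^{\mathfrak h_N(x,X_1)}]$. This gives
\begin{equation*}
E\Big[e^{\sum_{j=1}^k\mathfrak h_N(X_{j-1},X_j)}\Big]=(Q_N^k\ind_J)(X_0),
\end{equation*}
where $Q_Na(x)=\frac{1}{\nu_\infty(f)\log N}\int_J \mathscr S_N(x,z)f(z)a(z)\,dz$. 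Thus $\nu_\infty(f)Q_N=T^f_N$ from Definition \ref{def:lambdacont}. The rest of the argument is spectral perturbation of $Q_N$ around the rank-one operator $Q_\infty a=\nu_\infty(af)/\nu_\infty(f)$, whose only nonzero eigenvalue is $1$, with eigenfunction $\ind_J$.

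The key step is to show that $Q_N\to Q_\infty$ in operator norm on $L^2(J)$, or better on $L^\infty(J)$ or $C(J)$, so that $\sup_k$ can be controlled pointwise at $X_0$. We split $\mathscr S_N=\int_0^1\mathbf p_{N,\mathrm{dif}}\,ds+\int_1^N\mathbf p_{N,\mathrm{dif}}\,ds$.

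For the first piece, the short-time bound in Assumption \ref{a3} \eqref{a3heatkernel} gives $\int_0^1\mathbf p_{N,\mathrm{dif}}(s;x,z)\,ds\le C(1+\log_-|x-z|)$. This is uniformly integrable in $z$ over $J$, uniformly in $x$. After dividing by $\log N$, its contribution to $Q_N$ is $O(1/\log N)$ in $L^\infty\to L^\infty$ norm, by the Schur test.

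For the second piece, Assumption \ref{a3} \eqref{a3log} gives $\int_1^N\mathbf p_{N,\mathrm{dif}}(s;x,z)\,ds=\nu_\infty(z)\log N+o(\log N)$ uniformly on the compact $J\times J$. Hence the kernel converges uniformly to that of $Q_\infty$.

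Together these give $\|Q_N-Q_\infty\|_{L^\infty\to L^\infty}\to0$. The same estimate holds on $L^2(J)$, and also on $C(J)$ once we check equicontinuity; the continuity of transition densities assumed in \eqref{a3consistency} helps with the latter. In particular $Q_N$ is compact, being a norm limit of operators whose kernels are a uniformly continuous part plus a part that is small in norm.

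Next we apply the Riesz projection. Let $\Gamma$ be a small circle around $1$, and set $P_N=\frac1{2\pi i}\oint_\Gamma(z-Q_N)^{-1}dz$. Then $P_N\to P_\infty$ in norm, so for large $N$ the operator $P_N$ has rank one. Its eigenvalue $\lambda_N$ is real: $Q_N$ is real, conjugate eigenvalues come in pairs, and the rank-one condition rules out a pair. Moreover $\lambda_N\to1$. By upper semicontinuity of the spectrum, the rest of the spectrum of $Q_N$ lies in a disc of radius $r_N\to0$. Hence $\|Q_N^k(I-P_N)\|\le C r_N'^{\,k}$ with $r_N'\to0$, uniformly in $k$.

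Write $\ind_J=\psi_N+w_N$ with $\psi_N=P_N\ind_J\to\ind_J$ uniformly. We then obtain $|Q_N^k\ind_J(x)-\lambda_N^k\psi_N(x)|\le c_N r_N'^{\,k}$, and we conclude exactly as in Proof 2 by taking logarithms, which yields the uniform-in-$k$ statement. The identification $\lambda_N=\Lambda_N(f)/\nu_\infty(f)$ is immediate, since $\lambda_N$ is the principal eigenvalue of $Q_N=T^f_N/\nu_\infty(f)$. If $X_0=x_0\notin J$, we enlarge $J$ as in the footnote to Proof 2.

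The main obstacle is the norm convergence of $Q_N$ in a space strong enough for pointwise evaluation at $X_0$. The logarithmic singularity of $\mathscr S_N$ on the diagonal rules out the bounded-kernel argument of Proof 1. That is why we work on $L^\infty(J)$ and use the Schur test, which needs only the integrability of $\log_-|x-z|$, rather than relying on pointwise bounds.
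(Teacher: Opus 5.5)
Your proposal is correct and follows the paper's route. You adapt Proof 2 of Lemma \ref{2.1} to the integral operator $T_N^f/\nu_\infty(f)$ and use operator-norm convergence to the rank-one limit $Q_\infty$. You also justify that convergence more explicitly than the paper, via the split $\int_0^1+\int_1^N$ with the short-time bound $C(1+\log_-|x-z|)$. The pointwise evaluation at $X_0$ is then handled by uniform convergence of the principal eigenfunction, exactly as in the paper. The only differences are cosmetic: you work in the sup-norm space with the Schur test and Riesz projections, whereas the paper works on $L^2(J)$ and uses that $Q_N$ maps $L^2(J)$ boundedly into $C(J)$; your remark about compactness of $Q_N$ is not needed for the Riesz-projection argument.
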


    \begin{proof}
    In this case, we cannot use the eigendecomposition for $n \times n$ matrices as written since now we have the integral operators $Q_Nh(x) = \frac1{\log N}\cdot  \frac{\int_J\mathscr S_N(x,z) f(z)h(z)dz}{\int_J\nu_\infty(z)f(z)dz},$ which act on the infinite-dimensional space $L^2(J)$ rather than the finite dimensional space $\mathbb R^J$. Nevertheless, the spectral theorem for compact operators still applies, as these integral operators are given by kernels with only a logarithmic singularity. The limit $Q_\infty$ is still a rank-one operator, and the convergence $Q_N\to Q_\infty$ is still in operator norm. Therefore, the principal eigenvalue converges to a strictly positive value exactly as in the finite-dimensional case, while the remainder of the spectrum converges to zero uniformly as $N\to\infty$. Moreover, since $\mathscr S_N(x,y) \leq C(1+\log_-|x-y|)$, it follows that every $Q_N$ maps $L^2(J)$ boundedly into $C(J)$, thus it follows that all eigenfunctions of $Q_N$ are continuous and converge uniformly to 1 on $J,$ and consequently all of the same arguments that were given in the proof of Lemma \ref{2.1} still apply.
\end{proof} 

\begin{cor}[Renewal function expansion - continuum analogue of Proposition \ref{prop:RiemannSum}]\label{rie2}
    Let $f:\mathbb R^2\to\mathbb R$ be compactly supported. For $t>0 , x,x',y,y'\in\mathbb R^2$, define the \textbf{renewal function}
    \begin{equation*}
        U_N(f; t,x,x',y,y')dydy' = \sum_{k=0}^\infty \sum_{0 < r_1< \cdots<r_k < t} \mathbf E^{(x,x')}_{N,2} \Bigg[\prod_{j=1}^k f(R^1_{r_j}-R^2_{r_j}) \ind_{\{ R^1_{t}= dy, R^2_{t}=dy'\}}\Bigg].
    \end{equation*}
The density on the left side is well-defined, and moreover if $h:\mathbb R^2 \to \mathbb R$ is bounded and measurable, then
    \begin{align*}
        \int_{\mathbb R ^2} U_N(e^{\boldsymbol{\zeta}_N}-1;& t; x, x', y, y')\z_N(y-y')h(y')dy'\\
        &= \sum_{s\in (\log N)^{-1}\mathbb N} e^{\vartheta s + o(1)} \cdot \widetilde E_{N,s\log N} \bigg[ h(y- X_{s \log N}) \ind_{\big\{ \tau^{\mathbf X}_{(N, s\log N)} = t, S^{\mathbf X}_{(N, s \log N)} = y-x\big\}}\bigg] 
    \end{align*}
    with the $X_j$ sampled from the tilted measure $\tilde P_{N,k}$ as in \eqref{eq1}.
\end{cor}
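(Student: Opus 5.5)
The plan is to follow the discrete proof of Proposition \ref{prop:RiemannSum}. Replace sums over intermediate times and positions with integrals, replace every factor $e^{\z_N}-1$ by $\z_N$ as dictated by the continuous-time expansion \eqref{c1}, and read the indicator $\ind_{\{\tau=t,\,S=y-x\}}$ as the joint density of $(\tau^{\mathbf X}_{(N,k)},S^{\mathbf X}_{(N,k)})$ at $(t,y-x)$.

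\textbf{Step 1: the renewal density is well defined.} First I check that $U_N$ has a density in $(y,y')$. For each $k$, the $k$-th term is an iterated integral of products of the transition densities $\mathbf p_{N,2}$, which are continuous by Assumption \ref{a3} \eqref{a3consistency}. Each factor $f(R^1-R^2)$ is bounded and compactly supported. The short-time and long-time heat kernel bounds of Assumption \ref{a3} \eqref{a3heatkernel} then show that the $k$-fold integral over the simplex $0<r_1<\dots<r_k<t$ converges. Only the last time increment carries the singular factor $r^{-2}$, and it is integrated against a density that is already smooth in the terminal variable. The series in $k$ converges because $\|f\|_\infty\le C/\log N$ while each return contributes at most $C\log N$ through $\mathscr S_N$. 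To control this sum I would use the bound $\mathscr S_N(x,y)\le C(1+\log N+\log_-|x-y|)$ together with the damping argument of Proposition \ref{part_v4}.

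\textbf{Step 2: expansion in gaps.} Writing $x_j$ for the gaps and $a_j$ for the positions of the second particle, I disintegrate $U_N\cdot\z_N(y-y')h(y')\,dy'$ exactly as in \eqref{trigger}--\eqref{h_identity}. By translation invariance of $\mathbf p_{N,2}$, each transition factor depends only on $(r_j-r_{j-1},\,a_j-a_{j-1})$ for fixed gaps. Normalizing each factor by $\mathscr S_N(x_{j-1},x_j)$ turns it into the density \eqref{(t,a)''}. The $k$-th term therefore becomes
\[
\int \prod_{j=1}^k \z_N(x_j)\,\mathscr S_N(x_{j-1},x_j)\; p_{(\tau,S)}(t,y-x)\; h(y-x_k)\,dx_1\cdots dx_k .
\]
The convention $(T,A)=(0,0)$ when $x_{j-1}=x_j$ concerns a Lebesgue-null set, so it is harmless. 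I then rewrite $\prod_j\z_N(x_j)\,dx_j$ as $\big(\nu_\infty(\z_N)\big)^k$ times the law of i.i.d.\ $X_j\sim\mu$, and I tilt by $\prod_j \mathscr S_N(X_{j-1},X_j)/(\nu_\infty(X_j)\log N)$, as in \eqref{eq0}--\eqref{eq1}.

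\textbf{Step 3: identifying the constants.} Lemma \ref{2.1'} replaces $E\big[e^{\sum_j\mathfrak h_N(X_{j-1},X_j)}\big]$ by $\lambda_N^k e^{\e_N(k)}$ with $\sup_k|\e_N(k)|\to0$. This produces the prefactor $\big(\log N\,\Lambda_N(\z_N)\big)^k e^{\e_N(k)}$. Applying \eqref{crit_tuning2} and setting $k=s\log N$ gives $e^{\vartheta s+o(1)}$ uniformly in $k$.

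\textbf{Main obstacle.} The hardest part is the uniformity of the $o(1)$ error in Lemma \ref{2.1'} when $\mathscr S_N$ has a logarithmic singularity on the diagonal, so that $\mathfrak h_N$ is unbounded there. I would handle this through the spectral formulation: if the eigenfunctions converge uniformly to $1$ on $J$, the error term is controlled by $\sup_J|\log\psi_N|$ and not by $\|\mathfrak h_N\|_\infty$. A second point to check is that the tilted measure is still a genuine probability measure despite the singularity. This requires $\int_J\mathscr S_N(x,z)f(z)\,dz<\infty$, which holds because $\log_-$ is integrable in two dimensions.
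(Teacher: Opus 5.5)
Your Steps 2--3 follow the paper's disintegration and tilting. However, the step that actually needs an argument is not the one you flagged.

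Lemma \ref{2.1'} is a statement about a \emph{fixed} compactly supported $f$. Unlike Lemma \ref{2.1}, it is not uniform over $f$ supported in a fixed compact set. The logarithmic diagonal singularity of $\mathscr S_N$ makes the operator-norm convergence $Q_N\to Q_\infty$, and hence the rate at which $\e_N(k)\to 0$, depend on how concentrated $\mu$ is; if $\mu$ is allowed to approach a point mass as $N$ grows, it breaks down. In the discrete case this was harmless: Lemma \ref{2.1} is uniform over $f$ supported in a fixed finite set, which is exactly what allowed $f=e^{\z_N}-1$ to vary with $N$ in Theorem \ref{2.3}.

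In your Step 3 you apply Lemma \ref{2.1'} to $f=\z_N$, which depends on $N$, and nothing you wrote justifies this. The missing ingredient is the structural hypothesis in Assumption \ref{a3} \eqref{a3markov}: when $\mathbb I=\mathbb R$, one has $\z_N=\beta_N^2\z$ for a fixed $\z\in C_c(\mathbb R^2)$. Consequently:
\begin{itemize}
\item $\mu=\z\nu_\infty/\nu_\infty(\z)$ does not depend on $N$;
\item $\lambda_N(\z_N)=\lambda_N(\z)$ and $\Lambda_N(\z_N)=\beta_N^2\Lambda_N(\z)$.
\end{itemize}
So Lemma \ref{2.1'} is only ever invoked for the single fixed function $\z$, and its error $\e_N(k)$ is then uniform in $k$. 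This is precisely the point the paper's proof turns on. Your ``main obstacle'' paragraph (eigenfunctions converging uniformly to $1$, error controlled by $\sup_J|\log\psi_N|$) is just the proof of Lemma \ref{2.1'} for a fixed $f$, which you may simply cite. It does not address the $N$-dependence of $f$, and without the $\beta_N^2\z$ structure your argument would not close.

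Two smaller points.

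\textbf{Step 1.} The heuristic ``$\|f\|_\infty\le C/\log N$ against $C\log N$ per return'' only bounds the $k$-th term by $C^k$, which does not give convergence. The damping machinery of Proposition \ref{part_v4} is also unnecessary. For fixed $N$ and $t$, bound $|f|\le\|f\|_\infty$ and collapse the spatial integrals by Chapman--Kolmogorov. This shows the $k$-th term has density at most $\frac{(\|f\|_\infty t)^k}{k!}\,\mathbf p_{N,2}(t,(x,x'),(y,y'))$, which gives existence and convergence at once. That is all the paper means by saying the expansion \eqref{trigger} proves existence of the density.

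\textbf{Uniformity in $k$.} The tuning \eqref{crit_tuning2} gives $(\log N\,\Lambda_N(\z_N))^{s\log N}=e^{(\vartheta+o(1))s}$. The error is therefore $o(1)$ only for $s$ in compact sets, not uniformly in $k$ as you claim. The paper's notation $e^{\vartheta s+o(1)}$ is equally loose, and large $s$ is handled downstream by Corollary \ref{cor:dcBound}.
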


\begin{proof} The proof essentially follows that of Proposition \ref{prop:RiemannSum}, however, a few points need to be clarified. 

The first point is the very existence of the density $U_N(f; t,x,x',y,y')$ above. This essentially follows from applying the Markov property and then expanding all of the integrals in the definition of the renewal functions, exactly as in \eqref{trigger}. The important thing to note here is that, in the first place, Assumption \ref{a3} imposes the existence of transition kernels $\mathbf p_{N,h}(t,\x,\y)$, so that the expansion \eqref{trigger} explicitly proves the existence of the density.

The second point is the error term $o(1)$ appearing in the exponent. Note that, unlike Lemma \ref{2.1}, it is no longer the case that the limit in Lemma \ref{2.1'} is \textit{uniform} over all $f$ whose support is contained in a fixed compact subset of $\mathbb R^2$ (for example, taking $\mu$ in \eqref{mu_f} to approach a Dirac mass would cause a serious problem due to the log singularity of $\mathscr S_N$). Instead, it only remains true for a \textit{fixed} continuous function $f$ of compact support. However, the uniformity is no longer required because of our stronger constraint on the functions $\z_N$, namely, that they are of the form $\beta_N^2 \z(x)$ for a \textit{fixed} function $\z$ which does not depend on $N$, see the third point in Assumption \ref{a3} \eqref{a3markov}. This means that the law $\mu$ from \eqref{mu_f} does not depend on $N$. This is important 
because a lack of uniformity would potentially affect the error term appearing in the exponent of the statement of the corollary (causing it to possibly blow up as $N$ grows large).
\end{proof}

Next, we note that there is a significant subtlety that arises in the proof of the Dickman convergence result (Theorem \ref{convergence_dickman}), and in fact that theorem is false in its stated form if $\mathbb I=\mathbb R$. Indeed, $\mathscr S_N(x,y) = +\infty$ if $x=y$, which is why we took the convention that $T^{x\to x}_{(N,j)}=0$. This does impact the Dickman convergence result because if all gaps are equal to the same value $(x\to x)$, then clearly the convergence will fail.

Thus, the statement of Theorem \ref{convergence_dickman} needs to be modified so that rather than arbitrary sequences $\x_N$, we consider only some special sequences in which the consecutive gaps are sufficiently far apart. We fix some $T>0$ and consider only those sequences $\x_N=(x_1^N,\ldots, x^N_{\lfloor T\log N\rfloor })$ such that 
\begin{equation}\label{100}\{\sqrt{T\log N} \le j \le T\log N -\sqrt{T\log N}: |x_j^N-x_{j-1}^N| \le (\log N)^{-100} \} =\emptyset.
\end{equation}
These exponents are not sharp, but they suffice for the proofs below.

\begin{thm}[Dickman convergence result in the continuum setting]\label{dick2} Let $T>0$, and let $\x_N=(x_1^N,\ldots, x^N_{\lfloor T\log N\rfloor })$ be any collection of sequences satisfying \eqref{100}. Then the pair of processes $\big(N^{-1}\tau^{\mathbf x_N}_{(N,s\log N)} , N^{-1/2} S^{\mathbf x_N}_{(N,s\log N)}\big)_{s \in [0,T]}$ given in Definition \ref{cont_inc} converge in law to $\big(
Y_{s},\frac1{\sqrt{2}} W_{
Y_{s}}\big)_{s \in [0,T]}$ as $N\to \infty$, where $Y_s$ is the Dickman subordinator and $W$ is an independent standard Wiener process in $\mathbb R^2$, both started from $0$. Convergence is in the sense of finite-dimensional distributions.
\end{thm}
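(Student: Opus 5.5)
We follow the proof of Theorem \ref{convergence_dickman}: independence of the increments plus convergence of Fourier transforms. Fix $s\in(0,T]$ and $(\mu,\lambda)\in\mathbb R\times\mathbb R^2$. By independence, the characteristic function of $\big(N^{-1}\tau^{\x_N}_{(N,s\log N)},N^{-1/2}S^{\x_N}_{(N,s\log N)}\big)$ factors as $\prod_{j\le s\log N}(1+\Delta_{N,j})$, where
\[
\Delta_{N,j}:=\frac{1}{\mathscr S_N(x_{j-1},x_j)}\int_0^N\!\!\int_{\mathbb R^2}\big(e^{i\mu r/N+i\lambda\bullet a/\sqrt N}-1\big)\mathbf p_{N,2}\big(r;(x_{j-1},0),(a,a-x_j)\big)\,da\,dr .
\]
If $x_{j-1}=x_j$ we set $\Delta_{N,j}=0$, consistent with the convention $T^{x\to x}=0$. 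We split the indices $j\le s\log N$ into two groups. The bad indices are the at most $2\sqrt{T\log N}$ indices near the two ends, where \eqref{100} gives no information. The good indices are those in the middle range, where $|x_j^N-x_{j-1}^N|>(\log N)^{-100}$. Each factor satisfies $|1+\Delta_{N,j}|\le 1$, since $1+\Delta_{N,j}$ is a characteristic function. It then suffices to prove two things. First, the product over bad indices tends to $1$. Second, $\sum_{\text{good}}\log(1+\Delta_{N,j})\to s\int_0^1\!\int(e^{i\mu u+i\lambda\bullet a}-1)\mathbf g(\tfrac u2,a)\,da\,\frac{du}{u}$. Convergence of multi-time finite-dimensional distributions then follows in the same way from independent increments over disjoint index blocks.

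\textbf{Good indices.} We want the one-step estimate $\log N\cdot\Delta_{N,j}\to\int_0^1\!\int(e^{i\mu u+i\lambda\bullet a}-1)\mathbf g(\tfrac u2,a)\,da\,\frac{du}{u}$, uniformly over gaps $x_{j-1},x_j$ in a fixed compact set with $|x_j-x_{j-1}|\ge(\log N)^{-100}$. The numerator is handled by the continuum analogue of Lemma \ref{flem}: the test function $h(u,a)=e^{i\mu u+i\lambda\bullet a}-1$ vanishes at $(0,0)$, so Case 2b applies. The numerator therefore converges to $\nu_\infty(x_j)$ times the Dickman integral, uniformly on compacts by the uniformity built into Assumption \ref{a3} \eqref{a3log}.

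The short-time part $r\in(0,1)$ must be controlled separately. We use the short-time bound of Assumption \ref{a3} \eqref{a3heatkernel}, $\mathbf p_{N,2}(r;\x,\y)\le Cr^{-2}e^{-\alpha|\x-\y|/\sqrt r}$, together with $|h(r/N,a/\sqrt N)|\le C(r/N+|a|/\sqrt N)$. This bounds the contribution from $r<1$ by $O(N^{-1/2})$, with no singularity in the gap.

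For the denominator, we split $\mathscr S_N(x,y)=\int_0^1+\int_1^N$. The first assumption in \eqref{a3log} gives $\int_1^N=\nu_\infty(y)\log N+o(\log N)$ uniformly on compacts. The short-time estimate gives $\int_0^1\mathbf p_{N,\mathrm{dif}}(r;x,y)\,dr\le C(1+\log_-|x-y|)$, which is at most $C\log\log N$ when $|x-y|\ge(\log N)^{-100}$. Hence $\mathscr S_N/\log N\to\nu_\infty(y)$ uniformly over good gaps, and this is exactly why the separation in \eqref{100} is needed.

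It follows that $\Delta_{N,j}=O(1/\log N)$ uniformly over good indices. We may then use $\mathrm{Log}(1+u)=u+O(u^2)$; the quadratic errors sum to $O(1/\log N)$. The good indices number $s\log N-O(\sqrt{\log N})$, so their sum has the claimed limit.

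\textbf{Bad indices (main obstacle).} At a bad index the gap may be arbitrarily small or even zero, so $\mathscr S_N$ may be much larger than $\log N$. Our plan is to bound these factors directly instead of expanding the logarithm. We claim $|\Delta_{N,j}|\le C/\log N$ for every pair of gaps in the compact set, including nearly coincident ones. Since $\mathscr S_N\ge\int_1^N\mathbf p_{N,\mathrm{dif}}\ge c\log N$ uniformly, a large denominator only makes $\Delta_{N,j}$ smaller. The numerator is $O(1)$ by the same Case 2b bound, together with the $O(N^{-1/2})$ control of the short-time piece. Thus $\prod_{\text{bad}}|1+\Delta_{N,j}-1|$-type errors are controlled by $\sum_{\text{bad}}|\Delta_{N,j}|\le C\sqrt{T\log N}/\log N\to0$, so the bad factors contribute a product tending to $1$.

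The delicate points are the uniformity in $\nu_\infty(x_j)$ and the lower bound on $\mathscr S_N$. We will obtain both from compactness together with the uniform convergence hypotheses in \eqref{a3log}.
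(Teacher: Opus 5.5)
Your proposal is correct and is essentially the paper's argument: you rerun the Fourier proof of Theorem~\ref{convergence_dickman} and split $\mathscr S_N=\int_0^1+\int_1^N$, so that the short-time bound $\int_0^1\mathbf p_{N,\mathrm{dif}}(r;x,y)\,dr\le C(1+\log_-|x-y|)$ and the separation in \eqref{100} give uniform convergence of $\mathscr S_N/\log N$ at the middle indices, while the $O(\sqrt{\log N})$ boundary indices are absorbed into an $O((\log N)^{-1/2})$ error, which you make more explicit than the paper via $|1+\Delta_{N,j}|\le 1$ and $\mathscr S_N\ge c\log N$. One small slip: the first particle starts at the gap $x$ rather than at $0$, so the $r<1$ part of the numerator is only $O\big(N^{-1/2}(1+\log_-|x-y|)\big)$ rather than singularity-free; however, it is bounded by $CN^{-1/2}(1+\mathscr S_N(x,y))$, so after dividing by $\mathscr S_N$ you still get $|\Delta_{N,j}|\le C/\log N$ at every index and nothing else changes.
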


\begin{proof}
Much of the proof of Theorem \ref{convergence_dickman} goes through verbatim, but there is a substantial problem in the calculations leading up to \eqref{f2}. In that proof, every instance of $\sum_{r=1}^N$ needs to be replaced by $\int_0^N dr$. The convergence \eqref{f2} remains uniform here, which is easy to check thanks to $(u,a) \mapsto e^{i\mu a+i\lambda \bullet a}-1$ vanishing at the origin, and the uniformity in Assumption \ref{a3} \eqref{a3logasymptotic}. However, the convergence $\frac{\log N}{\mathscr S_N(x_{j-1},x_j)} \to \frac{1}{\nu_\infty(x_j)}$ is no longer uniform! It is actually possible that $\frac{\log N}{\mathscr S_N(x_{j-1},x_j)} =0$ if $x_{j-1}=x_j$, for example. Thus, if we have a sequence $\x=(x_j)$ such that the gaps $|x_{j-1} - x_j|$ are very small for too many values of $j$, then this will cause a serious problem in the convergence \eqref{uni0}.\footnote{It is tempting to believe that one may be able to replace instances of $\int_0^Ndr$ by instances of $\int_1^Ndr$ in the definition of $\mathscr S_N$ and the renewal variables, however this is not permissible because the time component of the renewal variables must start at zero, otherwise the constant $\vartheta$ in the critical tuning \eqref{crit1} would change by an $O(1)$ shift. Thus, the integrals \textit{must} start from 0, and the singularity is inevitable.} 

However, under the hypothesis \eqref{100}, we can actually estimate the error. First note that $\mathscr A_N(x,y):= \int_0^1 \mathbf p_{N,\mathrm{dif}}(t,x,y)dt \leq C(1+\log_-|x-y|)$ thanks to the short-time heat kernel estimate in Assumption \ref{a3}. Second, note that $\mathscr B_N(x,y) = \int_1^N \mathbf p_{N,\mathrm{dif}}(t,x,y)dt$ satisfies $\frac{1}{\log N} \mathscr B_N(x,y) \to \nu_\infty(y)$ uniformly over $x,y\in J$ as $N\to \infty$, by Assumption \ref{a3} \eqref{a3logasymptotic} (the idea here is that starting at time 1 removes the singularity along $x=y$). Thus, if $|x-y| \ge (\log N)^{-100}$, we see that $\mathscr A_N(x,y) \leq C(1+100 \log\log N)$, and thus $\frac1{\log N}\mathscr S_N(x,y) =\frac{1}{\log N} \big( \mathscr A_N(x,y) + \mathscr B_N(x,y)\big) = \frac{\mathscr B_N(x,y)}{\log N} +O (\tfrac{\log\log N}{\log N}) \to \nu_\infty(y)$ uniformly in the set of such $x,y$ satisfying this constraint. This discussion means that \eqref{uni1} is actually uniform over the set of all $x_{j-1},x_j$ satisfying $|x_{j-1}-x_j| \ge (\log N)^{-100}$ (all sums appearing there are now replaced by integrals). Thus we see that if \eqref{100} holds, then the exponential in \eqref{uni0} will still converge to the correct limit, with a multiplicative error term that is at worst $e^{O((\log N)^{-1/2})},$ 
thus proving the result among this class of $\mathbf x_N$ satisfying \eqref{100}.
\end{proof}

We are not finished with the discussion of the Dickman convergence, because now we need to show that the assumption \eqref{100} actually holds with high probability for the gap variables $X_j$ under the tilted measures $\tilde P_{N,k}$ from \eqref{eq1}. Thus we need to study the precise behavior of the successive gaps $(X_j)_{j=1}^k$ under $\tilde P_{N,k}$, which we avoided in the discrete-space cases. 

\begin{thm}[Estimating the difference of consecutive gaps under $\tilde P_{N,k}$]\label{dick3} Under the change of measure from \eqref{eq1}, we have an upper bound of the form 
\begin{equation}\label{99}\sum_{j=\sqrt{T\log N}}^{T\log N-\sqrt{T\log N}} \tilde E^{x_0}_{N,T\log N} \bigg[ \ind_{\{|X_j-X_{j-1}| \leq (\log N)^{-100} \}} \bigg] \leq C(\log N)^{-49},
\end{equation}where $C$ is uniform over $N$, and we are now using the superscript $x_0$ to specify the starting position $X_0=x_0.$ Thus by Markov's inequality ($P(X\ge 1) \le E[X])$, this bound implies that if $(x_j)_j$ are sampled from $\tilde P_{N,T\log N}$, the identity \eqref{100} holds with probability greater than $1-C(\log N)^{-49}$. 
\end{thm}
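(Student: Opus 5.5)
We will use the Markov-chain description of the tilted law from the second proof of Lemma \ref{2.1'}. Under $\tilde P_{N,k}$ the gaps $(X_j)_{j=0}^k$ form a time-inhomogeneous Markov chain. Its transition kernel is obtained from the homogeneous kernel
\[
b_N(x,y)=\frac{\mathscr S_N(x,y)f(y)}{\int_J \mathscr S_N(x,z)f(z)\,dz}
\]
by a Doob $h$-transform with the eigenfunctions of $Q_N$. Explicitly, conditional on $X_{j-1}=x$, the density of $X_j$ is proportional to
\[
\mathscr S_N(x,y)f(y)\,\frac{(Q_N^{k-j}\ind_J)(y)}{(Q_N^{k-j+1}\ind_J)(x)} .
\]
By the spectral picture in Lemma \ref{2.1'}, $Q_N^{m}\ind_J=\lambda_N^m\psi_N+O(c_Ne^{-\alpha_N m})$, with $\psi_N\to 1$ uniformly on $J$. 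Hence for $N$ large the ratio of the $h$-transform factors is bounded above and below by constants, uniformly in $j$ and $k$. It follows that the one-step transition density of $X_j$ given $X_{j-1}=x$ is at most
\[
C\,\frac{\mathscr S_N(x,y)f(y)}{\int_J \mathscr S_N(x,z)f(z)\,dz}.
\]
The same bound holds for the first few steps; we simply discard the steps before $\sqrt{T\log N}$, which is also why the sum in \eqref{99} starts there.

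Next we bound a single summand. Conditioning on $X_{j-1}=x$ and using the Markov property,
\[
\tilde P(|X_j-X_{j-1}|\le (\log N)^{-100}\mid X_{j-1}=x)\le C\,\frac{\int_{|y-x|\le(\log N)^{-100}}\mathscr S_N(x,y)f(y)\,dy}{\int_J\mathscr S_N(x,z)f(z)\,dz}.
\]
For the denominator, we split $\mathscr S_N=\mathscr A_N+\mathscr B_N$ as in the proof of Theorem \ref{dick2}. By Assumption \ref{a3} \eqref{a3logasymptotic}, $\mathscr B_N(x,z)\ge c\log N$ uniformly on $J$ (here we use that $\nu_\infty>0$ and $f$ is a fixed function with $\int f\nu_\infty>0$). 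The denominator is therefore at least $c\log N$. For the numerator, the short-time estimate gives $\mathscr A_N(x,y)\le C(1+\log_-|x-y|)$, and $\mathscr B_N\le C\log N$, so $\mathscr S_N(x,y)\le C(\log N+\log_-|x-y|)$. Integrating over the ball of radius $r=(\log N)^{-100}$ in $\mathbb R^2$ bounds the numerator by
\[
C\|f\|_\infty\big(r^2\log N+r^2\log(1/r)\big)\le C(\log N)^{-199}.
\]
Each summand is thus $O((\log N)^{-200})$. Summing over at most $T\log N$ values of $j$ gives $O((\log N)^{-199})$, which is far better than the required $(\log N)^{-49}$. The stated bound then follows, and the final sentence of the theorem is Markov's inequality.

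The main obstacle is the first step: justifying that the tilted chain's kernel is comparable, uniformly in $j$, to $b_N$. This needs the $h$-transform factors $Q_N^{m}\ind_J$ to be bounded above and below by constant multiples of $\lambda_N^m$ uniformly in $m\ge0$. We would get this from the operator-norm convergence $Q_N\to Q_\infty$ in $L^2(J)\to C(J)$, using the bound $\mathscr S_N\le C(1+\log_-|x-y|)+C\log N$, together with the spectral gap; the $C(J)$ control is what gives the needed pointwise lower bound on $Q_N^m\ind_J$. If this uniformity proves delicate for small $m$ (that is, $j$ close to $k$), the excluded range $j\ge T\log N-\sqrt{T\log N}$ leaves $m\ge\sqrt{T\log N}$. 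In that range the remainder term is negligible and $\psi_N$ is close to $1$, so the ratio is $1+o(1)$.
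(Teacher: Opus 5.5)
Your proposal is correct, and it reaches the bound by a more direct route than the paper. The paper proves the more general estimate $\tilde P^{x_0}_{N,k}(|X_j-X_{j-1}|<\epsilon)\le C(\sqrt\epsilon+ke^{-\alpha\sqrt k})$ for $\sqrt k\le j\le k-\sqrt k$, in four steps:
\begin{enumerate}
\item It uses the eigenfunction expansion \eqref{k-t} to show that the inhomogeneous kernel \eqref{inhom} is within $Ce^{-\alpha(k-t)}$ of the homogeneous $h$-transformed kernel $\tilde q_N$.
\item It couples the tilted chain with the $\tilde q_N$-chain up to time $k-\sqrt k$.
\item It uses an $N$-independent spectral gap to couple with the stationary chain, whose invariant density is $(\boldsymbol{\psi}_1^N)^2$, from time $\sqrt k$ on.
\item It bounds the small-ball probability under stationarity crudely by $C\sqrt\epsilon$.
\end{enumerate}
The $\sqrt{T\log N}$ buffers and the exponent $-49$ come from this scheme.

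You use the same spectral input, but only one consequence of it: the ratio $(Q_N^{m}\ind_J)(y)/(Q_N^{m+1}\ind_J)(x)$ is bounded uniformly in $m\ge 0$ and $x,y\in J$. Estimate \eqref{k-t} already gives this for every $m$, since its error is bounded, so your fallback for $j$ near $k$ is not needed. From this you dominate the one-step density pointwise by $C\,\mathscr S_N(x,y)f(y)/\log N$, uniformly in the current state. You then read off the small-ball mass from $\mathscr S_N\le C(\log N+\log_-|x-y|)$ and the lower bound $\int_J\mathscr S_N(x,z)f(z)\,dz\ge c\log N$.

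This removes the coupling and mixing-time arguments entirely. It also makes the buffers unnecessary, because the per-step bound holds for every step once $X_{j-1}\in J$. And it gives the sharper total $O((\log N)^{-199})$. What the paper's route buys in exchange is structural information, namely that the gap chain is approximately stationary in the bulk, which this particular estimate does not require.
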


\begin{proof} To prove \eqref{99}, we prove more generally that 
\begin{equation}
    \label{98} \tilde P_{N,k}^{x_0} (|X_j-X_{j-1}| <\epsilon) \leq C(\sqrt \epsilon+ke^{-\alpha \sqrt k}),
\end{equation}
where $C,\alpha$ are uniform over $\sqrt k\le j \le k-\sqrt{k}, x_0\in J, \epsilon\in (0,1),$ and $N,k\in\mathbb N.$ To prove this, let $\mu$ be as in \eqref{mu_f}. 
Note by the definition \eqref{eq1} that $\tilde P^{x_0}_{N,k}$ is defined by tilting $\mu^{\otimes k}$ by the Radon-Nikodym derivative $\frac1{Z}e^{\sum_{j=1}^k \mathfrak h_N(x_{j-1},x_j)}$ where $Z$ is the normalizing constant and $\mathfrak h_N$ are as in \eqref{h_n}. Thus, under $\tilde P_{N,k}$ one may verify that $(X_j)_{j=0}^k$ form a time-inhomogeneous Markov chain $(X_0,\ldots,X_k)$ with transition probability of $\{X_t=x\}\to \{X_{t+1}=y\}$ given by the formula 
\begin{equation}\tilde p_{N,k} (t,t+1;x,y) = \frac{e^{\mathfrak h_N(x,y)} \mu(y) E [ e^{\mathfrak h_N(y,X_{t+2}) + \sum_{j=t+3}^{k} \mathfrak h_N(X_{j-1},X_j)}]}{E [ e^{\mathfrak h_N(x,X_{t+1}) + \sum_{j=t+2}^{k} \mathfrak h_N(X_{j-1},X_j)}]},\label{inhom}
\end{equation}where $x,y\in J$ and $t=0,1,\ldots,k-1.$ In the above expectations, $X_j$ are sampled i.i.d.\ from $\mu$, and $\mathfrak h_N(X_{j-1},X_j)$ is understood as 0 if $j>k.$ To prove \eqref{98}, the idea is to show that for $j \in [\sqrt k, k-\sqrt k]$ this Markov chain is well-approximated by a time-homogeneous one with a positive spectral gap (independent of $N$) such that at its equilibrium $\varrho_N$, this Markov chain satisfies $\varrho_N(|X_j-X_{j-1}|<\epsilon)\le C\sqrt \epsilon.$ We have included a buffer time of order $\sqrt k$ at both ends of the length-$k$ interval because it takes some time to reach equilibrium at the beginning, and likewise the time-homogeneous approximation will fail close to the end of the interval (obviously $\sqrt k$ here is not optimal, since time to equilibrium should be $O(1)$).\footnote{The intuition for the Markov property of $\tilde P_{N,k}$ is simply that we have a Gibbs measure with nearest-neighbor Hamiltionan $\mathfrak h_N(x,y)$ and base measure $\mu^{\otimes k}$. Far from the boundaries, the finite-volume Gibbs measure will be approximable by its thermodynamic (large-volume) limit as the size of the interval grows large, since we are on the 1d lattice $\mathbb Z.$ This thermodynamic limit would certainly be a time-homogeneous chain. With this intuition, one could alternatively use the cluster expansion in \cite[Theorem 6.38-Proposition 6.39]{FV} to prove the required bound \eqref{98}.}

To prove this homogeneous approximation, we study the ratio of the expectations in \eqref{inhom}. Define $V_N(x,y):=e^{\mathfrak h_N(x,y)}$, so that $V_N(x,y)\to 1$ for $x\ne y$ as $N\to \infty$, and moreover $V_N(x,y) \leq C(1+\log_-|x-y|)$. Let $\boldsymbol{\psi}_1^N:J\to \mathbb R$ be the top eigenfunction for $\mathbf V_N f(x) := \int_J f(y) V_N(x,y)\mu(y)dy,$ say with top eigenvalue $E_1(N).$ Note that since $\mathbf V_N$ is converging in operator norm to a rank-one projection operator $f\mapsto \langle f,\ind_J\rangle \ind_J,$ it follows that $E_1(N)\to 1$ and $\boldsymbol{\psi}_1^N \to \ind_J$ in $L^2(J)$ as $N\to\infty$ (and therefore uniformly on $J$, since $\boldsymbol{\psi}_1^N$ is an eigenvector and $\mathbf V_N$ maps $L^2$ boundedly into $C(J))$. Furthermore, all other eigenvalues of $\mathbf V_N$ are converging uniformly to 0. Combining these observations, and noting that $E [ e^{\mathfrak h_N(x,X_{t+1}) + \sum_{j=t+2}^{k} \mathfrak h_N(X_{j-1},X_j)}] = \mathbf V_N^{k-t}\ind_J (x),$ by eigenfunction expansion we see immediately that the ratio of expectations in \eqref{inhom} satisfies 
\begin{equation}\label{k-t}
    \bigg| \frac{ E [ e^{\mathfrak h_N(y,X_{t+2}) + \sum_{j=t+3}^{k} \mathfrak h_N(X_{j-1},X_j)}]}{E [ e^{\mathfrak h_N(x,X_{t+1}) + \sum_{j=t+2}^{k} \mathfrak h_N(X_{j-1},X_j)}]}  - \frac{\boldsymbol{\psi}_1^N(y)}{E_1(N) \boldsymbol{\psi}_1^N(x) }\bigg| \leq Ce^{-\alpha (k-t)}, 
\end{equation}
where $C,\alpha>0$ are independent of $N,k,t,x,y$. Note that if $t<k-\sqrt{k}$ then the right side is upper-bounded by $Ce^{-\alpha \sqrt{k}}. $ In particular, if we define the time-homogeneous Markov chain $\tilde Q^x_N$ with one-step transition density 
\begin{equation}\label{qc2}
    \tilde q_N(x,y) := \frac{e^{\mathfrak h_N(x,y)} \mu(y) \boldsymbol{\psi}_1^N(y) }{E_1(N)\boldsymbol{\psi}_1^N(x)},
\end{equation}
then clearly \eqref{k-t} implies a total variation bound 
\begin{equation}
    \label{tvpq}
\sup_{x\in J}\| \tilde p_{N,k} (t,t+1,x,\bullet) - \tilde q_N(x,\bullet)\|_{TV} \leq Ce^{-\sqrt k} \qquad \mathrm{for} \qquad 0\le t \le k-\sqrt k.\end{equation}
This means that we can couple the Markov chain $(X_j)_j$ sampled from $\tilde P_{N,k}^{x} $ with the Markov chain $(Y_j)_j$ sampled from $\tilde Q^x_N$ so that the trajectories coincide for all times $1\le t \le k-\sqrt k$, with probability greater than $1-Cke^{-\alpha \sqrt k}$, where the extra factor $k$ comes from a simple union bound over all the possible one-step transitions up to time $k-\sqrt k \le k$. 

As we already observed, the Markov chain \eqref{qc2} has transition density converging uniformly to $\mu(y)$ as $N\to \infty$, which implies that all other eigenvalues except for $E_1(N)$ are converging uniformly to zero, thus the Markov chain clearly has a spectral gap and mixing time independent of $N$. The invariant measure of the Markov chain $\tilde Q_N$ is equal to $\boldsymbol{\pi}_N(x) = \boldsymbol{\psi}_1^N(x)^2$, by direct calculation, which means that $\boldsymbol{\pi}_N \to 1$ uniformly. Since the Markov chain thus has a mixing time independent of $N$, this means we can couple $(Y_j)_j$ sampled from $\tilde Q^x_N$ with the stationary version of the same Markov chain $(Z_j)_j$ such that $Y_j=Z_j$ for all $j\ge \sqrt k$ with probability greater than $1-Ce^{-\alpha \sqrt k}$.

Super-imposing both couplings $(X,Y)$ and $(Y,Z)$, we can thus couple the original Markov chain $(X_j)$ sampled from $\tilde P_{N,k}^{x} $ with  the stationary chain $(Z_j)_j$ so that $X_j = Z_j$ for all $\sqrt k \le j\le k-\sqrt k, $ with probability greater than $1-Cke^{-\alpha \sqrt k}$, where $C,\alpha$ are independent of $N,k$. If we calculate the analogue of the left side of \eqref{98} for the stationary chain $(Z_j)_j$, we find that 
\begin{equation}
    \tilde Q^{\boldsymbol{\pi}_N}_N(|Z_j-Z_{j-1}| \le \epsilon ) = \int_{J\times J} \boldsymbol{\psi}_1^N(x)^2 \tilde q_N(x,y) \ind_{\{|y-x| \le \epsilon\}} dydx \leq\int_{J} \boldsymbol{\psi}_1^N(x)^2 \cdot \sqrt{\epsilon} dx \leq C\sqrt \epsilon,
\end{equation}
where we note that $\sqrt \epsilon$ is not an optimal upper bound for the inner integral over $y$, but suffices for us (note that $\tilde q_N$ has a log singularity along $x=y$). By the coupling, this implies that \eqref{98} is upper bounded by $C\sqrt \epsilon + Cke^{-\alpha \sqrt k},$ where the exponential term comes from the possibility of the coupling $X_j=Z_j$ failing to hold for some $j\in [\sqrt k,k-\sqrt k]$, completing the proof.
\end{proof}

\begin{cor}\label{dick4}
    Suppose that $\mathbf x_N$ are randomly sampled from $\tilde P_{N,\lfloor T\log N\rfloor}$, independently of the renewal variables from \eqref{(t,a)''}. The pair of processes $\big(N^{-1}\tau^{\mathbf x_N}_{(N,s\log N)} , N^{-1/2} S^{\mathbf x_N}_{(N,s\log N)}\big)_{s \in [0,T]}$ given in Definition \ref{cont_inc} converge in law to $\big(
Y_{s},\frac1{\sqrt{2}} W_{
Y_{s}}\big)_{s \in [0,T]}$ as $N\to \infty$, where $Y_s$ is the Dickman subordinator and $W$ is an independent standard Wiener process in $\mathbb R^2$, both started from $0$. Convergence is in the sense of finite-dimensional distributions.
\end{cor}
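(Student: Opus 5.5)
The plan is to combine Theorem \ref{dick2} with Theorem \ref{dick3} by conditioning on the gap sequence. Since $\mathbf x_N$ is sampled independently of the renewal increments of Definition \ref{cont_inc}, the joint law of $\big(N^{-1}\tau^{\mathbf x_N}_{(N,s\log N)} , N^{-1/2} S^{\mathbf x_N}_{(N,s\log N)}\big)_{s\in[0,T]}$ is a mixture over $\mathbf x_N$ of the laws for deterministic sequences. Fix finitely many times $0\le s_1<\dots<s_n\le T$ and Fourier parameters $(\mu_i,\lambda_i)\in\mathbb R\times\mathbb R^2$. Define $\Theta_N(\mathbf x)$ as the joint characteristic function of the $n$ pairs, evaluated for a deterministic sequence $\mathbf x$. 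Let $\Theta_\infty$ be the corresponding characteristic function of $\big(Y_{s_i},\frac1{\sqrt2}W_{Y_{s_i}}\big)_{i\le n}$. The goal is $\tilde E_{N,\lfloor T\log N\rfloor}[\Theta_N(\mathbf X)]\to\Theta_\infty$.

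Let $G_N$ be the set of sequences satisfying \eqref{100}. Split the expectation over $G_N$ and its complement. Since $|\Theta_N|\le 1$, the complement contributes at most $\tilde P_{N,\lfloor T\log N\rfloor}(\mathbf X\notin G_N)$. Theorem \ref{dick3} with Markov's inequality bounds this by $C(\log N)^{-49}\to0$.

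On $G_N$, I would upgrade Theorem \ref{dick2} to a uniform statement: $\sup_{\mathbf x\in G_N}|\Theta_N(\mathbf x)-\Theta_\infty|\to0$. This is the step needing care, since Theorem \ref{dick2} is stated sequence by sequence. I expect its proof already gives uniformity, and I would check this rather than reprove it. The error in \eqref{uni0} comes from two sources. The first is the indices $j$ with $|x_j-x_{j-1}|\ge(\log N)^{-100}$, where \eqref{uni1} holds uniformly. The second is at most $2\sqrt{T\log N}$ boundary indices. Each such factor has modulus at most $1$, since it is a characteristic function of a probability law (or equals $1$ in the degenerate case $T^{x\to x}=0$). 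The bulk factors multiply to the limit up to $e^{o(1)}$ uniformly. Omitting $O(\sqrt{\log N})$ factors out of order $\log N$ changes the time parameter by $O((\log N)^{-1/2})$. Combined with continuity of the limiting Lévy exponent in $s$, this also handles the boundary.

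A cleaner route is a subsequence argument. If uniformity failed, pick $\mathbf x_N\in G_N$ with $|\Theta_N(\mathbf x_N)-\Theta_\infty|\ge\delta$. This contradicts Theorem \ref{dick2} applied to that sequence, since that theorem allows arbitrary sequences in $G_N$.

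Combining the two pieces gives convergence of the mixed characteristic functions, and hence finite-dimensional convergence by Lévy's continuity theorem. One last point: the joint characteristic function over several times reduces to products of independent increment factors over the blocks $(s_{i-1}\log N, s_i\log N]$, exactly as in \eqref{uni0}. So the uniform argument applies block by block.
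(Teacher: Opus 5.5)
Your proposal is correct and matches the paper's proof. In both, you disintegrate over the independent gap sequence, discard the event where \eqref{100} fails using Theorem \ref{dick3} and Markov's inequality, and on the complement apply Theorem \ref{dick2} with bounded convergence; your subsequence argument makes explicit the uniformity over $G_N$ that the paper's ``converges in probability'' step uses tacitly.

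Rely on that subsequence route rather than your first heuristic. Bounding the $O(\sqrt{\log N})$ boundary factors in \eqref{uni0} only by modulus $1$ does not make their product tend to $1$: each must be $1+O(1/\log N)$ uniformly in the gaps, which is what underlies the $e^{O((\log N)^{-1/2})}$ error in the proof of Theorem \ref{dick2}.
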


\begin{proof}
    By independence, we can disintegrate along the law of the gap variables $x_j$, which as stated are sampled from $\tilde P_{N,\lfloor T\log N\rfloor}$. Consider the ``bad event" $B_N$ in which \eqref{100} does not hold, and we see by Theorem \ref{dick3} that the probability of $B_N$ is upper-bounded by $C(\log N)^{-49}. $ 
    
    On the other hand, on $B_N^c$, the \textit{quenched} characteristic function of the pair $(N^{-1}\tau^{\x_N}, N^{-1/2} S^{\x_N})$ given the gap variables $x_j$ converges in probability to the correct (deterministic) limit by Theorem \ref{dick2}. Taking expectation over the gap variables and applying the bounded convergence theorem then yields that the unconditional characteristic function also has the correct limit.
\end{proof}


Now we are finally ready to prove the main theorem for the case $\mathbb I=\mathbb R$. 

\begin{proof}[Proof of Theorem \ref{mr2} in the case $\mathbb I=\mathbb R$]

Thanks to the renewal representation of Corollary \ref{rie2} and the Dickman convergence result of Corollary \ref{dick4}, all of the calculations that were done in the discrete setting in Theorem \ref{thm:macro} - Theorem \ref{limitofu2} go through unchanged in the continuum setting. Since these are the crucial inputs for the second-moment convergence in Theorem \ref{2ndmom} and the semigroup property in Theorem \ref{convo}, the proofs of those theorems therefore go through directly in the present setting. 
Thus, what remains to be shown is how to adapt the proof of the higher moments convergence results from Section 3. The most difficult part is to carefully adapt the definition of the operators to the continuum spatial setting, and then derive the resulting contraction identities. Thus, adjusting these definitions accordingly is the aspect that we focus on. 

In the case $\mathbb I=\mathbb R$, we need to adapt the stopping time arguments in the fourth-moment convergence result of Theorem \ref{4thmom}. Recall the partition update times from Definition \ref{stop}. These stopping times are hitting times of closed sets which may share boundaries, so in principle the number of partition updates could be infinite in the continuous-time setting, which means that the stopping times could themselves be undefined (similar to the zero set of standard one-dimensional Brownian motion being an uncountable set). Moreover, it is not yet clear how to generalize the discrete-time setup to continuous time when we disintegrate along the possible values of the stopping times and the possible spatial locations of the Markov chains at those stopping times (as in Proposition \ref{s=1234a}). For this, we adjust the definition of the relation $\precsim$ so that $\x\precsim I$ means that $\mathrm{dist}(\x, \{\y: \y\sim I \}) < 1$ or $\x \sim \hat 0$, then adjust the definition of the stopping times so that the next partition update only occurs when $\mathbf R_t\not\precsim I$ if the previous partition update was associated to the partition $I$. Then the number of partition updates on any finite time horizon $[0,t]$ is necessarily finite, since a continuous path cannot traverse a fixed distance infinitely many times in a bounded interval.\footnote{
    More precisely, if $f: [0,1] \to \mathbb R^d$ is continuous, then there is no increasing sequence $t_n\uparrow t \in [0,1]$ for which $|f(t_i)- f(t_{i-1})| \geq 1$ for all $i\in\mathbb N.$
} Then we would accordingly adjust the definition of the operators from Definition \ref{pij} to 
    $$\mathscr P_N^{I,J}(t,\x,d\y):= \ind_{\{\x\sim I, \y \sim J\}} \mathbf E^{\x}_{N,4} \bigg[ e^{\int_0^t \boldsymbol{\eta}_{N,4} (\mathbf R_r) dr } \ind_{\{\mathbf R_t =d\y, \mathbf R_r \precsim I, \forall 0\le r<t\}} \bigg].$$ 
    This expression is purely formal because of the ``$d\y$'' on the right side, but it can be rigorously understood as a regular conditional probability, defined by the hitting density of the Markov chain $\mathbf R_t$ which is killed as soon as it leaves the set $\{\x\precsim I\}$. Then the associated operator is still well-defined and given by this kernel. 
    
    We now define the ``collision spaces" $\mathbf C_J := \{ \x \in (\mathbb R^2)^4: \x\sim J\},$ so that $\mathscr P_N^{I,J}$ can be viewed as operators from $\mathbf C_J\to \mathbf C_I$ by the formula $\mathscr P_N^{I,J} f(\x) = \int_{\mathbf C_J} f(y) \mathscr P^{I,J}_N (\x,d\y)$ for $\x\in \mathbf C_I.$ These spaces $\mathbf C_I$ are the natural analogue of the discrete collision spaces $(\mathbb Z^2)^4_J$ in this continuum setting.

    Likewise, the operators $\mathscr Q_{N,\mathrm{con}}$ and $\mathscr U_{N,\mathrm{con}}$ should be defined as regular conditional probabilities in a similar fashion, more or less exactly as in Definition \ref{cons}:
    \begin{align*}
        \mathscr Q_{N,\mathrm{con}}^{I,J} (t,\x,d\y)& := \ind_{\{\x\sim I, \y\sim J\}} \mathbf E^{\x}_{N,4} \bigg[ \ind_{\{\mathbf R_t=d\y, \;\mathbf R_r \precsim I\; \forall 0\le r<t\}} \bigg],\\
        \mathscr U^J_{N,\mathrm{ con}}(t,\mathbf x,d\mathbf y)&:=\begin{cases} \boldsymbol{\eta}_{N,4}(\x) \delta_{\x}(d\y) \ind_{\{\x\sim J\}}, & t=0 \\[5pt] \boldsymbol{\eta}_{N,4}(\x)
        \mathscr P_N^{J,J}(t,\x,d\y)\boldsymbol{\eta}_{N,4}(\y) \ind_{\{\x,\y\sim J\}}, & t>0.\end{cases} 
    \end{align*}
    Note that $e^{\boldsymbol{\eta}}-1$ has been replaced by $\boldsymbol{\eta}$ here for the same reasons explained in \eqref{c1}-\eqref{c2}.
    
    Then the recursion of Lemma \ref{recur} still holds true, though the integral should have an extra Dirac component along $u=v:$
    \begin{equation*}\mathscr P_N^{I,J}(t) =  \mathscr Q_{N,\mathrm{con}}^{I,J}(t)  + \int_{0\le u \le v< t} \mathscr Q^{I,I}_{N,\mathrm{con}}(u) \mathscr U^I_{N,\mathrm{con}} (v-u) \mathscr Q^{I,J}_{N,\mathrm{con}}(t-v) \Big[ dudv + \delta_u(dv)du\Big].
\end{equation*}
Now, we make the following important observation: despite the fact that all of these kernels were defined via regular conditional probabilities, the measures $\mathscr Q_{N,\mathrm{con}}^{J,J}(t,\x,\bullet)$ and $\mathscr U^J_{N,\mathrm{con}} (t,\x,\bullet)$ actually have densities with respect to Lebesgue measure. In other words, we can write $\mathscr Q_{N,\mathrm{con}}^{J,J} (t,\x,d\y) = \mathscr Q_{N,\mathrm{con}}^{J,J}(t,\x,\y)d\y$\footnote{This will not be true for $\mathscr Q^{I,J}_{N,\mathrm{con}}$ with $I\ne J$, which in general could only hope to have some hitting density that is absolutely continuous with respect to Hausdorff measure on the boundaries of $\mathbf C_J$. But we will observe that these objects with $I\ne J$ are irrelevant for the final expansion.} and $\mathscr U^J_{N,\mathrm{con}} (t,\x,d\y) = \mathscr U^J_{N,\mathrm{con}} (t,\x,\y)d\y,$ where the kernels are continuous in both variables. This can actually be proved directly from the fact that the free versions of these operators have densities with respect to Lebesgue measure, and (even stronger than just the existence of the densities) this argument will show that we still have the upper bounds 
\begin{equation*}
    \mathscr U^J_{N,\mathrm{con}} (t,\x,\y) \le \mathscr U_{N,\mathrm{free}} (t,\x,\y) , \qquad \mathscr Q^{J,J}_{N,\mathrm{con}}(t,\x,\y) \le \mathscr Q^{J,J}_{N,\mathrm{free}} (t,\x,\y),
\end{equation*}
for all $t>0$, where the free versions are defined in the obvious way in this context (e.g., by adapting Definitions \ref{ufree} and \ref{ufree1}). Again, we make no such claim for $\mathscr Q^{I,J}_{N,\mathrm{con}}(t,\x,d\y)$ with $I\ne J$, and in fact the analogous claim is unnecessary and would be nonsensical in this context anyways.

    In this context, Proposition \ref{op_exp} and the contraction identities of Theorem \ref{contraction} still hold, and so one still obtains the final expansion of Theorem \ref{part_v2.5}. In this final expansion, there are no remaining instances of $\mathscr Q^{I,J}_{N,\mathrm{con}}(t,\x,d\y)$ with $I\ne J$ because they have all been ``contracted away." Only free versions remain when $I\ne J$. Thus, all of the remaining operators have kernels that admit a density with respect to the Lebesgue measure on $\mathbf C_J$ (except for instances of $\mathscr U_{N,\mathrm{con}}(0,\x,d\y)$ which have Dirac components; these are easily handled separately, as they have operator norm $O(1/\log N))$ and are irrelevant for the limit, as we have already observed in the discrete case). This means that once we have reached Theorem \ref{part_v2.5}, there is no need to worry about any kernels that have singular components with respect to Lebesgue measure, and we can proceed exactly as in the discrete case.

    Consequently, all of the operator norm bounds and estimates still go through unchanged, though every instance of the spaces $(\mathbb Z^2)^4_J$ should now be replaced by its continuous-space counterpart $\mathbf C_J$. Likewise, the exact limits of the kernels still go through verbatim. Then the tightness arguments and the identification of limit points also goes through verbatim.
\end{proof}

\subsection{Theorems \ref{check} and \ref{check2}}

For Theorems \ref{check} and \ref{check2}, we will rely on the theory of \emph{SRI chains} that was developed in \cite[Section 4]{DP25}. We will state a more general theorem that will allow us to verify both theorems at once.


\begin{ass}\label{B}
    Let $I$ be any locally compact subgroup of $\mathbb R^2$, and let ``Lebesgue measure on $I$" refer to the Haar measure normalized so as to have density 1 at infinity. 
    Let $\{\mathbf P_{N,h}\}_{N\ge 1}$ be a sequence of Markov chains on $I^h$ and let $\mathbf P_{\infty,h}$ be another Markov chain on $I^h$. These Markov chains may be discrete- or continuous-time.  The time $t=1$ transition probabilities will be denoted by $\mathbf p_{N,2}(\x,d\y)$ and $\mathbf p_{\infty,2}(\x,d\y)$. Suppose that this sequence of Markov chains satisfies the following conditions for all $1 \leq h \leq 5$:
    \begin{enumerate}
        \item\label{Bstrongfeller} The transition kernels $\mathbf p_{N,h}(\x,d\y)$ and $\mathbf p_{\infty,h}(\x,d\y)$ have a globally bounded and continuous density with respect to Lebesgue measure on $I$, which decays to 0 exponentially fast in $|\x-\y|$, at a rate that is independent of $N$. In particular, the Markov chains $\mathbf P_{N,h}$ and $\mathbf P_{\infty,h}$ have the strong Feller property.
        \item\label{BconvTV} The transition probabilities $\pN{h}(\x,\bullet)$ converge uniformly in total variation distance: for all $\x=(x_1,\dots,x_h)$
        \begin{align*}
            \lim_{N\to\infty} \sup_{\x\in I^h} \norm*{\mathbf p_{N,h} (\x,\bullet) - \mathbf p_{\infty,h} (\x,\bullet)}_{TV} &=0.
        \end{align*}
        \item\label{Bexpmoments} We have exponential moments for the one-step transition probabilities: for some $\epsilon > 0$,
        $$
        \sup_N \sup_{\x\in I^h} \int_{I^h} e^{\epsilon|\y-\x|} \mathbf p_{N,h} (\x,d\y) <\infty.
        $$
        \item\label{Birred} The Markov chains $\mathbf P_{N,h}$ and $\mathbf P_{\infty,h}$ are topologically irreducible, and for each $N$ there exists a measure $\mu_N$ on $I$ converging weakly to a measure $\mu_\infty$ with covariance given by the $2 \times 2$ identity matrix and satisfying for all $\x=(x_1,\dots,x_h)$ the estimate
        \begin{align*}
            \sup_N&\norm*{\mathbf p_{N,h} (\x,\bullet) - \mu_N^{\otimes h} (\x-\bullet)}_{TV} \leq Ce^{-\alpha\min_{1\le i<j\le h} |x_i-x_j|},
        \end{align*}
        which should be interpreted for $h=1$ as $\mathbf p_{N,1} (x,\bullet) = \mu_N(x-\bullet)$ for all $x\in I$. Here the constants $C,\alpha>0$ do not depend on $N$.
        \item\label{Btransinv} Assume that for all $N$ we have $\mathbf p_{N,h} (\x,\y ) = \mathbf p_{N,h} (\x+(a,\dots,a), \y+(a,\dots,a))$.
        \item\label{Bconsistency} 
        The $\mathbf P_{\infty,h}$ are projective in the sense that any subset of $m<h$ coordinates of $\mathbf P_{\infty,h}$ is distributed as $\mathbf P_{\infty,m}$ (but this is not necessarily true in the prelimit). 
    \end{enumerate}
\end{ass}

\begin{thm}\label{A} In the setting of Assumption \ref{B}, the following results hold. 
    \begin{enumerate}
    \item\label{Ainvarianceprinciple} (Invariance principle) Put $d_N:= N\int_I y\mu_N(dy)$. If $N^{-1/2} \y_N \to \y$, then in the topology of $C[0,T]$ the process $N^{-1/2}(\mathbf R_{Nt} - (d_Nt,\dots,d_Nt))_{t \ge 0}$ under $\PN{h}^{\y_N}$ converges in law to Brownian motion in $(\R^2)^h$ started from $\y$.
    
    \item\label{Aheatkernel} (Heat kernel upper-bounds) The kernels $\mathbf p_{N,h}$ have densities with respect to Lebesgue measure on $I^h$, and moreover these densities satisfy the upper bound
    \begin{equation*}
        \pN{h}(t, \x,\y) \leq C t^{-k} \big(1+t^{-1/2} |\y-\x-N^{-1}(d_N,\dots,d_N)t| \big)^{-S},
    \end{equation*}
    where $S$ can be taken as large as desired.
    
    \item\label{Auniqueinvmeas} Up to scalar multiplication, there exists a unique invariant measure $\nu_\infty$ of constant growth at infinity for the gap chain of $\mathbf p_{\infty,2}$, defined as the Markov chain on $I$ with one-step transition probability $\mathbf p_{\infty,\mathrm{dif}}(x,y) := \int_{I} \mathbf p_{\infty,2} ((x,0),(a,a-y))da.$ Moreover, the measure $\nu_\infty$ has a density with respect to Lebesgue measure on $I$.

    \item\label{Adiaglimits} (Diagonal limits) In the discrete-time case, there exists a unique constant multiple of $\nu_\infty$ such that the following limits hold.
    Let $\mathbf p_{N,\mathrm{dif}}(r;x,y):= \int_{I} \mathbf p_{N,2} (r,(x,0), (a,a-y))da$.
    For all fixed values of $x,y\in I$ we have
    \begin{align*}
        \sum_{r=0}^N \mathbf p_{N,\mathrm{dif}}(r;x,y) &= \nu_\infty(y) \log(N) + o(\log N) 
    \end{align*}
    as $N\to\infty.$ Furthermore, if $(\x_N) \subset (\Z^2)^2$ is well-separated with limit $N^{-1/2}\x_N \to (x_1,x_2)$, then for all  $t>0$, $y\in \mathbb Z^2$, and smooth bounded $\phi:\mathbb R^2\to \mathbb R$ we have 
    \begin{align*}
        \lim_{N\to \infty} \sum_{r=0}^{Nt} &\int_{I} \phi(N^{-1/2} (a-N^{-1}d_Nr)) \mathbf p_{N,2} (r,x_N^1,x_N^2,a,a-y)da \\&= 4 \pi \cdot  \nu_\infty(y) \int_0^t \mathbf g(2s,x_1-x_2) \int_{\mathbb R^2} \mathbf g(2s,z) \phi(\tfrac12 (z+x_1+x_2)) 
        \,dz\,ds.
    \end{align*}
    If we are instead in the continuous-time case, then the same limits hold verbatim, except with $\sum_{r=0}^N$ replaced by $\int_1^N dr$ on the left-hand side.
    \end{enumerate}
\end{thm}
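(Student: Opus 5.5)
The plan is to reduce everything to the SRI-chain theory of \cite[Section 4]{DP25}. Items \eqref{Ainvarianceprinciple} and \eqref{Aheatkernel} should follow from the structure of Assumption \ref{B} by a perturbative comparison with i.i.d.\ walks, and items \eqref{Auniqueinvmeas} and \eqref{Adiaglimits} from an ergodic analysis of the gap chain combined with a Krylov--Bogoliubov averaging trick.

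For the invariance principle and the heat kernel bound, I would view $\mathbf P_{N,h}$ as a perturbation of $\mu_N^{\otimes h}$. By Assumption \ref{B} \eqref{Birred}, the two one-step kernels differ in total variation by at most $Ce^{-\alpha\min|x_i-x_j|}$. The time spent by the coordinates within distance $R$ of one another is therefore the only source of discrepancy.

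Using topological irreducibility, strong Feller continuity and the exponential moments (Items \eqref{Bstrongfeller}, \eqref{Bexpmoments}, \eqref{Birred}), I would show by a Lyapunov/regeneration argument that the occupation time of each pairwise near-diagonal region up to time $Nt$ is $O(\log N)$ in $h=2$ pairs. In particular it is $o(N)$, so after centering by $d_N$ the martingale CLT yields convergence to Brownian motion. Uniformity in $N$ comes from Item \eqref{BconvTV}.

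For the heat kernel, I would first obtain an on-diagonal bound $\sup_{\x,\y}\mathbf p_{N,h}(t,\x,\y)\le Ct^{-h}$. The tool is a Nash-type inequality transferred from the free walk, or a decomposition at the last exit from the collision region together with the free local bound for $\mu_N^{\otimes h}$. The polynomial off-diagonal factor would then follow from exponential-moment martingale estimates on $\mathbf R_t-\x-t(d_N,\dots,d_N)/N$ by Chebyshev.

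For item \eqref{Auniqueinvmeas}, the gap chain $\mathbf p_{\infty,\mathrm{dif}}$ equals the difference of two independent $\mu_\infty$-walks away from a compact set, up to exponentially small errors. It is recurrent, being a two-dimensional, centered walk with finite exponential moments, and it is strong Feller and irreducible. The Harris theory of recurrent chains then gives a unique invariant Radon measure up to scaling. The absolutely continuous density comes from the density of the kernel in Item \eqref{Bstrongfeller}, and constant growth at infinity is obtained by comparison with Lebesgue measure, which is invariant for the free difference walk.

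For the diagonal limits I would follow \cite[Theorems 4.16--4.17]{DP25}. Since $r\mapsto\mathbf p_{N,\mathrm{dif}}(r;x,\cdot)$ has total mass decaying like $r^{-1}$ on compacts, the measures $\frac1{\log N}\sum_{r\le N}\mathbf p_{N,\mathrm{dif}}(r;x,\cdot)$ are tight on compacts. This uses the heat kernel bound from \eqref{Aheatkernel}. Any subsequential limit is invariant for $\mathbf p_{\infty,\mathrm{dif}}$ by Item \eqref{BconvTV}, because the boundary terms are $O(1/\log N)$. Uniqueness then forces the limit to be $c\nu_\infty$, and $c$ is fixed independently of $x$ by matching the far-field region with the free local CLT, which contributes $\frac1{4\pi}\log N$ per unit of area.

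The second limit would be handled by running the chain from the well-separated start to macroscopic times using item \eqref{Ainvarianceprinciple}. The Markov property then applies the first limit locally near the diagonal, and a time-averaged version of the local computation yields the factor $4\pi\nu_\infty(y)\mathbf g(2s,x_1-x_2)\int\mathbf g(2s,z)\phi$.

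I expect the main obstacle to be the $N$-uniform on-diagonal heat kernel bound $Ct^{-h}$ near collisions without a local limit theorem. There I would first try the last-exit decomposition combined with the logarithmic occupation bound.
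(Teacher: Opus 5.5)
Your proposal has a genuine gap. You announce a reduction to the SRI-chain theory of \cite[Section 4]{DP25}, but you never check that $\mathbf P_{N,h}$ is an SRI chain, and that check is the only real work in the paper's proof.

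\textbf{The missing ingredient} is an $N$-uniform non-stickiness ($\delta$-repulsivity) estimate
\[
\liminf_{N\to\infty}\,\inf_{\x\in I^h}\int_{I^h}\big|y_i-y_j-(x_i-x_j)\big|\,\mathbf p_{N,h}(\x,d\y)>0 .
\]
For $h=2$ the paper considers $g_N(x)=\int_I|y-x|\,\mathbf p_{N,\mathrm{dif}}(x,dy)$.
\begin{itemize}
\item It is continuous, by the strong Feller property and exponential moments.
\item By Item \eqref{Birred} of Assumption \ref{B}, it lies within $Ce^{-\alpha|x|/2}$ of $c_N=\iint|u-v|\,\mu_N(du)\,\mu_N(dv)$, and $c_N\to c_\infty>0$.
\item So the infimum can only degenerate along a bounded sequence $x_N\to x$ with $g_N(x_N)\to 0$. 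Item \eqref{BconvTV} would then make $x$ an absorbing state of $\mathbf P_{\infty,\mathrm{dif}}$, contradicting irreducibility.
\end{itemize}
General $h$ follows from Items \eqref{BconvTV}, \eqref{Bexpmoments}, \eqref{Bconsistency}. With this in hand, Items \eqref{Ainvarianceprinciple}--\eqref{Adiaglimits} are read off from \cite[Theorems 4.6, 4.7, 4.10, 4.16--4.17]{DP25}. A one-step Chapman--Kolmogorov argument then turns the ball bounds into pointwise density bounds.

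\textbf{Why your substitutes do not close.}
\begin{itemize}
\item \emph{Occupation bound.} Your Lyapunov/regeneration route to the $O(\log N)$ occupation bound needs, besides a $\log|x|$ Lyapunov function at infinity, an $N$-uniform bound on the time spent near the diagonal per visit. Irreducibility of each $\mathbf P_{N,h}$ gives no uniformity in $N$, and one-step TV convergence alone does not control time scales of order $N$. Without repulsivity you fall back on deducing $\sum_{r\le N}\mathbf p_{N,\mathrm{dif}}(r;x,B_R)\le C\log N$ from $\mathbf p_{N,\mathrm{dif}}(r;x,B_R)\le C_R/r$, which is the on-diagonal bound you wanted to derive from it.
\item \emph{Nash inequality.} This needs a Dirichlet form, but $\mathbf P_{N,h}$ is not reversible, and Lebesgue measure is not invariant for it in general.
\item \emph{Last-exit decomposition.} Decomposing at the last exit from the collision set is vacuous when $\y$ is itself a collision configuration, which is the case you need. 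For other $\y$ it loses a factor $\log t$ unless you also prove a $1/\log t$ non-return probability. For $s\in[t/2,t]$ it requires the very collision-configuration density bound you are after.
\end{itemize}
The tightness step of your Krylov--Bogoliubov argument for Item \eqref{Adiaglimits} uses the heat kernel bound, so that item is not established either.

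\textbf{A smaller omission in Item \eqref{Adiaglimits}.} For $I=\mathbb R^2$, Krylov--Bogoliubov gives only vague convergence of $\frac{1}{\log N}\sum_r\mathbf p_{N,\mathrm{dif}}(r;x,\cdot)$, whereas the claim concerns the density at a fixed $y$. The paper upgrades this by writing $\mathbf p_{N,\mathrm{dif}}(t;x,y)=\mathbf E^x_{N,\mathrm{dif}}[g^N_y(X_{t-1})]$ with $g^N_y=\mathbf p_{N,\mathrm{dif}}(1;\cdot,y)$. By the strong Feller property and Item \eqref{Bstrongfeller}, $g^N_y$ is continuous, decays exponentially uniformly in $N$, and converges uniformly to $\mathbf p_{\infty,\mathrm{dif}}(1;\cdot,y)$.
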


\begin{proof}First we note that \begin{align}\liminf_N \inf_{\x\in I^h}&\int_{I^h} |y_i-y_j-(x_i-x_j)| \mathbf p_{N,h} (\x,d\y) >0 .\label{delta_irr}
\end{align}
We prove this for $h=2$. The general case then follows from the total variation convergence in Item \eqref{BconvTV}, the consistency condition of Item \eqref{Bconsistency}, and the exponential moment bounds in Item \eqref{Bexpmoments}. For $h=2$, proving the claim is equivalent by Item \eqref{Btransinv} to showing that
\begin{equation*}
    \liminf_N \inf_{x\in I} \int_I |y-x| \mathbf p_{N,\mathrm{dif}} (x,dy) >0.
\end{equation*}
If we let $g_N(x):= \int_I |y-x| \mathbf p_{N,\mathrm{dif}} (x,dy) >0,$ then from the strong Feller property and the exponential moment bounds, it is easy to see that $g_N$ is continuous. Furthermore, Item \eqref{Birred} above ensures that $|g_N(x) - c_N| \leq Ce^{-\alpha|x|/2}$ where $c_N:= \int_I\int_I |u-v| \mu_N(dx) \mu_N(dy).$ Item \eqref{Birred} also ensures that $c_N \to c_\infty>0$, where $c_\infty: =\int_I\int_I |u-v| \mu_\infty(du) \mu_\infty(dv).$ Consequently we just need to show that there exists no bounded sequence $x_N\to x \in \mathbb R^2$ such that $g_N(x_N) \to 0.$ If such a sequence were to exist, it would follow that $\int_I |y-x| \mathbf p_{\infty,\mathrm{dif}}(x,dy)=0$, so that $x$ would be an absorbing state for $\mathbf P_{\infty,\mathrm{dif}}$, contradicting its irreducibility and thus proving \eqref{delta_irr}.

Letting $2\delta$ be the infimum in \eqref{delta_irr}, we proceed to the proof of the main result. In the language of \cite[Definition 4.1]{DP25}, we see that for sufficiently large $N$, the Markov chain $\mathbf P_{N,h}$ is a $\delta$-repulsive SRI chain. The invariance principle of Item \eqref{Ainvarianceprinciple} follows immediately from \cite[Theorem 4.6]{DP25}.

The heat kernel upper upper bound of Item \eqref{Aheatkernel} follows from \cite[Theorem 4.7]{DP25}. That theorem yields bounds on balls of radius one, i.e., $\int_{I^k} \ind_{\{|\bfa |\le 1\}} \mathbf p_{N,k}(t,\x,d\bfa) \leq Ct^{-k} (1+|\bfa -\x - N^{-1} (d_N,\ldots,d_N)t|)^{-S},$ but we can use the property $\mathbf p_{N,k} (t,\x,\y) = \int_{I^k} \mathbf p_{N,k} (1,\bfa,\y) \mathbf p_{N,k}(t-1,\x,\bfa )d\bfa $ to easily go from a bound on balls to a pointwise bound on the transition density that is uniform over all $\x,\y$. 

Existence of a unique invariant measure in Item \eqref{Auniqueinvmeas} then follows from \cite[Theorem 4.10]{DP25}, with the scalar multiple chosen to yield the unit normalization of \cite[Definition 4.12]{DP25}. That theorem does not prove the existence of a continuous and bounded density, but this follows from using the invariance property to write $\nu_\infty(y) = \nu_\infty(g_y)$, where $g_y(x) = \mathbf p_{\infty,\mathrm{dif}}(x,y)$, which also shows that $\nu_\infty$ has a density with respect to Lebesgue measure on $I$.

To prove the diagonal limit of Item \eqref{Adiaglimits}, we may then apply \cite[Theorems 4.16--17]{DP25}. 
Letting $\nu_\infty$ denote two-dimensional Lebesgue measure yields the following weak version of Item \eqref{Birred}: for $x\in\mathbb R^2$ and $f_N\to f$ uniformly with $\sup_N| f_N(x)| \leq Ce^{-\alpha|x|}$, one has \begin{align*}
    \sum_{r=0}^N  \mathbf E^x_{N,\mathrm{dif}}[f_N(X_r)] &= \nu_\infty(f) \log(N) +o(\log N), 
\end{align*}
as $N\to\infty,$ where $\mathbf p_{N,\mathrm{dif}}$ denotes the transition density associated to $\mathbf P_{N,\mathrm{dif}}$ given by \eqref{aepsilon}, and the estimate is uniform over $x$ lying in any compact set. Furthermore, the cited result also shows that, if $\x_N$ is well-separated as in Item \eqref{Adiaglimits}, then for all $\phi \in C_c(\mathbb R^2)$ and $f_N\to f$ uniformly with $\sup_N| f_N(x)| \leq Ce^{-\alpha|x|}$ and $t>0$, we have 
\begin{align*}
    \lim_{N\to \infty} \sum_{r=0}^{Nt} & \mathbf E^{\x_N}_{N,2} [f_N(R^1_r-R^2_r) \phi(N^{-1/2} (R^1_r - d_Nr) ] \\&=4 \pi \cdot  \nu_\infty(f) \int_0^t \mathbf g(2s,x_1-x_2) \int_{\mathbb R^2} \mathbf g(2s,z) \phi(\tfrac12 (z+x_1+x_2)) \,dz \,ds.
\end{align*}
Now to complete the proof, we need to upgrade the above statement to the case where $f$ is a Dirac mass, for which we will use the strong Feller property. We know that $\nu_\infty$ is the invariant measure for $\mathbf P_{\infty,\mathrm{dif}}$, thus by the invariance property we can write $\nu_\infty(y) = \nu_\infty(g_y)$ where $g_y(x) = \mathbf p_{\infty,\mathrm{dif}}(x,y)$, which was also used to show that $\nu_\infty$ has a density with respect to Lebesgue measure on $I$. On the other hand, by the Markov property we can also write $\int_1^N \mathbf p_{N,\mathrm{dif}} (t;x,y)dt = \int_1^N \mathbf E^x_{N,\mathrm{dif}} [ g_y^N(X_{t-1})] dt,$ where $g_y^N(x) = \mathbf p_{N,\mathrm{dif}}(t=1,x,y).$ The strong Feller property and the heat kernel bound in Item \ref{Bstrongfeller} of Assumption \ref{B} then implies that if $y_n\to y\in \mathbb R^2,$ then $\sup_N g^N_{y_N}$ decays exponentially fast at infinity, and $g^N_{y_N}\to g_y$ uniformly. Thus Item \eqref{Adiaglimits} follows.
\end{proof}

\begin{thm}
    Under the assumptions of Theorems \ref{check} or \ref{check2}, the six assumptions of Theorem \ref{A} hold, and the statements of Theorems \ref{check} and \ref{check2} follow.
\end{thm}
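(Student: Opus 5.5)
Under the hypotheses of Theorem \ref{check} (discrete, $I=\mathbb Z^2$) or Theorem \ref{check2} ($I=\mathbb Z^2$ or $\mathbb R^2$), I will verify the six items of Assumption \ref{B} for the Markov chains $\mathbf P_{N,h}$ given by Assumption \ref{a1}\eqref{a1markov} (resp.\ \ref{a3}\eqref{a3markov}). Then I will read off Items \eqref{a1logasymptotic}--\eqref{a1heatkernel} (resp.\ \eqref{a3log}--\eqref{a3heatkernel}) from Theorem \ref{A}. The first task is to relate $\mathbf p_{N,h}$ to the raw moment kernel $m_{N,h}(\x,\y):=\mathbb E[\prod_j H^N_{0,1}(x_j,y_j)]$. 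By Assumption \ref{a1}\eqref{a1markov} at $t=1$ we have $m_{N,h}(\x,\y)=e^{\mathscr E+\h_{N,h}(\x)}\mathbf p_{N,h}(\x,\y)$, and $\|\h_{N,h}\|_\infty+\|\mathscr E\|_\infty\to0$. Hence $\mathbf p_{N,h}(\x,\cdot)=(1+o(1))m_{N,h}(\x,\cdot)$ uniformly. Total variation convergence of $\mathbf p_{N,h}$ to $\mathbf p_{\infty,h}$ (Item \eqref{BconvTV}) then follows from hypothesis (2). The same reasoning gives the $\lim\|\h\|_\infty=0$ version claimed at the end of both theorems, since only $o(1)$ closeness is used. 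Translation invariance (Item \eqref{Btransinv}) comes from spatial stationarity of $H^N$. Projectivity of $\mathbf p_{\infty,h}$ (Item \eqref{Bconsistency}) follows by summing $y_j$ out of $m_{N,h}$ using $\mathbb E$-normalization and the covariance bound $\mathrm{Var}(\sum_yH^N_{0,1}(0,y))\le C/\log N$ noted after Theorem \ref{check}. Irreducibility (Item \eqref{Birred}, first half) is hypothesis (3), and the prelimiting chains inherit it for large $N$ by TV closeness plus strong Feller.

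The exponential moment condition (Item \eqref{Bexpmoments}) comes from Hölder. Writing $|\y-\x|\le\sum_j|y_j-x_j|$ gives
\[\sum_{\y}e^{\epsilon'|\y-\x|}m_{N,h}(\x,\y)\le \mathbb E\Big[\prod_{j=1}^h\sum_{y}e^{\epsilon'|y-x_j|}H^N_{0,1}(x_j,y)\Big],\]
which is bounded by hypothesis (1) with $\epsilon'\le\epsilon$, $h\le5$, together with stationarity. For Item \eqref{Birred}, the decoupling estimate, take $\mu_N:=\mathbf p_{N,1}(0,\cdot)$. Assumption \ref{a1}\eqref{a1consistency} gives exact factorization $\mathbf p_{N,h}(\x,\cdot)=\mu_N^{\otimes h}$ whenever all coordinates are farther than $A$ apart. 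In general I will bound $\mathbf p_{N,h}(\x,\cdot)$ against the product measure by the mass of jumps of length $\gtrsim\min|x_i-x_j|$, using the exponential moments to get $Ce^{-\alpha\min|x_i-x_j|}$. Here I expect to split $\x$ into the clusters given by the $A$-relation and use the consistency item repeatedly. Hypothesis (4) gives identity covariance for $\mu_\infty$.

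The main obstacle is Item \eqref{Bstrongfeller}: bounded continuous densities with exponential decay. In the discrete case this is trivial (counting measure, and decay comes from exponential moments). In the continuum case $\mathbb I=\mathbb R$ I will use the short-time estimate assumed in Theorem \ref{check2}(1), $m_{N,h}(t;\x,\y)\le Ct^{-h}e^{-\alpha|\x-\y|/\sqrt t}$, at $t=1$, together with $\mathbf p=(1+o(1))m$; this gives bounded, exponentially decaying densities. Continuity in $(\x,\y)$ is part of Assumption \ref{a3}\eqref{a3consistency}. For the limit chain I would pass to the limit using uniform boundedness and equicontinuity, obtained by writing $\mathbf p_{N,h}(1)=\mathbf p_{N,h}(1/2)\mathbf p_{N,h}(1/2)$; this is where I am least sure and may need a weaker form of strong Feller. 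Once all items hold, Theorem \ref{A} gives the invariance principle, the heat kernel bound, $\nu_\infty$, and the diagonal limits. These are exactly Items \eqref{a1logasymptotic}--\eqref{a1heatkernel}. In the continuum case the short-time bound of Item \eqref{a3heatkernel} is the hypothesis itself, transferred via $\mathbf p=(1+o(1))m$ and the Feynman–Kac comparison over $[0,t]$, $t<1$.
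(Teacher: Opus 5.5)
Your overall route is the paper's. You verify the items of Assumption \ref{B} and invoke Theorem \ref{A}, using H\"older for \eqref{Bexpmoments}, normalization for \eqref{Bconsistency}, and the consistency assumptions for \eqref{Birred}. In one respect you are more careful than the paper: you transfer total-variation convergence and the short-time heat kernel bound from the raw moment kernel $m_{N,h}$ to $\mathbf p_{N,h}$ via the factor $e^{\pm(\|\boldsymbol{\eta}_{N,h}\|_\infty+\|\mathscr E^{N,h}\|_\infty)}$.

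\textbf{The gap: the decoupling estimate of Item \eqref{Birred} in the continuous-time setting of Theorem \ref{check2}.} There, Assumption \ref{a3}\eqref{a3consistency} gives no factorization of the time-one kernel. It only says the generator $\mathbf L_{N,h}$ splits away from the set $G:=\{\x:\min_{i<j}|x_i-x_j|\le A\}$. For continuous paths there are also no ``jumps of length $\gtrsim\min_{i<j}|x_i-x_j|$'' whose mass you could bound. So ``using the consistency item repeatedly'' does not yield a bound on $\|\mathbf p_{N,h}(\x,\bullet)-\mu_N^{\otimes h}(\bullet-\x)\|_{TV}$.

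The missing idea, which the paper supplies, is a pathwise coupling:
\begin{itemize}
\item Since the generators agree off $G$, you can take $\PN{h}^{\x}$ to coincide with the product L\'evy process $\PN{1}^{\otimes h}$ started at $\x$ until the latter first enters $G$.
\item The time-one total-variation distance is then at most the probability that the product path hits $G$ during $[0,1]$.
\item A union bound over pairs, plus the exponential tail of the running maximum $\sup_{s\le 1}|R^j_s-x_j|$ (which comes from the exponential moments in hypothesis \eqref{check2exptail}), gives $C\sum_{i<j}e^{-\alpha|x_i-x_j|}$.
\end{itemize}
In the discrete setting, by contrast, you need no clusters or tail bounds. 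The distance is zero when $\min_{i<j}|x_i-x_j|>A$, and otherwise it is trivially at most $2\le 2e^{\alpha A}e^{-\alpha\min_{i<j}|x_i-x_j|}$.

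\textbf{Smaller points.} For projectivity, the variance bound only controls $X:=\sum_y H^N_{0,1}(0,y)$ in $L^2$. Integrating out $y_1,\dots,y_m$ leaves $\mathbb E\big[(\prod_{j\le m}X_j-1)\prod_{j>m}H^N_{0,1}(x_j,y_j)\big]$ with up to five factors. H\"older then forces you to show $\|X-1\|_{L^h}\to0$ for $h\le 5$, and interpolating between $L^2$ and the fifth-moment bound does not reach the endpoint $h=5$. The fix, which is what the paper does, uses more of the Markov representation:
\begin{itemize}
\item At the point $(0,\dots,0)$ it gives $\mathbb E[X^k]=1+o(1)$ for all $k\le5$.
\item Hence $\mathbb E[(X-1)^k]=o(1)$ for $1\le k\le 5$.
\item For $X\ge0$ one has $|X-1|^5\le(X-1)^5+2(X-1)^4$, which closes the endpoint.
\end{itemize}

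Finally, the paper simply takes Item \eqref{Bstrongfeller} and topological irreducibility as given by the hypotheses; it does not derive them. Your proposed derivations would not go through as stated in any case:
\begin{itemize}
\item Equicontinuity of the limiting density via $\mathbf p_{N,h}(1)=\mathbf p_{N,h}(\tfrac12)\mathbf p_{N,h}(\tfrac12)$ needs $N$-uniform regularity of the half-step kernel, which you do not have.
\item Uniform total-variation closeness plus strong Feller does not by itself give irreducibility of $\PN{h}$, at a fixed $N$, from every starting point.
\end{itemize}
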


\begin{proof}
    Items \eqref{Bstrongfeller} and \eqref{BconvTV} of Assumption \ref{B} are assumed outright in Theorems \ref{check} and \ref{check2}. Item \eqref{Btransinv} just follows from translation invariance of the flows $H^N$.

    To check Item \eqref{Bexpmoments}, we need to check that $\int_{I^h} e^{\epsilon|\y-\x|} \mathbb E [ \prod_{j=1}^h H_{0,1}^N(x_j,y_j)] d\y <\infty$ for some $\epsilon>0$. Let $I$ be either $\mathbb Z^2$ or $\mathbb R^2$ depending on the case of interest. By H\"{o}lder's inequality and the triangle inequality $|\y-\x| \leq \sum_1^h |y_j-x_j|$, we have 
    \begin{align*}
        \int_{I^h} e^{\epsilon|\y-\x|} \mathbb E [ \prod_{j=1}^h H_{0,1}^N(x_j,y_j)] d\y &\le  \mathbb E \bigg[ \prod_{j=1}^h \bigg(\int_{I} e^{\epsilon|y_j-x_j|} H_{0,1}^N(x_j,y_j) dy_j\bigg) \bigg] \\&\leq \prod_{j=1}^h \mathbb E \bigg[  \bigg(\int_{I} e^{\epsilon|y_j-x_j|} H_{0,1}^N(x_j,y_j) dy_j\bigg)^h \bigg]^{1/h}  \\&= \mathbb E \bigg[  \bigg(\int_{I} e^{\epsilon|y|} H_{0,1}^N(0,y) dy\bigg)^h \bigg] ,
    \end{align*}
    where in the last line we used the translation invariance assumption on $H^N$. By the assumptions of Theorem \ref{check}, the last term is a finite quantity independent of $\x=(x_1,\dots,x_h)$ for $h\le 5.$

    To check Item \eqref{Birred} of Assumption \ref{B}, note that topological irreducibility is automatically assumed in Theorems \ref{check} and \ref{check2}, and so it suffices to check exponential decay of the total variation distance. In the discrete-time setting of Theorem \ref{check}, this is trivial because we assumed the consistency of kernels in Item \eqref{Bconsistency} of Assumption \ref{a1}. However, in the continuous-time setting of Theorem \ref{check2}, this is not immediate from the consistency condition given in Assumption \ref{a3}. Nonetheless, the total variation bound can be proved by a simple and explicit coupling construction as follows. 
    Let $G:= \{ \x \in I^h : \min_{i<j} |x_i-x_j| \leq A\}$ where $A>0$ is fixed as in Item \eqref{a3markov} of Assumption \ref{a3}. By Item \eqref{check2exptail} of Theorem \ref{check2}, note that $\mathbf P_{N,1}^{\otimes h}$ is just a L\'{e}vy process with exponential moments, and so the running maximum of its modulus has exponential tail decay. A simple union bound using this exponential tail decay shows that, starting from $\x=(x_1,\dots,x_h)$, the probability that any $\PN{1}^{\otimes h}$-sample path $(\x+\sigma \vec W(t))_{t\in [0,1]}$ hits $G$ is bounded above by $C\sum_{1\le i<j\le k} e^{-\alpha |x_i-x_j|},$ where $C,\alpha>0$ are independent of $N$. Meanwhile, the  $\mathbf P_{N,h}^{\x}$-sample path $(\mathbf R(t))_{t\in [0,1]}$ agrees with $(\x+\sigma \vec W(t))_{t\in [0,1]}$ on the event that $(\x+\sigma \vec W(t))_{t\in [0,1]}$ does \textit{not} hit $G$, which provides the explicit high-probability coupling of the law of $\mathbf R(1)$ with the law of $\x+\sigma \vec W(1)$. This shows that the total variation distance in Item \eqref{Adiaglimits} of Theorem \ref{A} is bounded above by $\sum_{1\le i<j\le h} e^{-\alpha |x_i-x_j|}$, which in turn is upper bounded by $\frac12 h(h-1)e^{-\alpha \min_{1\le i<j\le h} |x_i-x_j|}$.

    Finally, we check the consistency condition in Item \eqref{Bconsistency}. This amounts to showing that for all $\x,\y\in I^h$ we have $\int_{I^m} \mathbf p_{\infty,h}(\x,\y)dy_1\cdots dy_m = \mathbf p_{\infty,h-m}( (x_{m+1},\dots,x_h) , (y_{m+1},\dots,y_h)). $ By the existence of continuous densities that is assumed, both sides of this equation are continuous in the $(y_{m+1},\dots,y_k)$ variable for fixed $\x$. Thus it suffices to prove the relation for almost every $(y_{m+1},\dots,y_h)$. This amounts to checking
    \begin{align*}
        \int_{I^{h-m}} \psi(&y_{m+1},\dots,y_h) \bigg[ \int_{I^m} \mathbf p_{\infty,h}(\x,\y)dy_1\cdots dy_m\bigg] dy_{m+1} \cdots dy_h\\& = \int_{I^{h-m}} \psi( y_{m+1},\dots,y_h)\mathbf p_{\infty,m-k}( (x_{m+1},\dots,x_h) , (y_{m+1},\dots,y_h))dy_{m+1}\cdots dy_h
    \end{align*}
    for all $\psi\in C_c^\infty (\mathbb R^{h-m})$,
    which reduces to
    \begin{align*}
        \lim_{N\to\infty} \int_{I^h} \psi(& y_{m+1},\dots,y_h)\mathbb E \Big[ \prod_{j=1}^m H^N_{0,1} (x_j,y_j)\Big] d\y \\
        &= \lim_{N\to\infty} \int_{I^{h-m}} \psi( y_{m+1},\dots,y_h)\mathbb E \Big[ \prod_{j>m} H^N_{0,1} (x_j,y_j) \Big]dy_{m+1}\cdots dy_h,
    \end{align*}
    or equivalently
    \begin{equation*}
        \mathbb E \bigg[ \bigg(  \int_{I^m} \prod_{j\le m} H^N_{0,1}(x_j,y_j) dy_1\cdots dy_m \;-\; 1\bigg) \cdot \int_{I^{h-m}} \psi( y_{m+1},\dots,y_h)\prod_{j>m} H^N_{0,1} (x_j,y_j)dy_{m+1}\cdots dy_h\bigg]\to 0.
    \end{equation*}
    By H\"{o}lder's inequality, we just need to show that $\lim_{N\to \infty} A_N B_N=0$ where 
    \begin{align*}
    A_N&= \mathbb E \bigg[ \bigg(  \int_{I^m} \prod_{j\le m} H^N_{0,1}(x_j,y_j) dy_1\cdots dy_m \;-\; 1\bigg)^{h/m}\bigg]^{m/h}, \\ B_N&=\|\psi\|_{L^\infty} \mathbb E \bigg[ \bigg( \int_{I^{h-m}} \prod_{j>m} H^N_{0,1} (x_j,y_j)dy_{m+1}\cdots dy_h\bigg)^{\frac{h}{h-m}} \bigg]^{1-\frac{m}{h}}.
    \end{align*}
    But $B_N$ is upper bounded by a constant independent of $N$ by Item \eqref{a1markov} of Assumption \ref{a1} (which is assumed in Theorem \ref{check}): indeed $\int_{I^{h-m}} \prod_{j>m} H^N_{0,1} (x_j,y_j)dy_{m+1}\cdots dy_h = \prod_{j>m} \int_I H^N_{0,1}(x_j,y) dy $, and from here another application of H\"{o}lder's inequality gives us the uniform upper bound $B_N \leq \prod_{j>m} \mathbb E \big[ \big(\int_I H^N_{0,1}(x_j,y) dy\big)^h \big]^{1/h} = \mathbb E [ \big(\int_I H^N_{0,1}(0,y) dy\big)^h\big]^{1-\frac{m}{h}},$ which is a constant.
    
    It remains only show that $A_N \to 0$. By the identity $a_1\cdots a_m -1 = \sum_{S\subset \{1,\dots,m\}: S\ne \emptyset } \prod_{j\in S} (a_j-1)$ and another application of H\"{o}lder's inequality, we have 
    \begin{align*}
        A_N = \bigg\| \prod_{j\le m} \Big( \int_I H_{0,1}^N(x_j,y) dy\Big) - 1 \bigg\|_{L^{h/m}} &\le \sum_{\emptyset \ne S\subset \{1,\dots,m\}} \bigg\| \prod_{j\in S} \Big( \int_I H_{0,1}^N(x_j,y)dy -1\Big)  \bigg\|_{L^{h/m}} \\&\leq \sum_{\emptyset \ne S\subset \{1,\dots,m\}}  \prod_{j\in S} \bigg\|  \int_I H_{0,1}^N(x_j,y)dy -1\bigg\|_{L^{h|S|/m}} \\&\le \sum_{\emptyset \ne S\subset \{1,\dots,m\}} \bigg\| \int_I H^N_{0,1} (0,y)dy -1\bigg\|_{L^h}^{|S|},
    \end{align*}
    where in the last line we are using that $|S| \le m$, so that by monotonicity of $L^p$ norms, the $L^{h|S|/m}$ norm is upper bounded by the $L^h$ norm. Now for $h\le 5$, the limit $\big\| \int_I H^N_{0,1} (0,y)dy -1\big\|_{L^h}^h \to 0$ is immediate from Item \eqref{a1markov} of Assumption \ref{a1} and a direct computation. In fact, it is upper bounded by $C/\log N$. 
\end{proof}

\section{Examples of models satisfying the hypotheses}\label{sec:examples}


In this section, we provide examples of models that satisfy the assumptions of the main results.

\subsection{Discrete directed polymer}\label{5.1}

\begin{ass}\label{4.1}We will consider a random environment $\omega=\{\omega_{t,x}\}_{(t,x) \in \Z \times \Z^2}$ defined on some complete probability space. These weights are independent in $t$, of finite-range dependence in $x$, and strictly stationary in $x$. For simplicity, we assume $\omega_{0,0}$ is nondeterministic but deterministically bounded i.e., $|\omega_{0,0}|\leq K$ for some nonrandom $K > 0.$ assume that the spatial covariance function is nonnegative, that is,
\begin{equation}\label{zeta_poly}
    \zeta_{1,1}(x):= \mathrm{Cov}(\omega_{0,0}, \omega_{0,x})  \ge 0 ,\qquad x\in \mathbb Z^2.
\end{equation} We also assume that $\zeta_{1,1}(x)$ is \textit{strictly} positive for all $x$ such that $\omega_{0,0}$ and $\omega_{0,x}$ are dependent. 
\end{ass}
This is true for example in the case of nearest-neighbor interaction, or of an environment given by a simple averaging of i.i.d.\ spatial neighbors. We also assume that the probability space admits a $\mathbb Z$-valued random walk $(R(t))_{t\in \mathbb Z_{\ge 0}}$ that is independent of the environment $\omega$. In other words, the probability measure splits as $\mathbb P = P^\omega\otimes \mathbf P_{\mathrm{RW}}$, where $P^\omega$ denotes the law of the environment and $\mathbf P_{\mathrm{RW}}$ denotes the law of the random walk. 

\begin{ass}\label{rwass}
    We assume that the $\mathbb Z^2$-valued random walk $(R(t))_{t\in \mathbb Z_{\ge 0}}$ starts at the origin, is aperiodic, has bounded increments of mean 0 and variance 1, and $\mathbb E[ e^{\epsilon|R(1)|}] <\infty $ for some $\epsilon > 0$.
\end{ass}

\begin{defn}
    Given a sequence $\beta_N>0$, define the \textit{directed polymer partition function} 
    \begin{equation}\label{zn}
        H^N_{s,t}(x,y):= e^{-(t-s)\lambda(\beta_N) } \mathbb E \big[e^{\beta_N  \sum_{u=s}^{t-1} \omega_{u,x+R(u-s)} }\ind_{\{R(t-s)=y-x\}}\big| \omega \big], 
    \end{equation}
    where $s,t,x,y\in \mathbb Z $ with $s\le t$ and $N\in \mathbb N$. Here $\lambda(\beta) = \log \mathbb E[e^{\beta \omega_{0,0}}].$
\end{defn}
The expectation is over the random walk path $(R(t))_{t\in \mathbb Z_{\ge 0}}$, conditional on the environment $\omega$ which we again emphasize is always assumed to be \textit{independent} of the random walk $R$. 
Thus \eqref{zn} can be viewed as a weighted sum over paths from $(s,x)$ to $(t,y)$, the weight being determined by the environment $\omega$.

Thus for $s,t \in N^{-1} \mathbb Z$ with $s<t$, the macroscopic field $\mathfrak H^N_{s,t}$ from Definition \ref{a2} can be written  $$\mathfrak H^N_{s,t}(\phi,\psi) = N^{-1}\sum_{x\in \mathbb Z^2} \phi(N^{-1/2}x) 
e^{-N(t-s)\lambda(\beta_N) }
\mathbb E[e^
{\beta_N  \sum_{u=Ns}
^{Nt-1}\omega_{u,x+R(u-Ns)}
}
\psi(N^{-1/2}(R(N(t-s))+x))\big| \omega \big]. $$

\begin{thm}[Verification of hypotheses for the directed polymer]\label{poly_verify} Suppose Assumptions \ref{4.1} and \ref{rwass} hold. Then there exists a sequence $\beta_N\asymp (\log N)^{-1/2}$ such that the family $H^N$ in \eqref{zn} satisfies Assumption \ref{a1}. The measure $\nu_\infty$ is $\frac1{4\pi}$ times counting measure on $\mathbb Z^2$ for this model.
\end{thm}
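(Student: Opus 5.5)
The plan is to verify Items \eqref{a1flow}--\eqref{a1consistency} of Assumption \ref{a1} directly, and then obtain Items \eqref{a1logasymptotic}--\eqref{a1heatkernel} from Theorem \ref{check}. The value of $\beta_N$ will be fixed at the end by the critical tuning \eqref{crit_tuning}.

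\textbf{Flow structure and choice of chain.} The flow axioms of Definition \ref{1} follow from the Markov property of $R$, the time-independence and spatial stationarity of $\omega$, and the normalization by $e^{-\lambda(\beta_N)}$. For the Markov representation we do not use the canonical chain \eqref{eq:mrep}. Instead we take $\mathbf P_{N,h}:=\mathbf P_{\mathrm{RW}}^{\otimes h}$, the law of $h$ independent copies of the walk. This choice does not depend on $N$, and the consistency and translation-invariance conditions are then automatic. Fubini over the independent walks gives
\[
\mathbb E\Big[\prod_j H^N_{0,t}(x_j,y_j)\Big]=\mathbf E^{\x}\Big[\prod_{r<t} \mathbb E\Big[e^{\beta_N\sum_j\omega_{r,R^j_r}}\Big]e^{-h\lambda(\beta_N)}\ind_{\{\mathbf R_t=\y\}}\Big].
\]
So the representation holds with $\boldsymbol\eta_{N,h}(\x)=\log\mathbb E\big[e^{\beta_N\sum_j\omega_{0,x_j}}\big]-h\lambda(\beta_N)$ and $\mathscr E\equiv0$.

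\textbf{Additive functional.} Write $\z_N(x)=\boldsymbol\eta_{N,2}(x,0)$. This function is supported in the dependency range $A$, since the two weights are independent outside it. A cumulant expansion gives $\z_N(x)=\beta_N^2\zeta_{1,1}(x)+O(\beta_N^3)$, uniformly in $x$, using boundedness of $\omega$.

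The step I expect to need the most care is nonnegativity of $\z_N$, which Assumption \ref{a1} requires. On the support of $\zeta_{1,1}$ it holds for large $N$, because $\zeta_{1,1}(x)>0$ there by Assumption \ref{4.1} and the leading term dominates. Off that support the variables are independent, so $\z_N=0$ exactly. The bound $\|\boldsymbol\eta_{N,h}\|_\infty\le C\beta_N^2\le C/\log N$ follows from the same expansion.

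For the pair decomposition outside $T^{(h)}$: if no non-pair collision occurs, the weights split into independent groups that are singletons or single pairs. The joint cumulant generating function is then additive over those groups, which gives $\boldsymbol\eta_{N,h}=\sum_{i<j}\z_N(x_i-x_j)$.

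\textbf{Remaining items via Theorem \ref{check}.} Exponential moments follow from Hölder's inequality, boundedness of $\omega$, and the exponential moment of $R(1)$. The limit kernel is $\mathbf p_{\infty,h}=p^{\otimes h}$; total-variation convergence holds because $e^{\boldsymbol\eta}\to1$ uniformly. Irreducibility of $p^{\otimes h}$ follows from aperiodicity, and the covariance is the identity by Assumption \ref{rwass}.

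Theorem \ref{check} then gives Items \eqref{a1logasymptotic}--\eqref{a1heatkernel}. Here $\nu_\infty$ is the invariant measure of the difference walk, namely counting measure, normalized as $\frac1{4\pi}$ by the two-dimensional local limit theorem for the difference walk, whose covariance is $2\,\mathrm{Id}$.

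\textbf{Choice of $\beta_N$.} It remains to produce $\beta_N\asymp(\log N)^{-1/2}$ satisfying \eqref{crit_tuning}. The map $\beta\mapsto\Lambda_N(e^{\z_N}-1)$ is continuous and increasing in $\beta$, and equals $\beta^2\Lambda_N(\zeta_{1,1})(1+O(\beta))$ by linearity of $\Lambda_N$ to leading order. Since $\Lambda_N(\zeta_{1,1})\to\nu_\infty(\zeta_{1,1})>0$ by Lemma \ref{2.1}, the intermediate value theorem yields $\beta_N$ with $\Lambda_N(e^{\z_N}-1)\log N=1+\vartheta/\log N$ exactly. Moreover $\beta_N^2\sim(\nu_\infty(\zeta_{1,1})\log N)^{-1}$.
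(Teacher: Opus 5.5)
Your proposal is correct and follows the paper's proof closely. Both use the same $N$-independent chain $p^{\otimes h}$ and the same functional $\boldsymbol\eta_{N,h}(\x)=\log\mathbb E[e^{\beta_N\sum_j\omega_{0,x_j}}]-h\lambda(\beta_N)$. Both get nonnegativity of $\z_N$ from the leading covariance term together with independence off $\mathrm{supp}\,\zeta_{1,1}$, obtain Items \eqref{a1logasymptotic}--\eqref{a1heatkernel} from the local limit theorem or Theorem \ref{check}, and choose $\beta_N$ by the intermediate value theorem.

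Two sub-steps are argued differently.

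\textbf{Pair decomposition off $T^{(h)}$.} You use mutual independence of the groups directly. The paper instead expands the joint cumulant generating function (via \cite[Theorem 9.11]{KLM}) and isolates a remainder supported on triple intersections. Your argument is shorter and suffices here.

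\textbf{Critical tuning.} The paper writes the top eigenvalue as $\beta^2 f_N(\beta)$. It then gets continuity of $f_N$ and $\sup_{\beta<\epsilon}|f_N(\beta)-c_{\mathrm{mean}}|<c_{\mathrm{mean}}/2$ (for large $N$) from Kloeckner's perturbation theorem for the top eigenvalue near the rank-one limit. Your justification ``by linearity of $\Lambda_N$'' is not right as stated, since $\Lambda_N$ is not linear.

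What actually makes your claim true, uniformly in $N$, is the following. $T_N^f$ is a nonnegative matrix on $\mathbb R^J$, so $\Lambda_N(f)$ is its Perron root, which is positively homogeneous, entrywise monotone and continuous in $f$. Since $\min_J\zeta_{1,1}>0$ on $J=\mathrm{supp}\,\zeta_{1,1}$, you also have $(1-C\beta)\zeta_{1,1}\le\beta^{-2}\big(e^{\z(\beta,\cdot)}-1\big)\le(1+C\beta)\zeta_{1,1}$ there. Together these give $\Lambda_N(e^{\z(\beta,\cdot)}-1)=\beta^2\Lambda_N(\zeta_{1,1})(1+O(\beta))$ with the $O(\beta)$ independent of $N$. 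That uniformity is exactly what yields $\beta_N\asymp(\log N)^{-1/2}$, so you should state the step this way.

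With that fix, your Perron--Frobenius route is more elementary than the paper's. The paper's perturbation argument is the generic one it reuses for the RWRE, where the transition kernel $\mathfrak p^\beta$ itself depends on $\beta$.
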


\begin{proof}
\textbf{Verification of Items \eqref{a1flow}, \eqref{a1markov}, and \eqref{a1consistency}.} For now let $\beta_N$ be any sequence of positive values converging to 0. For $y\in \mathbb Z^2$ let $p(y) = \mathbf P_{\mathrm{RW}} (R(1) -R(0) =y)$, so that $p$ is the increment law of the random walk. Then we see that this model satisfies the stochastic flow property, with $H_{0,1}^N(x,y) = e^{\beta_N \omega_{t,x} - \log \mathbb E[e^{\beta_N \omega_{0,0}}]} p(y-x)$. Taking products and then expectations, the consistency of kernels in Item \eqref{a1consistency} follows trivially from the finite-range assumption on $\{\omega_{0,x}\}_{x\in \mathbb Z^2}$.

In this case, the Markov chain $\PN{h}$ is just given by the law of $h$ independent random walks of increment law $p(\bullet)$ as above, and in particular does not actually depend on $N$. Indeed, using \eqref{zn} and putting $G_N^{(h)}(x_1,\dots,x_h):=\mathbb E [ e^{\beta_N ( \omega_{t,x_1} + \dots+\omega_{t,x_h})} ]/\mathbb E[e^{\beta_N\omega_{0,0}}]^h$,  we obtain
\begin{align*}
    \mathbb E\bigg[ \prod_{j=1}^h H^N_{0,t} (x_j,y_j)\bigg] = (\mathbf E_{\mathrm{RW}}^{\otimes h})^{\x} \bigg[ \ind_{\{ \mathbf R_{t} = \y\}} \prod_{u=0}^{t-1} G_N^{(h)}(\mathbf R_u) \bigg],
\end{align*}
where the expectation is taken with respect to $h$ independent random walks $\mathbf R = (R^1,\dots,R^h)$ started from $\x=(x_1,\dots,x_h)$. Notice that $\log G^{(2)}(x_i,x_j) = \z_N(x_i-x_j) $  for some function $\z_N$, by translation invariance of the field $\{\omega_{t,x}\}_{t,x}$.
Taylor expanding the cumulant generating function $\log G_N$ gives
\begin{align*}
    \log G_N^{(h)}(x_1,\dots,x_h) &= \sum_{p,q=1}^\infty \frac{\beta_N^{p+q}}{p!q!}  \sum_{1\le i<j\le h} \zeta_{p,q} (x_i-x_j)\;+\; \beta_N^3 \boldsymbol\varepsilon_N(\x) \\& = \sum_{1\le i<j\le h} \log G^{(2)}_N(x_i-x_j) \;+\; \beta_N^3 \boldsymbol\varepsilon_N(\x),
\end{align*}where $\zeta_{p,q}:\mathbb Z^2\to\mathbb R$ are given by 
\begin{equation*}
    \zeta_{p,q}(x_i-x_j) = \kappa_{(p+q)} (\underbrace{\omega_{t,x_i},\dots,\omega_{t,x_i}}_{p\;\text{terms}}, \underbrace{\omega_{t,x_j},\dots,\omega_{t,x_j}}_{q\;\text{terms}})
\end{equation*}
and $\kappa_{(k)}$ denotes joint cumulant of order $k$. Meanwhile, the remainder $\boldsymbol\varepsilon_N(\x)$ corresponds to those terms in the cumulant expansion consisting of joint cumulants that contain at least three separate indices $i<j<\ell$, and is thus supported on triple intersections in the sense that there exists $A>0$ so that
\begin{equation*}
    | \boldsymbol\varepsilon_N(\x)| \leq C \sum_{ 1\le i<j<\ell \le k} \ind_{\{ |x_i-x_j| \le A , |x_j-x_\ell|\le A\}}.
\end{equation*}
The fact that the cumulant generating function above converges absolutely is nontrivial, and follows from \cite[Theorem 9.11]{KLM}.
    
To check Item \eqref{a1markov}, we define\footnote{
    It is tempting to believe that $\z(\beta,x)$ could be replaced by its leading term $\beta^2 \zeta_{1,1}(x),$ but this will lead to the wrong answer, which was already noticed in early works such as \cite{MR4017119}.
}
$\z(\beta,x):= \sum_{p,q=1}^\infty \frac{\beta^{p+q}}{p!q!} \zeta_{p,q} (x) = \log \mathbb E [e^{\beta(\omega_{0,0} + \omega_{0,x})}] - 2\log \mathbb E [e^{\beta \omega_{0,0} }],$
so that $\z_N:=\z(\beta_N,\bullet)$.\footnote{
    Note that in the nearest-neighbor case $\z(\beta,x)$ is nonzero only when $x=0$, and $\z(\beta,0) = \lambda(2\beta)-2\lambda(\beta).$ This is consistent with the scaling of $\beta$ in \cite{CSZ_2d}.
} 

Thus we see that $\h_{N,h} (\x):= \log G_N^{(h)} (\x)$ will satisfy Assumption \ref{a1} \eqref{a1markov}. It is not trivial that $\z_N = \z(\beta_N,\,\bullet\,)$
is nonnegative: however, this follows for sufficiently large values of $N$ from our assumption after \eqref{zeta_poly}, which concerns the leading-order term in the expression for $\z(\beta,x)$ (note that all $\zeta_{p,q}$ are supported on the same finite set). Then we obtain exactly the representation needed for Item \eqref{a1markov} of Assumption 1.1.

\medskip
\noindent\textbf{Verification of Items \eqref{a1logasymptotic}--\eqref{a1heatkernel}.} In this case $\nu_\infty$ is just counting $\frac1{4\pi}$ times the counting measure on $\mathbb Z^2,$ which follows from the local central limit theorem. Proving the logarithmic asymptotics and heat kernel bounds also follows from the local central limit theorem. Alternatively, an easy application of Theorem \ref{check} will quickly verify Items \eqref{a1logasymptotic}--\eqref{a1heatkernel}. 

\medskip
\noindent\textbf{Verification of the critical tuning.} Thus far we have considered arbitrary $\beta_N\to 0$. We now justify the existence\footnote{In this example and later ones, we do not prove uniqueness of the solution $\beta_N$, though we believe it to be true. With more work, we could actually go further and prove a full Taylor expansion of the top eigenvalue 
using explicit formulas for the Fr\'echet derivative of the top eigenvalue. 
Such local expansion formulas for the top eigenvalue can be found in \cite[Theorem 2.6]{Kloeckner2019Effective} or \cite[Chapter 3]{Kato}. For brevity, we do not calculate these expansions. } of a particular $\beta_N \downarrow 0$ satisfying the critical tuning \eqref{crit_tuning}.

To be more precise, let $J$ denote the support of $\zeta_{1,1}$, which is a finite set since we are in the discrete setting. We write $T_{N,\beta}$ for the linear map on $\R^J$ from Definition \ref{def:lambda}, which is more explicitly given by
\begin{align*}
    T_{N,\beta} h(x) &= \frac1{\log N} \sum_{z\in J} \bigg( \sum_{r=0}^N \mathbf p_{\mathrm{dif}} (r,x,y)\bigg) (e^{\z(\beta, y)}-1) h(y),
\end{align*}
where $\mathbf p_{\mathrm{dif}} (1,x,y) :=  \sum_{a\in \mathbb Z^2} p^{\otimes 2} (a, a+y-x)$.

Let $\boldsymbol{\lambda}_N(\beta)$ denote the top eigenvalue of $T_{N,\beta}: \mathbb R^J\to \mathbb R^J$. We need to show that there exists a solution to the equation $ \boldsymbol{\lambda}_N(\beta) \cdot \log(N) = 1+\frac{\vartheta+o(1)}{\log N}$, which then defines $\beta_N$. To prove this, it suffices to show a local expansion 
\begin{equation}\label{eigen_exp0}
    \boldsymbol{\lambda}_N(\beta) = \beta^{2} f_N(\beta) , \qquad \limsup_{N\to\infty} \sup_{0\le \beta<\epsilon}| f_N(\beta) - c_{\mathrm{mean}}| < \frac{c_{\mathrm{mean}}}{2},
\end{equation}
where $c_{\mathrm{mean}} := \sum_{a\in \mathbb Z^2} \zeta_{1,1}(a) >0$ and moreover each $f_N$ is a continuous function on $[0,\epsilon]$. Here $\epsilon>0$ is some positive constant not depending on $N$. 
This would imply that $\beta \mapsto \boldsymbol{\lambda}_N(\beta)$ is continuous on $[0,\epsilon]$ and $\frac{c_{\mathrm{mean}}}{2}\beta^2 \leq \boldsymbol{\lambda}_N(\beta) \leq \frac{3c_{\mathrm{mean}}}{2}\beta^2,$ so that the intermediate value theorem would immediately justify the existence of $\beta_N \asymp (\log N)^{-1/(2p)}$ such that $ \boldsymbol{\lambda}_N(\beta) \cdot \log(N) = 1+\frac{\vartheta+o(1)}{\log N}$.
        
First let us recall from \eqref{phoo} that $e^{\z(\beta,y)}-1$ Taylor expands as $\beta^{2p} \z_{1,1}(y) +\beta^{2p+1} \ell(\beta,y)$ where $\sup_{0<\beta<1} \|\ell(\beta,\bullet)\|_{L^\infty}<\infty$. Thus $f_N(\beta)$ is the top eigenvalue of the linear map
\begin{equation*}
    S_{\beta,N} h(x) = \frac{1}{\log N} \sum_{r=0}^N \sum_{y\in J}\mathbf p_{\mathrm{dif}} (r,x,y) \big(\zeta_{1,1}(y) +\beta\ell(\beta,y)\big) h(y)
\end{equation*}
on $\mathbb R^J$. We conclude the proof of Theorem \ref{poly_verify} by checking \eqref{eigen_exp0} in steps.

\smallskip
\noindent\textbf{Step 1.} Let $S_{0,\infty}:= \lim_{N\to\infty} S_{0,N}$, so that $S_{0,\infty} = \langle \bullet , \zeta_{1,1} \rangle_{L^2(J)} \ind_J$ is manifestly a rank one operator whose unique non-zero eigenvalue is $\brac{\zeta_{1,1},\ind_J}_{L^2(J)} =c_{\z}$. For any $\delta>0$, we claim there exists $\epsilon>0$ and $N_0 \in \mathbb N$ such that $\|S_{N,\beta} - S_{0,\infty}\| < \delta$ whenever $\beta<\epsilon$ and $N\ge N_0$, where the operator norm is always understood to be on $L^2(J)$. Indeed, for any sequence $\beta_N\downarrow 0$ and $x,y\in J$, the local CLT (or Theorem \ref{check}) guarantees that $\frac1{\log N} \sum_{r=0}^N \mathbf p_{\mathrm{dif}} (r,x,y)dt \to 1$. This means that, along any sequence $\beta_N\downarrow 0$, the integral kernel of $S_{\beta_N,N}$ converges uniformly to that of $S_{0,\infty}$, where for $x,y\in J$ the kernels are respectively given by $K_{\beta_N,N}(x,y) = \frac1{\log N} \sum_{r=0}^N \mathbf p_{\mathrm{dif}}(r,x,y)\cdot (\zeta_{1,1}(y)+\beta_N\ell(\beta_N,y))$ and $K(x,y)=\zeta_{1,1}(y)$. The claim follows.

\smallskip
\noindent\textbf{Step 2.} Next we prove continuity of $f_N$ on $[0,\epsilon]$. Define $\tau_0:= \frac{\|\zeta_{1,1}\|_{L^2(J)}}{\langle \ind_J ,\zeta_{1,1}\rangle_{L^2(J)}}$ and $\gamma_0:=\langle \zeta_{1,1},\ind_J\rangle_{L^2(J)}$. Both quantities are positive since $\zeta_{1,1}>0$ on $J$. Using \cite[Theorems 2.3 and 2.6]{Kloeckner2019Effective}, there exists a map $A\mapsto \lambda_{\mathrm{top}}(A)$ which sends a compact operator $A$ on $L^2(J)$ to its eigenvalue of maximal modulus, and the function $\lambda_{\mathrm{top}}$ is well-defined and uniformly continuous (in the operator norm topology) on the domain $\{A: \|A-S_{0,\infty}\| <1/(12\tau_0 \gamma_0) \}.$ Notice that the function $\beta\mapsto f_N(\beta)$ is the composition of the two maps $$\beta \mapsto S_{N,\beta} \mapsto \lambda_{\mathrm{top}}(S_{N,\beta}).$$
The first map is clearly continuous on $[0,\epsilon]$, and it lands in the domain of the second map whenever $\beta<\epsilon$ and $N\ge N_0$ by Step 1. The second map is continuous as just discussed, and thus the composed map is continuous.

\smallskip
\noindent\textbf{Step 3.} It remains to prove the estimate on the $\limsup$ in \eqref{eigen_exp0}. In fact, we prove the stronger statement that $f_N(\beta_N) \to c_{\mathrm{mean}}$ for any sequence $\beta_N\downarrow 0$,
which clearly implies the claim. But this is immediate from the result of Step 1 and the continuity of the map $\lambda_{\mathrm{top}}$ in Step 2.
\end{proof}

\begin{remark}
    We comment here on how to match our notation with that of \cite{CSZ_2d} in the case of the directed polymer with i.i.d.\ weights. To avoid periodicity issues, we used an aperiodic version of the model, from \cite{SHFnotes}. 

\begin{center}
    \begin{tabular}{|c|c|c|}
        \hline
        \rule{0pt}{2.5ex}
        \textbf{Quantity} & \textbf{Notation in \cite{CSZ_2d}} & \textbf{Our notation} \\[2.5pt]
        \hline
        
        \rule{0pt}{2.5ex}
        Noise strength
        &
        $\sigma_N^2 := e^{\lambda(2\beta_N)-2\lambda(\beta_N)}-1 $
        &
        $ e^{\z(\beta_N,0)}-1 $
        \\[6pt]
        
        Replica overlap
        &
        $R_N$
        &
        $ \mathscr S_N(0,0)
        $
        \\[8pt]
        
        Critical tuning
        &
        $ \frac{1}{\log N}R_N\sigma_N^2 $
        &
        $ \Lambda_N\big(e^{\z(\beta_N, \bullet)}-1\big) $
        \\
        \hline
    \end{tabular}
\end{center}
Putting this together, in \cite{SHFnotes}, the critical tuning is $\sigma_N^2 = \frac{1}{R_N}\big(1 + \frac{\vartheta + o(1)}{\log N}\big).$ This is equivalent to the eigenvalue tuning $\Lambda_N\big(e^{\z(\beta_N, \bullet)}-1\big) \log N = 1+\frac{\vartheta+o(1)}{\log N}$ from \eqref{crit_tuning}. 
\end{remark}

\subsection{Mollified (2+1)-dimensional multiplicative SHE}\label{sss:molSHE}

We set $\epsilon=\epsilon_N=N^{-1/2}$ and consider the It\^o stochastic PDE 
\begin{equation*}\label{mshe}\tag{mSHE}
    \partial_t Z^\epsilon(t,y) = \tfrac12 \Delta_y Z^\epsilon(t,y) + \beta_\epsilon \cdot Z^\epsilon(t,y) \cdot \eta^\epsilon(t,y),\qquad t\ge 0,\; y\in \mathbb R^2,
\end{equation*}
where $\eta^\epsilon:= \varphi_\epsilon *\eta$ for a standard Gaussian space-time white noise $\eta$ on $\mathbb R\times\mathbb R^2$ and the convolution is in space only, and where $  \varphi_\epsilon (y):= \epsilon^{-2} \varphi(\epsilon^{-1}y)$ for some fixed nonnegative continuous even function $\varphi \in C_c^\infty(\mathbb R^2)$. 
    
\begin{defn}
    Set $\epsilon = \epsilon_N := N^{-1/2}$ and then define $H^N_{s,t}(x,y) := \epsilon^2 \cdot z_{\epsilon^2s,\epsilon^2 t}(\epsilon x,\epsilon y)$,
    where $z_{s,t}$ are the propagators for \eqref{mshe}.
\end{defn}

\begin{thm}[Verification of the hypotheses for the mollified SHE]\label{mshe_verify}
    There exists a sequence $\beta_N := \beta_{\epsilon_N}\asymp (\log N)^{-1/2}$ such that the family $H^N$ defined above satisfies Assumption \ref{a1}.
    In this case ${\boldsymbol{\zeta}}_N(x)=\beta_N^2\cdot  \varphi * \varphi(x)$, and $\nu_\infty$ is $\frac1{4\pi}$ times Lebesgue measure on $\R^2.$
\end{thm}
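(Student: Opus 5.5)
The plan is to mirror the proof of Theorem \ref{poly_verify}, now in the continuum setting of Assumption \ref{a3} with $\mathbb I=\mathbb R$. The statement says "Assumption \ref{a1}", but the verification is really against the continuous-time Assumption \ref{a3}. We then use Theorem \ref{check2} to dispose of the analytic Items \eqref{a3log}--\eqref{a3heatkernel}.

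\textbf{Structural items.} First I would check Items \eqref{a3flow}, \eqref{a3markov}, \eqref{a3consistency} directly.

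The flow property comes from linearity of \eqref{mshe} and uniqueness of its It\^o solutions. Independence of time increments holds because $z_{s,t}$ is measurable with respect to the white noise restricted to $(s,t]$. Stationarity follows from the invariance of $\eta$ under shifts. Normalization follows since the It\^o equation has mean equal to the heat kernel.

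For the moments I would apply It\^o's formula, or equivalently Feynman--Kac with Gaussian expectation. With $H^N$ rescaled to microscopic units, this gives
\[
\mathbb E\Big[\prod_{j}H^N_{0,t}(x_j,y_j)\Big]=\mathbf E^{\x}\Big[e^{\beta_N^2\sum_{i<j}\int_0^t \varphi*\varphi(B^i_r-B^j_r)\,dr}\,\ind_{\{\mathbf B_t\in d\y\}}\Big],
\]
where $\mathbf B$ consists of $h$ independent standard Brownian motions. So $\mathbf P_{N,h}$ is $N$-independent Brownian motion, $\mathscr E^{N,h}=0$, and $\boldsymbol\eta_{N,h}$ is exactly pairwise with no non-pair error term. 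This gives $\z_N=\beta_N^2\varphi*\varphi$, which is nonnegative because $\varphi\ge0$, and compactly supported. Consistency of the generators is then trivial.

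\textbf{Analytic items.} Items \eqref{a3log}--\eqref{a3heatkernel} can be verified by hand, since the difference chain is Brownian motion with variance $2$. Then $\int_1^N \mathbf p_{N,\mathrm{dif}}\,dt=\frac{1}{4\pi}\log N+O(1)$, giving $\nu_\infty=\frac1{4\pi}$ Lebesgue, and both the short- and long-time Gaussian heat kernel bounds are explicit. Alternatively these follow by citing Theorem \ref{check2}. Its moment hypothesis, the short-time bound $\mathbb E[\prod_{j\le5}H_{0,t}]\le Ct^{-5}e^{-\alpha|\x-\y|/\sqrt t}$, follows from the displayed formula since the exponential weight is at most $e^{Ct\beta_N^2}$.

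\textbf{Critical tuning.} This is the only step needing care. I would copy Steps 1--3 of Theorem \ref{poly_verify} with $T_{N,\beta}$ acting on $L^2(J)$, $J=\supp(\varphi*\varphi)$, via the kernel $\frac{1}{\log N}\mathscr S_N(x,z)\,\varphi*\varphi(z)$. Here $\mathscr S_N$ now starts at $t=0$ and has a logarithmic singularity on the diagonal. Since $\z_N$ is exactly $\beta^2\varphi*\varphi$, the top eigenvalue is linear, $\Lambda_N(\z_N)=\beta^2\Lambda_N(\varphi*\varphi)$. So I would simply set
\[
\beta_N^2=\frac{1+\vartheta/\log N}{\log N\,\Lambda_N(\varphi*\varphi)},
\]
with no intermediate value theorem needed.

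It remains to show $\Lambda_N(\varphi*\varphi)\to\nu_\infty(\varphi*\varphi)=\frac{1}{4\pi}>0$, which yields $\beta_N\asymp(\log N)^{-1/2}$. For this I would use that $\frac{1}{\log N}\mathscr S_N\to\frac1{4\pi}$ in $L^2(J\times J)$: the singular part $\int_0^1$ is $O(1+\log_-|x-z|)$, which is square integrable. Hence the operators converge in Hilbert--Schmidt norm to a rank-one operator, and the principal eigenvalue converges by the same argument as in Lemma \ref{2.1'}. The one point to watch is that the $L^2$ compactness is really needed because of the logarithmic singularity; everything else is routine.
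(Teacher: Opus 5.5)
Your proposal is correct and follows the paper's route. Feynman--Kac reduces the moments to independent Brownian motions with the purely pairwise potential $\beta_N^2\,\varphi*\varphi$. The analytic items follow from explicit Gaussian heat-kernel computations. The tuning comes from setting $\beta_N^2$ directly through $\Lambda_N(\varphi*\varphi)$, using that $\Lambda_N$ is linear in $\beta^2$.

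Your extra step, showing $\Lambda_N(\varphi*\varphi)\to\frac1{4\pi}$ via Hilbert--Schmidt convergence, is correct. It gives $\beta_N\asymp(\log N)^{-1/2}$ and the bound $\|\boldsymbol{\eta}_{N,h}\|_{L^\infty}\le C/\log N$, which the paper leaves implicit. You are also right that the assumption actually being verified is Assumption \ref{a3}, not Assumption \ref{a1}.
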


This result is not new, and follows from a culmination of the works of \cite{MR1629198, GQT, Tsa24}, but we can recover it in the present framework.

\begin{proof}
    By the Feynman-Kac formula \cite{BC95}, we can represent \begin{equation*}
        \int_{\mathbb R^2} H^N_{0,t} (x,y) \psi(y) dy = \mathbf E_{\mathrm{BM}}^x \bigg[ e^{\beta_\epsilon \int_0^t \eta^\epsilon(s,B_s)ds  -\frac{\beta_\epsilon^2}2 (\varphi_\epsilon *\varphi_\epsilon)(0)t} \psi(B_t) \bigg| \eta^\epsilon \bigg]
    \end{equation*}
    for $\psi\in C_c^\infty(\mathbb R^2)$, from which one easily sees that
    \begin{equation*}
        \int_{(\mathbb R^2)^h} \mathbb E \bigg[\prod_{j=1}^h H_{0,t}(x_j,y_j) \bigg]\Psi(\y) d\y = (\mathbf E_{\mathrm{BM}}^{\otimes h})^{\x} \bigg[ e^{\int_0^t \z_N(B_s^i-B_s^j) ds} \Psi(\mathbf B_t) \bigg],
    \end{equation*}
    for all $\Psi \in C_c^\infty ((\mathbb R^2)^h),$ with $\z_N$ as in the theorem statement. 
    Since the noise is Gaussian, all cumulants of order three and up vanish identically, thus the process $\boldsymbol{\eta}_{N,h}$ is identically equal to $\sum_{i<j} \z_N(x_i-x_j)$. In this case $\nu_\infty$ is just Lebesgue measure on $\R^2.$ Checking the logarithmic asymptotics and heat kernel bounds of Assumption \ref{a3} amounts to scale invariance and asymptotics for the standard heat kernel on $\mathbb R^2$. 

    To ensure that the critical tuning is satisfied in this case, we simply set
    \begin{equation*}
        \beta_N^2 := \frac1{\Lambda_N(\varphi * \varphi) \log N}\bigg(1+\frac{\vartheta+o(1)}{\log N}\bigg).
    \end{equation*}
    We note here that we can unambiguously let $\Lambda_N(f)$ denote the top eigenvalue as in Definition \ref{def:lambdacont}.
\end{proof}

\subsection{SHE with derivative-type noise}\label{5.3} 

The following model is motivated by work of \cite{Hai24} on renormalization in the presence of variance blowup, which was also studied in \cite{Par-} using a different moment-based approach. Those results were in $d=1$: we now focus on $d=2$, which is more subtle.

Fix some $p\in\mathbb N$. Consider the It\^o stochastic PDE 
\begin{equation*}\label{dshe}\tag{dSHE}
    \partial_t Z^\epsilon(t,y) = \tfrac12 \Delta_y Z^\epsilon(t,y) + \beta_\epsilon^{1/2}  \epsilon^{p} \cdot Z^\epsilon(t,y) \cdot (-\Delta_y)^{p/2}\eta^\epsilon(t,y),\qquad t\ge 0,\;y\in \mathbb R^2,
\end{equation*}
where $\eta^\epsilon := \varphi_\epsilon *\eta$ for a standard Gaussian space-time white noise $\eta$ on $\mathbb R\times\mathbb R^2$ and the convolution is in space only,
and where $\varphi_\epsilon (y):= \epsilon^{-2} \varphi(\epsilon^{-1}y)$ for some fixed even function $\varphi \in C_c^\infty(\mathbb R^2)$ with $\int_{\mathbb R^2}\varphi=1$. 

Note that unlike \eqref{mshe}, there is a fractional Laplacian around the noise term, which creates significant additional complications. The papers \cite{Hai24, Par-} consider the $d=1$ version of this equation and prove convergence to the KPZ equation with $\beta_\epsilon=\epsilon^{-1/2}$. Here we will prove the $d=2$ version of the result, with convergence to the SHF when $\beta_\epsilon= O((\log(1/\epsilon)^{-1/2})$. We remark that there is an extra square root on $\beta_\epsilon$ appearing in \eqref{dshe} as compared to \eqref{mshe}, which lacks the derivative $(-\Delta)^{p/2}$ acting on the noise. The reason for this will become clear from the proof. 

\begin{defn}
    Here, we set $\epsilon = \epsilon_N := N^{-1/2}$. Let $H^N_{s,t}(x,y):=\epsilon^2 \cdot Z_{\epsilon^2 s,\epsilon^2 t}^\epsilon(\epsilon x,\epsilon  y)$,
    where $Z^\epsilon_{s,t}$ are the propagators for \eqref{dshe}.
\end{defn}



\begin{thm}[Verification of hypotheses for the SHE with derivative-type noise]\label{dshe_verify}
    There exists a sequence $\beta_N:= \beta_{\epsilon_N} \asymp (\log N)^{-1/2}$ for which
    $H^N$ as defined above satisfy Assumption \ref{a3}. In this case ${\boldsymbol{\zeta}}_N(x): = \beta_N^2 |\nabla \varphi * \Delta^{p-1}\varphi(x)|^2$, and $\nu_\infty$ is $\frac1{4\pi}$ times Lebesgue measure on $\R^2.$
\end{thm}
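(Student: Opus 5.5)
We follow the template of Theorem \ref{mshe_verify}: compute the moments by a Feynman--Kac representation and read off the Markov process, the additive functional $\boldsymbol{\eta}_{N,h}$ and $\z_N$. Then invoke Theorem \ref{check2} for the global items, and finally choose $\beta_N$ by an eigenvalue/intermediate value argument as in Theorem \ref{poly_verify}.

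After the diffusive rescaling $H^N_{s,t}(x,y)=\epsilon^2 Z^\epsilon_{\epsilon^2 s,\epsilon^2 t}(\epsilon x,\epsilon y)$, the equation becomes a unit-scale equation
\[
\partial_t u=\tfrac12\Delta u+\beta_N^{1/2}\, u\,\xi,
\]
where $\xi=(-\Delta)^{p/2}\varphi*\eta$ is white in time with smooth spatial covariance $C=(-\Delta)^{p/2}\varphi*(-\Delta)^{p/2}\varphi=\varphi*\Delta^p\varphi$ up to sign. This covariance has zero integral, so the naive Feynman--Kac choice $\z_N=\beta_N C$ is not nonnegative. It is also of the wrong order, since the critical scaling needs $\beta_N^2$.

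The key step is therefore the tilt anticipated in the remark after Assumption \ref{a1}. Write $(-\Delta)^{p/2}\varphi*\eta$ as a divergence $\nabla\cdot(\nabla\Delta^{p-1}\varphi*\eta)$ (up to sign), so the noise is a divergence of a smooth vector field $V=\beta_N^{1/2}\nabla\Delta^{p-1}\varphi*\eta$. For the $h$-point moments, Itô calculus and Girsanov then give a representation in terms of $h$ Brownian motions with a drift interaction. These are diffusions with generator
\[
\mathbf L_{N,h}=\tfrac12\Delta+\beta_N\sum_{i\ne j}\nabla_{x_i}\cdot\big(\Gamma(x_i-x_j)\nabla_{x_j}\big),
\]
where $\Gamma$ is the matrix covariance of $\nabla\Delta^{p-1}\varphi*\eta$. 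The leftover potential after completing the square is
\[
\boldsymbol{\eta}_{N,h}(\x)=\beta_N^2\sum_{i<j}\big|\nabla\varphi*\Delta^{p-1}\varphi(x_i-x_j)\big|^2+(\text{non-pair terms}),
\]
plus an error functional $\mathscr E^{N,h}_t$ coming from boundary terms of Itô's formula. That error is $O(\beta_N^{1/2})$ in $L^\infty$, which tends to zero. This explains the $\beta_N^{1/2}$ in \eqref{dshe} and gives $\z_N=\beta_N^2|\nabla\varphi*\Delta^{p-1}\varphi|^2\ge0$, compactly supported, of the form $\beta_N^2\z$.

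With the moment representation in hand, the local items are routine:
\begin{itemize}
\item Item \eqref{a3flow} holds because propagators of a linear SPDE form a flow.
\item Item \eqref{a3consistency} holds because the interaction generator vanishes when $|x_i-x_j|>A$, with $A$ the support diameter of $\varphi$.
\item Non-pair terms are supported in $T^{(h)}$.
\end{itemize}
For Items \eqref{a3log}--\eqref{a3heatkernel} we apply Theorem \ref{check2}:
\begin{itemize}
\item Exponential moments and short-time Gaussian bounds follow from the uniform ellipticity of $\mathbf L_{N,h}$ for small $\beta_N$ (Aronson estimates).
\item The limit chain is $h$ independent Brownian motions, since the drift vanishes as $\beta_N\to0$; this gives TV convergence, irreducibility and identity covariance.
\item $\nu_\infty$ is $\frac1{4\pi}$ times Lebesgue measure, consistent with invariance of Lebesgue measure for the divergence-form gap diffusion.
\end{itemize}

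Finally, we choose $\beta_N$ to satisfy \eqref{crit_tuning2}. The subtlety is that $\mathbf p_{N,\mathrm{dif}}$ itself depends on $\beta$, so we argue as in Steps 1--3 of Theorem \ref{poly_verify}. Set $f_N(\beta)=\beta^{-2}\Lambda_N(\z_N)$. We show $f_N$ is continuous in $\beta$, which follows from continuity of the top eigenvalue and continuous dependence of the Green's function on the drift. We also show $f_N\to\int|\nabla\varphi*\Delta^{p-1}\varphi|^2>0$ uniformly for small $\beta$. The intermediate value theorem then yields $\beta_N\asymp(\log N)^{-1/2}$.

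The main obstacle is the Girsanov/Itô step. One must show that the cross terms are exactly a drift interaction plus an $L^\infty$-vanishing error $\mathscr E^{N,h}_t$, uniformly in $t$. Controlling that error uniformly over long times $t\le N$ is delicate. I would first try writing the problematic term as an exact martingale-plus-gradient via a corrector solving $\mathbf L_{N,2}\chi=C$, which exists because $C$ has zero mean; the boundary term $\beta_N^{1/2}\chi$ is then bounded.
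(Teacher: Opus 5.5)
Your overall plan is the paper's: Feynman--Kac, remove the zero-mean pair potential by It\^o's formula plus Girsanov, read off $\z_N=\beta_N^2|\nabla\varphi*\Delta^{p-1}\varphi|^2$, get Items \eqref{a3log}--\eqref{a3heatkernel} from Theorem \ref{check2}, then choose $\beta_N$. Your final $\z_N$ is correct, but the central step as you wrote it does not give a valid moment representation.

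First, $(-\Delta)^{p/2}\varphi*\eta\neq\nabla\cdot(\nabla\Delta^{p-1}\varphi*\eta)$. The right side is $\Delta^p\varphi*\eta$, of order $2p$, with covariance $\Delta^{2p}\Phi$ rather than $(-1)^p\Delta^p\Phi$ (where $\Phi=\varphi*\varphi$). More importantly, even a correct divergence form of the noise would not help. The term $u\,\nabla\cdot V$ is not a transport term $\nabla\cdot(uV)$, so the $h$-point moments of \eqref{dshe} only see the scalar covariance of the noise, as a pair potential on independent Brownian motions. A Girsanov change of measure can only add a drift. Second-order cross terms $\nabla_{x_i}\cdot(\Gamma(x_i-x_j)\nabla_{x_j})$ arise for a conservative equation like \eqref{fth}, not here. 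So the process you feed into Item \eqref{a3markov} and Theorem \ref{check2} is the wrong one. The leftover potential you state belongs to the drift picture, not to the generator you wrote.

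The correct step, which is what the paper does, puts the divergence structure in the covariance rather than the noise. Write $C=(-1)^p\Delta^p\Phi=\Delta\chi$ with $\chi:=(-1)^p\Delta^{p-1}\Phi$, which is smooth, even and compactly supported. Under the free Brownian motions, $W^i-W^j$ has generator $\Delta$, so
\[
\int_0^t C(W^i_s-W^j_s)\,ds=\chi(W^i_t-W^j_t)-\chi(x_i-x_j)-\int_0^t\nabla\chi(W^i_s-W^j_s)\cdot d(W^i-W^j)_s .
\]
Girsanov then turns the stochastic integrals into the drift $dA^i=-\beta_N\sum_{j\ne i}\nabla\chi(A^i-A^j)\,dt+dW^i$. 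The leftover potential is $\tfrac12\beta_N^2\sum_i|\sum_{j\ne i}\nabla\chi(x_i-x_j)|^2$, i.e.\ $\sum_{i<j}\z_N(x_i-x_j)$ plus terms supported on $T^{(h)}$. The error is $\mathscr E^{N,h}_t=\beta_N\sum_{i<j}[\chi(A^i_t-A^j_t)-\chi(x_i-x_j)]$, bounded by $h^2\beta_N\|\chi\|_{L^\infty}$ uniformly in $t$; it is of order $\beta_N$, not $\beta_N^{1/2}$.

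So the ``main obstacle'' you identify does not exist, and your corrector plan is unnecessary and circular. The interacting generator $\mathbf L_{N,2}$ is itself produced by the Girsanov step the corrector is meant to set up. Also, zero mean of $C$ alone would not make $\nabla\chi$ compactly supported, which Item \eqref{a3markov} requires of $\z_N$. What makes everything work is that $C$ is exactly the Laplacian of a compactly supported function.

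On the tuning, your caution is justified. With the permutation-symmetric drift above, the gap solves $dD=-2\beta_N\nabla\chi(D)\,dt+\sqrt2\,dB$, so $\mathbf p_{N,\mathrm{dif}}$ does depend on $\beta$. The paper, whose displayed drift carries a factor $\mathrm{sign}(i-j)$, instead asserts that the gap is exactly $\sqrt2$ times a Brownian motion and sets $\beta_N^2=(\Lambda_N(\z)\log N)^{-1}(1+\frac{\vartheta+o(1)}{\log N})$ directly. Your continuity and intermediate-value argument is the safer route. Two small corrections there: the limit of $\beta^{-2}\Lambda_N$ is $\frac1{4\pi}\int|\nabla\varphi*\Delta^{p-1}\varphi|^2$. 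And $\nu_\infty$ should be read off from the limiting gap chain $\sqrt2\,B$, since the prelimit gap has invariant density $e^{-2\beta_N\chi}$, not Lebesgue measure.
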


\begin{proof}
    \textbf{Verification of Items \eqref{a3flow}, \eqref{a3markov}, and \eqref{a3consistency}.} First let us verify Item \eqref{a3markov} of Assumption \ref{a3}.
    Let $\Phi_\epsilon := \varphi_\epsilon * \varphi_\epsilon$, with $\varphi_\epsilon$ as above, and let $\Phi:=\Phi_1$. For now, consider any sequence $\beta_\epsilon\downarrow 0$; we will specify one later. If $\phi: \mathbb R^2\to\mathbb R$ is bounded and continuous, and $\x = (x_1,\dots,x_h) \in \mathbb R$, we have 
    \begin{equation*}
        \int_{(\mathbb R^2)^h} \mathbb E \bigg[ \prod_{j=1}^h Z_{0,t}^\epsilon (x_j, y_j) \bigg]\prod_{j=1}^h \phi(y_j) dy\cdots dy_n = (\mathbf E_{\mathrm{BM}}^{\otimes h})^{\x} \bigg[  e^{\beta_\epsilon (-\epsilon^2)^p \sum_{1\le i<j\le h}\int_0^t \Phi_\epsilon^{(2p)} (W^i_s-W^j_s)ds} \prod_{j=1}^h \phi(W^j_t)\bigg].
    \end{equation*}
    Here the expectation on the right side is with respect to an $h$-dimensional Brownian motion $\mathbf W$ started from $\x$, and $\Phi^{(2p)}:= \Delta^p \Phi$ denotes the $p$\textsuperscript{th} power of the standard Laplacian on $\mathbb R^2$ applied to $\Phi$. The formula above is easily checked, since $(-1)^p\Phi_\epsilon^{(2p)}$ is the covariance kernel of the noise $(-\Delta_y)^{p/2} \eta^\epsilon$. 
    
    Thus in checking Item \eqref{a3markov} it is tempting to say $\mathbf P_{N,h} = \mathbf P_{\mathrm{BM}}^{\otimes h}$ and $\z_N = \beta_\epsilon (-\epsilon^2)^p \Phi_\epsilon^{(2p)}$, but this is wrong! Indeed, $\Phi_\epsilon^{(2p)}$ fails to stay nonnegative (it integrates to 0), as required in Assumption \ref{a3}. Instead, we will further manipulate the above expectation. By It\^o's formula, we can rewrite the quantity in the exponent above in terms of a stochastic integral, so that
    \begin{align}
        \notag &\int_{(\mathbb R^2)^h } \mathbb E  \bigg[ \prod_{j=1}^h Z_{0,t}^\epsilon (x_j, y_j) \bigg] \prod_{j=1}^h \phi(y_j) dy\cdots dy_h \\
        &= \notag (\mathbf E_{\mathrm{BM}}^{\otimes h})^{\x} \bigg[ e^{\beta_\epsilon(-\epsilon^{2} )^p\sum_{1\le i<j\le h} \big(\Phi_\epsilon^{(2p-2)}(W^i_t-W^j_t)-\Phi_\epsilon^{(2p-2)}(x_i-x_j) -\int_0^t \nabla \Phi_\epsilon^{(2p-2)}(W^i_s-W^j_s) d(W^i-W^j)_s\big) } \prod_{j=1}^h \phi( W^j_t )\bigg] \\
        &= \mathbf E^{\x}_{\mathrm{Diff}(\epsilon)} \bigg[e^{ \mathbf T^\epsilon_t(X)+\sum_{1\le i<j\le h} \big[\beta_\epsilon(-\epsilon^{2})^p\big(\Phi_\epsilon^{(2p-2)}(X^i_t-X^j_t)-\Phi_\epsilon^{(2p-2)}(x_i-x_j)\big) +\beta_\epsilon^2\epsilon^{4p} \int_0^t 
        |\nabla \Phi_\epsilon^{(2p-2)}(X^i_s-X^j_s)|^2 ds\big] } \prod_{j=1}^h \phi( X^j_t) \bigg] \label{apx}
    \end{align}
    where by Girsanov's theorem the last expectation is no longer with respect to Brownian motion, but rather the diffusion process on $(\mathbb R^2)^h$, started from initial condition $\x$, with dynamics given by 
    \begin{equation}\label{diffu}
        dX^i = \beta_\epsilon(-\epsilon^2)^p \sum_{j\ne i} \mathrm{sign}(i-j) \nabla \Phi_\epsilon^{(2p-2)}(X^i-X^j)ds + dW^i, \qquad 1\le i \le h,
    \end{equation}
    and where the process of triple intersections is given by 
    \begin{align*}
        \mathbf T_t^\epsilon(X)&:= \frac{\beta_\epsilon^2}2 \epsilon^{4p}\sum_{i=1}^h \sum_{\substack{j_1,j_2\neq i\\ j_1\ne j_2}} \int_0^t \mathrm{sign}(i-j_1)\mathrm{sign}(i-j_2) \sum_{a_1,a_2=1}^h \prod_{k\in \{1,2\}} \partial_{a_k} \Phi^{(2p-2)}_\epsilon(X^i_s -X^{j_k}_s) ds.
    \end{align*}
    Translating back to microscopic coordinates, we therefore see that, in Assumption \ref{a3}, the relevant Markov process $\mathbf P_{N,h}$ will be given by the law of $\mathbf A_t:= \epsilon^{-1} \mathbf X_{\epsilon^2 t}$, with $\mathbf X=(X^1,\ldots,X^h)$ solving \eqref{diffu}. Thus we see that $\mathbf P_{N,h}$ solves the microscopically rescaled SDE 
    \begin{equation}\label{aepsilon}
        dA^i = \beta_\epsilon \sum_{j\ne i} \mathrm{sign} (i-j) \nabla \Phi^{(2p-2)}(A^i-A^j) ds +  dW^i,\qquad 1\le i \le h.
    \end{equation}
    We thus set $\z_N(\x):= \beta_N^2{\boldsymbol{\zeta}}(x)$ with $\z(x) := |\nabla \varphi * \varphi^{(2p-2)}(x)|^2$, and in turn \begin{equation*}
        \boldsymbol{\eta}_{N,h} (\mathbf x) :=  \sum_{1\le i<j \le h} \z_N(x_i-x_j) +\frac{\beta_\epsilon^2}{2} \sum_{i=1}^h \sum_{\substack{j_1,j_2\neq i\\ j_1\ne j_2}} \mathrm{sign}(i-j_1)\mathrm{sign}(i-j_2) \sum_{a_1,a_2=1}^h \prod_{k\in \{1,2\}} \partial_{a_k} \Phi^{(2p-2)}(x_i-x_{j_k}),
    \end{equation*}
    and we see from above that Item \eqref{a3markov} of Assumption \ref{a3} is satisfied. Since the drift term in \eqref{aepsilon} is smooth and compactly supported, the existence of a smooth bounded transition density with Gaussian tails at $t=1$ for the diffusion \eqref{diffu} is classical from \cite[Theorem A]{Zhang97}, and the consistency condition in Item \eqref{a3consistency} is also clear from the exact form of \eqref{diffu}.
    It remains to check that this family of processes satisfies the claimed heat kernel bounds and logarithmic asymptotics.
    
    There is still an extra term $\mathscr E^{N,h}_t:=(-1)^p\beta_\epsilon\sum_{1\le i<j\le h}  \big(\Phi^{(2p-2)}(X^i_t-X^j_t) - \Phi_\epsilon^{(2p-2)}(X^i_0-X^j_0)\big)$ appearing in \eqref{apx} that has not yet been explained. This term is an error that will appear in the moment expansion, and it is not part of the additive functional $\boldsymbol{\eta}_{N,h}$. Since it has vanishing $L^\infty$ norm, it will be irrelevant in the limit, but is allowable as shown in Assumption \ref{a3} \eqref{a3markov}.
    
    We still have not shown that $\|\boldsymbol{\eta}_{N,h}\|_{L^\infty} \leq C/\log N, $ i.e., $\beta_N^2\le C/\log N.$ This will be done when we study the exact asymptotics of $\beta$ below. However, we still have $\|\boldsymbol{\eta}_{N,h}\|_{L^\infty}\to 0$ for arbitrary sequences $\beta_N\to 0$: by the final statement in Theorem \ref{check2}, this will be enough to check Items \eqref{a3log}--\eqref{a3heatkernel}. 
    
    \smallskip
    \noindent\textbf{Verification of Items \eqref{a3logasymptotic}--\eqref{a3heatkernel}.} When $h=2$ in \eqref{aepsilon} above, we indeed see that the difference $X^1-X^2$ is Markov since $d(A^1-A^2) = \beta_\epsilon \nabla \Phi^{(2p-2)} (A^1-A^2) dt + \sqrt 2 dW.$ We need to prove that this diffusion satisfies all of the relevant limiting formulas and estimates.
    In this SDE, the drift term $\nabla \Phi^{(2p-2)}$ is a fixed function not depending on $\epsilon,$ and $\beta_\epsilon \to 0$. Thus as $\epsilon\to 0$ it is clear that $\beta_\epsilon \nabla \Phi^{(2p-2)}\to 0$ uniformly. Consequently, this family of Markov chains $\mathbf P_{N,h}$ converges on compact time intervals to a standard Brownian motion in $(\mathbb R^2)^h$ as $N\to \infty$ (equivalently, as $\epsilon\to 0$). Immediately we see that Items \eqref{check2hptlim}--\eqref{check2cov} of Theorem \ref{check2} are satisfied. To satisfy Item \eqref{check2exptail}, we need to make sure that the time $t=1$ density of $\mathbf A$ started from $\mathbf A_0=\mathbf 0$ has exponentially decaying tails. This follows from the main result of \cite{Zhang97} on Gaussian heat kernel bounds for diffusions with smooth and bounded coefficients. This verifies the conditions of Theorem \ref{check2}, and thus items \eqref{a3logasymptotic}--\eqref{a3heatkernel} of Assumption \ref{a3}.

    \smallskip
    \noindent\textbf{Verification of the critical tuning.} Thus far we have verified Assumption \ref{a2}, and we see that the proof is valid for any sequence $\beta_\epsilon\to 0$. However, we still need to find the precise asymptotics of $\beta_\epsilon$ by verifying \eqref{crit_tuning2}, which is the hardest assumption to check for this particular model. 
    
    Take $h=2$ in \eqref{aepsilon}, and note that $\nabla \Phi$ is an odd function. Then we see that the difference process $A:=A^1-A^2$ is just $\sqrt{2}$ times a Brownian motion, regardless of the value of $\beta>0$. Consequently, the Markov chain $\mathbf p_{N,\mathrm{dif}}$ will be independent of $N$, and we can unambiguously let $\Lambda_N(f)$ denote the top eigenvalue as in Definition \ref{def:lambdacont}. Thus to verify the critical tuning, we can just take 
    $$\beta_N^2:= \frac1{\Lambda_N(\z) \log N} \bigg( 1+\frac{\vartheta+o(1)}{\log N}\bigg),$$
    where as always $\z(x) := |\nabla \varphi * \varphi^{(2p-2)}(x)|^2$.
\end{proof}

\subsection{Random walk in a random environment}\label{5.4}

\begin{ass}\label{4.6} Consider a family of random probability measures $\{K_t(x,\bullet)\}_{(t,x) \in \mathbb Z \times \mathbb Z^2}$ on $\mathbb Z^2$. We impose that $K_t(x,\bullet)$ are independent for distinct $t\in \mathbb Z$, of finite-range dependence in $x\in\mathbb Z^2$, and strictly stationary in $x$ (in the sense that $K_0(x,\bullet+x)$ is a stationary process in $x$). 

Fix some $p\in \mathbb N$, as well as some unit vector $v_{\mathrm{unit}}\in \mathbb R^2$. Suppose the first $p-1$ moments of $K_0(0,\bullet)$ are deterministic, whereas the $p$\textsuperscript{th} one is genuinely random (in the sense that the quantity $\sum_{(a_1,a_2)\in \mathbb Z^2} a_1^{k_1} a_2^{k_2} K_{0}(0 , (a_1,a_2))$ is deterministic for all positive integers $k_1,k_2$ such that $k_1+k_2< p$, but has positive variance for some $k_1,k_2$ such that $k_1+k_2=p$). 

We assume that $\mu(x):= \mathbb E[K_0(0,x)]$ has covariance given by the $2 \times 2$ identity matrix and has finite support. 
We also assume that the $p$\textsuperscript{th} moments have nonnegative spatial covariance function
\begin{equation}\label{zeta_rwre}
    \zeta_{p,p}(x):= (p!)^{-2} \mathrm{Cov}\bigg(\sum_{z\in \mathbb Z^2} (v_{\mathrm{unit}}\bullet z)^p K_0(0,z), \sum_{z\in \mathbb Z^2} (v_{\mathrm{unit}}\bullet (z-x))^pK_0(x,z) \bigg) \ge 0 ,\qquad x\in \mathbb Z^2, 
\end{equation}
and that $\zeta_{p,p}(x)$ is strictly positive whenever $K_t(0,\bullet), K_t(x,\bullet)$ are dependent.
We furthermore assume that the Markov chain on $(\mathbb Z^2)^h$ with one-step transition density 
\begin{equation}\label{qchains}
    \mathbf q^{(h)} (\x,\y) = \mathbb E \bigg[ \prod_{j=1}^k K_t(x_j,y_j)\bigg]
\end{equation}
is irreducible. \end{ass}

The condition \eqref{zeta_rwre} holds e.g.\ in the spatially independent case where $K_t(x,\bullet), K_t(x',\bullet)$ are independent for $x \ne x'$.  \eqref{zeta_rwre} also holds when the kernels $K_t$ are given by spatially averaging such i.i.d.\ kernels on some finite-width box. See \cite{hass2025superuniversalbehavioroutliersdiffusing, DP25} for further discussion of the significance of the integer $p$ and the condition \eqref{zeta_rwre}. Irreducibility of $\mathbf q^{(h)}$ is not automatic: consider $K_t(0,\bullet)$ supported on a single but random point almost surely (this is exactly the Bernoulli RWRE, see the introduction of \cite{DDP24} for a deeper discussion of this model). 


\begin{defn} Let $\omega= \{K_t(x,\bullet)\}_{(t,x) \in \Z \times \mathbb Z^2}$ and then let $P^\omega_{s,t}$ denote the transition density of the associated RWRE. That is, $P_{s,t}^\omega(x,y)= K_{s+1}\cdots K_t(   x,y),$ where the product is defined by $K_1K_2(   x,z) = \sum_y K_1(x,y)K_2(y,z)$, and onward by associativity.

Let $\varsigma_N = \beta_N v_{\mathrm{unit}}$ where $\beta_N>0$ can be any positive sequence for now.\footnote{
    The location strength vectors $\varsigma_N$ could be taken to be of a more general form, having some vanishing angular component as well, and the results would still hold true with some modified $\z(\beta,x)$, but this would clutter the discussion with more notation, so we do not pursue this.
}
We also put $\mu(x):= \mathbb E[K_0(0,x)]$ and write $M(\lambda):= \sum_{z\in \mathbb Z^2} e^{\lambda \bullet z} \mu(z)$ for its moment generating function.
Define the location vectors $d_N:= N\nabla \log M(\varsigma_N) = \sum_y y\mathbb E[H^N_{0,1}(0,y)]$,
and from here
\begin{equation}\label{hn_rw}
    H^N_{s,t}(x,y) := e^{\varsigma_N \bullet(y-x
    ) -N(t-s) \log M(\varsigma_N)} P^\omega_{s,t} (x, y).
\end{equation}
\end{defn}
This is exactly the random field that was studied in \cite{DDP24, DP25}.
Samples of this field with respect to a certain spatially-averaged random environment are visualized in Figure \ref{fig:rwre}.

\begin{thm}[Verification of hypotheses for the RWRE]
    Suppose Assumption \ref{4.6} holds. Then there exists a sequence $\beta_N\asymp (\log N)^{-1/(2p)}$ along which the family
    $H^N$ defined above satisfies Assumption \ref{a1}.
    In this case, $\nu_\infty$ is the invariant measure of the gap process for the 2-point motion generated by these Markov kernels.
\end{thm}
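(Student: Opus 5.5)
The proof will follow the template of Theorem \ref{poly_verify}, with the exponential tilt by $\varsigma_N$ playing the role of the inverse temperature. The argument has four stages: the flow structure, the Markov representation of moments, the global Items \eqref{a1logasymptotic}--\eqref{a1heatkernel} via Theorem \ref{check}, and finally the critical tuning.

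\textbf{Flow structure and moment representation.} Items \eqref{a1flow} and \eqref{a1consistency} come essentially for free. Since $P^\omega_{s,t}$ is a product of the kernels $K_u$ and the tilt factor $e^{\varsigma_N\bullet(y-x)}$ telescopes, Chapman--Kolmogorov holds, and normalization follows from the definition of $M$. For Item \eqref{a1markov} I will take the canonical representation \eqref{eq:mrep}. The chain is $\mathbf P_{N,h}$ with one-step kernel equal to the $\varsigma_N$-tilt of $\mathbf q^{(h)}$ from \eqref{qchains}, normalized coordinatewise. The additive functional is
\begin{equation*}
\boldsymbol\eta_{N,h}(\x)=\log\sum_{\y}\mathbb E\Big[\prod_j K_0(x_j,y_j)e^{\varsigma_N\bullet(y_j-x_j)}\Big]-h\log M(\varsigma_N).
\end{equation*}
Writing $\sum_z e^{\varsigma\bullet z}K_0(x,x+z)=\sum_k \frac{1}{k!}\sum_z(\varsigma\bullet z)^kK_0(x,x+z)$, the moments of order $<p$ are deterministic and cancel in the normalization. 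The first random contribution is therefore $\frac{\beta^p}{p!}$ times the $p$-th moment fluctuation, so a joint cumulant expansion (using \cite[Theorem 9.11]{KLM} for convergence, since the increments are bounded) gives $\boldsymbol\eta_{N,2}(x_1,x_2)=\z_N(x_1-x_2)$ with $\z_N=\beta_N^{2p}\zeta_{p,p}+O(\beta_N^{2p+1})$. It also gives the pairwise decomposition off non-pair collisions, with a remainder supported on $T^{(h)}$. Nonnegativity of $\z_N$ for large $N$ follows from the strict positivity of $\zeta_{p,p}$ on its (finite) support, and finite range makes the chains factorize away from collisions.

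\textbf{Global Items via Theorem \ref{check}.} Items \eqref{a1logasymptotic}--\eqref{a1heatkernel} will be checked through Theorem \ref{check}, using its weak form $\|\boldsymbol\eta_{N,h}\|_\infty\to0$, which holds for any $\beta_N\to0$. The inputs are as follows.
\begin{itemize}
\item Exponential moments of $\sum_x e^{\epsilon|x|}H^N_{0,1}(0,x)$ are trivial, since $\mu$ has finite support and $\beta_N\to0$.
\item Total variation convergence of the tilted kernels to $\mathbf q^{(h)}$ is uniform because the tilt factor tends to $1$ uniformly on the bounded support.
\item Irreducibility of the limit chain is exactly the assumption on $\mathbf q^{(h)}$.
\item The unit covariance of $\mu$ is assumed.
\end{itemize}
This identifies $\nu_\infty$ as the invariant measure of the gap chain of $\mathbf q^{(2)}$. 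The drift $d_N=N\nabla\log M(\varsigma_N)$ is exactly the centering that appears in Assumption \ref{a1}.

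\textbf{Critical tuning (main obstacle).} It remains to produce $\beta_N\asymp(\log N)^{-1/(2p)}$ solving \eqref{crit_tuning}. Here I will repeat Steps 1--3 of the proof of Theorem \ref{poly_verify}, with $\beta^2$ replaced by $\beta^{2p}$. Write $e^{\z(\beta,\cdot)}-1=\beta^{2p}(\zeta_{p,p}+\beta\ell(\beta,\cdot))$ with $\ell$ bounded, so that $\Lambda_N=\beta^{2p}f_N(\beta)$ with $f_N$ the top eigenvalue of $S_{\beta,N}$. The delicate point, absent for the polymer, is that $\mathbf p_{N,\mathrm{dif}}$ itself depends on $\beta$ through the tilt. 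Consequently the convergence of $\frac1{\log N}\mathscr S_N(x,y)\to\nu_\infty(y)$ must be shown uniformly along arbitrary sequences $\beta_N\downarrow0$. I expect this to follow from Theorem \ref{check} applied along every such sequence, combined with a contradiction/subsequence argument. Given this, $S_{\beta,N}$ converges in operator norm to the rank-one operator $h\mapsto\nu_\infty(\zeta_{p,p}h)\ind_J$. Continuity of the top eigenvalue \cite{Kloeckner2019Effective} then gives $f_N$ continuous on $[0,\epsilon]$ with $f_N(\beta)$ within a factor of $\nu_\infty(\zeta_{p,p})>0$ uniformly. Finally, the intermediate value theorem yields $\beta_N$ with $\beta_N^{2p}f_N(\beta_N)\log N=1+\frac{\vartheta+o(1)}{\log N}$, and hence $\beta_N^{2p}\asymp1/\log N$, which also gives the bound $\|\boldsymbol\eta_{N,h}\|_\infty\le C/\log N$.
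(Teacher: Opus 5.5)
Your proposal is correct and follows the paper's proof essentially step by step. The common route is: the canonical tilted Markov representation with the cumulant expansion \eqref{phoo}; Items \eqref{a1logasymptotic}--\eqref{a1heatkernel} via the weak form of Theorem \ref{check}; and the critical tuning via operator-norm convergence of $S_{\beta,N}$ along arbitrary sequences $\beta_N\downarrow 0$, continuity of the top eigenvalue, and the intermediate value theorem. One small note: your limiting rank-one operator $h\mapsto\nu_\infty(\zeta_{p,p}h)\ind_J$, with eigenvalue $\nu_\infty(\zeta_{p,p})$, is actually the accurate form of what the paper writes as $\langle\bullet,\zeta_{p,p}\rangle\ind_J$ with eigenvalue $c_p$, and the discrepancy does not affect the argument.
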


\begin{proof}
    \textbf{Verification of Items \eqref{a1flow}, \eqref{a1markov}, and \eqref{a1consistency}.}
    The fact that $H^N$ as defined in \eqref{hn_rw} satisfies the flow property $H_{s,t}^N H_{t,u}^N = H_{s,u}^N$ for $s<t<u$ follows from direct computation and the fact that $P^\omega_{s,t}$ clearly satisfies the flow property. 
    
    For now, we will consider an arbitrary sequence $\beta_N\to 0$. Let 
    \begin{equation*}
        \boldsymbol{\eta}_{N,h} (\x):= \log \sum_{y_1,\dots,y_k \in \Z^2} \mathbb E \bigg[ \prod_{j=1}^k H^N_{0,1} (x_j,y_j)\bigg],
    \end{equation*}
    from which we may define, as in \eqref{eq:mrep}, the Markov chain $\mathbf P_{N,h}$ by the one-step transition probability \begin{equation}\label{pnh} \mathbf p_{N,h} (\x,\y) = e^{-\boldsymbol{\eta}_{N,h} (\x)}\mathbb E \bigg[ \prod_{j=1}^k H^N_{0,1} (x_j,y_j)\bigg].
    \end{equation}
    This is exactly the same Markov chain as in \cite{DP25}, albeit written somewhat more compactly.
    Since $\beta_N\to 0$, one may easily verify that $\|\boldsymbol{\eta}_{N,h}\|_{L^\infty} \to 0$.
    The finite-range assumption on the kernels $K_t$ clearly implies that $\z_N(x):= \boldsymbol{\eta}_{N,2}(0,x)$ has finite support independent of $N$, and moreover that $\boldsymbol{\eta}_{N,h}(\x) = \sum_{1\le i<j\le h} \z_N(x_i-x_j) $ as long as $\x \notin \{\y: |y_i-y_j| +|y_j-y_k|\le A\}$, where $A$ is the range of dependency specified in Item \eqref{a1markov}.
    
    Here we need to do a more careful analysis of how exactly the Markov chains $\mathbf p_{N,h}$ \eqref{pnh} are related to the Markov chains $\mathbf q^{(h)}$ from \eqref{qchains}. A direct calculation shows that the exact relation is \begin{equation}\label{tilt_dens}\mathbf p_{N,h}(\x,\y) = \frac{e^{   \varsigma \bullet \sum_{i=1}^h(   y_i-   x_i)} \mathbf q^{(h)}(\mathbf x,\mathbf y)}{\sum_{\bfa\in (\mathbb Z^2)^h} e^{   \varsigma\bullet  \sum_{i=1}^h(   a_i-   x_i)} \mathbf q^{(h)}(\mathbf x,\mathbf a)},
    \end{equation}
    where $\varsigma = \varsigma_N = \beta_N v_{\mathrm{unit}}$. This is straightforward to show directly, and we refer to \cite[Proposition 2.6]{DP25} for a detailed proof.
    
    Thus we obtain precisely that 
    \begin{align}\label{phoo} 
        \notag \boldsymbol{\eta}_{N,h}(\x)&=\log \sum_{\bfa\in (\mathbb Z^2)^h}e^{   \varsigma_N\bullet  \sum_{i=1}^h(   a_i-   x_i)} \mathbf q^{(h)}(\mathbf x,\mathbf a) - h\log M(\varsigma_N) \\&=  \sum_{1\le i<j\le h} \sum_{m,n=p}^\infty \frac{\beta_N^{m+n}}{m!n!} \zeta_{m,n}(x_i-x_j) + \boldsymbol\varepsilon(\varsigma_N,\x) ,
    \end{align}
    where $\zeta_{m,n}:\mathbb Z^2\to\mathbb R$ are given by
    \begin{equation*}
        \zeta_{m,n}(x^i-x^j):=\kappa_{(m+n)} (\underbrace{v_{\mathrm{unit}}\bullet (R^i_1-x^i) , \dots, v_{\mathrm{unit}}\bullet (R^i_1-x^i)}_{m\;\text{terms}}, \underbrace{v_{\mathrm{unit}}\bullet (R^j_1-x^j),\dots,v_{\mathrm{unit}}\bullet (R^j_1-x^j)}_{n\;\text{terms}})
    \end{equation*}
    and the joint cumulants are understood to be under the measures $\mathbf q^{(h)}(\x,\bullet)$ with $\x=(x^1,\dots,x^h).$ We are using the fact that the joint moments up to order $p-1$ of the kernels $K_t$ are deterministic, which is why the above sum starts from $m,n=p$ and the leading order is $\beta_N^{2p}$ (all lower-order terms will cancel exactly). Moreover, the error term satisfies
    \begin{equation*}
        \boldsymbol\varepsilon(\varsigma,x) \leq C|\varsigma|^{2p+1}\sum_{ 1\le i<j<\ell \le h} \ind_{\{ |x_i-x_j| \le A , |x_j-x_\ell|\le A\}}
    \end{equation*}
    for some $A>0$, because it contains exactly those cumulants with three or more distinct indices. The above series converges absolutely and defines an analytic function of $\beta$ by e.g.\ the finite-range assumption on $K_t(x,\bullet)$ and \cite[Theorem 9.11]{KLM}.
    
    Thanks to this Taylor expansion, the condition \eqref{zeta_rwre} makes it clear that $\z_N$ will necessarily be nonnegative for large $N$ as long as $\beta_N\to 0.$ The fact that $H^N$ is translation invariant in space-time then implies that $\mathbf p_{N,2} ( (x+a,x'+a), (y+a,y'+a)) = \mathbf p_{N,2} ((x,x'),(y,y'))$, so that the difference process of $\mathbf P_{N,2}$ is indeed Markov.
    
    We still have not shown that $\|\boldsymbol{\eta}_{N,h}\|_{L^\infty} \leq C/\log N.$ This will be accomplished when we study the exact asymptotics of $\beta$ below. However, for arbitrary sequences $\beta_N\to 0$ we still have $\|\boldsymbol{\eta}_{N,h}\|_{L^\infty}\to 0$: again, invoking the final statement of Theorem \ref{check}, this will be enough to verify Items \eqref{a1logasymptotic}--\eqref{a1heatkernel}.

    \smallskip
    \noindent\textbf{Verification of Items \eqref{a1logasymptotic}--\eqref{a1heatkernel}.} This is most easily done by verifying the conditions of Theorem \ref{check}. As $N\to \infty$ we know that $\|\boldsymbol{\eta}_{N,h}\|_{L^\infty} \to 0$ which means by \eqref{tilt_dens} that the Markov chains $\PN{h}$ defined above converge as $N\to \infty$ to the Markov chains $\mathbf q^{(h)}$ from \eqref{qchains}, which are irreducible by assumption. Since $\mu = \mathbf q^{(1)}(0,\bullet)$ has finite support and covariance given by the $2 \times 2$ identity matrix, it then follows that the assumptions of Theorem \ref{check} are true.
    
    \smallskip
    \noindent\textbf{Verification of the critical tuning.} 
    In light of \eqref{phoo}, we define $$\z(\beta,x):= \sum_{m,n=p}^\infty \frac{\beta^{m+n}}{m!n!} \zeta_{m,n}(x) = \log \sum_{\bfa\in (\mathbb Z^2)^2}e^{   \beta v_{\mathrm{unit}}\bullet  \sum_{i=1}^2(   a_i-   x_i)} \mathbf q^{(2)}(\mathbf x,\mathbf a) - 2\log M(\beta v_{\mathrm{unit}}).$$ 
    To be more precise, let $J$ denote the support of $\zeta_{p,p}$ which is a finite set since we are in the discrete setting. Now, we let $T_{N,\beta}$ denote the linear map on $\mathbb R^J$ given by Definition \ref{def:lambda}: more explicitly
    \begin{equation*}
        T_{N,\beta} h(x) = \frac1{\log N} \sum_{z\in J} \bigg( \sum_{r=0}^N \mathfrak p^\beta (r,x,y)\bigg) (e^{\z(\beta, y)}-1) h(y),
    \end{equation*}
    where
    \begin{equation*}
        \mathfrak p^\beta (1,x,y) =  \frac{\sum_{a\in \mathbb Z^2} e^{   \beta v_{\mathrm{unit}} \bullet \sum_{i=1}^2(   y_i-   x_i)} \mathbf q^{(2)}((0,x), (a,a+y))}{\sum_{\bfa\in (\mathbb Z^2)^2} e^{   \beta v_{\mathrm{unit}}\bullet  \sum_{i=1}^2(   a_i-   x_i)} \mathbf q^{(2)}((0,x),\mathbf a)}.
    \end{equation*}
    Let $\boldsymbol{\lambda}_N(\beta)$ denote the top eigenvalue of the (finite-dimensional) operator $T_{N,\beta}: \mathbb R^J\to \mathbb R^J$. We need to show that there exists a solution to the equation $ \boldsymbol{\lambda}_N(\beta) \cdot \log(N) = 1+\frac{\vartheta+o(1)}{\log N}$, which then defines $\beta_N$. To prove this, it suffices to show a local expansion 
   \begin{equation}\label{eigen_exp1}
        \boldsymbol{\lambda}_N(\beta) = \beta^{2p} f_N(\beta) , \qquad \limsup_{N\to\infty} \sup_{0\le \beta<\epsilon}| f_N(\beta) - c_{p}| < \frac{c_{p}}{2},
    \end{equation}
     where $c_{p} := \sum_{a\in \mathbb Z^2} \zeta_{p,p}(a) >0$ and each $f_N$ is continuous on $[0,\epsilon]$. Here $\epsilon>0$ is some positive constant not depending on $N$. 
     This would imply that $\beta\mapsto \boldsymbol{\lambda}_N(\beta)$ is continuous on $[0,\epsilon]$ and $\frac{c_{p}}{2}\beta^{2p} \leq \boldsymbol{\lambda}_N(\beta) \leq \frac{3c_{p}}{2}\beta^{2p}.$
     Once again, the intermediate value theorem immediately shows existence of a sequence $\beta_N \asymp (\log N)^{-1/(2p)}$ such that $ \boldsymbol{\lambda}_N(\beta) \cdot \log(N) = 1+\frac{\vartheta+o(1)}{\log N}$.
        
    First let us recall from \eqref{phoo} that $e^{\z(\beta,y)}-1$ Taylor expands as $\beta^{2p} \z_{p,p}(y) +\beta^{2p+1} \ell(\beta,y)$ where $\sup_{0<\beta<1} \|\ell(\beta,\bullet)\|_{L^\infty}<\infty$. Thus, we note that $f_N(\beta)$ is simply the top eigenvalue of the linear map on $\mathbb R^J$ given by $$S_{\beta,N} h(x) = \frac{1}{\log N} \sum_{r=0}^N \sum_{y\in J}\mathfrak p^\beta (r,x,y) \big(\zeta_{p,p}(y) +\beta\ell(\beta,y)\big) h(y) . $$ 
        
    \smallskip
    \noindent\textbf{Step 1.} Let $S_{0,\infty}:= \lim_{N\to\infty} S_{0,N}$, so that explicitly $S_{0,\infty} = \langle \bullet , \zeta_{p,p} \rangle_{L^2(J)} \ind_J$ is a rank-one operator, whose unique non-zero eigenvalue is $\langle \zeta_{p,p},\ind_J\rangle_{L^2(J)} =c_{\z}$. For any $\delta>0$, we will show that there exist $\epsilon>0$ and $N_0 \in \mathbb N$ such that $\|S_{N,\beta} - S_{0,\infty}\| < \delta$ whenever $\beta<\epsilon$ and $N\ge N_0$, where as always the operator norm is understood to be on $L^2(J)$. Indeed, for any sequence $\beta_N\downarrow 0$ and $x,y\in J$, Theorem \ref{check} guarantees that $\frac1{\log N} \sum_{r=0}^N \mathbf p^{\beta_N} (r,x,y)dt \to \nu_\infty(y)$, where $\nu_\infty$ is the invariant measure of the gap process of $\mathbf q^{(2)}$.
    This means that, along any sequence $\beta_N\downarrow 0$, the integral kernel of $S_{\beta_N,N}$ converges to that of $S_{0,\infty}$,
    where for $x,y \in J$ the kernels are respectively given by $K_{\beta_N,N}(x,y) = \frac1{\log N} \sum_{r=0}^N \mathbf p^{\beta_N}(r,x,y) \cdot (\zeta_{p,p}(y)+\beta_N \ell(\beta_N,y))$ and $K(x,y)=\zeta_{p,p}(y)$. This implies the claim.

    \smallskip
    \noindent\textbf{Step 2.} Next, we prove continuity of $f_N$ on $[0,\epsilon]$. Define $\tau_0:= \frac{\|\zeta_{p,p}\|_{L^2(J)}}{\langle \ind_J ,\zeta_{p,p}\rangle_{L^2(J)}}$ and $\gamma_0:=\langle \zeta_{p,p},\ind_J\rangle_{L^2(J)}$. Both quantities are positive since $\zeta_{p,p}>0$ on $J$. Using \cite[Theorems 2.3 and 2.6]{Kloeckner2019Effective}, there exists a map $A\mapsto \lambda_{\mathrm{top}}(A)$ which sends a compact operator $A$ on $L^2(J)$ to its eigenvalue of maximal modulus, and the function $\lambda_{\mathrm{top}}$ is well-defined and uniformly continuous (in the operator norm topology) on the domain $\{A: \|A-S_{0,\infty}\| <1/(12\tau_0 \gamma_0) \}.$ Notice that the function $\beta\mapsto f_N(\beta)$ is the composition of the two maps $$\beta \mapsto S_{N,\beta} \mapsto \lambda_{\mathrm{top}}(S_{N,\beta}).$$
    The first map is clearly continuous on $[0,\epsilon]$, and by Step 1 lands in the domain of the second one whenever $\beta<\epsilon$ and $N\ge N_0$. The second map is also continuous as just discussed, and so the composed map is continuous.

    \smallskip
    \noindent\textbf{Step 3.} Now we prove the estimate on the $\limsup$ in \eqref{eigen_exp1}. Again this follows from the stronger statement that, along any sequence $\beta_N\downarrow 0$, $f_N(\beta_N) \to c_{p}$, which is immediate from the result of Step 1 and the continuity of the map $\lambda_{\mathrm{top}}$ in Step 2.
\end{proof}

\begin{remark} In the proof, we did not comment on the invariant measure $\nu_\infty$ very much, but there are many examples where it is a nontrivial measure as opposed to just counting measure on $\mathbb Z^2$. Even in $d=1$, \cite{DDP24} provides such an example of a nontrivial measure arising from a nearest-neighbor model, and consequently the noise coefficient of the KPZ equation there was calculated to be $8\sigma^2/(1-4\sigma^2)$ where $\sigma^2$ is the one-point variance of the underlying i.i.d.\ random environment $\{\omega_{t,x}\}$. The next example will also provide an example of a nontrivial measure $\nu_\infty.$
\end{remark}

\subsection{Diffusion in random medium}\label{5.5}
Here we consider another RWRE-type model in a continuous environment, 
which models diffusion in turbulent media and heavily draws from the works of \cite{kar68,Kr, GK95, GK96, GH, LR, war}. 

\begin{defn}
    Consider a vector-valued Gaussian noise $ \vec  \eta = (\eta_1,\eta_2)$ where each $\eta_i = \varphi *\xi_i$ for i.i.d.\ standard space-time white noises $\xi_1$ and $\xi_2$ on $\mathbb R\times \mathbb R^2$, with $\varphi \in C_c^\infty(\mathbb R^2)$ a fixed non-negative even function with $\int_{\mathbb R^2} \varphi =1$. Equivalently $\vec \eta$ has 
    with covariance matrix
    $\mathbb E [ \eta_i(t,x) \eta_j (s,y)] = \ind_{\{i=j\}}\Phi (x-y) \delta_0(t-s)$, where $\Phi=\varphi * \varphi$ and $*$ is spatial convolution.
    Assume 
    that $\Phi$ is compactly supported and that $\Phi(0) = \theta $, where $\theta \in (0,1)$. As explained in \cite{kun94a, kun94b}, one may sensibly construct It\^o solutions to the stochastic PDE given by 
    \begin{equation}\label{fth}\tag{aSHE}
        \partial_t u (t,x) = \tfrac12 \Delta u(t,x) + \mathrm{div} \big( u(t,x)  \vec  \eta (t,x)\big), \qquad u(0,x) = \delta_0(x),\qquad t\ge 0 ,\; x\in \mathbb R^2.
    \end{equation}
    Thus, we can consider the family of propagators $u_{s,t}(x,y)$ for \eqref{fth} with $s<t$ and $x,y\in \R^2$, collectively called diffusion in a random potential.
\end{defn}

A pathwise solution adapted to the filtration generated by $  \vec \eta$ is possible thanks to the spatial smoothness of $\vec \eta $, see \cite{kun94b}. Then \eqref{fth} turns out to be conservative, meaning that $\int_{\mathbb R^d} u(t,x)dx =1$ for all $t>0$. Note that \eqref{fth} can be viewed as the Kolmogorov forward equation associated to the formal SDE 
\begin{equation}d   X(t) =   - \vec \eta(t,   X(t))dt + \theta^{1/2} d   W(t), \hspace{1 in} X(0)=0.
\end{equation}
Here $   W(t)$ is a standard Wiener process in $\mathbb R^d$. This SDE representation looks misleading because the It\^o-Stratonovich correction in \eqref{fth} is not a constant multiple of $u$, but rather is given by $\frac12 (1-\theta) \Delta u(t,x),$ which explains why the viscosity in the It\^o equation is $\frac12$ instead of $\frac12\theta$. 

Despite $\vec \eta $ being distribution-valued, \cite{kun94b} shows that such a SDE actually makes sense and that a solution exists for a.e.\ realization of $\vec \eta$, and furthermore that \eqref{fth} describes the evolution of the density of $   X(t)$ started from 0.

\begin{defn}\label{tilt_diffre}
    Let $u_{s,t}$ denote the propagators of the stochastic PDE \eqref{fth}. Fix a unit vector $v_{\mathrm{unit}}\in\mathbb R^2$.
    Let $\varsigma_N = \beta_N v_{\mathrm{unit}}$, where $\beta_N>0$ can be any positive sequence for now. Then we may define \begin{equation*}
        H^N_{s,t}(x,y) = e^{\varsigma_N \bullet(y-x) -\frac{N}2(t-s) |\varsigma_N|^2} u_{s,t} (x,y),
    \end{equation*}
    where $\varsigma_N=N^{-1}d_N = N^{-1} \beta_Nv_{\mathrm{unit}}$.
\end{defn}
Thus, the macroscopic field $\mathfrak H^N$ from Definition \ref{a2} can be represented in this case by
\begin{align*}\mathfrak H^N_{0,t}(\phi, \psi)&:= \frac{1}{N} \int_{(\mathbb R^2)^2} \phi\left (\frac{x}{\sqrt{N}}\right)\psi\left (\frac{y-   d_Nt}{\sqrt{N}}\right) e^{\varsigma_N \bullet (y-x) - \frac{N}2t |\varsigma_N|^2}u_{0,Nt}(x, y) dxdy.
\end{align*}
In macroscopic coordinates, we note that $(t,y)\mapsto \mathfrak H^N_{0,t}(\phi,\delta_y)$ solves the stochastic PDE 
\begin{equation}\label{ashe1}
    \partial_t H (t,x) = \tfrac12 \Delta H(t,x) + (N^{-1/2}  \mathrm{div} - \varsigma_N \bullet) \big( H(t,x) \eta^N(t,x)\big),\qquad t\ge 0, x\in \mathbb R^2
\end{equation}
with initial condition $\phi$, where $\eta^N(t,x) = N \vec\eta(Nt, N^{1/2}x + d_Nt)$ is converging in law to standard Gaussian space-time white noise. 

\begin{thm}[Verification of hypotheses for the diffusion in random medium]
    There exists a sequence $\beta_N\to 0$ such that the family $H^N$ as defined above satisfies Assumption \ref{a3}. In this case ${\boldsymbol{\zeta}}_N(x)= \beta_N^2\cdot  (\varphi *\varphi)(x)$, meanwhile $\nu_\infty(x)$ is equal to some constant multiple of $\frac1{1-\varphi *\varphi(x)}. $\footnote{With more work, we could identify the constant as $\frac1{4\pi}$. For brevity we skip this calculation.}
\end{thm}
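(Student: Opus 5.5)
We verify Assumption \ref{a3} in the same order as for \eqref{dshe}: first the structural Items \eqref{a3flow}--\eqref{a3consistency}, then Items \eqref{a3logasymptotic}--\eqref{a3heatkernel} via Theorem \ref{check2}, and finally we choose $\beta_N$ to achieve \eqref{crit_tuning2}.

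\textbf{Flow property and moment formula.} The flow property of $H^N$ follows from the flow property of the propagators $u_{s,t}$ (Kunita's stochastic flow theory \cite{kun94b}) together with the telescoping of the exponential tilt. For the moments, we use the It\^o form of \eqref{fth}. The $h$-point correlation $\mathbb E[\prod_j u_{0,t}(x_j,y_j)]$ solves a backward Kolmogorov equation whose generator is
$$\tfrac12\sum_i\Delta_i+\tfrac12\sum_{i\neq j}\Phi(x_i-x_j)\nabla_i\cdot\nabla_j,$$
with diagonal viscosity $\tfrac12(1+\Phi(0))-\tfrac12\Phi(0)$ arranged so that the one-point motion is $\tfrac12\Delta$. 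This is the generator of the $h$-point motion of Kunita's flow, and it is a genuine diffusion because $\Phi$ is positive definite and $\Phi(0)=\theta<1$. The tilt $e^{\varsigma_N\bullet(y-x)}$ conjugates this generator. Expanding $e^{-\varsigma\bullet x}L e^{\varsigma\bullet x}$ produces three pieces: a drift $\varsigma$ per coordinate plus cross-drifts $\Phi(x_i-x_j)\varsigma$, which together define the tilted diffusion $\mathbf P_{N,h}$; the constant $\tfrac h2|\varsigma|^2$, which cancels the normalization; and the potential
$$\sum_{i<j}\beta_N^2\,\Phi(x_i-x_j).$$
Hence $\boldsymbol\zeta_N=\beta_N^2\,\varphi*\varphi\ge0$, which has compact support. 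The noise is Gaussian, so there is no non-pair remainder. Since the drift is constant, we remove it by shearing, and the error term $\mathscr E^{N,h}$ vanishes. Consistency in Item \eqref{a3consistency} holds because the generator splits exactly once the particles are at distance greater than $\mathrm{diam}\,\supp\Phi$.

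\textbf{Verification via Theorem \ref{check2}.} We apply Theorem \ref{check2} under the weaker hypothesis $\|\boldsymbol\eta_{N,h}\|_\infty\to0$. The limiting chain $\mathbf p_{\infty,h}$ is the untilted $h$-point motion. Its coefficients are smooth and uniformly elliptic, so irreducibility, Gaussian short-time bounds \cite{Zhang97}, and exponential moments all follow. Total-variation convergence holds because the tilted drift is $O(\beta_N)$, via Girsanov and Pinsker.

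For $h=2$, the gap process has generator $(1-\Phi(z))\Delta_z$. Since this is in divergence-free form up to a scalar factor, its invariant density is proportional to $1/(1-\Phi(z))$. This identifies $\nu_\infty$, consistent with Theorem \ref{A}\eqref{Auniqueinvmeas}.

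\textbf{Critical tuning and the main obstacle.} The main obstacle is that, after tilting, $\mathbf p_{N,\mathrm{dif}}$ depends on $\beta$ through the gap drift $\beta\,\Phi(z)\cdot 0$. This term in fact cancels by antisymmetry, but the cancellation needs to be checked. We will try first to show that the gap law is independent of $N$, exactly as in \eqref{dshe}. If that holds, $\Lambda_N(\boldsymbol\zeta)$ is unambiguous and we set
$$\beta_N^2=\frac{1}{\Lambda_N(\varphi*\varphi)\log N}\Big(1+\frac{\vartheta}{\log N}\Big).$$
If the cancellation fails, we fall back on the continuity-of-top-eigenvalue argument from Theorem \ref{poly_verify}, Steps 1--3 (operator-norm continuity via \cite{Kloeckner2019Effective}), followed by the intermediate value theorem, which gives $\beta_N^2\asymp1/\log N$. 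Either route yields $\|\boldsymbol\eta_{N,h}\|_\infty\le C/\log N$, which completes the verification of Assumption \ref{a3}.
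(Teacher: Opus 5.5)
Your proposal is correct and follows essentially the same route as the paper. Conjugating the $h$-point generator by $e^{\varsigma_N\bullet\sum_j x_j}$ is the infinitesimal form of the paper's Girsanov change of measure, and it gives the same potential $\beta_N^2\sum_{i<j}\Phi(x_i-x_j)$, the same $N$-independent gap generator $(1-\Phi(z))\Delta_z$ with invariant density $1/(1-\Phi)$, the same appeal to Theorem \ref{check2}, and the same explicit choice of $\beta_N^2$ via $\Lambda_N(\varphi*\varphi)$. The cancellation you flag follows immediately from evenness of $\Phi$, since both particles receive the identical drift $\varsigma_N(1+\Phi(x_1-x_2))$, so the fallback eigenvalue-continuity argument is unnecessary.
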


Based on \eqref{ashe1} and the result of Theorem \ref{mshe_verify} for the mollified SHE, it might seem obvious that this would converge to the same limit as the mollified SHE. That is because the divergence part of the noise in \eqref{ashe1} vanishes at rate $N^{-1/2}$, whereas the scalar part behaves like $\beta_N$, which we expect to vanish at the slower rate of $O(\log^{-1/2} N)$; and moreover $\z_N$ takes on the same form.
However, it turns out that the divergence part does contribute nontrivially to the limit! If we were to couple the noises in the natural way and take a joint limit in distribution of \eqref{ashe1} with \eqref{mshe}, then the two SHFs thus obtained would not be the same, thus indicating non-triviality of the result. The fact that $\nu_\infty$ is a nontrivial measure is another indication of this fact. See \cite{DDP24} for a discussion of this decoupling phenomenon in the $d=1$ case. See also the related discussion in \cite{dom}.

\begin{proof}
    \textbf{Verification of Items \eqref{a3flow}, \eqref{a3markov}, and \eqref{a3consistency}.} Throughout the proof $\epsilon = \epsilon_N := N^{-1/2}$. Overall the proof is quite similar to the tilting proof used for the RWRE model in the previous subsection (and in fact \cite{DP25} included both models under the same framework) but here we will give the explicit argument to be as clear as possible.
    The $h$-point motion is defined as the Markov process on $(\mathbb R^2)^h$ whose transition density is given by $\mathbf q^{(h)}(t,\x,\y) = \mathbb E \big[ \prod_{j=1}^h u_{0,t}(x_j,y_j)\big]$, where $u_{s,t}$ denotes the propagators for \eqref{fth}.
    Then the generator corresponding to $\mathbf q^{(h)}(t,\x,\y)$ is given by the following second-order elliptic operator on $C^2((\mathbb R^2)^h)$: 
    \begin{equation*}
    L^{(h)}_{\mathrm{Cen}}f(y_1,\dots,y_h):= \frac12 \sum_{i,j=1}^h \big( \theta \delta_{i,j}+ \Phi(y_i-y_j)\big)\cdot (\mathrm{div}_i\nabla_j f )(y_1,\dots,y_h),
    \end{equation*}
    where if $(y_1,\dots,y_h)\in (\mathbb R^2)^h$ then $\mathrm{div}_i$ denotes divergence only in the $i$\textsuperscript{th} coordinate, and $\nabla_j$ denotes the 2-vector consisting of partials only in the $j$-coordinate. Denote by $ \mathbf E^{\x}_{\mathrm{Cen}(\epsilon)}$ the expectation with respect to the Markov process $X$ on $(\mathbb R^2)^h$ with generator $L^{(h)}_{\mathrm{Cen}}$ and $X_0 = \x$.
    
    Let $\Phi_\epsilon := \varphi_\epsilon * \varphi_\epsilon$, with $\varphi_\epsilon(x) := \epsilon^{-2} \varphi(\epsilon^{-1}x)$. Then for $\psi:\mathbb R^2\to \mathbb R$ bounded and continuous and $x_1,\dots,x_h\in\mathbb R^2$, we claim that
    \begin{equation}\label{a7}\int_{(\mathbb R^2)^h} \mathbb E\bigg[ \prod_{j=1}^h H^N_{0,t} (x_j, y_j) \bigg]\prod_{j=1}^h \psi(y_j) dy\cdots dy_n = \mathbf E^{(x_1,\dots,x_h)}_{\mathrm{Sing}(N,h)} \bigg[ e^{  \beta_N^2\sum_{1\le i<j\le h}\int_0^t \Phi (X^i_s-X^j_s)ds}\prod_{j=1}^h \psi(X^j_s) \bigg].
    \end{equation}
    Here the expectation on the right side is with respect to a diffusion process on $(\mathbb R^2)^h$ with generator 
    \begin{equation}\label{lnh}L^{(N,h)}_{\mathrm{Sing}}f(y_1,\dots,y_h):= L^{(h)}_{\mathrm{Cen}}f(y_1,\dots,y_h) + \sum_{i\ne j} \Phi(y_i-y_j) (\varsigma_N\bullet \nabla_i f)(y_1,\dots,y_h). 
    \end{equation}
    Thus the Markov chain $\PN{h}$ needed to satisfy Assumption \ref{a3} \eqref{a3markov} will be precisely the one with generator $L_{\mathrm{Sing}}^{(N,h)}$, and furthermore 
    \begin{equation}\label{eta_diff}
    \boldsymbol{\eta}_{N,h}(\x) := \beta_N^2 \sum_{1\le i<j\le h} \Phi (x_i-x_j) = \sum_{1\le i<j\le h} \z_N (x_i-x_j).
    \end{equation} For $h=2$ we indeed see that the difference $X^1-X^2$ is Markov in its own filtration, with generator given by $L_{\mathrm{Sing}}^{(N,\mathrm{dif})} f(y)= (1-  \Phi(y)) \Delta f(y) $, thanks to a cancellation of the drift terms. To prove the above moment formula \eqref{a7}, we use observations made in \cite{war}. Consider the diffusion of particles $X^i\in\mathbb R^2$ given by \begin{equation}\label{sde}dX^i_s =-\vec\eta(s,X^i_s)ds + \theta^{1/2} dW^i_s
    \end{equation}
    where $1\le i\le n$ and the environment $\vec\eta$ is fixed (quenched), and $W^i$ are independent Brownian motions independent of $\vec\eta$. Such a process makes sense despite the temporal roughness of $\vec\eta$, in fact the entire flow of diffeomorphisms for the SDE given $\eta^\epsilon$ is well-posed, see Theorem 4.5.1 of the monograph \cite{kun94b}. As mentioned above and proved in \cite[Proposition 2.1]{ew6}, the Kolmogorov forward equation for each $X^i$ may be written as the It\^o-Walsh solution to \eqref{fth}. 
    If we average out the environment $\vec\eta$ then the $n$-point motion $(X^1,\dots,X^n)$ is a diffusion in $(\mathbb R^2)^h$ with generator $L^{(h)}_{\mathrm{Cen}}$, more precisely if we let $u_{s,t}(x,y)$ denote the propagators of \eqref{fth}, we have \begin{equation}\label{moma}\int_{(\mathbb R^2)^h} \mathbb E\bigg[ \prod_{j=1}^h u_{0,t}(x_j,y_j)\bigg] \prod_{j=1}^h \phi(y_j)dy\cdots dy_h = \mathbf E_{\mathrm{Cen}(h)}^{(x_1,\dots,x_h)} \bigg[ \prod_{j=1}^h \phi(X^j_t)\bigg].\end{equation}
    This immediately follows from \eqref{sde} since for $a\in \mathbb R^2$ one has $\mathbb E[ a\bullet (X^i_{t+dt}-X^i_t) \cdot a \bullet  (X^j_{t+dt}-X^j_t)| \mathcal F_t] = |a|^2 (\epsilon^2 \Phi(X^i_t-X^j_t) +\theta \delta_{i,j})dt$, so that $(X^1,\dots,X^h)$ solves the martingale problem for $L^{(h)}_{\mathrm{Cen}}$; we refer to the discussion of \cite[(2.23)-(2.24)]{ew6} if one desires a more explicit and rigorous calculation.

    With \eqref{moma} verified, we now apply the exponential tilting given in Definition \ref{tilt_diffre}. The exponential term $e^{\frac12 |\varsigma_N|^2 t + \varsigma_N\bullet y }$ in Definition \ref{tilt_diffre} can be interpreted as a change of measure of $\mathbf P^{\x}_{\mathrm{Cen}(h)}$ by using the martingale $e^{\varsigma_N\bullet (X^1+\dots+X^h) - \frac{1}2 \langle \varsigma_N\bullet (X^1+\dots+X^h)\rangle}.$ More precisely, recall that $\langle \varsigma_N\bullet X^i,\varsigma_N\bullet X^j\rangle_t = \beta_N^2 \int_0^t \Phi(X^i_s-X^j_s)ds$ for $i\ne j$, then use \eqref{moma} and then go through with this change of measure, and we will obtain that \begin{align*}\int_{(\mathbb R^2)^h} \mathbb E\bigg[ \prod_{j=1}^h H^N_{0,t}(x_j,y_j)\bigg] \prod_{j=1}^h \phi(y_j)dy\cdots dy_j &= e^{\frac{h}2 |\varsigma_N|^2t} \mathbf E_{\mathrm{Cen}(h)}^{(x_1,\dots,x_h)} \big[e^{\varsigma_N\bullet (X^1_t +\dots+X^h_t)} \prod_{j=1}^h \phi(X^j_t+d_N t)\big]\\&=  \mathbf E^{(x_1,\dots,x_h)}_{\mathrm{Sing}(N,h)} [ e^{  \sum_{1\le i<j\le h}\beta_N^2 \int_0^t \Phi (X^i_s-X^j_s)ds}\prod_{j=1}^h \phi(X^j_s)]
    \end{align*}
    as required.

    We have not yet shown that $\|\boldsymbol{\eta}_{N,h}\|_{L^\infty} \leq C/\log N.$ This will be accomplished when we study the exact asymptotics of $\beta$ below. However, for \textit{any sequence} $\beta_N\to 0$, we still have that $\|\boldsymbol{\eta}_{N,h}\|_{L^\infty}\to 0$, thus by the final statement in Theorem \ref{check2}, this will still be enough to verify Items \eqref{a3log}--\eqref{a3heatkernel}.
    
    \smallskip
    \noindent\textbf{Verification of Items \eqref{a3logasymptotic}--\eqref{a3heatkernel}.} This is most easily achieved by checking the conditions of Theorem \ref{check2}. As $N\to \infty$ we know that $\|\boldsymbol{\eta}_{N,h}\|_{L^\infty} \to 0$ which means by \eqref{tilt_dens} that the Markov chains $\mathbf p_{N,h}$ defined above are converging as $N\to \infty$ to the Markov chains $\mathbf q^{(h)}$ from \eqref{qchains}, which by assumption are irreducible.
    Since $\mu = \mathbf q^{(1)}(0,\bullet)$ has finite support and covariance given by the $2 \times 2$ identity matrix, the heat kernel bound needed in Item \eqref{check2exptail} of Theorem \ref{check2} follows from the work of \cite{Zhang97} on Gaussian heat kernel bounds for diffusions with drift on compact time intervals. It then follows that the conditions of Theorem \ref{check2} hold. 
    
    Recall that $L_{\mathrm{Sing}}^{(N,\mathrm{dif})} f(y)= (1-  \Phi(y)) \Delta f(y) $, which is actually independent of $N$ (similar to the other continuum examples). Thus we see that the limiting Markov chain $\mathbf P_{\infty,\mathrm{dif}}$ has the same generator $L_{\mathrm{Sing}}^{(\infty,\mathrm{dif})} f(y)= (1-  \Phi(y)) \Delta f(y)$, which indeed has invariant measure $\frac1{1-\Phi(y)}$ as claimed in the theorem statement.
    
    \smallskip
    \noindent\textbf{Verification of the critical tuning.}  So far, we have considered a general sequence $\beta_N\to 0$, but now we need to prove the existence of a precise sequence satisfying \eqref{crit_tuning2}. 
    
    As we just noted above, the Markov chain $\mathbf p_{N,\mathrm{dif}}$ will be independent of $N$, and we can unambiguously let $\Lambda_N(f)$ denote the top eigenvalue as in Definition \ref{def:lambdacont}. Thus to verify the critical tuning, we can just take 
    $$\beta_N^2:= \frac1{\Lambda_N(\Phi) \log N} \bigg( 1+\frac{\vartheta+o(1)}{\log N}\bigg).$$
\end{proof}

\appendix

\bibliographystyle{alpha}
\bibliography{refs.bib, ref.bib}
\end{document}